\renewcommand{\mathcal}{\mathscr}
\renewcommand{\leq}{\leqslant}
\renewcommand{\geq}{\geqslant}
\newtheorem{theo}{Theorem}[section]
\newtheorem{lem}[theo]{Lemma}
\newtheorem{cor}[theo]{Corollary}
\newtheorem{prop}[theo]{Proposition}
\theoremstyle{definition}
\newtheorem{defi}[theo]{Definition}
\newtheorem{remark}[theo]{Remark}
\newtheorem{example}[theo]{Example}
\theoremstyle{remark}
\def \Ql {{\overline{\mathbf Q}_{\ell}}}
\def \bQ {{\overline{\mathbf Q}}}
\def \PH {{}^{\mathfrak p} \mathcal H}
\DeclareMathOperator{\Spec}{Spec}
\DeclareMathOperator{\Tr}{Tr}
\DeclareMathOperator{\Diag}{Diag}
\DeclareMathOperator{\drp}{drop}
\DeclareMathOperator{\jump}{jump}
\DeclareMathOperator{\swan}{swan}
\DeclareMathOperator{\loc}{loc}
\DeclareMathOperator{\FKM}{fkm}
\newcommand{\Zz}{\mathbf{Z}}
\newcommand{\Qq}{\mathbf{Q}}
\newcommand{\Rr}{\mathbf{R}}
\newcommand{\Aa}{\mathbf{A}}
\newcommand{\Pp}{\mathbf{P}}
\newcommand{\Cc}{\mathbf{C}}
\newcommand{\Ff}{\mathbf{F}}
\newcommand{\Gg}{\mathbf{G}}
\DeclareMathOperator{\ft}{FT}
\newcommand{\uple}[1]{\text{\boldmath${#1}$}}
\newcommand{\mtrx}[1]{\uple{#1}}
\def \mcL {\mathcal L}
\def \rank {\operatorname{rank}}
\def\eg{e.g.\kern.3em}
\def\loccit{loc.\kern3pt cit.{}\xspace}
\def\resp{\text{resp.}\kern.3em}
\DeclareMathOperator{\Der}{D_c^{\mathrm{b}}}
\DeclareMathOperator{\rhom}{\mathcal{H}om}
\DeclareMathOperator{\CC}{CC}
\let\SS=\relax
\DeclareMathOperator{\SS}{SS}
\DeclareMathOperator{\cc}{cc}
\DeclareMathOperator{\dual}{D}
\newcommand{\rmH}{\mathrm{H}}
\newcommand{\Gal}{\mathrm{Gal}}
\DeclareMathOperator{\GL}{GL}
\DeclareMathOperator{\PGL}{PGL}
\DeclareMathOperator{\Sp}{Sp}
\DeclareMathOperator{\USp}{USp}
\DeclareMathOperator{\Un}{U}
\DeclareMathOperator{\Ort}{O}
\DeclareMathOperator{\CH}{CH}
\let\@wraptoccontribs\wraptoccontribs
\begin{document}

\title{Quantitative Sheaf Theory}

\author{Will Sawin}

\address[W.\,Sawin]{Columbia University, 2990 Broadway, New York, NY, USA 10027}

\email{sawin@math.columbia.edu}

\contrib[Mis en forme par]{A. Forey, J. Fresán and E. Kowalski}

\address[A. Forey]{EPFL/SB/TAN, Station 8, CH-1015 Lausanne, Switzerland} 
  \email{arthur.forey@epfl.ch}

\address[J.\,Fres\'an]{CMLS, \'Ecole polytechnique, F-91128 Palaiseau cedex, France}
\email{javier.fresan@polytechnique.edu}

\address[E.\,Kowalski]{D-MATH, ETH Z\"urich, R\"amistrasse 101, CH-8092 Z\"urich, Switzerland} 
  \email{kowalski@math.ethz.ch}

  \thanks{W.\,S. was supported by Dr.\,Max R\"{o}ssler, the Walter
    Haefner Foundation and the ETH Zürich Foundation and by by NSF grant DMS-2101491. A.\,F. and
    E.\,K. are supported by the DFG-SNF lead agency program grant
    200020L\_175755. A.\,F. is supported by SNF Ambizione grant PZ00P2\_193354. J.\,F. is partially supported by the grant
    ANR-18-CE40-0017 of the Agence Nationale de la Recherche.}

\begin{abstract}
  We introduce a notion of complexity of a complex of $\ell$-adic
  sheaves on a quasi-projective variety and prove that the six
  operations are ``continuous'', in the sense that the complexity of
  the output sheaves is bounded solely in terms of the complexity of
  the input sheaves. A key feature of complexity is that it provides
  bounds for the sum of Betti numbers that, in many interesting cases,
  can be made uniform in the characteristic of the base field. As an
  illustration, we discuss a few simple applications to horizontal
  equidistribution results for exponential sums over finite fields.
\end{abstract}

   \keywords{$\ell$-adic cohomology, Betti numbers, characteristic cycles, Riemann hypothesis, equidistribution of exponential sums}

\maketitle
\setcounter{tocdepth}{1}
\tableofcontents

\section{Introduction}

Since its invention, and especially since Deligne's proof~\cite{weilII}
of the strongest form of the Riemann Hypothesis over finite fields,
étale cohomology has exerted a considerable influence on analytic number
theory. Its applications very often rely on estimates for the dimension
of various étale cohomology spaces, which appear in ``implicit
constants'' arising from the Grothendieck\nobreakdash--Lefschetz trace
formula combined with the Riemann Hypothesis and depend on the
characteristic~$p$ of the finite fields under consideration. This
characteristic is typically itself a variable going to infinity, and
getting uniform estimates in terms of~$p$ turns out to be the crucial
difficulty. Except for very simple cases, uniformity of such estimates
is \emph{not} a formal feature of étale cohomology (for examples and
further discussion, see~\cite[11.11]{ant}).  

In recent years, this issue has been particularly visible in a series of
works by Fouvry, Kowalski and Michel (see, e.g.,~\cite{fkm2}
and~\cite{fkm1}) that make extensive use of very general sheaves on
curves in various problems of analytic number theory. Due to the simpler
nature of curves (essentially, the Euler--Poincaré characteristic
controls the sum of Betti numbers, and has an expression in terms of
``simple'' local invariants), they obtained a satisfactory theory,
phrased in terms of a ``complexity'' invariant of an $\ell$-adic sheaf
on a curve over a finite field, which they called the ``(analytic)
conductor''. The key feature of this theory is that \emph{most natural
  operations on sheaves and the analytic resulting estimates} depend on
the ``input'' sheaves only through their conductor (see,
e.g.,~\cite[Th.\,1.5]{fkm2}).

Another application of a suitable version of complexity for
$\ell$-adic sheaves on curves is the proof by
Deligne~\cite{deligne-trace} of the theorem that, for a lisse
$\Ql$-adic Weil sheaf on a normal connected scheme of finite type over
a finite field such that the traces of Frobenius at all closed points are algebraic numbers, the field generated by these traces is a number field. The complexity that we introduce here could
also be used in Deligne's argument.

In this paper, we develop a similar theory for higher-dimensional
quasi\nobreakdash-projective algebraic varieties over any
field. (Being a geometric invariant, the complexity is defined by base
change to an algebraically closure of the base field, so most of this paper
will only deal with algebraically closed fields.) This leads to very general
estimates that solve most of the known problems of estimating Betti
numbers in analytic number~theory.

We now state somewhat informally the definition of complexity and some
of the key statements, focusing for simplicity on sheaves on affine
space~$\Aa^n$. Let~$k$ be an algebraically closed field and~$\ell$ a
prime number different from the characteristic of~$k$. We will define:

\begin{itemize}
\item A non-negative integer~$c(A)$ for any object~$A$ of the bounded
  derived category of constructible sheaves~$\Der(\Aa^n,\Ql)$
  (Definitions~\ref{def-complexity} and~\ref{def-ca}). Letting~$u$
  denote the open immersion of~$\Aa^n$ in~$\Pp^n$, the integer~$c(A)$
  is defined as the maximum over integers~$0\leq m\leq n$ of the sum
  of the Betti numbers of the pullback of the extension by zero~$u_!A$
  to a ``generic'' linear subspace of dimension~$m$ of~$\Pp^n$.
  
\item A non-negative integer~$c(f)$ for any morphism
  $f\colon \Aa^n\to \Aa^m$ (Definition~\ref{def-cuv}). In general, this
  is also defined in terms of sums of Betti numbers, but admits in the
  case at hand a completely explicit bound that only involves $n$, $m$
  and the degrees of the polynomials defining~$f$ (Proposition~\ref{pr-explicit-uniform}).
\end{itemize}

We will then prove the following result (Theorems~\ref{pushpull} and~\ref{theo-vancycles}). In the statement, $\dual(A)$
denotes the Verdier dual of~$A$ and all functors and operations are
considered in the derived sense (so, e.g., we write~$f_*$ instead of
$Rf_*$).

\begin{theo}\label{th-sample}
  For any~$f\colon \Aa^n\to\Aa^m$, for any objects~$A$ and~$B$
  of~$\Der(\Aa^n,\Ql)$, and any object~$C$ of~$\Der(\Aa^m,\Ql)$, the
  following estimates hold:
  \begin{gather*}
    c(\dual(A))\ll c(A),
    \\
    c(A\otimes B)\ll c(A)c(B),\quad\quad c(\rhom(A,B)) \ll c(A)c(B),
    \\
    c ( f^* C) \ll c(f) c(C),\quad\quad c( f^! C) \ll c(f) c(C),
    \\
    c(f_! A) \ll c(f) c(A), \quad\quad c (f_* A) \ll c(f) c(A).
  \end{gather*}
  In all these estimates, the implied constants depend only on~$(n,m)$
  and are effective.
  \par
  Moreover, let~$S$ be the spectrum of a strictly Henselian discrete
  valuation ring with special point~$\sigma$ and generic point~$\eta$,
  and let $\Psi$ and~$\Phi$ denote the nearby and vanishing cycle
  functors from~$\Der(\Aa^n_S,\Ql)$ to~$\Der(\Aa^n_{\sigma}, \Ql)$.  For any
  object $A$ of $\Der(\Aa^n_S,\Ql)$, the following estimates hold: 
  \begin{align*}
    c(\Psi(A))&\ll c(A_\eta),\\
    c(\Phi(A))&\ll c(A_\eta)+c(A_\sigma).
  \end{align*}
\end{theo}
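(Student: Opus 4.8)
The proof strategy splits into two independent parts, one for each block of estimates. For the six-operations estimates, my plan is to reduce everything to the behavior of complexity under proper pushforward along a projection and under pullback, since the other operations (tensor, internal Hom, Verdier dual, exceptional functors) can be built from these together with base change and the fact that complexity controls sums of Betti numbers on all generic linear sections. More precisely: (i) prove the stability of complexity under Verdier duality first, using that duality commutes with restriction to smooth subvarieties up to shift and that $u_!$ and $j_*$ are exchanged by duality on $\mathbf{P}^n$; (ii) handle $f^*C$ and $f^!C$ by factoring $f$ as a graph embedding followed by a linear projection, controlling $c$ of the graph embedding via $c(f)$ and reducing the projection case to the Künneth-type estimate for external tensor products combined with the pullback-to-linear-section definition; (iii) deduce $f_!A$ from $f^*$ and duality via $f_! = \dual f_* \dual$, so the real content is bounding $c(f_*A)$; (iv) bound $c(f_*A)$ by stratifying the base, using proper base change to control the generic linear sections of $f_*A$ by the cohomology of fibers, and invoking the uniform Betti-number bounds that $c(A)$ and $c(f)$ provide. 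The main obstacle here is step (iv): the generic linear sections of $f_*A$ are not simply pushforwards of sections of $A$, so one must carefully set up a parameter space of linear subspaces, apply generic base change (e.g. in the style of Deligne or the uniform version available from the affine Lefschetz bounds), and extract an effective bound depending only on $(n,m)$.

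For the nearby and vanishing cycle estimates, the plan is to use the classical identification of nearby cycles with a pushforward from the ``oriented product'' or, more concretely, to realize $\Psi(A)$ and $\Phi(A)$ via the formula relating them to the cohomology of the Milnor tube or to a tame/wild decomposition. The cleanest route for a quantitative statement is to invoke the theorem (going back to Deligne, in the form used by Beilinson, or via the specialization of the derived pullback along $\operatorname{Spec}$ of a henselization) expressing $\Psi A$ in terms of $i^* j_* \tilde{A}$ on a suitable cover, so that the bound on $c(\Psi A)$ follows from the already-established bounds for $j_*$, $i^*$, and pullback along the normalization map (whose complexity is controlled because it is finite of bounded degree once one has reduced to a situation with bounded ramification — and ramification is itself controlled by $c(A_\eta)$ through the Swan conductor, which appears in the characteristic-cycle bounds underlying the definition of $c$). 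The vanishing cycle $\Phi A$ then sits in the distinguished triangle $A_\sigma \to \Psi A \to \Phi A \to$, so $c(\Phi A) \ll c(\Psi A) + c(A_\sigma) \ll c(A_\eta) + c(A_\sigma)$ by the triangle inequality for complexity (sums of Betti numbers are subadditive in distinguished triangles, up to the constant built into the definition via linear sections).

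The key input making all of this uniform in the characteristic is that $c(A)$ was defined precisely so as to bound the swan conductors and characteristic-cycle data, and these local invariants behave well under the six operations with bounds independent of $p$; concretely, the stratification and fiber-dimension counts that enter the effective Betti-number estimates (via an affine Lefschetz argument, or via the bound $\sum_i \dim \mathrm{H}^i_c \ll$ (geometric data) that holds uniformly) never see $p$ except through the tame/wild dichotomy already encoded in $c$. I would therefore organize the write-up so that the genuinely new estimate — the one for $c(f_*A)$ on a projection — is proved once, carefully, with explicit dependence on $(n,m)$, and every other line of the theorem is a formal consequence via duality, factorization of morphisms, distinguished triangles, and the product formula. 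I expect the bound for $c(f_* A)$, and within it the handling of how generic linear sections of the target interact with the pushforward, to be the single hardest point; the nearby/vanishing cycle estimates, by contrast, should reduce almost mechanically to it once the right comparison with $j_*$ along a ramified cover is set up.
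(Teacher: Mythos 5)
There is a genuine gap at the heart of your plan: you never identify, let alone propose a proof of, the one result on which the whole theorem rests, namely the tensor-product bound $\sum_i h^i(\Pp^n, A\otimes B)\ll c(A)c(B)$ (Theorem~\ref{main}). You invoke ``the K\"unneth-type estimate for external tensor products combined with the pullback-to-linear-section definition'' as if it were an available tool, but in the paper this is the central theorem: its proof requires the characteristic-cycle index formula for tensor products (Theorem~\ref{geneufor}), the construction of a basis of strong test sheaves, a \emph{generic-position} version of the bound (Corollary~\ref{genericbettibound}), and then a delicate double induction degenerating a generic element of $\GL_{n+1}$ to the identity one diagonal entry at a time, controlled by an Euler--Poincar\'e argument on $\Gg_m$ (Lemma~\ref{gmformula}). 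Once this is in hand, the six-operations estimates are formal: because $c_u(A)=c(u_!A)$ and the generic linear sections in the definition are themselves of the form $-\otimes l_{\mtrx{a}*}\Ql$ via the projection formula, the bound for $f_!$ drops out of the projection formula, the Leray spectral sequence and Theorem~\ref{main}; $f_*$ then follows by duality. You propose the opposite order --- attack $c(f_*A)$ directly by stratification and generic base change, and deduce $f_!$ by duality --- and you correctly sense that this is hard; it is in fact the wrong direction. Generic base change as a black box yields no effective constants depending only on $(n,m)$, and the paper's own quantitative generic base change (Theorem~\ref{thm-gen-bc-conductors}) is \emph{deduced from} the six-functor continuity, so your step (iv) would be circular or at best require reproving the tensor theorem in disguise.

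The nearby-cycles part also does not go through as described. Realizing $\Psi A$ as $i^*j_*\widetilde A$ over a cover and ``invoking the already-established bounds for $j_*$'' fails because the trait $S$ is not a quasi-projective variety over a field, so Theorem~\ref{pushpull} does not apply to the specialization morphism, and the Swan conductor is not part of the definition of complexity in dimension $>1$, so it cannot be used to control ramification of the cover. The paper instead compares $\Psi_f$ computed over $S$ with $\Psi_{f'}$ computed over the strict localization $S'$ of $M^{m+1,n+1}_S$ at a geometric generic point, using the identity $\Psi_{s'\circ f'}(B)=\Psi_{f'}(B)^P$ for a pro-$p$ group $P$ (so that taking invariants is exact and only decreases Betti numbers), together with the smooth and proper base-change compatibilities of $\Psi$. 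Your deduction of the $\Phi$ estimate from the $\Psi$ estimate via the triangle $A_\sigma\to\Psi A\to\Phi A$ is correct and is exactly what the paper does (and the subadditivity of $c$ in distinguished triangles is exact, with no extra constant).
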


Over finite fields, the conjunction of the Riemann Hypothesis and
the theory of complexity yields the following
``quasi-orthogonality'' statement (Theorem~\ref{th-rh}):

\begin{theo}\label{th-sample-Riemann} 
  Suppose that~$k$ is the algebraic closure of a finite
  field~$\Ff$, and let~$A$ and~$B$ be irreducible perverse sheaves on~$\Aa^n$
  defined over~$\Ff$ that are pure of weight zero, with trace
  functions~$t_A$ and~$t_B$ respectively. Then the estimate
  $$
  \sum_{x\in \Ff^n} |t_A(x)|^2 =1+O(c(A)^2|\Ff|^{-1/2})
  $$
  holds, and the estimate
  $$
  \sum_{x\in \Ff^n} t_A(x)\overline{t_B(x)}\ll c(A)c(B)|\Ff|^{-1/2}
  $$
  holds if~$A$ and~$B$ are not geometrically isomorphic. In both
  estimates, the implied constants depend only on~$n$ and are effective.
\end{theo}

\begin{remark}\label{rem:weightsforperverse}
  Readers from analytic number theory who are unfamiliar with perverse
  sheaves may be surprised by the lack of the averaging factor $1/|\Ff^n|$ in the writing of these sums,
  in comparison with statements like those in~\cite{fkm2}. This is due
  to the normalization inherent to the definition of weights in this
  setting: for instance, for a perverse sheaf $M$ on $\Aa^1$ that is a
  single lisse sheaf sitting in degree $-1$, being pure of weight zero
  means that the eigenvalues of Frobenius at all points have modulus
  $|\Ff|^{-1/2}$ (and not $1$, as is the case for a lisse sheaf that is
  pointwise pure of weight zero).
\end{remark}

We highlight one first rather simple application (see part (1) of
Corollary~\ref{cor-katz}), which gives a positive answer to a question
of Katz~\cite[p.\,8 and 12.6.6]{mmp}.

\begin{theo}\label{th-sample-2}
  Let~$n \geq 1$ and $d \geq 1$ be integers. Let $D(n,d)$ be the space
  of Deligne polynomials of degree~$d$ in~$n$ variables, i.e. those
  whose homogeneous part of degree~$d$ defines a non-singular
  hypersurface in $\Pp^{n-1}$. For each~$f\in D(n,d)(\Ff_p)$,~set
  $$
  S(f;p)=\frac{1}{p^{n/2}}\sum_{x\in\Ff_p^n}e\Bigl(\frac{f(x)}{p}\Bigr),
  $$
  where $e(z)=\exp(2i\pi z)$ for $z\in\Cc$.  The families
  $(S(f,p))_{f\in D(n,d)(\Ff_p)}$ become equidistributed as
  $p\to+\infty$ with respect to the image under the trace of the
  probability Haar measure on the unitary group $\Un_{(d-1)^n}(\Cc)$.
\end{theo}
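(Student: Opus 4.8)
The strategy is the standard Deligne--Katz equidistribution machine, with the new complexity bounds supplying the uniformity in $p$. Fix $d$ and $n$ and let $V = D(n,d)$, an open subvariety of the affine space of all degree-$\leq d$ polynomials in $n$ variables. Over $V$ one has a family of exponential sums, and the first step is to realize it sheaf-theoretically: let $\pi\colon \Aa^n\times V\to V$ be the projection, let $f\colon \Aa^n\times V\to\Aa^1$ be the universal polynomial, and set $\mcL=\mcL_\psi(f)$ for the Artin--Schreier sheaf attached to an additive character $\psi$ of $\Ff_p$. Then $K=\pi_!\mcL[n](n/2)$ is a complex on $V$ whose trace function at $f\in V(\Ff_p)$ is (up to the normalization built into $S(f;p)$) the sum $S(f;p)$. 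By the work of Katz (as in~\cite{mmp}), for $f$ a Deligne polynomial the sum $\pi_!\mcL$ is concentrated in a single cohomological degree and is pure, so after restricting to a dense open $V^\circ\subseteq V$ the complex $K$ becomes a single lisse sheaf $\mathcal F$ on $V^\circ$, pointwise pure of weight zero, of rank $(d-1)^n$, and pure as a sheaf. The trace function of $\mathcal F$ at $f$ is $S(f;p)$.

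The second step is to identify the geometric monodromy group of $\mathcal F$. This is precisely the content of Katz's analysis in~\cite{mmp}: for $d\geq 3$ (and, with minor modifications, the small cases) the group $G_{\mathrm{geom}}$ of $\mathcal F$ is the full unitary group $\Un_{(d-1)^n}$ — more precisely, $\mathcal F$ carries no autoduality and its geometric monodromy is as large as possible, so the Zariski closure is $\GL_{(d-1)^n}$ and, after the weight-zero normalization, the relevant compact form governing equidistribution is $\Un_{(d-1)^n}(\Cc)$. I will take this group-theoretic input as given from Katz's work; it is independent of $p$ for $p$ large. Granting it, Deligne's equidistribution theorem says that the Frobenius conjugacy classes $\{\Theta_f : f\in V^\circ(\Ff_p)\}$ become equidistributed in the space of conjugacy classes of $\Un_{(d-1)^n}(\Cc)$ as $p\to\infty$, with a quantitative error governed by the Weil bounds for the sums $\sum_{f\in V^\circ(\Ff_p)}\Tr(\rho(\Theta_f))$ over nontrivial irreducible representations $\rho$.

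The third step — where quantitative sheaf theory enters — is to make these Weil bounds uniform in $p$. For a nontrivial irreducible representation $\rho$ of $\Un_{(d-1)^n}$, the sum $\sum_{f\in V^\circ(\Ff_p)}\Tr(\rho(\Theta_f))$ is, up to normalization, the sum of the trace function of the sheaf $\rho_*\mathcal F$ obtained by pushing $\mathcal F$ through $\rho$; since $\rho$ is nontrivial and the monodromy is all of the unitary group, $\rho_*\mathcal F$ has no trivial subquotient, so $\rmH^*_c(V^\circ_{\bar\Ff_p},\rho_*\mathcal F)$ vanishes in the top degree and the Grothendieck--Lefschetz formula plus the Riemann Hypothesis give a bound of the shape $(\dim\rmH^*_c(V^\circ,\rho_*\mathcal F))\, p^{\dim V^\circ - 1/2}$. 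The point is that $\rho_*\mathcal F$ is built from $\mathcal F$ by tensor operations (the representation $\rho$ appears inside a tensor power $\mathcal F^{\otimes a}\otimes(\mathcal F^\vee)^{\otimes b}$ with $a+b$ depending only on $d,n$, not on $p$), and $\mathcal F$ itself is built from the Artin--Schreier sheaf by the six operations $\pi_!$, pullback, and restriction to an open. Hence Theorem~\ref{th-sample} (together with the bookkeeping needed to pass from $\Aa^n$ to $V^\circ$, which is again a quasi-projective variety, and the explicit complexity bound for the morphism $f$, which involves only $d$, $n$) bounds $c(\rho_*\mathcal F)$ by a constant depending only on $d,n,\rho$ — in particular independent of $p$. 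Feeding this into the bound for the sum of Betti numbers of $\rho_*\mathcal F$ on $V^\circ$ yields $\sum_{f\in V^\circ(\Ff_p)}\Tr(\rho(\Theta_f)) \ll_{d,n,\rho} p^{\dim V^\circ - 1/2}$. Summing over $\rho$ with a test function, and accounting for the negligible contribution of $f\in V(\Ff_p)\setminus V^\circ(\Ff_p)$ (a set of size $O(p^{\dim V-1})$, each sum trivially bounded by $p^{n/2}$), proves equidistribution.

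The main obstacle is the group-theoretic computation of $G_{\mathrm{geom}}$ and the verification that $\mathcal F$ has no autoduality — but that is exactly what Katz established in~\cite{mmp}, so here it is an input rather than something to be proved. Within the scope of this paper, the real work is the second and third paragraphs fused together: organizing the family as a complex built by the six operations from elementary sheaves so that Theorem~\ref{th-sample} applies uniformly, and checking that the passage to the open set $V^\circ$ and to the pushforward sheaves $\rho_*\mathcal F$ does not destroy the complexity bounds. That bookkeeping is routine given Theorem~\ref{th-sample} and the explicit bound in Proposition~\ref{pr-explicit-uniform}.
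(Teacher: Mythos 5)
Your overall architecture --- realize the family as $p_{2,!}\mcL_{\psi(f(x))}[n](n/2)$ on $D(n,d)$, check it is a lisse rank-$(d-1)^n$ sheaf pure of weight zero, bound its complexity uniformly in $p$ via the six-operations formalism and Proposition~\ref{pr-explicit-uniform}, and then run the Weyl criterion with the Riemann Hypothesis --- is exactly the paper's strategy (Corollary~\ref{cor-katz}\,(1), via the mechanism of Theorem~\ref{th-katz}). But there is a genuine gap in your second step, and it sits at the crux of the equidistribution statement.

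You assert that the geometric monodromy group of $\mathcal F=\mathcal D_p$ is all of $\GL_{(d-1)^n}$, "independent of $p$ for $p$ large", and you use this to conclude that $\rho(\mathcal F)$ has no trivial subquotient for every non-trivial irreducible $\rho$ of $\Un_{(d-1)^n}$. That is not what Katz proved, and it is false: by \cite[Th.\,6.8.34]{mmp}, for $p\geq 7$ with $p\nmid d(d-1)$ the geometric monodromy group is $G_{2p}=\{g\in\GL_N\mid \det(g)^{2p}=1\}$ (the determinant is built from Gauss sums and is geometrically of finite order), a proper subgroup which \emph{depends on $p$}. So this family does not literally fit the "fixed monodromy group" template, and your claimed vanishing of top-degree cohomology for $\rho(\mathcal F)$ does not follow from what you have written. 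The missing argument, which the paper supplies, is a representation-theoretic one: for a fixed non-trivial irreducible $\rho$ of $\Un_N$ (equivalently of $\GL_N$), the restriction $\rho|_{G_{2p}}$ contains the trivial representation with multiplicity equal, by Frobenius reciprocity, to the number of $h\in\Zz$ with $\rho\cong\det(\cdot)^{2ph}$; since $\rho$ is non-trivial this forces $h\neq 0$, which is impossible once $p$ is large relative to $\rho$. Only then do the Weyl sums $\sum_f\Tr(\rho(\theta_p(f)))$ decay like $p^{\dim D(n,d)-1/2}$, with the implied constant controlled by $c(\rho(\mathcal D_p))\ll_{n,d,\rho}1$ exactly as you describe. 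Two smaller points: the sheaf is lisse on all of $D(n,d)$ by Deligne's work, so no shrinking to a dense open $V^\circ$ is needed (if you do shrink, you must also bound the degree of the complement uniformly in $p$, e.g.\ via Theorem~\ref{singloc}); and the degenerate cases $d=1,2$, where $(d-1)^n\in\{0,1\}$ and Katz's monodromy computation does not apply, need the separate elementary treatment the paper gives.
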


We now comment on the approach that we use.  Previous Betti number bounds,
such as those of Bombieri~\cite{bombieribetti},
Adolphson--Sperber~\cite{ASbetti} and Katz~\cite{katzbetti}, focused
primarily on bounding cohomology groups involving certain very explicit
sheaves, namely Artin\nobreakdash--Schreier and Kummer sheaves. It is
possible to apply these bounds to a sheaf cohomology problem involving,
say, higher-rank Kloosterman sheaves, but only after unraveling their definition to recast the problem entirely in terms of
Artin\nobreakdash--Schreier sheaves. When more complicated operations are
performed (for example, additive or multiplicative convolution, or Fourier
transform), this process of re-interpretation becomes exceedingly
cumbersome.

Our approach is instead built around the six functors formalism of
étale cohomology, and is closely related to the characteristic classes
constructed by T.\,Saito~\cite{saito1}.  We define the ``complexity'' of
an arbitrary bounded complex of constructible $\ell$\nobreakdash-adic sheaves on a
quasi\nobreakdash-projective variety, and prove that it satisfies
essentially all desired properties suggested by the case of curves and
the requirements of applications to analytic number theory. In
particular, the complexity of common sheaves such as Artin-Schreier,
Kummer and Kloosterman sheaves can be easily calculated, and it turns
out to be bounded independently of the characteristic of the
underlying field, which is the key uniformity property that we seek.

\begin{remark}
  In fact, the complexity of a sheaf on an algebraic variety will also
  depend on a chosen quasi-projective embedding of the variety; this
  seems unavoidable to have a theory with good properties, as we
  explain in Example~\ref{ex-counter}.
\end{remark}

\begin{remark}
  The definition of complexity and the arguments of this paper apply,
  almost without modification, to the derived category of sheaves with
  coefficients in~$\overline{\mathbf F}_\ell$ instead of~$\Ql$. Neither
  version is stronger. Although the Betti numbers of a~$\Ql$\nobreakdash-sheaf are bounded
  by the Betti numbers of the reduction mod $\ell$ of an integral model
  of it, this inequality does not help us transfer statements of the
  form ``a bound for the Betti numbers of this sheaf implies a bound for
  the Betti numbers of that sheaf'' in either direction. We have stated and worked out in detail the~$\Ql$\nobreakdash-version as it is the most directly relevant for applications to analytic number theory, but the $\overline{\mathbf F}_\ell$-version may also be useful for other purposes.
\end{remark}

We believe that this framework has a number of good properties, among which:
\begin{enumerate}
\item Since the deeper aspects of étale cohomology are built primarily
  around the six functors perspective, rather than the cohomology of
  varieties with coefficients in some simple explicit sheaves, this
  framework behaves much better in arguments where sophisticated
  techniques of étale cohomology are used.
\item Many applications of exponential sum bounds from \'{e}tale
  cohomology revolve around exponential sums that are produced from
  simpler ones by applying analytic tools like changes of variables,
  summation over some variables, Fourier transform, etc. Through the
  ``function-sheaf dictionary'', each of these usually corresponds to
  an operation on the sheaf side, which is constructed by means of the
  six functors (e.g., summation corresponds to direct image with compact support,
  etc).  Since we control the growth of the complexity under the six
  functors, we obtain automatically a good control of the estimates in
  such operations.
\end{enumerate}

As we will see, almost all of the bounds for the complexity of the
output sheaf of some cohomology operation are linear in the complexity
of the input sheaf. This is not always needed for
applications, but shows that the theory has good structural properties.

\subsection*{An interpretation}

As suggested by Fouvry, Kowalski and Michel in the special case of
curves, the ``quantitative sheaf theory'' that is developed in this
paper can be thought of as defining the complexity of 
$\ell$-adic sheaves in such a way that most (if not all) usual
operations in étale cohomology are ``continuous'', in the sense that
applying the operation to a sheaf with a given complexity will
lead to another one with complexity bounded only in terms
of the initial one. Thus, we think of the complexity as being similar to a
(semi)-norm on a topological vector space, with functors on categories
of sheaves playing the role of (often linear) maps between vector
spaces. For instance, the ``continuity'' of
Deligne's~$\ell$\nobreakdash-adic Fourier transform (which was first observed in
dimension one in~\cite[Prop.\,8.2]{fkm1}) turns out to be one of the
most essential features of applications of étale cohomology to
analytic number theory.

\subsection*{Outline of the paper.}

Although the complexity is defined in terms of sums of Betti numbers, the proof of its main properties deeply relies on T.\,Saito's
construction~\cite{saito1} of the characteristic cycle of $\ell$-adic
complexes. We survey what we require from this theory in
Section~\ref{sec-saito}, and prove a small complement on
characteristic cycles of tensor products (Theorem~\ref{geneufor}). In Section~\ref{sec-generic}, we formally define the complexity on projective space, we establish a few simple
lemmas concerning ``generic'' injective linear maps, and most
importantly we connect this approach with the characteristic cycle (Proposition~\ref{bilinearbettibound}). Section~\ref{sec-test} is of
technical nature: we define and prove the existence of certain objects
called ``test sheaves'' that will ultimately lead to a comparison of
the complexity with a norm of the characteristic
cycle. Section~\ref{sec-tensor} uses these tools to establish the
first fundamental result, namely a bilinear bound for the complexity
of the tensor product (Theorem~\ref{main}). Then
Section~\ref{sec-6ops} can rather quickly exploit the formalism of
étale cohomology to establish the general version of
Theorem~\ref{th-sample}, namely Theorems~\ref{pushpull} and~\ref{theo-vancycles}; later
subsections derive various other ``continuity'' properties. Finally,
Section~\ref{creation} gives some fundamental examples (such as
Artin--Schreier and Kummer sheaves) and summarizes a few direct
applications (including forms of the Riemann Hypothesis, such as Theorem~\ref{th-sample-Riemann}, the finiteness statement of
Corollary~\ref{cor-finiteness}, and a form of Deligne's
equidistribution theorem, from which Theorem~\ref{th-sample-2} folllows). In the concluding Section~\ref{sec-effective},  we explain how all the basic estimates can be stated with explicit constants. 

\subsection*{Notation and conventions}

\subsubsection*{Algebraic geometry}
We fix throughout a prime number $\ell$ and we denote by $k$ a field, algebraically closed unless otherwise specified, in which $\ell$ is invertible. 

By an \emph{algebraic variety} over a (not necessarily algebraically
closed) field~$k$, we mean a reduced and separated scheme of finite
type over the spectrum of~$k$.

By a \emph{geometric generic point} of an irreducible variety $X$ over a separably closed
field~$k'$, we mean, as is customary in the theory of \'{e}tale cohomology, a map $\Spec k' \to X$ such that the image of the underlying set-theoretic map consists of the generic point of $X$.

Let~$X$ be a scheme of finite type over~$k$. We denote by~$\Der(X)$
the bounded derived category of constructible complexes of $\Ql$-sheaves on~$X$
(see, e.g.,~\hbox{\cite[II.5]{KW_weilconj}}).
We will usually write distinguished triangles in this category simply as
\[
A\longrightarrow B\longrightarrow C.
\]

For any object~$K$ of~$\Der(X)$ and any integer~$i\in\Zz$, we denote
by~$\rmH^i(X,K)$ and $\rmH^i_c(X,K)$ the étale
cohomology and the étale cohomology with compact support groups of~$X$ with coefficients in~$K$, and we write
\[
h^i(X,K)=\dim \rmH^i(X,K),\quad\quad h^i_c(X,K)=\dim \rmH^i_c(X,K)
\]
for the corresponding Betti numbers. 

When applied to objects of~$\Der(X)$, the symbols $f_!$ and $f_*$
always refer to the derived functors; the tensor product and the hom
functor of objects of $\Der(X)$ are also always derived functors. We
denote by $\dual(A)$ the Verdier dual of an object $A$ of~$\Der(X)$.

Given an algebraic variety~$X$ over~$k$ and objects $A$ and $B$ of
$\Der(X)$, the \emph{shriek tensor product} of~$A$ and~$B$ is the
object
\[
A\otimes_! B=\Delta^!(A\boxtimes B)
\]
of $\Der(X)$, where $\Delta\colon X\to X\times X$ denotes the diagonal
embedding. It is related to the usual tensor product by the duality 
\[
\dual(A\otimes_! B)=\dual(A) \otimes \dual(B). 
\]

We often use the projection formula in the derived category: for a morphism
$f\colon X\to Y$ of algebraic varieties over~$k$, and for objects~$A$ of~$\Der(X)$ and~$B$
of~$\Der(Y)$, there is a canonical isomorphism
\begin{displaymath}
  f_!(A\otimes f^* B)\simeq f_!A\otimes B
\end{displaymath} in the category~$\Der(Y)$ (see, e.g., \cite[Th.\,7.4.7\,(i)]{fu_etale_co}).

We also recall the excision triangle: let
$i\colon Z\to X$ be a closed immersion and~\hbox{$j\colon U\to X$}
the complementary open immersion, all varieties being defined over~$k$. For any
object~$A$ of~$\Der(X)$ and any morphism~$f\colon X\to Y$ over~$k$, there is a distinguished triangle
\begin{displaymath}
  (f\circ j)_!j^*A\longrightarrow f_!A\longrightarrow (f\circ i)_!i^*A
\end{displaymath}
in the category~$\Der(Y)$ (see, e.g.,~\cite[Th.\,7.4.4\,(iii)]{fu_etale_co}).

\subsubsection*{Finite fields}

In some sections (e.g., Sections~\ref{sec-indep} and~\ref{sec-rh}), we
will work over finite fields.  We usually denote by~$\Ff$ such a 
field, which is always assumed to have characteristic different from~$\ell$.
For integers $n\geq 1$, we then denote by~$\Ff_n$ the extension
of~$\Ff$ of degree~$n$ inside some fixed algebraic closure of~$\Ff$ (which
often will be the field~$k$).

Let~$X$ be an algebraic variety over~$\Ff$.  For any object~$A$
of~$\Der(X)$ and any finite extension~$\Ff_n$ of~$\Ff$, we denote by
\[
t_A(\cdot;\Ff_n) \colon X(\Ff_n)\longrightarrow \Ql
\]
the \emph{trace function} of~$A$ on~$\Ff_n$. We refer the reader to~\cite[\S
1]{laumon} for the basic formalism of trace functions in this
generality. We will also write $t_A(x)=t_A(x;\Ff)$ for~\hbox{$x\in X(\Ff)$}.

In all arguments involving the formalism of weights (in the sense of
Deligne), we will fix an isomorphism \hbox{$\iota\colon \Ql\to \Cc$}
and use it to identify both fields, viewing in particular the trace
functions as taking complex values. Weights are then considered to be
defined only with respect to~$\iota$, e.g. we write ``pure of weight
zero'' instead of ``$\iota$-pure of weight zero''.

For a real number $w$ and an element $|\Ff|^{-w}$ in $\Ql$
corresponding to a choice of~$|\Ff|^{-w}$ in~$\Cc$ through the
isomorphism $\iota$, we denote by $\Ql(w)$ the pullback to $X$ of the
rank\nobreakdash-one~$\ell$\nobreakdash-adic sheaf on $\Spec(\Ff)$ on
which Frobenius acts through multiplication by $|\Ff|^{-w}$.  This
allows one to define twists $A(w)=A \otimes \Ql(w)$ for any object~$A$
of $\Der(X)$, and hence to reduce questions about pure sheaves of some
weight to those of weight zero.

\subsubsection*{The Euler--Poincaré characteristic}
We recall the Euler--Poincaré characteristic formula for a perverse
sheaf on a smooth curve (see, e.g.,~\cite[Th.\,2.2.1.2]{laumon} for the
projective case, from which the general case below follows by considering the
extension by zero to the compactification).  Let $k$ be an algebraically
closed field of characteristic $p>0$, let~$C$ be a smooth curve
over~$k$, and denote by~$\overline{C}$ the smooth projective
compactification of~$C$. Given a perverse sheaf~$A$ on~$C$, we denote
by
$$
\rank(A) = \dim \mathcal H^{-1}(A)_\eta
$$
the \emph{generic rank} of $A$ and, for a closed point $x$ of $C$, we write
\begin{align*}
  \drp_x (A)&=\rank(A) - \dim \mathcal H^{-1}(A)_x
  \\
  \jump_x (A)&=\dim \mathcal H^0(A)_x.
\end{align*}
For a closed point $x$ of $\overline{C}$, we denote by $\swan_x(A)$ the
\emph{Swan conductor} at $x$ of the cohomology sheaf $\mathcal H^{-1}(A)$. 
We further set
$$
\loc(A)= \sum_{x \in C} \left( \drp_x (A)+ \jump_x (A) + \swan_x(A)
\right) + \sum_{x \in \overline{C}- C } \swan_x(A), 
$$
where both sums run over closed points. 

With the above notation, the Grothendieck--Ogg--Shafarevitch formula for the
Euler\nobreakdash--Poincaré characteristic with compact support of $A$ takes the form
\begin{displaymath} 
\chi_c(C,A)=\rank(A)\chi_c(C,\Ql[1])-\loc(A).
\end{displaymath} The same result holds for the usual Euler--Poincaré characteristic $ \chi(C,A)$ since both are in fact equal for any constructible sheaf on any variety by a theorem of Laumon \cite{LaumonEuler}. 

If~$\mathcal{F}$ is a middle-extension sheaf on~$C$ (by which we mean that there exists a non-empty open subset $j \colon U \hookrightarrow C$ such that $\mathcal{F}$ is lisse on $U$ and the adjunction morphism is an isomorphism $\mathcal{F}\simeq j_\ast j^\ast \mathcal{F}$), then
$A=\mathcal{F}[1]$ is a perverse sheaf on~$C$ satisfying
$\mathcal{H}^{-1}(A)=\mathcal{F}$. We use the notation
$\rank(\mathcal{F})$, $\drp_x(\mathcal{F})$, $\swan_x(\mathcal{F})$
and $\loc(\mathcal{F})$ accordingly; note that in this
case~$\jump_x(A)=0$ holds for all~$x$.

\subsubsection*{Asymptotic notation}

For complex-valued functions~$f$ and~$g$ defined on a set (or on 
objects of a category, in which case the values of~$f$ and~$g$ are
assumed to only depend on their isomorphism classes), the notation
$f\ll g$ and $f=O(g)$ are synonymous; they mean that there exists a
real number $c\geq 0$ such that, for all $x$ in the relevant set (or
all objects in the category), the inequality $|f(x)|\leq cg(x)$ holds. We call a
value of~$c$ an ``implied constant'', and we may point out its
(in)dependency on some additional parameters. We also write
$f \asymp g$ whenever both $f \ll g$ and $g \ll f$ hold.


\subsection*{Remarks on the text.}

All the important ideas of this paper are solely due to W.\,Sawin, and
were worked out in 2015 and 2016 while he was an ETH--ITS Junior
Fellow.  The current text was written by A.\,Forey, J.\,Fresán and
E.\,Kowalski, based on the original draft by W.\,Sawin, in view of the
applications to equidistribution results for exponential sums on
commutative algebraic groups in their work~\cite{ffk}. The
quantitative form of the generic base change theorem is due to
A.\,Forey.

\subsection*{Acknowledgements.} We warmly thank the anonymous referees for their thorough and friendly reading of a first version of this paper. Their many suggestions greatly helped us improve the presentation and remove a number of inaccuracies.


\section{Characteristic cycles}\label{sec-saito}

In this section, we recall some properties of the characteristic cycles and the characteristic classes of complexes of étale sheaves as defined by Beilinson \cite{beil_SS} and T. Saito~\cite{saito1}. We also prove a small complement (Theorem \ref{geneufor}) regarding the compatibility of characteristic cycles with tensor products. 

Let $X$ be a smooth scheme over a perfect field $k$. In \cite{saito1},
characteristic cycles are defined for complexes of sheaves of
$\Lambda$-modules on $X$, where $\Lambda$ is a finite local ring whose residue characteristic $\ell$ is invertible in $k$ (e.g., $\Lambda=\Zz/\ell^n\Zz$). It was observed by
Umezaki, Yang and Zhao~\cite[Section 5]{CC-epsilon} that the whole
theory can be readily adapted to the case of $\Ql$-coefficients. In
what follows, we implicitly rely on \cite{CC-epsilon} in order to
apply Saito's results to $\Lambda=\Ql$.

Assume $X$ has pure dimension $n$. The \emph{characteristic
  cycle} $\CC(A)$ of an object~$A$ of~$\Der(X)$ is an algebraic cycle on
the cotangent bundle $T^*X$ of $X$ of the form
\[
\CC(A)=\sum_i m_i C_i, 
\] 
where $m_i$ is an integer and $C_i$ is a closed conical (i.e. stable
under the natural action of~$\Gg_m$) subset of $T^*X$ of dimension
$n$. We refer the reader to \cite[Def.\,5.10]{saito1} for the definition
of the multiplicities~$m_i$, and simply indicate some of the relevant
properties of the characteristic cycle. 

Another invariant of $A$ is its \emph{singular support} $\SS(A)$, which
is a closed conical subset of~$T^* X$ of dimension $n$, containing the
support of the characteristic cycle, previously defined by Beilinson in
\cite{beil_SS}. Unlike what happens for the characteristic cycle, two
$\overline{\mathbf Z}_\ell$\nobreakdash-sheaves that are isomorphic
after tensoring with $\Ql$ may have distinct singular supports, so before stating results involving the singular support we will choose a $\overline{\mathbf Z}_\ell$-structure
$A_{\bar{\Zz}_{\ell}}$ (though the particular choice will matter little), and set
$$
\SS(A)=\SS(A_{\bar{\Zz}_{\ell}}\otimes\bar{\mathbf{F}}_{\ell}).
$$

If the object $A$ is perverse, then $\CC(A)$ is effective (i.e., we have
$m_i\geq 0$ for every $i$) by~\cite[Prop.\,5.14]{saito1}.

The main property of the characteristic cycle is an index formula (\cite[Th.\,7.13]{saito1}) according to which, if $X$ is a smooth
projective variety over an algebraically closed field~$k$, the
Euler--Poincaré characteristic of $A$ is given by the
intersection number
\begin{displaymath}
\chi(X,A)=\CC(A)\cdot [T^*_X X]. 
\end{displaymath}
In this formula, $T^*_X X$ denotes the zero section of $T^*X$, and the
intersection is well-defined since the support of $\CC(A)$ has pure
dimension $n$.

\begin{example} Let $X$ be a smooth projective curve and $\mathcal{F}$ a
  lisse sheaf on a dense open subset $j\colon U\to X$. The
  characteristic cycle of the perverse sheaf $A=j_!\mathcal{F}[1]$ is
  given by
\[
  \CC(A)=\rank \mathcal{F}\cdot[T^*_X X]+\sum_{x\in X\backslash
    U} (\rank \mathcal{F}+\swan_x A)[T^*_x X],
\]
where $T^*_x X$ stands for the conormal bundle of $x$ in $T^*X$.  In
this situation, the index formula amounts to the
Grothendieck--Ogg--Shafarevich formula.
\end{example}

Another deep result is the compatibility of the characteristic cycle
with pullbacks. Let~\hbox{$h \colon W\to X$} be a morphism between
smooth schemes of pure dimensions $m$ and~$n$ respectively and
$C\subset T^*X$ a closed conical subset. Let
\[
h^*C=W \times_X C \subset W \times_X T^*X
\]
be the pullback of $C$, and denote by $K \subset W \times_X T^*X$ the
preimage of the zero section~$T^*_W W$ by the map
$dh\colon W\times_X T^*X\to T^*W$. Following \cite[Def.\,3.3 and
7.1]{saito1}, we say that the map~$h$ is \emph{properly
  $C$\nobreakdash-transversal} if the intersection~$h^*C \cap K$ is a
subset of $W\times_X T^*_XX$ and each irreducible component of $h^*C$
has dimension $m$. This implies in particular that the restriction of
$dh$ to $h^*C$ is finite (by \cite[Lem.\,1.2.\,(ii)]{beil_SS} or
\hbox{\cite[Lem.\,3.1]{saito1}}). Hence, given a cycle~$Z$ supported on
the irreducible components of $C$, we can define its
pullback~$h^!Z \subset T^*W$ as $(-1)^{m-n}$ times the push-forward
along the finite map~$h^*C\to T^*W$ of the preimage of $Z$ in
$W\times_X T^*X$ (see \hbox{\cite[Def.\,7.1]{saito1}}).

The main theorem of Beilinson and Saito \cite[Th.\,7.6]{saito1} regarding pullbacks says that, if~$A$ is an object of $\Der(X, \overline{\mathbf Z}_\ell)$ and $h\colon W\to X$ is a properly $\SS(A)$-transversal morphism, then the following equality holds:
\[
\CC(h^*A)=h^!\CC(A). 
\]
For example, smooth morphisms are  properly $\SS(A)$-transversal by~\hbox{\cite[Lem.\,3.4]{saito1}}, and in that case the theorem essentially amounts to the statement that, for smooth varieties $X$ and $Y$ and objects $A$ of $\Der(X)$ and $B$ of $\Der(Y)$, the~equality  
\[
\CC(A\boxtimes B)=\CC(A)\times \CC(B),
\]
holds in $T^\ast(X \times Y)=T^\ast X \times T^\ast Y$, see \cite[Th.\,2.2]{saito2} for details. 

In what follows, vector bundles on schemes are viewed as schemes in the usual way. A sum of vector bundles $V + W$ is isomorphic to their fiber product over the base, whereas the product  $V \times W$ is their product as schemes, and hence is a vector bundle on the product of the bases. As for $T^*X$, a closed subset of a vector bundle is said to be \emph{conical} if it is invariant under the scaling action of $\Gg_m$. Given a vector bundle $V$ on a variety~$X$, we denote by $\overline{V} = \Pp (V + \mathcal O_X)$ the projective bundle compactifying $V$, which admits a decomposition $\overline{V} = V \cup \Pp(V)$.

The main result of this section is the following theorem concerning characteristic cycles of tensor products of sheaves. 

\begin{theo}\label{geneufor} Let $X$ be a smooth variety over $k$ of pure dimension $n$. Let $A$ and $B$ be objects of $\Der(X,\overline{\mathbf Z}_\ell)$. 
Consider the summation and the inclusion maps  
\[
s\colon T^*X + T^*X \to T^*X, \qquad i\colon  T^*X + T^*X \to T^*X \times T^*X.
\]
Assume that $\SS(A) \cap \SS(B)$ is supported on the zero section and that each irreducible component of $\SS(A) \times_X \SS(B)$ has dimension at most $n$. Then:
\begin{enumerate}
\item\label{char:item1} Each irreducible component of $\SS(A) \times_X \SS(B)$ has dimension equal to $n$.
\item\label{char:item2} The equality $i^*(\SS(A) \times \SS(B))=\SS(A) \times_X \SS(B)$ holds. 
\item\label{char:item3} The restriction of $s$ to $\SS(A) \times_{X} \SS(B)$ is a finite map to $ T^*X$.
\item\label{char:item4}  The following equality holds
 \begin{displaymath}
 \CC(A \otimes B)=(-1)^n s_* i^* \left(\CC(A) \times \CC(B)\right),
 \end{displaymath} where the pullback and the pushforward are taken in the sense of intersection theory. (By \eqref{char:item1} and \eqref{char:item2}, the inverse image $i^* \left(\SS(A) \times \SS(B)\right)$ has the expected dimension, so the intersection-theoretic pullback is well-defined.)
\item\label{char:item5} Assume that $X$ is projective. Let $\overline{\CC(A)}$ and $\overline{\CC(B)}$ be the closures of $\CC(A)$ and $\CC(B)$ inside the projective bundle $\overline{T^*X}$. Then the equality
\begin{displaymath}
\chi(X,A \otimes B) =(-1)^n \CC(A) \cdot \CC(B) =(-1)^n \overline{\CC(A)} \cdot \overline{\CC(B)}
\end{displaymath}
holds, where the dots denote intersection numbers of algebraic cycles.
\end{enumerate}
\end{theo}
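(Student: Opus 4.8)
The plan is to deduce everything from the pullback formula $\CC(h^*A) = h^! \CC(A)$ applied to the diagonal embedding $\Delta \colon X \to X \times X$, combined with the Künneth formula $\CC(A \boxtimes B) = \CC(A) \times \CC(B)$ on $X \times X$, since $A \otimes B = \Delta^*(A \boxtimes B)$. The cleanest route is therefore to verify that $\Delta$ is properly $\SS(A \boxtimes B)$-transversal under our hypotheses, and then to unwind the definition of $\Delta^!$ on cycles into the concrete recipe $(-1)^n s_* i^*$ appearing in \eqref{char:item4}. First I would recall that $\SS(A \boxtimes B) = \SS(A) \times \SS(B)$ inside $T^*X \times T^*X = T^*(X \times X)$, and that $\Delta^*(T^*(X\times X)) = (X\times X) \times_{X \times X} (T^*X \times T^*X)$ restricted along the diagonal is precisely $T^*X \times_X T^*X$, with the map $d\Delta \colon T^*X \times_X T^*X \to T^*X$ being exactly the summation map $s$ (up to the identification; the codifferential of $\Delta$ adds cotangent vectors). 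So the preimage $K$ of the zero section $T^*_X X$ under $d\Delta = s$ is the anti-diagonal $\{(\xi, -\xi)\}$, and the condition $\Delta^*\SS(A\boxtimes B) \cap K \subseteq$ zero section says exactly that whenever $(\xi,\eta) \in \SS(A) \times_X \SS(B)$ with $\xi + \eta = 0$, then $\xi = \eta = 0$ — equivalently $\SS(A) \cap \SS(B)$ is supported on the zero section, which is our hypothesis. Combined with the dimension hypothesis on the components of $\SS(A)\times_X \SS(B)$, this gives proper $\SS(A\boxtimes B)$-transversality of $\Delta$, which by \cite[Lem.\,3.1]{saito1} or \cite[Lem.\,1.2.(ii)]{beil_SS} forces every component of $\Delta^*\SS(A\boxtimes B) = \SS(A)\times_X\SS(B)$ to have dimension exactly $n = \dim X$ (proving \eqref{char:item1}) and makes $s|_{\SS(A)\times_X\SS(B)}$ finite onto its image in $T^*X$ (proving \eqref{char:item3}).

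For \eqref{char:item2} I would argue that $i \colon T^*X + T^*X \hookrightarrow T^*X \times T^*X$ is the inclusion of the fiber product over $X$, so $i^*(\SS(A)\times \SS(B))$ is by definition $(T^*X\times_X T^*X) \cap (\SS(A)\times\SS(B)) = \SS(A)\times_X\SS(B)$ as a set; the content is that this scheme-theoretic or cycle-theoretic intersection has no excess, which follows from \eqref{char:item1} since $i$ is a regular embedding of codimension $n$ and the intersection has the expected dimension. Then \eqref{char:item4} is the translation of $\CC(\Delta^*(A\boxtimes B)) = \Delta^!(\CC(A)\times\CC(B))$: by definition $\Delta^! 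Z = (-1)^{m-n'}$ (pushforward along the finite map $d\Delta$ of the preimage of $Z$ in $W \times_X T^*X$), with $m = \dim X = n$ and $n' = \dim(X\times X) = 2n$, giving sign $(-1)^{n - 2n} = (-1)^n$; the "preimage in $W\times_X T^*X$" is precisely the operation $i^*$ restricting the product cycle $\CC(A)\times\CC(B)$ to the fiber product, and pushforward along $d\Delta = s$ is $s_*$. One subtlety to address is the effectivity/well-definedness of the intersection-theoretic $i^*$: one can first reduce to $A, B$ perverse by dévissage (the characteristic cycle and singular support only depend on the perverse cohomology objects up to the relevant bookkeeping, or one argues triangle by triangle using additivity of $\CC$), where $\CC(A)$ and $\CC(B)$ are effective, though strictly this reduction is not needed once \eqref{char:item1}–\eqref{char:item2} guarantee the expected dimension.

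Finally, \eqref{char:item5}: the first equality $\chi(X, A\otimes B) = (-1)^n \CC(A)\cdot\CC(B)$ comes from applying the index formula $\chi(X,A\otimes B) = \CC(A\otimes B)\cdot[T^*_X X]$ (valid since $X$ is smooth projective) and then substituting \eqref{char:item4}: one has $\CC(A\otimes B)\cdot[T^*_XX] = (-1)^n\, s_* i^*(\CC(A)\times\CC(B)) \cdot [T^*_XX]$, and by the projection formula for the finite map $s$ this equals $(-1)^n\, i^*(\CC(A)\times\CC(B)) \cdot s^*[T^*_XX]$, where $s^*[T^*_XX]$ pulled back to the fiber product and intersected with $\CC(A)\times_X\CC(B)$ counts points $(\xi,\eta)$ with $\xi+\eta = 0$ — i.e. this computes the intersection number $\CC(A)\cdot\CC(B)$ computed via the antidiagonal, which by the standard fact that $(-1)^?$-corrected intersection with the zero section under summation matches intersection of the two cycles (this is the content of, e.g., \cite[Th.\,2.2]{saito2} or a direct excess-intersection computation) gives $\CC(A)\cdot\CC(B)$; I would phrase this as: intersecting $\CC(A)\times\CC(B)$ with $\Delta_{T^*X}$ (the image of the zero section under $(\xi)\mapsto(\xi,\xi)$, equivalently via $s$ and the antipode) inside $T^*X\times T^*X$ gives both $\CC(A)\cdot\CC(B)$ by definition of intersection product and the left-hand expression by the above manipulation. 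For the second equality, $\CC(A)\cdot\CC(B) = \overline{\CC(A)}\cdot\overline{\CC(B)}$ inside the projective bundle $\overline{T^*X}$: this holds because the closures meet only over $T^*X$ and not along the boundary $\Pp(T^*X)$ — indeed $\overline{\CC(A)} \cap \Pp(T^*X)$ lies in $\Pp(\SS(A))$ and similarly for $B$, and $\Pp(\SS(A)) \cap \Pp(\SS(B)) = \Pp(\SS(A)\cap\SS(B)) = \emptyset$ since $\SS(A)\cap\SS(B)$ is supported on the zero section which has empty projectivization; so the two intersection numbers agree by the excision/locality of intersection numbers (the relevant components all lie in the open $T^*X \subset \overline{T^*X}$).

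The main obstacle I anticipate is \eqref{char:item4}–\eqref{char:item5}: matching the sign and the precise shape $(-1)^n s_* i^*$ against Saito's definition of $h^!$ on cycles requires care, and the passage from "$\Delta^!\CC(A\boxtimes B)$" to "intersection number $\CC(A)\cdot\CC(B)$" in \eqref{char:item5} is a genuine (if standard) excess-intersection computation — essentially the statement that summing two Lagrangian-type cycles and intersecting with the zero section computes their mutual intersection number — which one must either cite from \cite{saito2} or verify directly using the deformation-to-the-normal-cone description of $\Delta^!$. Everything else (\eqref{char:item1}, \eqref{char:item2}, \eqref{char:item3}) is a fairly mechanical unwinding of proper $C$-transversality for the diagonal.
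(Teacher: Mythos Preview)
Your overall strategy is exactly the paper's: verify that the diagonal $\Delta$ is properly $\SS(A)\times\SS(B)$-transversal, apply Saito's pullback theorem to get \eqref{char:item4}, and then combine with the index formula for \eqref{char:item5}. Two points deserve correction, one small and one more substantive.

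First, in your treatment of \eqref{char:item1} the logic is slightly circular. Proper $C$-transversality \emph{requires} as part of its definition that every component of $h^*C$ have dimension $m$; it does not ``force'' this. So you cannot conclude $\dim = n$ from proper transversality --- you must first establish $\dim \geq n$ in order to verify proper transversality. The paper does this by the standard codimension inequality: $\SS(A)\times_X\SS(B)$ is the intersection inside the $3n$-dimensional $T^*X + T^*X$ of the preimages of $\SS(A)$ and $\SS(B)$ (each $2n$-dimensional), hence every component has dimension at least $n$. Combined with the hypothesis $\dim \leq n$, this gives \eqref{char:item1}, and \emph{then} proper transversality follows.

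Second, and more importantly, your argument for \eqref{char:item5} is vague precisely where the paper is sharp. After the projection formula you correctly arrive at $s^*[T^*_XX] \cdot i^*(\CC(A)\times\CC(B))$, i.e.\ the intersection along the antidiagonal $\{\xi+\eta=0\}$. You then appeal to a ``standard fact'' or an ``excess-intersection computation'' to identify this with $\CC(A)\cdot\CC(B)$. No excess intersection is needed: the paper's trick is simply that $\CC(B)$ is \emph{conical}, hence invariant under $\xi\mapsto -\xi$, so the antidiagonal intersection number equals the diagonal one, i.e.\ $d_*[T^*X]\cdot i^*(\CC(A)\times\CC(B))$ where $d\colon T^*X\to T^*X+T^*X$ is the diagonal. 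One more application of the projection formula (now for the closed immersion $d$, with $i\circ d$ the diagonal of $T^*X$) then gives $\CC(A)\cdot\CC(B)$ directly. Your boundary argument for $\overline{\CC(A)}\cdot\overline{\CC(B)} = \CC(A)\cdot\CC(B)$ is correct and is exactly Lemma~\ref{simpinteq} of the paper.
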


Note that proving properties \eqref{char:item1},  \eqref{char:item2} and  \eqref{char:item3} amounts to checking that the diagonal map is properly $\SS(A) \times \SS(B)$-transversal. The proof of the theorem relies on the following lemma: 

\begin{lem}\label{simpinteq} Let $X$ be a variety over $k$ and let $V$ be a vector bundle on $X$. Let $C_1,C_2 \subset V$ be conical subsets and $\overline{C_1}$ and $\overline{C_2}$ their closures inside the projective bundle $\overline{V}$.  The following two conditions are equivalent:
\begin{enumerate}
\item\label{equiv:cond1} $C_1 \cap C_2 $ is contained in the zero section of $V$.
\item\label{equiv:cond2} $\overline{C_1} \cap \overline{C_2}$ does not intersect $\Pp(V)$ inside $\overline{V}$.
\end{enumerate}
\end{lem}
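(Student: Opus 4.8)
The plan is to reduce both conditions to the single statement that the ``projectivized cones'' of $C_1$ and $C_2$ are disjoint in $\Pp(V)$. For a closed conical subset $C\subseteq V$, write $\Pp(C)\subseteq\Pp(V)$ for the image of $C$ minus the zero section under the projection $V\setminus(\text{zero section})\to\Pp(V)$. The key point I would prove first is the identity $\overline{C}\cap\Pp(V)=\Pp(C)$, valid for every closed conical $C$. Granting it, one gets $\overline{C_1}\cap\overline{C_2}\cap\Pp(V)=\Pp(C_1)\cap\Pp(C_2)$, so condition~\eqref{equiv:cond2} amounts to $\Pp(C_1)\cap\Pp(C_2)=\emptyset$. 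Independently, two nonzero vectors of one fibre $V_x$ with equal image in $\Pp(V)$ differ by a scalar in $\Gg_m$; since the $C_i$ are conical, this makes it immediate that $\Pp(C_1)\cap\Pp(C_2)\neq\emptyset$ exactly when $C_1\cap C_2$ meets the complement of the zero section, i.e.\ exactly when~\eqref{equiv:cond1} fails. Thus \eqref{equiv:cond1} and \eqref{equiv:cond2} are both equivalent to $\Pp(C_1)\cap\Pp(C_2)=\emptyset$, and hence to one another.

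For the identity $\overline{C}\cap\Pp(V)=\Pp(C)$ I would argue locally on $X$, writing $V=X\times\Aa^r$ and $\overline{V}=X\times\Pp^r$ with homogeneous coordinates $[t_0:t_1:\dots:t_r]$, so that $V=\{t_0\neq 0\}$, $\Pp(V)=\{t_0=0\}$, and $\Gg_m$ scales $t_1,\dots,t_r$ simultaneously. As $C$ is closed and $\Gg_m$-invariant, its radical ideal on an affine chart of $X$ is homogeneous in $t_1,\dots,t_r$, hence generated by finitely many homogeneous polynomials $f_1,\dots,f_s$; since these do not involve $t_0$ they remain homogeneous in $(t_0,\dots,t_r)$, so $Z=\{f_1=\dots=f_s=0\}$ is a closed subset of $X\times\Pp^r$ with $Z\cap V=C$ and $Z\cap\Pp(V)=\Pp(C)$ (putting $t_0=0$ does not alter the $f_j$). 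The inclusion $\overline{C}\subseteq Z$ is then automatic and gives $\overline{C}\cap\Pp(V)\subseteq\Pp(C)$. For the reverse inclusion I would take a point of $\Pp(C)$, represent it by some $w\in C$ outside the zero section, and note that the curve $\lambda\mapsto\lambda w$ lies in $C$ by conicity and, in these coordinates, converges to the given point of $\Pp(V)$ as $\lambda\to\infty$; hence that point lies in $\overline{C}$.

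This is a soft statement and I do not expect a serious obstacle. The one step calling for a little attention is the inclusion $\overline{C}\cap\Pp(V)\subseteq\Pp(C)$: a priori $\overline{C}$ might be strictly smaller than the naive homogeneous vanishing locus $Z$, but as we only invoke $\overline{C}\subseteq Z$ this is harmless, and the degenerate case in which $C$ is contained in the zero section — where both sides of the identity are empty — is covered by the same argument. The remaining points, namely that a $\Gg_m$-invariant reduced closed subscheme has a homogeneous ideal and the limit computation in homogeneous coordinates, are routine.
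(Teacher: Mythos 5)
Your proof is correct, and it is organized differently from the paper's. The paper treats the two implications by separate mechanisms: for \eqref{equiv:cond2}$\Rightarrow$\eqref{equiv:cond1} it observes that $\overline{C_1}\cap\overline{C_2}$ is proper (over $X$) and, if disjoint from $\Pp(V)$, also closed in the vector bundle $V$, hence affine, hence finite, hence (being conical) contained in the zero section; for \eqref{equiv:cond1}$\Rightarrow$\eqref{equiv:cond2} it simply asserts that a point of $\overline{C_1}\cap\overline{C_2}$ lying in $\Pp(V)$ forces the corresponding line to lie in $C_1\cap C_2$. You instead route everything through the single identity $\overline{C}\cap\Pp(V)=\Pp(C)$: the inclusion $\Pp(C)\subseteq\overline{C}$ (closure of a $\Gg_m$-orbit) replaces the paper's proper-plus-affine-equals-finite argument, and the inclusion $\overline{C}\cap\Pp(V)\subseteq\Pp(C)$ (homogeneity of the ideal of a conical closed set, and the auxiliary closed set $Z$) is precisely a careful justification of the step the paper asserts without proof. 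What each approach buys: the paper's properness argument is shorter and coordinate-free, while yours is more elementary and self-contained, checks the harder direction in detail rather than taking it on faith, and isolates a reusable statement about closures of cones in $\Pp(V+\mathcal O_X)$. All the ingredients you flag as routine (homogeneous radical ideals of $\Gg_m$-invariant closed subsets over a Noetherian base, the limit $[1:\lambda w]\to[0:w]$) are indeed standard, so there is no gap.
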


\begin{proof} The intersection $\overline{C_1}\cap \overline{C_2}$ is proper, as a closed subset of $\overline{V}$. Taking the decomposition $\overline{V} = V \cup \Pp(V)$ into account, if condition \eqref{equiv:cond2} holds, then $\overline{C_1}\cap \overline{C_2}$ is also affine, being a closed subset of $V$. Therefore, $\overline{C_1}\cap \overline{C_2}$ is a finite conical subset of $V$, and is hence contained in the zero section. Conversely, \eqref{equiv:cond1} implies \eqref{equiv:cond2} because if $x$ is a point of $\overline{C_1} \cap \overline{C_2}$ that lies in $\Pp(V)$, then the line in $V$ corresponding to $x$ is contained in $C_1 \cap C_2$, which therefore is not just the zero section.
\end{proof}

\begin{proof}[Proof of Theorem \ref{geneufor}] 

We first prove statement \eqref{char:item1}. Since $\SS(A) \times_X \SS(B)$ is the intersection of the inverse images of $\SS(A)$ and $\SS(B)$ inside $T^*X + T^*X$ (i.e. the intersection of two $2n$\nobreakdash-dimensional schemes inside a smooth variety of dimension $3n$), each of its irreducible components has dimension at least $n$, and hence equal to $n$ by assumption. 

Statement \eqref{char:item2} follows from the fact that pullback by the map $i$ identifies the sum $T^*X + T^*X$ with $T^*X \times_X T^*X$ seen as a closed subset of $T^*X \times T^*X$. In the rest of the proof, we will make this identification. 

To prove \eqref{char:item3}, observe that $s \colon T^*X+T^*X \to T^*X$ is a map of vector bundles on~$X$.  Since $\SS(A)$ and $\SS(B)$ are conical  and $\SS(A)\cap \SS(B)$ is a subset of the zero section~$T^*_X X$ of~$T^*X$, the intersection of $\SS(A) \times_{X} \SS(B)$ and the pullback along~$s$ of $T^*_X X$ is contained in the zero section of $T^*X+T^*X$. Hence, the restriction of~$s$ to $\SS(A) \times_{X} \SS(B)$ is finite by \cite[Lem.\,3.1]{saito1}. 

We now turn to \eqref{char:item4}. Recall that the characteristic cycle of the external product~$A \boxtimes B$ is equal to $\CC(A) \times \CC(B)$ and that the tensor product $A \otimes B$ is given by~$\Delta^*(A \boxtimes B)$, where~$\Delta\colon X \to X \times X$ denotes the diagonal map. We will compute $\CC(A \otimes B)$ using the compatibility of characteristic cycles with pullbacks, as recalled above. For this, we need to show that $\Delta$ is a properly $\SS(A) \times \SS(B)$-transversal morphism. 

From part \eqref{char:item2}, we know that every irreducible component
of $\Delta^* (\CC(A) \times \CC(B))$ has dimension $n$. For the
transversality condition, we need to consider the maps
\[
T^*X \stackrel{s}{\longleftarrow} T^*X+T^*X\stackrel{i}{\longrightarrow} T^*(X\times X)
\]
defined by the diagonal $\Delta$, and check that the intersection of
$i^\ast(\SS(A) \times \SS(B))$ with the preimage of the zero section
$T^*_XX$ by the map $s$ is contained in the zero section of
$T^*X+T^*X$. But this is precisely the condition we checked in the proof
of \eqref{char:item3}, and is in fact equivalent to \eqref{char:item3}
by \cite[Lem.\,3.1]{saito1}. Hence, \cite[Th.\,7.6]{saito1} applies to
$\Delta$ and $A\boxtimes B$, which yields the equality
\[
\CC(\Delta^*(A\boxtimes B))=\Delta^! \CC(A\boxtimes B).
\] Combining this with the first part of the proof and the definition of $\Delta^!$, we get
\begin{align*}
\CC(A\otimes B) &=\CC(\Delta^*(A\boxtimes B))=\Delta^! \CC(A\boxtimes B)=(-1)^n s_* i^* (\CC(A)\times \CC(B)).
\end{align*}

Let us finally prove \eqref{char:item5}. By the index formula \cite[Th.\,7.13]{saito1}, the Euler--Poincaré characteristic of~$A \otimes B$ is the intersection number of $\CC(A \otimes B)$ with the zero section $[T^*_X X]$. Using part~\eqref{char:item4}, it is hence given by 
\[
(-1)^n  [T^*_X X] \cdot s_* i^* ( \CC(A) \times \CC(B)).
\] By the projection formula in intersection theory (see, e.g.,~ \cite[Prop.\,8.3\,(c)]{fulton}), the equality 
\[
[T^*_X X] \cdot s_* i^* ( \CC(A) \times \CC(B))=  s^* [T^*_X X] \cdot  i^* ( \CC(A) \times \CC(B))
\] holds, as long as the restriction of $s$ to the support of $i^* ( \CC(A) \times \CC(B))$ is a proper map. Taking into account that the support of $i^* (\CC(A) \times \CC(B))$ is $\SS(A) \times_X \SS(B)$, this is indeed the case by part~\eqref{char:item3}. Now, $s^* [T^*_X X]$ is the closed subset in $T^*X + T^*X$ consisting of those elements with zero sum. Being conical, $\CC(B)$ is in particular invariant under multiplication by $-1$, hence the equality 
\[
s^* [T^*_X X] \cdot  i^* ( \CC(A) \times \CC(B))=  d_*[T^*X] \cdot i^* ( \CC(A) \times \CC(B)), 
\] where $d\colon T^*X \to T^*X + T^*X$ is the diagonal map. Since $d$ is a closed immersion, the projection formula yields
\[
d_* [T^*X] \cdot  i^* ( \CC(A) \times \CC(B)) = [T^*X] \cdot d^* i^* (\CC(A) \times \CC(B)). 
\] Furthermore, $i \circ d$ is the diagonal map $\delta\colon T^*X \to T^*X \times T^*X$, so the equality
\begin{align*}
[T^*X] \cdot d^* i^* (\CC(A) \times \CC(B))&= [T^*X] \cdot \delta^* (\CC(A) \times \CC(B)) \\
&= \CC(A) \cdot \CC(B)
\end{align*} holds. Combining all these identities we get the first equality of \eqref{char:item5}. 
Finally, 
\[
\overline{\SS(A)} \cap \overline{\SS(B)}= \SS(A) \cap \SS(B)
\] follows from Lemma \ref{simpinteq}, and this implies the equality of intersection numbers 
\[
  \CC(A) \cdot \CC(B) =\overline{\CC(A)} \cdot \overline{\CC(B)},
\] which completes the proof. 
\end{proof}

The following definition is taken from Saito \cite[Def.\,6.7]{saito1} and relies on the isomorphism 
  \[
    \CH_*(\Pp^n)\longrightarrow  \CH_n\left( \overline{ T^*\Pp^n}\right), \quad (a_i)\longmapsto \sum_i p^*a_ih^i, 
  \] where $p\colon \overline{ T^*\Pp^n}\to \Pp^n$ is the 
  projection and~$h$ is the first Chern class of the dual of the
  universal sub line bundle of
  $\overline{T^*\Pp^n}\times_{\Pp^n} (T^*\Pp^n+\mathcal{O}_{\Pp^n})$
  (see~\cite[(6.12)]{saito1}).

\begin{defi}[Characteristic class]\label{def-cc}
The \emph{characteristic class} $\cc(A)\in \CH_*(\Pp^n)$ of an object
  $A$ of~$\Der(\Pp^n)$ is the image of $\overline{\CC(A)} \in \CH_n\left( \overline{ T^*\Pp^n}\right)$ under the
  inverse of the above isomorphism.
  \par
  The total Chow group $\CH_*(\Pp^n)$ is isomorphic to $\Zz^{n+1}$, with
  generators the classes of linear subspaces of dimension $0$ to
  $n$. Using this, we will view $\cc(A)$ as an element
  of~$\Zz^{n+1}$. The lattice $\Zz^{n+1}$ inherits the intersection
  pairing of $\CH_n\left( \overline{ T^*\Pp^n}\right)$, and this pairing
  is independent of the base field $k$.
\end{defi}

\begin{lem}\label{constantsheafbasis} Let $0 \leq m \leq n$ be an integer, and let $ K_m$ denote the constant $\Ql$\nobreakdash-sheaf
  on an $m$\nobreakdash-dimensional linear subspace of $\Pp^n$, extended
  by zero and placed in degree $-m$. Then the characteristic cycle
  $\CC( K_m)$ is the conormal bundle of that $m$-dimensional subspace,
  and the characteristic classes $\cc(K_0), \dots, \cc( K_n)$ form a
  basis of $\CH_*(\Pp^n)$. In particular,
  $\cc(K_0),\dots,\cc(K_n)$ form a basis of $\mathbf{Z}^{n+1}$ independent of
  $k$ and $\ell$.
\end{lem}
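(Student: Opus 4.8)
The plan is to split the statement into two assertions: first, that $\CC(K_m)$ equals the conormal bundle $[T^*_{L_m}\Pp^n]$ of an $m$-dimensional linear subspace $L_m \subset \Pp^n$; second, that the resulting classes $\cc(K_0),\dots,\cc(K_n)$ form a $\Zz$-basis of $\CH_*(\Pp^n) \cong \Zz^{n+1}$. For the first assertion, I would observe that the inclusion $\iota\colon L_m \hookrightarrow \Pp^n$ of a linear subspace is a closed immersion of smooth varieties, hence properly $\SS(\Ql_{\Pp^n}[n])$-transversal (since the singular support of the constant sheaf on $\Pp^n$ is the zero section, and any smooth morphism, in particular the inclusion of a smooth subvariety transverse to the zero section, satisfies the transversality condition — or one simply computes directly). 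Then $K_m = \iota_! \iota^* (\Ql_{\Pp^n}[n])[-n+m]$ up to shift conventions; more cleanly, $K_m$ is, up to shift, the extension by zero of the constant perverse sheaf on $L_m$, whose characteristic cycle on $T^*L_m$ is $(-1)^m$ times the zero section $[T^*_{L_m}L_m]$. The pushforward under $\iota_!$ (equivalently, the behaviour of $\CC$ under closed immersion, which sends the zero section of $T^*L_m$ to the conormal bundle $T^*_{L_m}\Pp^n$) then gives $\CC(K_m) = [T^*_{L_m}\Pp^n]$ with the sign absorbed by the chosen degree shift $-m$ in the definition of $K_m$. I would cite the relevant formula for $\CC$ under closed immersion from Saito~\cite{saito1} (the conormal-bundle statement appears essentially in the discussion of characteristic cycles of constant sheaves on smooth subvarieties); alternatively one can verify $\CC(K_m) = h^!\CC(\text{pt-sheaf})$ directly via~\cite[Th.\,7.6]{saito1} applied iteratively to a generic hyperplane section.

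For the second assertion, I would compute $\cc(K_m)$ explicitly using Saito's isomorphism $\CH_*(\Pp^n) \to \CH_n(\overline{T^*\Pp^n})$, $(a_i) \mapsto \sum_i p^* a_i h^i$, recalled just before Definition~\ref{def-cc}. The key input is that the closure $\overline{T^*_{L_m}\Pp^n}$ inside $\overline{T^*\Pp^n}$ has a known class in $\CH_n(\overline{T^*\Pp^n})$: concretely, the conormal bundle of an $m$-plane is a projective-space bundle of a predictable degree, and its class in terms of the basis $\{p^*[\text{linear}]_j \cdot h^i\}$ is upper-triangular with respect to the ordering by $m$. So I would show that the matrix expressing $(\cc(K_0),\dots,\cc(K_n))$ in the standard basis of $\Zz^{n+1}$ (classes of linear subspaces of dimensions $0,\dots,n$) is triangular with $\pm 1$ on the diagonal, hence unimodular. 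The independence of $k$ and $\ell$ is then immediate: the characteristic cycle of a constant sheaf on a linear subvariety, and all intersection numbers in $\overline{T^*\Pp^n}$, are purely intersection-theoretic and do not depend on the coefficient field (this is already noted for the pairing in Definition~\ref{def-cc}).

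The main obstacle I anticipate is the explicit computation of the class of the conormal bundle $\overline{T^*_{L_m}\Pp^n}$ in $\CH_n(\overline{T^*\Pp^n})$ in Saito's coordinates, i.e.\ pinning down the triangular matrix precisely enough to see it is unimodular. This requires understanding the Chern class $h$ of the dual of the universal sub-line-bundle and how $T^*\Pp^n$ sits inside $T^*\Pp^n + \mathcal{O}_{\Pp^n}$; one natural approach is to use the Euler sequence for $\Pp^n$ to identify $T^*\Pp^n$ and then compute Segre/Chern classes, or — more in the spirit of the paper — to sidestep the computation entirely by using the index formula: pairing $\cc(K_m)$ against $\cc(K_{n-m'})$ via part~\eqref{char:item5} of Theorem~\ref{geneufor} computes $(-1)^n\chi(\Pp^n, K_m \otimes K_{n-m'})$, which is $\chi$ of a constant sheaf on a generic intersection $L_m \cap L_{n-m'}$, and this is $1$ when $m + (n-m') \geq n$ with the intersection a single point (i.e.\ $m = m'$ generically... ) and more generally computable, giving directly that the Gram matrix of the $\cc(K_i)$ under the intersection pairing is unimodular, which forces them to be a basis. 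I would likely present this second, cohomological route as the cleaner argument.
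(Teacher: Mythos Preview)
Your preferred ``second route'' is essentially the paper's argument: compute the Gram matrix of intersection numbers $\cc(K_i)\cdot\cc(K_j)$, observe it is anti-triangular with $1$'s on the anti-diagonal (zero when $i+j<n$, one when $i+j=n$), and conclude the classes form a basis. The only difference is that the paper computes these intersection numbers directly as geometric intersections of conormal bundles in $\overline{T^*\Pp^n}$ --- if $i+j<n$ the generic linear subspaces, hence their conormal bundles, are disjoint; if $i+j=n$ the subspaces meet transversely in one point and so do their conormal bundles --- whereas you would pass through Theorem~\ref{geneufor}\,\eqref{char:item5} and an Euler-characteristic computation. Both are fine; the direct geometric route is slightly shorter and avoids having to verify the transversality hypotheses of Theorem~\ref{geneufor} for the pair $(K_i,K_j)$. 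For the first assertion, the paper simply cites \cite[Lem.\,5.11(1) and 5.13(2)]{saito1}, which is cleaner than the various approaches you sketch. Your route~A (explicit triangular computation in Saito's coordinates) would also work but is, as you note, more laborious.
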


\begin{proof} The first statement follows from \cite[Lem.\,5.11(1) and
  5.13(2)]{saito1}. Recall that $\CH_*(\Pp^n)$ has rank $n+1$, so to
  show that the characteristic classes $\cc( K_0)$, \ldots, $\cc( K_n)$
  form a basis, it suffices to prove that they are linearly
  independent. We will do this by proving that the matrix of
  intersection numbers $\cc( K_i)\cdot \cc( K_j)$ is invertible. If
  $i+j< n$, then generic subspaces of dimensions $i$ and $j$ do not
  intersect, so neither do their conormal bundles, and hence the
  intersection number is $0$. If $i+j=n$, then a generic
  $i$\nobreakdash-dimensional subspace and a generic $j$-dimensional
  subspace intersect transversely at a single point, so their conormal
  bundles intersect transversely at a single point as well, and hence
  have intersection number $1$. The intersection matrix is thus
  invertible. The last sentence is just a restatement of this, except
  for the independence of~$k$ and $\ell$, which follows from observing
  that the isomorphism \cite[(6.12)]{saito1}, applied to the closure of
  the conormal bundle of an $m$-dimensional subspace, can be defined
  integrally and so is independent of the characteristic.
 \end{proof}


\section{Complexity and generic linear maps}\label{sec-generic}

In this section, we define the complexity of a complex of sheaves on projective space and we establish a few results about generic linear maps between projective spaces and their relationship with characteristic cycles.

\begin{defi} Let $k$ be a field.  Let $0 \leq m \leq n$ be integers. Let~$M^{n+1,m+1}_k$ be the variety of $(n+1) \times (m+1)$ matrices of maximal rank, so that in particular $M^{n+1,n+1}_k$ is equal to $\GL_{n+1, k}$. Given an extension $k'$ of $k$ and a $k'$-point $\mtrx{a} \in M^{n+1,m+1}(k')$, we denote by~$l_{\mtrx{a}}\colon \Pp^m_{k'} \to \Pp^n_{k'}$ the associated linear map. 
\end{defi}

In the case $m=n$, in which $\mtrx{a}$ is an invertible matrix and $l_{\mtrx{a}}$ is an automorphism, we may also use $l_{\mtrx{a}}$ to refer to the induced map on any scheme depending functorially on $\Pp^n$ (e.g., its cotangent bundle).

Here is the key definition of this article. Although it is closely related to characteristic cycles, as we will see, we have chosen to define it rather in terms of Betti numbers, so it can be used more directly in situations where some Betti numbers can be computed. 

\begin{defi}[Complexity of a complex of sheaves on projective space]\label{def-complexity}
  Let $k$ be a field and let $n \geq 0$ be an integer. For each $0 \leq m \leq n$, let $\mtrx{a}_m$ be a geometric generic point of $M^{n+1,m+1}_k$ defined over an algebraically closed field $k'$. The \emph{complexity} $c(A)$ of an object $A$ of $\Der(\Pp^n_k)$ is the non-negative integer
  \[ c(A)=\max_{0\leq m\leq n} \sum_{i \in \Zz} h^i(\Pp^m_{k'},
    l_{\mtrx{a}_m}^* A).\]
\end{defi}

When working with this definition, we will often use the projection formula
to write
\[h^i(\Pp^m_{k'}, l_{\mtrx{a}_m}^* A) = h^i(\Pp^n_{k'}, A \otimes
  l_{\mtrx{a}_m *} \Ql),
\]
which gives a slightly different expression for the complexity. Since
étale cohomology is invariant under base change of algebraically
closed fields, the value of~$c(A)$ is independent of the choice of the field of definition $k'$, and we will often drop it
from the notation.

Instead of considering geometric generic points of~$M^{n+1,m+1}$ in the definition of complexity, one can alternatively consider closed points in a suitable dense open subset of~$M^{n+1,m+1}$, as the following lemma shows. 

\begin{lem}\label{lem-complexity-open-set}
  Let $k$ be a field and $\bar k$ an algebraic closure of $k$. Let
  $0 \leq m \leq n$ be an integer, and let $A$ be a complex on
  $\Pp^n_k$. There exists a dense open subset $U\subset M^{n+1,m+1}_k$
  such that the equality
    \begin{displaymath}
     h^i(\Pp^{m}_{k'},l_\mtrx{a}^*A)=h^i(\Pp^{m}_{\bar k},l_\mtrx{b}^*A)
  \end{displaymath} holds for any geometric generic point
  $\mtrx{a}$ of $M^{n+1,m+1}$ defined over an algebraically closed field $k'$ and for every $\mtrx{b}\in U(\bar k)$ and $i\in \Zz$. 
\end{lem}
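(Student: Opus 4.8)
The plan is to realize the family of cohomology groups $\rmH^i(\Pp^m_{k'}, l_\mtrx{a}^* A)$ as the stalks of constructible sheaves on $M^{n+1,m+1}_k$ and then invoke generic constancy of stalks together with the compatibility of formation of cohomology sheaves with base change. More precisely, consider the universal linear map. Over the base $B = M^{n+1,m+1}_k$ there is a tautological morphism $L\colon \Pp^m_B \to \Pp^n_B$ of $B$-schemes whose fiber over a point $\mtrx{a}$ is exactly $l_\mtrx{a}\colon \Pp^m_{\kappa(\mtrx{a})}\to \Pp^n_{\kappa(\mtrx{a})}$. Let $\pi\colon \Pp^m_B \to B$ denote the structure map, let $q\colon \Pp^n_B\to B$ be the other structure map, and let $A_B = q^* A$ be the pullback of $A$ to $\Pp^n_B$ (using the projection $\Pp^n_B \to \Pp^n_k$). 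Then $\pi_* L^* A_B$ is an object of $\Der(B)$, and its cohomology sheaves $\mathcal H^i(\pi_* L^* A_B)$ are constructible sheaves on the variety $B$.

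The first key step is proper base change: for any point $b\in B$ with residue field $\kappa(b)$, one has a canonical isomorphism between the stalk at $b$ of $\mathcal H^i(\pi_* L^* A_B)$ — after pulling back to a separable closure — and $\rmH^i(\Pp^m_{\overline{\kappa(b)}}, l_\mtrx{b}^* A)$; this is where I use that $\pi$ is proper (so $\pi_! = \pi_*$) and that $L^* A_B$ restricts on the fiber $\Pp^m_{\kappa(b)}$ to $l_\mtrx{b}^* A$ by the construction of $L$ as the universal linear map, which itself is just base change for pullbacks along $L$ and along the projections. Applying this both to a geometric generic point $\mtrx{a}$ (defined over $k'$) and to closed points $\mtrx{b}\in B(\bar k)$ reduces the claimed equality of Betti numbers to the statement that $\dim \mathcal H^i(\pi_* L^* A_B)_{\bar b}$ is the same at the geometric generic point and at all $\bar k$-points in a suitable dense open subset.

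The second step is precisely generic constancy: a constructible sheaf $\mathcal F$ on a variety $B$ over $k$ is lisse on a dense open $U_i \subset B$, and on $U_i$ the dimension of its geometric stalks is constant, equal in particular to the rank of its pullback to the geometric generic point (this uses that the generic point specializes into $U_i$, so the generic stalk agrees with the stalk at points of $U_i$, and that $\mathcal F$ being lisse on $U_i$ the $\bar k$-points of $U_i$ all have stalks of the same dimension). Taking $\mathcal F = \mathcal H^i(\pi_* L^* A_B)$ for each of the finitely many integers $i$ with $\mathcal H^i \neq 0$ — finiteness holds because $\pi_* L^* A_B$ lies in the bounded derived category — and intersecting the resulting dense opens $U_i$, we obtain a single dense open $U\subset B = M^{n+1,m+1}_k$ on which all geometric stalks agree with the generic ones, which is the assertion.

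The main obstacle, and the only point requiring genuine care, is the identification in the first step of the fiber of $L^* A_B$ over a point $\mtrx{b}$ with $l_\mtrx{b}^* A$: one must check that the universal linear map $L$ is set up so that base changing along $\Spec\kappa(\mtrx{b})\to B$ recovers $l_\mtrx{b}$, and then that pullback of sheaves commutes with this base change (which is the base-change isomorphism $i_{\mtrx{b}}^* L^* A_B \simeq l_{\mtrx{b}}^* A$ along the Cartesian square defining the fiber). Everything else — proper base change, boundedness, generic constancy of constructible sheaves — is standard; one should only be slightly attentive to passing between residue fields and their separable (or algebraic) closures, which is harmless since étale cohomology of a variety over a separably closed field is insensitive to further enlarging the field, and is precisely the invariance already invoked after Definition \ref{def-complexity}.
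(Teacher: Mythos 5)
Your proposal is correct and follows essentially the same route as the paper: form the universal family $p_*\mathrm{mult}^*A$ over $M^{n+1,m+1}_k$ (your $\pi_* L^* A_B$), identify its stalks with the fiberwise cohomology by proper base change, and take the dense open where its cohomology sheaves are lisse. The extra care you devote to the identification of the fiber of $L^*A_B$ with $l_{\mtrx{b}}^*A$ is a point the paper treats as immediate, and rightly so.
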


\begin{proof}
  For $\mtrx{b}\in M^{m+1,n+1}(\bar k)$, we view the linear map $l_\mtrx{b}$ as the composition of the map~$x \mapsto (x,\mtrx{b})$ from $\Pp^m$ to $\Pp^m \times M^{m+1,n+1}$ with the matrix multiplication map
 \[
 \mathrm{mult} \colon \Pp^m \times M^{m+1,n+1} \to \Pp^n.
 \] Let $p\colon \Pp^m \times M^{m+1,n+1} \to M^{m+1,n+1}$ be the projection.  Let $U\subset M^{n+1,m+1}_k$ be a dense open set such that the 
complex $p_* \mathrm{mult}^* A$ has lisse cohomology sheaves on~$U$. It follows from the proper base change theorem that the equality
  \[
  h^i(\Pp^{m}_{k'},l_\mtrx{a}^*A)=h^i(\Pp^{m}_{\bar k},l_\mtrx{b}^*A)
  \] holds for every
  $b\in U(\bar k)$.
\end{proof}

We also note for later use the inequality $c(l_\mtrx{b}^*A) \leq c(A)$ for all $\mtrx{b} \in U(\bar k)$, which is a straightforward consequence of the previous lemma and the definition of complexity. 

\begin{prop}
\label{rm-zero}
Let $A$ be an object of $\Der(\Pp^n_k)$. The complexity $c(A)$ vanishes if and only if~$A=0$. More precisely, let $d_j$ be the dimension of the support of the cohomology sheaf~$\mathcal{H}^j(A)$ and let $r_j$ be the maximum of the generic ranks of the restrictions of $\mathcal{H}^j(A)$ to the irreducible components of maximal dimension of its support. Setting $d=\max_{j\in \Zz} d_j$ and
$r=\sum_{\substack{j\in \Zz \\ d_j=d}} r_j,$
the inequality $c(A)\geq r$ holds.
\end{prop}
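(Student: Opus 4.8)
The plan is to reduce the general statement to a computation on a single linear subspace of the right dimension, where the restriction of $A$ has a cohomology sheaf that is generically lisse of rank $r$ on a subspace where it is nonzero, and then use that a nonzero constructible sheaf on a positive-dimensional space, or a nonzero complex on a point, always has some nonvanishing cohomology. First I would dispose of the easy direction: if $A = 0$, then every pullback $l_{\mtrx{a}_m}^* A$ is zero, so all Betti numbers vanish and $c(A) = 0$. Conversely, suppose $A \neq 0$, and adopt the notation of the statement: let $j$ be an index with $d_j = d$ and with $r_j = r_{j_0}$ contributing to $r$, or rather I would argue cohomology sheaf by cohomology sheaf and sum at the end. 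Fix $j$ with $d_j = d$; let $Z$ be an irreducible component of the support of $\mathcal{H}^j(A)$ of dimension $d$ on which the generic rank of $\mathcal{H}^j(A)$ is $r_j$.

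The core step is to choose the linear subspace well. Take $m = n - d$, so that a generic linear subspace $L = l_{\mtrx{a}_m}(\Pp^m)$ of codimension $d$ in $\Pp^n$ meets $Z$ in a finite nonempty set of points, at which $\mathcal{H}^j(A)$ is lisse of rank $r_j$; moreover, by genericity $L$ avoids the (lower-dimensional) loci where $\mathcal{H}^j(A)$ fails to be lisse near those points, and avoids all components of supports of the other cohomology sheaves $\mathcal{H}^{j'}(A)$ of dimension $> n-m = d$ (there are none by maximality of $d$) as well as the intersection with the $d$-dimensional components being finite. Since pullback of complexes is exact on cohomology sheaves (as $l_{\mtrx{a}_m}$ is, after suitable choice, transversal — or simply because $l_{\mtrx{a}_m}^*$ commutes with taking $\mathcal{H}^\bullet$ up to the shift, being a pullback along an immersion that one can arrange to be non-characteristic, but in any case one only needs the stalk description), the stalk of $\mathcal{H}^j(l_{\mtrx{a}_m}^* A)$ at such an intersection point has dimension $r_j$. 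Now restrict further to the finite set $L \cap Z$: I would use that $l_{\mtrx{a}_m}^* A$ has a cohomology sheaf supported in dimension $0$ at those points with total stalk dimension $\geq r_j$ there (summing over the finitely many intersection points gives at least $r_j$, but one point already gives $r_j$), hence contributes at least $r_j$ to $\sum_i h^i(\Pp^m, l_{\mtrx{a}_m}^* A)$ via the hypercohomology spectral sequence — the skyscraper contributions in the lowest relevant degree cannot all be killed by differentials for dimension reasons at a single point.

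To get the clean bound $c(A) \geq r = \sum_{d_j = d} r_j$ rather than just $\geq r_j$ for each $j$ separately, I would choose a single generic $\mtrx{a}_m$ with $m = n-d$ that works simultaneously for all the finitely many indices $j$ with $d_j = d$: the subspace $L$ meets the union of the relevant $d$-dimensional components in a finite set, disjoint from all bad loci, and at a generic point $x \in L \cap Z_{j}$ the complex $l_{\mtrx{a}_m}^* A$ has, in cohomological degree $j$, a skyscraper of rank $r_j$. The point is that these skyscrapers live in different degrees when the $j$ differ, and skyscrapers in distinct degrees contribute independently to $\sum_i h^i$; even when several $\mathcal{H}^j$ with the same $d_j = d$ but different $j$ are supported (partially) at the same point, their stalk dimensions add across degrees. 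Summing the contributions over the degrees $j$ with $d_j = d$ yields $\sum_i h^i(\Pp^m_{k'}, l_{\mtrx{a}_m}^* A) \geq \sum_{d_j = d} r_j = r$, hence $c(A) \geq r > 0$, which also settles the equivalence $c(A) = 0 \iff A = 0$ since $d \geq 0$ forces $r \geq 1$ when $A \neq 0$.

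The main obstacle I anticipate is the bookkeeping in the last step: making precise that the skyscraper-sheaf contributions in distinct cohomological degrees genuinely survive to hypercohomology and add up, i.e. that no cancellation occurs in the spectral sequence $E_2^{p,q} = H^p(\Pp^m, \mathcal{H}^q(l_{\mtrx{a}_m}^* A)) \Rightarrow H^{p+q}$. Over the $0$-dimensional locus $L \cap Z$ one has $H^p = 0$ for $p > 0$, so the only $E_2$-terms involving these skyscrapers sit in $p = 0$; differentials into or out of the relevant $(0,q)$ spots either vanish (target/source is zero for degree reasons once one restricts attention to the part of $A$ supported on $L$) or, more carefully, one excises: let $i_L \colon L \cap Z \hookrightarrow \Pp^m$ and use the excision triangle to isolate the skyscraper part, whose hypercohomology is just the direct sum of the stalks in their respective degrees. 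That computation of the contribution from the $0$-dimensional stratum — rather than any geometric subtlety about generic linear sections, which is handled by Lemma \ref{lem-complexity-open-set} and standard genericity — is where the real work lies.
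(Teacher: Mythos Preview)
Your approach is essentially the same as the paper's: take $m=n-d$ and look at $l_{\mtrx{a}_m}^*A$. The one place you overcomplicate things is the spectral-sequence worry at the end. The observation you are missing is that, because $d=\max_j d_j$, a generic linear subspace of codimension $d$ meets the support of \emph{every} $\mathcal{H}^j(A)$ in a finite set (or misses it entirely). Hence $l_{\mtrx{a}_m}^*\mathcal{H}^j(A)=\mathcal{H}^j(l_{\mtrx{a}_m}^*A)$ is a skyscraper sheaf (or zero) for \emph{all} $j$, not just those with $d_j=d$. Therefore $E_2^{p,q}=\rmH^p(\Pp^m,\mathcal{H}^q(l_{\mtrx{a}_m}^*A))$ vanishes for every $p\neq 0$, and the hypercohomology spectral sequence degenerates for the trivial reason that it is concentrated in a single column. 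No excision is needed; you get directly $\rmH^j(\Pp^m,l_{\mtrx{a}_m}^*A)\cong \rmH^0(\Pp^m,\mathcal{H}^j(l_{\mtrx{a}_m}^*A))$, and summing over $j$ gives the bound.

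Two minor remarks: the commutation $\mathcal{H}^j(l_{\mtrx{a}_m}^*A)=l_{\mtrx{a}_m}^*\mathcal{H}^j(A)$ requires no transversality, since $l_{\mtrx{a}_m}$ is a closed immersion and $i^*$ is exact on sheaves; and your hedge ``at a generic point'' is unnecessary once you use a geometric generic $\mtrx{a}_m$.
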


\begin{proof}
Set $m=n-d$ and let $\mtrx{a}_m$ be a geometric generic point of $M^{n+1,m+1}_k$ defined over an algebraically closed field $k'$. The cohomology sheaf $\mathcal{H}^j(l_{\uple{a}_m}^*A)=l_{\uple{a}_m}^*\mathcal{H}^j(A)$ vanishes in degrees $j$ such that $d_j<d$, whereas it has finite support $S_j$ for all $j$ such that~$d_j=d$. Therefore, the Betti number $h^i(\Pp^n_{k'}, \mathcal{H}^j(l_{\uple{a}_m}^*A))$ vanishes for $i\neq 0$ and is equal to the sum of the dimensions of the stalks of $\mathcal{H}^j(l_{\uple{a}_m}^*A)$ at points of $S_j$ for $i=0$. By definition, there is a point of $S_j$ at which the stalk has dimension $r_j$, and hence the inequality~$h^i(\Pp^n_{k'}, \mathcal{H}^0(l_{\uple{a}_m}^*A))\geq r_j$ holds for all $j$ such that $d_j=d$.

It follows from the vanishing of cohomology in non-zero degrees that the spectral sequence  
$$
\rmH^i(\Pp^n_{k'}, \mathcal{H}^j(l_{\uple{a}_m}^*A))\implies \rmH^{i+j}(\Pp^n_{k'}, l_{\uple{a}_m}^*A)
$$
degenerates at the first page, hence an isomorphism
$$
\rmH^{j}(\Pp^n_{k'}, l_{\uple{a}_m}^*A)\cong\rmH^0(\Pp^n_{k'}, \mathcal{H}^j(l_{\uple{a}_m}^*A)).
$$
By definition of complexity, we then get 
$$
c(A)\geq \sum_{j\in \Zz} h^{j}(\Pp^n_{k'}, l_{\uple{a}_m}^*A)=\sum_{j\in \Zz} h^{0}(\Pp^n_{k'}, \mathcal{H}^j(l_{\uple{a}_m}^*A))\geq\sum_{\substack{j\in \Zz \\ d_j=d}}  r_j=r,
$$
which concludes the proof, since $r=0$ if and only if $A=0$. 
\end{proof}

We will make several arguments involving generic points on different spaces of linear maps.  The next two lemmas describe all the properties we will need about relationships between different generic points. 
 
\begin{lem}\label{transcendenceswapping} Let $X$ and $Y$ be geometrically irreducible affine varieties over a field~$k$. Let~$x \in X(k')$ and $y \in Y(k')$ be points defined over a field extension~$k'$ of $k$. Let~$k(x)$ and~$k(y)$ be the fields generated by the coordinates of $x$ and $y$ respectively. If $x$ is a geometric generic point of $X_k$ and $y$ is a geometric generic point of $Y_{k(x)}$, then $x$ is a geometric generic point of $X_{k(y)}$. \end{lem}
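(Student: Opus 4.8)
The plan is to translate the statement about geometric generic points entirely into the language of transcendence degrees, which makes it a purely field-theoretic assertion. Recall that, for a geometrically irreducible affine variety $X$ over a field $k$ with function field $k(X)$, a point $x \in X(k')$ with $k'$ algebraically closed is a geometric generic point of $X_k$ if and only if the associated homomorphism $k[X] \to k'$ is injective, equivalently the subfield $k(x) \subset k'$ generated by the coordinates of $x$ is $k$-isomorphic to $k(X)$; in particular $\operatorname{trdeg}_k k(x) = \dim X$, and conversely a $k'$-point whose coordinate field has transcendence degree $\dim X$ over $k$ must be generic (as the kernel of $k[X] \to k'$ is a prime of height $0$, hence zero, since $k[X]$ is a domain). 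So I would first record this dictionary as the starting point of the proof.

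Next I would set up the two hypotheses in these terms. Since $x$ is a geometric generic point of $X_k$, we have $\operatorname{trdeg}_k k(x) = \dim X$. Since $y$ is a geometric generic point of $Y_{k(x)}$, we have $\operatorname{trdeg}_{k(x)} k(x)(y) = \dim Y$ (note $Y_{k(x)}$ is still geometrically irreducible and $\dim Y_{k(x)} = \dim Y$). The key point is then additivity of transcendence degree in a tower:
\[
\operatorname{trdeg}_k k(x)(y) = \operatorname{trdeg}_{k(x)} k(x)(y) + \operatorname{trdeg}_k k(x) = \dim Y + \dim X.
\]
Now I want to reverse the roles and view this tower as $k \subset k(y) \subset k(y)(x) = k(x)(y)$. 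The subtlety — and I expect this to be the main (small) obstacle — is that $k(y)$ a priori need not have transcendence degree exactly $\dim Y$ over $k$; it is only clear that $\operatorname{trdeg}_k k(y) \le \dim Y$, because the coordinates of $y$ satisfy over $k$ all the relations cutting out $Y$, so the map $k[Y] \to k'$ factors through $k[Y] \to k(y)$ and $k(y)$ is a quotient field of a domain of transcendence degree $\le \dim Y$ over $k$.

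To finish, I would apply additivity again to the tower $k \subset k(y) \subset k(x,y)$:
\[
\dim X + \dim Y = \operatorname{trdeg}_k k(x,y) = \operatorname{trdeg}_{k(y)} k(y)(x) + \operatorname{trdeg}_k k(y) \le \operatorname{trdeg}_{k(y)} k(y)(x) + \dim Y,
\]
which forces $\operatorname{trdeg}_{k(y)} k(y)(x) \ge \dim X$. But $x \in X(k')$ satisfies the equations of $X$ over $k \subset k(y)$, so $\operatorname{trdeg}_{k(y)} k(y)(x) \le \dim X_{k(y)} = \dim X$; hence equality holds, and by the converse direction of the dictionary $x$ is a geometric generic point of $X_{k(y)}$. (One should note that $k'$ is algebraically closed and contains $k(y)$, so it serves as a valid field of definition for the geometric generic point, and that $X_{k(y)}$ remains geometrically irreducible because $X$ is geometrically irreducible over $k$.) This completes the argument; the only genuinely substantive input is the additivity of transcendence degree, everything else being the standard correspondence between generic points and transcendence-degree-maximal points recalled at the outset.
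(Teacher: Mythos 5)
Your proof is correct. The paper's own argument is a one-sentence geometric observation: identifying $X\times\eta$ and $\xi\times Y$ with irreducible subschemes of $X\times_k Y$ (where $\xi$ and $\eta$ are the generic points of $X$ and $Y$), the generic points of both coincide with the generic point of $X\times_k Y$, so the point $(x,y)$, which by hypothesis lies over the generic point of $\xi\times Y$, also lies over the generic point of $X\times\eta=X_{k(Y)}$. Your version replaces this with its arithmetic shadow: the single quantity $\operatorname{trdeg}_k k(x,y)=\dim X+\dim Y$, computed by additivity in the two towers $k\subset k(x)\subset k(x,y)$ and $k\subset k(y)\subset k(x,y)$, combined with the a priori bounds $\operatorname{trdeg}_k k(y)\le\dim Y$ and $\operatorname{trdeg}_{k(y)}k(y)(x)\le\dim X$, which are then forced to be equalities. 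The two arguments are equivalent in substance (your two tower decompositions correspond exactly to fibering $X\times Y$ over $X$ or over $Y$), but yours is self-contained at the level of field theory, makes the sandwich of inequalities explicit, and yields as a byproduct that $y$ is also a geometric generic point of $Y_k$. One microscopic point of hygiene: when you invoke the converse direction of your dictionary for $X_{k(y)}$, the coordinate ring $k(y)\otimes_k k[X]$ need not be a domain (it is only guaranteed to be irreducible, since $X$ is geometrically irreducible), so the height-zero prime you produce should be identified with the unique minimal prime rather than with $(0)$; this is still exactly the generic point, so nothing breaks.
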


\begin{proof}
  Letting $\xi$ and $\eta$ denote the generic points of $X$ and $Y$ respectively, the result follows from the observation that if we
  identify $X\times \eta$ and $\xi\times Y$ with (irreducible)
  subschemes of~$X\times Y$, then the generic points of $X\times \eta$ and
  $\xi\times Y$ are the same as the generic point of $X\times Y$. 
\end{proof}

\begin{lem}\label{generic-mixing} Let $\mtrx{a}$ be an element of $M^{n+1,m+1}(k)$ and let $\mtrx{g}$ be a geometric generic point of~$\GL_{n+1,k}$. Then $\mtrx{g} \mtrx{a}$ is a geometric generic point of $M^{n+1,m+1}_k$. \end{lem}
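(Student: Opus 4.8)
The claim is that right-multiplication of a $k$-rational matrix $\mtrx{a}$ of maximal rank by a geometric generic point $\mtrx{g}$ of $\GL_{n+1,k}$ produces a geometric generic point of $M^{n+1,m+1}_k$. The natural approach is to exhibit the multiplication map $\mu_{\mtrx{a}}\colon \GL_{n+1} \to M^{n+1,m+1}$, $\mtrx{g}\mapsto \mtrx{g}\mtrx{a}$, as a dominant morphism and to argue that a dominant morphism of $k$-varieties sends a geometric generic point of the source to a geometric generic point of the target. More precisely, I would first recall that $\mu_{\mtrx{a}}$ is a morphism defined over $k$ (it is given by linear polynomials in the entries of $\mtrx{g}$ with coefficients the entries of $\mtrx{a}\in M^{n+1,m+1}(k)$, so no field extension is involved), and then check that it is surjective on geometric points: since $\mtrx{a}$ has rank $m+1$, given any $\mtrx{b}\in M^{n+1,m+1}(\bar k)$ of rank $m+1$ one can complete both the columns of $\mtrx{a}$ and the columns of $\mtrx{b}$ to bases of $\bar k^{n+1}$ and find $\mtrx{g}\in\GL_{n+1}(\bar k)$ carrying one basis to the other, hence $\mtrx{g}\mtrx{a}=\mtrx{b}$. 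In particular $\mu_{\mtrx{a}}$ is dominant.

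The second ingredient is the standard fact that if $f\colon X\to Y$ is a dominant morphism of irreducible $k$-varieties, $k'$ is an algebraically closed field, and $x\in X(k')$ is a geometric generic point of $X_k$ (i.e. its image is the generic point $\xi$ of $X$), then $f(x)\in Y(k')$ is a geometric generic point of $Y_k$. This follows because $f$ maps $\xi$ to the generic point $\eta$ of $Y$ (a dominant morphism of irreducible schemes sends the generic point to the generic point), so the image of $x$ under $f$, viewed set-theoretically, is $\eta$, which is exactly the condition in the definition of a geometric generic point recalled in the Notation section. Note that both $\GL_{n+1,k}$ and $M^{n+1,m+1}_k$ are geometrically irreducible (the former is a dense open subscheme of affine space cut out by the non-vanishing of the determinant; the latter is an open subscheme of affine matrix space defined by the non-vanishing of some maximal minor, and is irreducible since it is an open subset of irreducible affine space — or, more invariantly, it is a homogeneous space for $\GL_{n+1}\times\GL_{m+1}$).

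Applying this with $X=\GL_{n+1,k}$, $Y=M^{n+1,m+1}_k$, $f=\mu_{\mtrx{a}}$ and $x=\mtrx{g}$ immediately yields that $\mtrx{g}\mtrx{a}$ is a geometric generic point of $M^{n+1,m+1}_k$. I do not expect any genuine obstacle here; the only mild subtlety is to make sure that $\mu_{\mtrx{a}}$ is really defined over $k$ (which is why the hypothesis $\mtrx{a}\in M^{n+1,m+1}(k)$, rather than over an extension, matters) and that it is dominant — and both are elementary. One can phrase the whole argument, if desired, purely scheme-theoretically: the composite $\Spec k' \xrightarrow{\mtrx{g}} \GL_{n+1,k}\xrightarrow{\mu_{\mtrx{a}}} M^{n+1,m+1}_k$ has image the generic point because the generic point of $\GL_{n+1}$ lies over the generic point of $M^{n+1,m+1}$ under $\mu_{\mtrx{a}}$, this last point being the translation of the dominance of $\mu_{\mtrx{a}}$ into the language of generic points, exactly as in the proof of Lemma~\ref{transcendenceswapping}.
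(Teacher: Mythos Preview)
Your argument is correct and follows essentially the same route as the paper: you show that the multiplication map $\mu_{\mtrx{a}}\colon \GL_{n+1,k}\to M^{n+1,m+1}_k$ is surjective (hence dominant), and conclude that it carries a geometric generic point to a geometric generic point. The paper phrases the surjectivity as transitivity of the $\GL_{n+1}$-action on $M^{n+1,m+1}$, but this is exactly what your basis-completion argument establishes.
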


\begin{proof} 
Because $\GL_{n+1, k}$ acts transitively on $M^{n+1,m+1}_k$, the map $g \mapsto g \mtrx{a}$ from $\GL_{n+1,k}$ to~$M^{n+1,m+1}_k$ is surjective, and hence dominant, so the image of $\mtrx{g}$ under this map is a geometric generic point of $M^{n+1,m+1}_k$.
\end{proof}

\begin{lem}\label{genlem2} Let $k$ be a field and let $0 \leq m \leq n$ be integers. Let $\mtrx{a}$, $\mtrx{g}$, and $\mtrx{h}$ be points of~$M^{n+1,m+1}_k$, $\GL_{n+1, k}$, and $\GL_{m+1, k}$ respectively, all defined over an algebraically closed field $k'$. Let $\mtrx{b} = \mtrx{g} \mtrx{a} \mtrx{h}^{-1}$. Let $k(\mtrx{a})$ be the subfield of $k'$ generated by the coordinates of $\mtrx{a}$, and similarly for $k(\mtrx{g}), k(\mtrx{a},\mtrx{g}),$
  and so on.  Assume that ${\mtrx{g}}$ is a geometric generic point
  of~$\GL_{n+1,k(\mtrx{a})}$ and that $\mtrx{h}$ is a geometric generic
  point of $\GL_{m+1,k(\mtrx{a},\mtrx{g})}$. Then
  $\mtrx{b}$ is a geometric generic point of
  $M^{n+1,m+1}_{k(\mtrx{a},\mtrx{h})}$ and $\mtrx{h}$ is a geometric
  generic point of~$\GL_{m+1,k(\mtrx{a},\mtrx{b})}$.
 
\end{lem}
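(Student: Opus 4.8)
The plan is to prove the two assertions of Lemma~\ref{genlem2} by repeated application of Lemma~\ref{transcendenceswapping} and Lemma~\ref{generic-mixing}, thinking of the construction $\mtrx{b} = \mtrx{g}\mtrx{a}\mtrx{h}^{-1}$ as being built up in two stages. First I would observe that the hypothesis is a ``tower of generic points'': $\mtrx{g}$ is generic in $\GL_{n+1}$ over $k(\mtrx{a})$, and then $\mtrx{h}$ is generic in $\GL_{m+1}$ over $k(\mtrx{a},\mtrx{g})$. I want to re-express this chain with $\mtrx{b}$ and $\mtrx{h}$ as the ``independent'' variables, and $\mtrx{a}$, $\mtrx{g}$ recovered from them. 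Since all the varieties in sight ($\GL$, $M^{n+1,m+1}$) are geometrically irreducible affine varieties over the relevant base fields, Lemma~\ref{transcendenceswapping} applies: in a tower $x$ generic over $k$, $y$ generic over $k(x)$, one may swap the roles, concluding $x$ is generic over $k(y)$. The key is to set up the right pair $(x,y)$ at each step.

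For the second conclusion, that $\mtrx{h}$ is a geometric generic point of $\GL_{m+1,k(\mtrx{a},\mtrx{b})}$: note first that $k(\mtrx{a},\mtrx{g},\mtrx{h}) = k(\mtrx{a},\mtrx{b},\mtrx{h})$ since $\mtrx{b}$ is determined by $(\mtrx{a},\mtrx{g},\mtrx{h})$ and conversely $\mtrx{g} = \mtrx{b}\mtrx{h}\mtrx{a}^{-1}$ wherever $\mtrx{a}$ has a one-sided inverse (more precisely, over the generic point this makes sense after identifying $M^{n+1,m+1}$ with an appropriate orbit; but actually $\mtrx{g}$ need not be recoverable from $(\mtrx{a},\mtrx{b})$ alone when $m<n$, so I would instead recover $\mtrx{g}$ from $(\mtrx{a},\mtrx{b},\mtrx{h})$, which suffices). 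So I would first show $\mtrx{h}$ is a geometric generic point of $\GL_{m+1,k(\mtrx{a},\mtrx{g})}$ — this is the hypothesis — and then upgrade the base field from $k(\mtrx{a},\mtrx{g})$ to $k(\mtrx{a},\mtrx{b})$. The cleaner route is to apply Lemma~\ref{transcendenceswapping} to the tower $X = \GL_{n+1}$ over $k(\mtrx{a})$ with point $\mtrx{g}$, and $Y = \GL_{m+1}$ over $k(\mtrx{a},\mtrx{g})$ with point $\mtrx{h}$: this gives that $\mtrx{g}$ is a geometric generic point of $\GL_{n+1,k(\mtrx{a},\mtrx{h})}$. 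Then $\mtrx{b} = \mtrx{g}\mtrx{a}\mtrx{h}^{-1}$ is the image of $\mtrx{g}$ under the map $\mtrx{g}\mapsto \mtrx{g}(\mtrx{a}\mtrx{h}^{-1})$, and since $\mtrx{a}\mtrx{h}^{-1} \in M^{n+1,m+1}(k(\mtrx{a},\mtrx{h}))$ (as $\mtrx{h}$ is invertible and $\mtrx{a}$ has maximal rank), Lemma~\ref{generic-mixing} applied over the field $k(\mtrx{a},\mtrx{h})$ shows $\mtrx{b}$ is a geometric generic point of $M^{n+1,m+1}_{k(\mtrx{a},\mtrx{h})}$. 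That is the first conclusion.

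For the claim that $\mtrx{h}$ is a geometric generic point of $\GL_{m+1,k(\mtrx{a},\mtrx{b})}$: having shown in the previous step that $\mtrx{g}$ is a geometric generic point of $\GL_{n+1,k(\mtrx{a},\mtrx{h})}$, I now have a tower where the ``first'' variable is $\mtrx{h}$ (generic over $k(\mtrx{a})$ — which follows since $\mtrx{h}$ is generic over the larger field $k(\mtrx{a},\mtrx{g})$, and genericity over a larger field implies genericity over a smaller one) and the ``second'' variable is $\mtrx{g}$ (generic over $k(\mtrx{a},\mtrx{h})$). Applying Lemma~\ref{transcendenceswapping} once more, with $X = \GL_{m+1}$ over $k(\mtrx{a})$ and point $\mtrx{h}$, and $Y = \GL_{n+1}$ over $k(\mtrx{a},\mtrx{h})$ and point $\mtrx{g}$, I conclude $\mtrx{h}$ is a geometric generic point of $\GL_{m+1,k(\mtrx{a},\mtrx{g})}$. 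But that is just the hypothesis again, not what I want — so instead I should use $\mtrx{b}$. The correct final move: $k(\mtrx{a},\mtrx{g},\mtrx{h}) = k(\mtrx{a},\mtrx{b},\mtrx{h})$, so $\mtrx{h}$ being a geometric generic point of $\GL_{m+1,k(\mtrx{a},\mtrx{b})}$ is equivalent to $\mtrx{h}$ having transcendence degree $\dim \GL_{m+1}$ over $k(\mtrx{a},\mtrx{b})$, i.e. $\mathrm{trdeg}_{k(\mtrx{a},\mtrx{b})} k(\mtrx{a},\mtrx{b},\mtrx{h}) = (m+1)^2$. Since $\mathrm{trdeg}_{k(\mtrx{a})}k(\mtrx{a},\mtrx{g},\mtrx{h}) = (n+1)^2 + (m+1)^2$ by the hypothesis tower, and $\mathrm{trdeg}_{k(\mtrx{a})}k(\mtrx{a},\mtrx{b}) = (n+1)(m+1)$ by the first conclusion, subtracting gives $\mathrm{trdeg}_{k(\mtrx{a},\mtrx{b})}k(\mtrx{a},\mtrx{b},\mtrx{h}) = (n+1)^2 + (m+1)^2 - (n+1)(m+1)$, which is \emph{not} $(m+1)^2$ in general — so a pure dimension count does not close it, and one genuinely needs that $\mtrx{h}$, \emph{over $k(\mtrx{a},\mtrx{b})$}, still ranges over a dense subset of $\GL_{m+1}$.

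\textbf{The main obstacle.} The delicate point is precisely this last claim. The resolution is to note that the map $\GL_{m+1,k(\mtrx{a},\mtrx{g})} \to M^{n+1,m+1}_{k(\mtrx{a},\mtrx{g})}$, $\mtrx{h} \mapsto \mtrx{g}\mtrx{a}\mtrx{h}^{-1} = \mtrx{b}$, is smooth (it is a torsor-like map, as $\GL_{m+1}$ acts freely on the right on $M^{n+1,m+1}$ with the orbit of $\mtrx{g}\mtrx{a}$ being all of $M^{n+1,m+1}$... actually the right $\GL_{m+1}$-action is free but not transitive when $m < n$); the precise statement I want is that the morphism $\mathrm{mult}\colon \GL_{m+1} \to M^{n+1,m+1}$, $\mtrx{h}\mapsto \mtrx{g}\mtrx{a}\mtrx{h}^{-1}$ (with $\mtrx{g}\mtrx{a}$ a fixed matrix of maximal rank over $k(\mtrx{a},\mtrx{g})$) is a locally closed immersion onto the $\GL_{m+1}$-orbit of $\mtrx{g}\mtrx{a}$, which is geometrically irreducible. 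Since $\mtrx{h}$ is a geometric generic point of $\GL_{m+1,k(\mtrx{a},\mtrx{g})}$, its image $\mtrx{b}$ is a geometric generic point of this orbit over $k(\mtrx{a},\mtrx{g})$, hence also over $k(\mtrx{a})$ (by Lemma~\ref{transcendenceswapping}-style reasoning or directly). Conversely $\mtrx{h}$ is recovered from $(\mtrx{b},\mtrx{g},\mtrx{a})$ by the inverse of this immersion, and one checks $\mtrx{h}$ is a geometric generic point of the fibre of $\GL_{m+1} \times \{\text{orbit}\} \to \{\text{orbit}\}$ over $\mtrx{b}$... This threatens to become circular, so the honest fix is: apply Lemma~\ref{transcendenceswapping} to the tower with first object $\GL_{m+1,k(\mtrx{a})}$ (point $\mtrx{h}$, which is generic there because $\mtrx{h}$ is already generic over the larger field $k(\mtrx{a},\mtrx{g})$) and second object $\GL_{n+1,k(\mtrx{a},\mtrx{h})}$ (point $\mtrx{g}$, generic there by the step above), obtaining $\mtrx{h}$ generic over $k(\mtrx{a},\mtrx{g})$ — tautology — but now also swap using $\mtrx{b}$ in place of $\mtrx{g}$: since over $k(\mtrx{a},\mtrx{h})$ the assignment $\mtrx{g}\mapsto\mtrx{b}=\mtrx{g}(\mtrx{a}\mtrx{h}^{-1})$ is, by Lemma~\ref{generic-mixing}, a dominant map sending the geometric generic point $\mtrx{g}$ to the geometric generic point $\mtrx{b}$ of $M^{n+1,m+1}_{k(\mtrx{a},\mtrx{h})}$, and this map is defined over $k(\mtrx{a},\mtrx{h})$, we get $k(\mtrx{a},\mtrx{h},\mtrx{b}) \subseteq k(\mtrx{a},\mtrx{h},\mtrx{g})$ with $\mtrx{b}$ generic; one then reruns Lemma~\ref{transcendenceswapping} with the tower $\GL_{m+1,k(\mtrx{a})}$ (point $\mtrx{h}$) followed by $M^{n+1,m+1}_{k(\mtrx{a},\mtrx{h})}$ (point $\mtrx{b}$, generic over $k(\mtrx{a},\mtrx{h})$ — this is the first conclusion, already proved) to conclude $\mtrx{h}$ is a geometric generic point of $\GL_{m+1,k(\mtrx{a},\mtrx{b})}$, as desired. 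The only thing to be careful about throughout is that every variety to which Lemma~\ref{transcendenceswapping} or Lemma~\ref{generic-mixing} is applied is geometrically irreducible and affine over the stated base field, which holds for $\GL_N$ and for $M^{n+1,m+1}$ over any field.
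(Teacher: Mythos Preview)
Your proposal is correct and, once the false starts are stripped away, follows exactly the paper's proof: apply Lemma~\ref{transcendenceswapping} to the pair $(\mtrx{g},\mtrx{h})$ over $k(\mtrx{a})$ to get $\mtrx{g}$ generic over $k(\mtrx{a},\mtrx{h})$; then Lemma~\ref{generic-mixing} over $k(\mtrx{a},\mtrx{h})$ with the fixed element $\mtrx{a}\mtrx{h}^{-1}\in M^{n+1,m+1}(k(\mtrx{a},\mtrx{h}))$ gives the first conclusion; finally apply Lemma~\ref{transcendenceswapping} to the pair $(\mtrx{h},\mtrx{b})$ over $k(\mtrx{a})$ to get the second. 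The detours through transcendence-degree counting and orbit geometry are unnecessary --- the clean three-step argument you land on at the very end is already complete.
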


\begin{proof} By Lemma \ref{transcendenceswapping}, because $\mtrx{g}$ is generic over $k(\mtrx{a})$ and $\mtrx{h}$ is generic over $k(\mtrx{a},\mtrx{g})$, $\mtrx{g}$ is a geometric generic point of $\GL_{n+1,k(\mtrx{a},\mtrx{h})}$.  

Because $\GL_{n+1, k}$ acts transitively on $M^{n+1,m+1}_k$ by left multiplication, applying a geometric generic point $\mtrx{g}$ of $\GL_{n+1,k(\mtrx{a},\mtrx{h})}$ to a point $ \mtrx{a} \mtrx{h}^{-1}$ of $M^{n+1,m+1}_{k(\mtrx{a},\mtrx{h})}$ produces a geometric generic point  $\mtrx{b}= \mtrx{g} \mtrx{a}\mtrx{h}^{-1}$ of $M^{n+1,m+1}_{k(\mtrx{a},\mtrx{h})}$. 

By Lemma \ref{transcendenceswapping}, because $\mtrx{h}$ is geometric
generic over $k(\mtrx{a})$ and $\mtrx{b}$ is geometric generic over
$k(\mtrx{a},\mtrx{h})$, it follows that $\mtrx{h}$ is geometric generic
over $k(\mtrx{a},\mtrx{b})$.
\end{proof}

\begin{defi} For $\lambda_1,\dots, \lambda_{n+1}$ in $k^{\times}$, let
  $\Diag(\lambda_1, \dots,\lambda_{n+1})$ be the diagonal matrix
  in~$\GL_{n+1}(k)$ whose diagonal entries are
  $\lambda_1,\dots,\lambda_{n+1}$. \end{defi}

\begin{lem}\label{final generic} Let $k$ be a field and let $n \geq 0$ be an integer. Let $\mtrx{g}$ be a geometric generic point of $  \GL_{n+1,k}$, and let $\lambda_1,\dots,\lambda_{n+1}$  be independent transcendentals over the field $k(\mtrx{g})$ generated by the matrix coefficients of $\mtrx{g}$. Then: 
\begin{enumerate}
 
\item $\mtrx{g} \Diag(\lambda_1,\dots,\lambda_{n+1}) \mtrx{g}^{-1}$ is a geometric generic point of $\GL_{n+1,k}$.

\item $\mtrx{g} \Diag(1,\dots,1,\lambda_{m+1},\dots,\lambda_{n+1})$ is a geometric generic point of $\GL_{n+1,k}$ for all $0 \leq m \leq n+1$. 
\end{enumerate}
\end{lem}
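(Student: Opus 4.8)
The plan is to deduce both statements from Lemma \ref{generic-mixing} and Lemma \ref{transcendenceswapping}, using the transitivity of the $\GL_{n+1}$-action and a ``transcendence-swapping'' argument to move the conjugating matrix $\mtrx{g}$ around. For part (1), I would first observe that the diagonal matrix $\mtrx{d} = \Diag(\lambda_1,\dots,\lambda_{n+1})$, whose entries are independent transcendentals over $k(\mtrx{g})$, is itself a geometric generic point of the diagonal torus $T \subset \GL_{n+1,k}$ defined over $k(\mtrx{g})$. The key point is then to show that conjugation $\mtrx{d} \mapsto \mtrx{g}\mtrx{d}\mtrx{g}^{-1}$ sends a generic point of $T_{k(\mtrx{g})}$ to a generic point of $\GL_{n+1,k}$. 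Concretely, the map $\GL_{n+1} \times T \to \GL_{n+1}$, $(\mtrx{g},\mtrx{d}) \mapsto \mtrx{g}\mtrx{d}\mtrx{g}^{-1}$ is dominant (its image contains all regular semisimple elements, which form a dense open set), so the image of the generic point of $\GL_{n+1} \times T$ is the generic point of $\GL_{n+1}$; and by Lemma \ref{transcendenceswapping} applied to $X = \GL_{n+1}$, $Y = T$, the pair $(\mtrx{g},\mtrx{d})$ with $\mtrx{g}$ generic over $k$ and $\mtrx{d}$ generic over $k(\mtrx{g})$ is a geometric generic point of $(\GL_{n+1} \times T)_k$. Pushing forward along the dominant conjugation map gives the claim.

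For part (2), write $\mtrx{d}' = \Diag(1,\dots,1,\lambda_{m+1},\dots,\lambda_{n+1})$. I would argue that $\mtrx{d}'$ is a geometric generic point of the subtorus $T' \subset \GL_{n+1,k}$ consisting of diagonal matrices whose first $m$ entries are $1$, defined over $k$ — this is immediate since its nontrivial entries $\lambda_{m+1},\dots,\lambda_{n+1}$ are algebraically independent over $k(\mtrx{g}) \supseteq k$, hence over $k$. Then $\mtrx{d}'$ is also generic over $k(\mtrx{g})$ by the same independence hypothesis, and by Lemma \ref{transcendenceswapping} (swapping the roles) $\mtrx{g}$ remains a geometric generic point of $\GL_{n+1, k(\mtrx{d}')}$. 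Now the map $\GL_{n+1} \to \GL_{n+1}$, $\mtrx{x} \mapsto \mtrx{x}\mtrx{d}'$, i.e. right translation by the fixed (over $k(\mtrx{d}')$) element $\mtrx{d}'$, is an isomorphism, so it carries the generic point $\mtrx{g}$ of $\GL_{n+1,k(\mtrx{d}')}$ to the generic point $\mtrx{g}\mtrx{d}'$ of $\GL_{n+1,k(\mtrx{d}')}$, which a fortiori is a geometric generic point of $\GL_{n+1,k}$. (The degenerate cases $m=0$ and $m=n+1$ are trivial: for $m=n+1$, $\mtrx{d}'$ is the identity and the statement is just that $\mtrx{g}$ is generic; for $m=0$ it reduces to a special case of the argument with $T' = T$.)

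The main obstacle, such as it is, is the surjectivity/dominance input in part (1): one must know that conjugates of a maximal torus fill up a dense subset of $\GL_{n+1}$. Over an algebraically closed field this is standard (regular semisimple elements are dense, and each is conjugate into $T$), but since we work with a geometric generic point over a possibly non-closed $k$ it is cleanest to phrase it schematically — the morphism $\GL_{n+1} \times T \to \GL_{n+1}$ is dominant because it is so after base change to $\bar k$, and dominance is insensitive to such base change. Everything else is a bookkeeping exercise in applying Lemmas \ref{transcendenceswapping} and \ref{generic-mixing} and the fact that dominant (resp. isomorphism) morphisms send generic points to generic points.
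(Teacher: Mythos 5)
Your proposal is correct and follows essentially the same route as the paper: both reduce each statement to the fact that a dominant morphism out of a product variety carries a geometric generic point (built from $\mtrx{g}$ and the independent transcendentals via Lemma~\ref{transcendenceswapping}) to a geometric generic point, with dominance of the conjugation map coming from the density of diagonalizable matrices and dominance in part (2) coming from transitivity of translation. Your phrasing of part (2) via the swap lemma plus the right-translation isomorphism is only a cosmetic repackaging of the paper's ``left multiplication acts transitively'' argument.
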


\begin{proof} Because $(\mtrx{g}, \lambda_1,\dots,\lambda_{n+1})$ is a  geometric generic point of $\GL_{n+1, k} \times \Gg_{m, k}^{n+1}$, to check that an element of $\GL_{n+1,k(\mtrx{g},\lambda_1,\dots,\lambda_{n+1})}$, expressed as a function of the tuple $(\mtrx{g}, \lambda_1,\dots,\lambda_{n+1})$ is a geometric generic point of $\GL_{n+1,k}$, it suffices to check that the function in question is a dominant map from $\GL_{n+1,k} \times \Gg_{m, k}^{n+1}$ to  $\GL_{n+1,k}$. For $\mtrx{g} \Diag(1,\dots,1,\lambda_{m+1},\dots,\lambda_{n+1})$, dominance is simply the fact that $\GL_{n+1, k}$ acts transitively on itself by left multiplication and for~$\mtrx{g} \Diag(\lambda_1,\dots,\lambda_{n+1}) \mtrx{g}^{-1}$ dominance is the well-known fact that generic matrices are diagonalizable.
\end{proof}

\begin{lem}\label{generic-intersection} Let $k$ be a field and let $m$
  and $m'$ be integers such that $0 \leq m,m'\leq n$ and $0 \leq m+m'-n$. Let $\mtrx{a}$
  and $\mtrx{a}'$ be independent geometric generic points of
  $M^{n+1,m+1}_k$ and $M^{n+1,m'+1}_k$ defined over a field $k'$ (i.e. such that $\mtrx{a}'$ is a geometric generic of $M^{n+1,m'+1}_{k(\mtrx{a})}$). Then there exists a geometric generic
  point $\mtrx{b}$ of $M^{n+1, m+m'-n+1}_k$ such that the image of
  $l_{\mtrx{b}}$ is the intersection of the images of $l_{\mtrx{a}}$
  and $l_{\mtrx{a}'}$ and such that
  $l_{\mtrx{a}*} \Ql \otimes l_{\mtrx{a}'*} \Ql = l_{\mtrx{b}*}
  \Ql$. \end{lem}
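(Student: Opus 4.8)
The plan is to reduce the statement to a genericity computation about linear subspaces of $\Pp^n$, combined with the already-established behavior of characteristic cycles under generic pullback. First I would set $r = m+m'-n \geq 0$ and explain why a generic point $\mtrx{b}$ of $M^{n+1,r+1}_k$ parametrizing the intersection exists. Concretely, the image of $l_{\mtrx{a}}$ is a linear subspace $L$ of dimension $m$, the image of $l_{\mtrx{a}'}$ is a linear subspace $L'$ of dimension $m'$, and for independent generic choices these meet transversely in a linear subspace $L\cap L'$ of the expected dimension $r$ (this is the content of the intersection-theoretic calculation already used in the proof of Lemma~\ref{constantsheafbasis}, where a generic $i$-subspace and a generic $j$-subspace with $i+j = n$ meet transversely in a point; here one applies the same principle with $i = m$, $j = m'$). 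Then I would argue that the matrix $\mtrx{b}$ whose column span is $L\cap L'$ can be taken to be a geometric generic point of $M^{n+1,r+1}_k$: the point $(\mtrx{a},\mtrx{a}')$ is generic in the product $M^{n+1,m+1}_k \times M^{n+1,m'+1}_k$, and the rational map sending a pair of matrices of full rank in general position to (a choice of basis of) the intersection of their column spans is a dominant map to $M^{n+1,r+1}_k$, since $\GL_{n+1}$ acts transitively on $(r{+}1)$-dimensional subspaces and any fixed such subspace arises this way. Genericity of $\mtrx{b}$ then follows exactly as in Lemmas~\ref{generic-mixing} and~\ref{genlem2}.

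Next I would establish the sheaf-theoretic identity $l_{\mtrx{a}*}\Ql \otimes l_{\mtrx{a}'*}\Ql = l_{\mtrx{b}*}\Ql$. Since each $l_{\mtrx{a}}$ is a closed immersion of a linear subspace, $l_{\mtrx{a}*}\Ql$ is the constant sheaf $\Ql$ on $L$ extended by zero to $\Pp^n$, i.e.\ $(i_L)_*\Ql$ where $i_L\colon L\hookrightarrow \Pp^n$; similarly for $L'$ and $L\cap L'$. The tensor product $(i_L)_*\Ql \otimes (i_{L'})_*\Ql$ is computed by base change / the fact that for closed immersions the derived tensor product of structure-type sheaves is the extension by zero from the scheme-theoretic intersection, \emph{provided} the intersection is transverse (equivalently, of the expected dimension, so that there are no higher $\mathrm{Tor}$ contributions). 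Here transversality of $L$ and $L'$ — which holds for generic $\mtrx{a},\mtrx{a}'$ — gives precisely $L \cap L'$ as a reduced linear subspace of dimension $r$, with the derived tensor product concentrated in degree $0$ and equal to $(i_{L\cap L'})_*\Ql = l_{\mtrx{b}*}\Ql$. I would phrase this cleanly via the projection formula: $l_{\mtrx{a}*}\Ql \otimes l_{\mtrx{a}'*}\Ql \simeq l_{\mtrx{a}'*}(l_{\mtrx{a}'}^* l_{\mtrx{a}*}\Ql)$, and then $l_{\mtrx{a}'}^* l_{\mtrx{a}*}\Ql$ is by proper base change the extension by zero of the constant sheaf on $l_{\mtrx{a}'}^{-1}(L) = L\cap L' \subset \Pp^{m'}$, whose image under $l_{\mtrx{a}'*}$ is $l_{\mtrx{b}*}\Ql$. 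The only subtlety to check is that the scheme-theoretic preimage $l_{\mtrx{a}'}^{-1}(L)$ is reduced of dimension $r$, which is again transversality.

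The step I expect to be the main obstacle is making the genericity of $\mtrx{b}$ rigorous while simultaneously keeping track of which base field $\mtrx{b}$ is generic over, since the lemma only asserts $\mtrx{b}$ is a geometric generic point of $M^{n+1,r+1}_k$ (over $k$, not over $k(\mtrx{a})$ or $k(\mtrx{a}')$), and there is a genuine choice involved in picking a matrix representative $\mtrx{b}$ from the subspace $L\cap L'$ — different representatives differ by right multiplication by $\GL_{r+1}$, which does not affect $l_{\mtrx{b}*}\Ql$ but must be handled so that \emph{some} representative is generic over $k$. The clean way around this is: the map $(\mtrx{a},\mtrx{a}') \mapsto L\cap L'$ to the Grassmannian $\mathrm{Gr}(r+1, n+1)$ is dominant (by transitivity of $\GL_{n+1}$), so $L\cap L'$ is a geometric generic point of the Grassmannian over $k$; then choosing $\mtrx{b}$ to be the generic point of the universal $(r{+}1)$-plane bundle over a neighborhood of that generic point — or equivalently appealing directly to Lemma~\ref{genlem2} with $\mtrx{g}$ a further generic element of $\GL_{n+1}$ applied to a fixed reference $(r{+}1)$-subspace — produces a genuine geometric generic point of $M^{n+1,r+1}_k$ with the required image. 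Once the bookkeeping of fields is arranged this way, the rest is transversality of generic linear subspaces together with the projection formula, both routine.
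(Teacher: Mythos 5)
Your proposal is correct and its skeleton matches the paper's: both arguments reduce the lemma to (i) generic linear subspaces of dimensions $m$ and $m'$ in $\Pp^n$ meeting in the expected dimension $r=m+m'-n$, (ii) a dominance argument showing that a matrix $\mtrx{b}$ parametrizing the intersection is generic, and (iii) the observation that tensoring extensions by zero of constant sheaves on linear subspaces yields the constant sheaf on the intersection. Where you genuinely diverge is step (ii), which is the crux. The paper constructs $\mtrx{b}$ explicitly as $\mtrx{a}$ applied to a row-reduced basis of the kernel of the linear map $(x,y)\mapsto l_{\mtrx{a}}(x)-l_{\mtrx{a}'}(y)$, and proves dominance of this specific rational map by specializing to the case where the first $r+1$ columns of $\mtrx{a}$ and $\mtrx{a}'$ coincide and are otherwise generic, so that $\mtrx{b}$ consists of exactly those columns; this keeps $\mtrx{b}$ defined over the same field $k'$. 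You instead factor through the Grassmannian and lift. That works, but note that the naive lift (e.g.\ a column-echelon representative of $L\cap L'$) lies in a proper closed subvariety of $M^{n+1,r+1}$ and is \emph{not} generic, so you must make a further generic choice in the $\GL_{r+1}$-fiber over $L\cap L'$ (enlarging $k'$, which is harmless for the use in Corollary~\ref{tensor1} but should be stated), whereas the paper's explicit construction avoids this. Two small corrections: the worry about higher $\mathrm{Tor}$ is vacuous for $\Ql$-coefficients, since the stalkwise tensor product of extensions by zero of $\Ql$ is always the constant sheaf on the set-theoretic intersection — transversality is needed only to pin down the dimension of that intersection; and the alternative you mention of applying Lemma~\ref{genlem2} with a generic $\mtrx{g}\in\GL_{n+1}$ to a fixed reference subspace does not by itself produce a $\mtrx{b}$ whose image is the \emph{given} $L\cap L'$, so that parenthetical should be dropped or reworked.
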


\begin{proof} View $\mtrx{a}$ and $\mtrx{a}'$ as matrices of elements defined over $k'$, and $l_\mtrx{a}(x) - l_{\mtrx{a}'}(y)$ as a linear map from $k'^{ m+1 + m'+1}$ to $k'^{n+1}$. Because $\mtrx{a}$ and $\mtrx{a}'$ are generic  and $m+ m' -n \geq 0$, this map is surjective, with kernel of dimension $m+m' +1-n$. We can choose a basis for this kernel by row reduction, and then let the columns of $\mtrx{b}$ be $\mtrx{a}$ (equivalently $\mtrx{a}')$ applied to the vectors in this basis.

We can check that the point $\mtrx{b}$ is generic by restricting to the special case where the first $m +m'+1-n$ columns of $\mtrx{a}$ agree with the first $m+m'+1-n$ columns of~$\mtrx{a}'$, and the other columns are generic. In this case, $\mtrx{b}$ will consist of exactly these columns. Since some specialization of generic matrices maps to a generic matrix, generic matrices also map to a generic matrix.

The last claim, on pushforwards, follows from the definition and the calculation of the intersection of the image. \end{proof}

\begin{lem}\label{genericperversity} Let $A$ and $B$ be perverse sheaves on $\Pp^n_k$. For a geometric generic point $\mtrx{g} \in \GL_{n+1}(k')$, the object $A \otimes l_{\mtrx{g}}^* B[-n]$ is perverse. \end{lem}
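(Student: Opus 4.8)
The plan is to present $A\otimes l_{\mtrx{g}}^{*}B$ as a pullback of the external product $A\boxtimes B$ along a twisted diagonal, to check that for generic $\mtrx{g}$ this pullback is transversal to the singular support, and to conclude by the fact that a properly transversal pullback of a perverse sheaf is perverse up to the relative-dimension shift. First I would record the identities $A\otimes l_{\mtrx{g}}^{*}B=\Delta^{*}\bigl(A\boxtimes l_{\mtrx{g}}^{*}B\bigr)$, where $\Delta\colon\Pp^{n}\to\Pp^{n}\times\Pp^{n}$ is the diagonal, and $\SS\bigl(A\boxtimes l_{\mtrx{g}}^{*}B\bigr)=\SS(A)\times\SS(l_{\mtrx{g}}^{*}B)$, where $\SS(l_{\mtrx{g}}^{*}B)$ is the image of $\SS(B)$ under the automorphism of $T^{*}\Pp^{n}$ induced by the automorphism $l_{\mtrx{g}}$. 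By the remark following Theorem~\ref{geneufor}, to know that $\Delta$ is properly $\SS(A)\times\SS(l_{\mtrx{g}}^{*}B)$-transversal it suffices to verify the two hypotheses of that theorem for the pair $(A,l_{\mtrx{g}}^{*}B)$, namely that $\SS(A)\cap\SS(l_{\mtrx{g}}^{*}B)$ is supported on the zero section and that every irreducible component of $\SS(A)\times_{\Pp^{n}}\SS(l_{\mtrx{g}}^{*}B)$ has dimension at most $n$.

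Next I would establish both conditions for $\mtrx{g}$ a geometric generic point of $\GL_{n+1}$, using that $\PGL_{n+1}$ acts on $T^{*}\Pp^{n}$ and transitively on the projectivized cotangent bundle $\Pp(T^{*}\Pp^{n})$, which is the incidence variety of pairs (point, hyperplane through it) and has dimension $2n-1$. Since $\SS(A)$ and $\SS(B)$ are conical of dimension $n$, their images in $\Pp(T^{*}\Pp^{n})$ have dimension at most $n-1$, so by Kleiman's transversality theorem the intersection of the image of $\SS(A)$ with the generic translate by $\mtrx{g}$ of the image of $\SS(B)$ is empty (its expected dimension $(n-1)+(n-1)-(2n-1)$ is negative), which is precisely the first condition. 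For the second, I would stratify $\Pp^{n}$ by the fibre dimensions of $\SS(A)$ and of $\SS(B)$ over it: since $\SS(A)$ has dimension $n$, the locus $\{x:\dim\SS(A)_{x}\geq d\}$ has dimension at most $n-d$, and likewise for $B$; applying Kleiman's theorem on $\Pp^{n}$ (on which $\PGL_{n+1}$ acts transitively) to these loci shows that, for generic $\mtrx{g}$, the locus where $\SS(A)$ has fibre dimension $\geq d$ and $\SS(l_{\mtrx{g}}^{*}B)$ has fibre dimension $\geq d'$ has dimension at most $n-d-d'$; since the fibre of $\SS(A)\times_{\Pp^{n}}\SS(l_{\mtrx{g}}^{*}B)$ over such a point has dimension at most $d+d'$, every component of the fibre product has dimension at most $n$. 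There are only finitely many relevant pairs $(d,d')$, so a single dense open subset of $\GL_{n+1}$ works, and the geometric generic point $\mtrx{g}$ lies in it.

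It remains to transfer perversity. The external product $A\boxtimes B$ is perverse on the smooth projective variety $\Pp^{n}\times\Pp^{n}$ of dimension $2n$: reducing along composition series and using that perverse sheaves form an abelian category closed under extensions, this comes down to the case $A=\mathrm{IC}_{Z}(\mcL)$, $B=\mathrm{IC}_{W}(\mathcal{M})$, where $A\boxtimes B=\mathrm{IC}_{Z\times W}(\mcL\boxtimes\mathcal{M})$ is again an intersection complex. Now $\Delta$ is properly $\SS(A)\times\SS(l_{\mtrx{g}}^{*}B)$-transversal, hence non-characteristic for $A\boxtimes l_{\mtrx{g}}^{*}B$ and for its Verdier dual (the singular support being self-dual), so one has the purity isomorphism $\Delta^{!}\cong\Delta^{*}[-2n](-n)$ applied to each. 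Combining this with the description of the perverse $t$-structure by support and cosupport conditions and a dimension count — which again uses that $\Delta^{*}\bigl(\SS(A)\times\SS(l_{\mtrx{g}}^{*}B)\bigr)$ has pure dimension $n$, so that $\Delta$ meets the support of each cohomology sheaf of $A\boxtimes l_{\mtrx{g}}^{*}B$ and of its dual in the expected codimension — one gets that $\Delta^{*}\bigl(A\boxtimes l_{\mtrx{g}}^{*}B\bigr)$ lies in perverse degree exactly $-n$. Therefore $A\otimes l_{\mtrx{g}}^{*}B[-n]=\Delta^{*}\bigl(A\boxtimes l_{\mtrx{g}}^{*}B\bigr)[-n]$ is perverse.

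I expect the main obstacle to be this last step: the statement that a properly $\SS(\mathcal{F})$-transversal pullback of a perverse sheaf $\mathcal{F}$, shifted by the relative dimension, is again perverse. Either it is invoked as a known consequence of the Beilinson--Saito theory of the singular support, or it must be proved by hand via the non-characteristic purity isomorphism together with the dimension count for the supports of the cohomology sheaves of the pullback and of its dual — and that dimension count is exactly where the transversality established in the second paragraph is consumed. By contrast, the genericity arguments via Kleiman's theorem are routine, though one should note that its dimension bound is valid in all characteristics and that the geometric generic point $\mtrx{g}$, being generic over a field of definition of $\SS(A)$ and $\SS(B)$, indeed lies in the relevant dense open subsets of $\GL_{n+1}$.
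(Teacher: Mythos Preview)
Your approach is correct but takes a substantially more elaborate route than the paper. The paper's proof is three lines: it observes that the family $A\otimes l_{g}^{*}B$ on $\Pp^{n}\times\GL_{n+1}$ is the pullback of the perverse sheaf $A\boxtimes B$ along the smooth morphism $(x,g)\mapsto(x,l_{g}(x))$ of relative dimension $(n+1)^{2}-n$ (smoothness because $\GL_{n+1}$ acts transitively on $\Pp^{n}$), so this family is perverse after shifting by $(n+1)^{2}-n$ by~\cite[4.2.5]{BBDG}; then restriction to the fiber over a geometric generic point of $\GL_{n+1}$, being a limit of open immersions, preserves perversity after shifting by $-(n+1)^{2}$. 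The net shift is $-n$, and no singular supports, no Kleiman, and no Beilinson--Saito are needed.

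Your route via the twisted diagonal and proper $\SS$-transversality does work: the two transversality conditions you verify are exactly those established later in the proof of Corollary~\ref{projectiveeulerformula}, and the perversity-preservation under properly transversal pullback is the content of \cite[Cor.~8.10]{saito1} (which the paper itself invokes, for a different purpose, in Lemma~\ref{lem-indepl}). So nothing is wrong. But you correctly flagged the final step as the sticking point, and it is precisely this step that the paper sidesteps entirely by factoring the pullback as (smooth morphism)$\circ$(generic-fiber restriction), each of which preserves perversity for elementary reasons. The lesson is that when the ``genericity'' comes from a group acting transitively, spreading out over the group and using smooth pullback is often far cheaper than proving transversality at a fixed generic translate.
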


\begin{proof} The external product $A \boxtimes B$ is perverse, and the sheaf $A \otimes l_{\mtrx{g}}^* B$ on $\Pp^n \times \GL_{n+1}$ is the pullback of $A \boxtimes B$ by a smooth morphism of relative dimension $(n+1)^2-n$, so by \cite[4.2.5]{BBDG} is perverse after shifting by $(n+1)^2-n$. We then take the pullback to the fiber over a geometric generic point of $\GL_{n+1}$, which preserves perversity after shifting by $-(n+1)^2$ by definition of perversity. \end{proof}

\begin{lem}\label{generic-dual} Let $\mtrx{a}$ be a geometric generic point of $M^{n+1,m+1}_k$. Let $A$ be an object of~$\Der(\Pp^n_k)$. 
  Then $\dual( l_{\mtrx{a}}^* A) = l_{\mtrx{a}}^* \dual(A)[2(m-n)]$,
  and if $A$ is perverse, then $l_{\mtrx{a}}^* A [m-n]$ is perverse.
\end{lem}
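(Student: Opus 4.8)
The plan is to exploit the fact that a generic linear embedding $l_{\mtrx a}\colon \Pp^m_{k'}\to\Pp^n_{k'}$ is, in a suitable sense, transversal to the data defining $A$, so that pullback along it commutes with Verdier duality up to the expected shift, and sends perverse sheaves to perverse sheaves after the shift $[m-n]$. First I would record the general identity $\dual(h^*A)=h^!\dual(A)$ valid for any morphism $h$ of varieties, so the content of the first assertion is the comparison $l_{\mtrx a}^! A \simeq l_{\mtrx a}^* A[2(m-n)]$. For a regular closed immersion of codimension $c$ between smooth varieties (which a linear embedding of projective spaces is, with $c=n-m$), one has the purity isomorphism $l_{\mtrx a}^! \simeq l_{\mtrx a}^*[-2c](-c)$; since complexity and all the statements here are geometric, I can ignore the Tate twist, and this gives exactly $l_{\mtrx a}^! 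A\simeq l_{\mtrx a}^* A[2(m-n)]$. Combining with $\dual(l_{\mtrx a}^*A)=l_{\mtrx a}^!\dual(A)$ yields the first claim; here genericity of $\mtrx a$ plays no role at all, only that $l_{\mtrx a}$ is a linear (hence regular closed) embedding between smooth varieties.

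For the perversity statement I would argue as follows. Factor a generic $l_{\mtrx a}$ as the composition of $n-m$ generic hyperplane sections, so by induction it suffices to treat the case $m=n-1$, i.e. the pullback of a perverse sheaf $A$ on $\Pp^n$ to a generic hyperplane $H\cong\Pp^{n-1}$. By the generic transversality theorem of Beilinson–Saito recalled in Section~\ref{sec-saito} (a generic hyperplane is properly $\SS(A)$-transversal, since the singular support $\SS(A)$ is a fixed closed conical subset of dimension $n$ and a generic hyperplane avoids the bad locus in the projectivized cotangent directions), the inclusion $H\hookrightarrow\Pp^n$ is properly $\SS(A)$-transversal. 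Saito's theorem then gives that $i_H^* A[-1]\simeq i_H^! A[1]$ and, more to the point, that $i_H^*A[-1]$ is perverse when $A$ is: this is essentially the statement that for an $\SS(A)$-transversal embedding the pullback (suitably shifted) of a perverse sheaf is perverse, which follows from the characterization of perversity in terms of dimensions of supports of cohomology sheaves together with the control that transversality gives on how $\SS$ pulls back. Thus $i_H^*A[-1]$ is perverse on $\Pp^{n-1}$; iterating $n-m$ times gives that $l_{\mtrx a}^*A[m-n]$ is perverse. Alternatively, and perhaps more cleanly, one can use Artin's affine vanishing theorem: writing $l_{\mtrx a}^*A[m-n]$ as a successive generic hyperplane pullback and using that the complement of a generic hyperplane in the support is affine, the standard perverse $t$-exactness of $j^*$ and $j_*$ for affine $j$ controls the cohomological amplitude, and the generic-transversality input pins down the dimension estimates on the nose.

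The main obstacle, and the step I would be most careful about, is the perversity claim in the borderline cases — specifically making sure the dimension estimates defining perversity are satisfied with equality rather than just an inequality, i.e. that pulling back to a generic hyperplane drops the dimension of the support of every cohomology sheaf $\mathcal H^j(A)$ by exactly one (as opposed to the trivial bound "by at most one"), and symmetrically for the costalks / the dual. This is exactly where genericity of $\mtrx a$ is essential: for a special hyperplane the support could be contained in $H$, destroying perversity, whereas for a generic one Bertini-type transversality guarantees the support meets $H$ properly. I would handle this by invoking the proper $\SS(A)$-transversality of a generic hyperplane (Beilinson–Saito, as cited) which simultaneously encodes the transversality needed for both $A$ and $\dual(A)$, since $\SS(\dual A)=\SS(A)$. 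Once transversality is in hand, the perversity of $l_{\mtrx a}^*A[m-n]$ is formal from Saito's pullback compatibility and the $t$-exactness statements, and the duality identity of the first half follows from purity as above; no separate argument is needed for the last clause because it is just the conjunction of the two displayed facts specialized appropriately.
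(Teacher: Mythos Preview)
Your duality argument contains a genuine error. You claim that for a regular closed immersion $i$ of codimension $c$ between smooth varieties there is a purity isomorphism $i^! \simeq i^*[-2c](-c)$ \emph{as functors}, and that therefore genericity of $\mtrx a$ plays no role. This is false: purity gives $i^!\Ql \simeq \Ql[-2c](-c)$, but not $i^! A \simeq i^* A[-2c](-c)$ for arbitrary $A$. A counterexample already in dimension one: take $n=1$, $m=0$, and $A=\delta_p$ the skyscraper at a closed point $p\in\Pp^1$, which is self-dual. If $l_{\mtrx a}(\Pp^0)=\{p\}$ (a non-generic choice), then $l_{\mtrx a}^* A=\Ql$ on a point, so $\dual(l_{\mtrx a}^* A)=\Ql$, whereas $l_{\mtrx a}^*\dual(A)[2(m-n)]=\Ql[-2]$. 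The identity $i^! A \simeq i^* A[-2c](-c)$ holds precisely when $i$ is non-characteristic (transversal) for $A$ --- which is exactly what genericity of $\mtrx a$ provides. So genericity is essential for the duality claim as well, contrary to your explicit assertion.

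The paper avoids this issue by a different factorisation: it writes $l_{\mtrx a}$ as the composite of the matrix-multiplication map $r\colon \Pp^m\times M^{n+1,m+1}\to\Pp^n$, which is \emph{smooth} of relative dimension $(m+1)(n+1)+m-n$, with the pullback to the geometric generic fibre over $\mtrx a\in M^{n+1,m+1}$. For a smooth morphism $r$ one genuinely has a functor isomorphism $r^!\simeq r^*(d)[2d]$, and pullback to a geometric generic fibre is a limit of open immersions, for which $j^!=j^*$. Both the duality formula and the perversity statement then fall out simultaneously from \cite[4.2.5]{BBDG}, with no transversality theorem needed. Your route to perversity via Beilinson--Saito transversality is correct in outline, and the same transversality input would also repair your duality argument; but it is a considerably heavier path than the paper's two-line smooth-pullback factorisation.
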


\begin{proof} We view the linear map $l_{\mtrx a}$ as the composition of the map $x \mapsto (x,\mtrx{a})$ from~$\Pp^m$ to
  \hbox{$\Pp^m \times M^{m+1,n+1}$} with the matrix multiplication map
  $\Pp^m \times M^{m+1,n+1} \to \Pp^n$. Observe that matrix multiplication 
  is a smooth morphism of relative dimension \hbox{$(m+1)(n+1) +m-n$} and the pullback
  to the geometric generic point is an inverse limit of open
  immersions. As in the previous lemma, the former preserve perversity
  after shifting by $(m+1)(n+1) +m-n$, and the latter after shifting by
  $-(m+1)(n+1)$. The statements for duals follow from the equality
  $f^!=f^*(d)[2d]$ for a smooth morphism $f$ of relative dimension
  $d$.
\end{proof}

\begin{lem}\label{lem-indepl}
  Let $0 \leq m \leq n-1$ be an integer, and let $A$ be a perverse sheaf on~$\Pp^n_k$. There exists a dense open subset
  $U\subset M^{n+1,m+1}_k$ such that, for any geometric generic point
  $\mtrx{a}$ of $M^{n+1,m+1}_k$ defined over an algebraically closed field $k'$ and for any $\mtrx{b}\in U(\bar{k})$, the complex
  $l_\mtrx{b}^*A[m-n]$ is perverse and satisfies
  \begin{displaymath}
    h^i(\Pp_{k'}^{m},l_\mtrx{a}^*A)=h^i(\Pp^{m}_{\bar{k}},l_\mtrx{b}^*A)
  \end{displaymath}
  for every $i\in \Zz$.
\end{lem}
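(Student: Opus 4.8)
The statement bundles a perversity assertion together with a Betti-number equality. The Betti-number equality, for $\mtrx{b}$ in a dense open $U''\subset M^{n+1,m+1}_k$, is exactly the content of Lemma~\ref{lem-complexity-open-set}. So the plan is to produce a dense open $U'\subset M^{n+1,m+1}_k$ over which $l_\mtrx{b}^*A[m-n]$ is perverse, and then take $U=U'\cap U''$.

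For the perversity I would spread out from the geometric generic point, where $l_\mtrx{a}^*A[m-n]$ is perverse by Lemma~\ref{generic-dual}. Factor $l_\mtrx{b}$ through the matrix-multiplication map $\mathrm{mult}\colon\Pp^m\times M^{n+1,m+1}\to\Pp^n$, which is smooth of relative dimension $d=(m+1)(n+1)+m-n$, as in the proofs of Lemmas~\ref{lem-complexity-open-set} and~\ref{generic-dual}. Then $P:=\mathrm{mult}^*A[d]$ is perverse on $Y:=\Pp^m\times M^{n+1,m+1}$, and therefore so is $Q:=\dual_Y(P)$; writing $p\colon Y\to M^{n+1,m+1}$ for the projection and $i_\mtrx{b}\colon Y_\mtrx{b}=\Pp^m_{\bar k}\hookrightarrow Y$ for the inclusion of the fibre over $\mtrx{b}\in M^{n+1,m+1}(\bar k)$, one has $l_\mtrx{b}^*A[m-n]=(i_\mtrx{b}^*P)[-N]$ with $N=(m+1)(n+1)=\dim M^{n+1,m+1}$, and $(i_{\bar\eta}^*P)[-N]=l_\mtrx{a}^*A[m-n]$ over the geometric generic point $\bar\eta$. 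I would then verify the two support conditions defining the perversity of $(i_\mtrx{b}^*P)[-N]$ by reducing each to the generic fibre. The condition in ${}^{\mathfrak p}D^{\leq 0}$ is the easy one: since $i_\mtrx{b}^*$ is exact on étale sheaves it commutes with the cohomology sheaves $\mathcal H^j$ and with supports, so by generic flatness applied to the finitely many nonzero sets $\mathrm{supp}\,\mathcal H^j(P)$ there is a dense open on which $\dim\mathrm{supp}\,\mathcal H^j(i_\mtrx{b}^*P)$ is constant in $\mtrx{b}$ and equal to its value at $\bar\eta$, where it is $\leq -j$ because $(i_{\bar\eta}^*P)[-N]$ is perverse. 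The condition in ${}^{\mathfrak p}D^{\geq 0}$ — the support condition on the cohomology sheaves of $\dual_{Y_\mtrx{b}}((i_\mtrx{b}^*P)[-N])$ — requires Deligne's generic base change theorem: over a dense open of $M^{n+1,m+1}$ the complex $\mathrm{mult}^*A$ is universally locally acyclic relative to $p$, and there $\dual_{Y_\mtrx{b}}(i_\mtrx{b}^*P)=(i_\mtrx{b}^*Q)(-N)[-2N]$, so the same generic-flatness argument, now applied to the sets $\mathrm{supp}\,\mathcal H^j(Q)$, controls these supports via the generic fibre, where $Q|_{Y_{\bar\eta}}[-N]=\dual(l_\mtrx{a}^*A[m-n])$ is perverse. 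Hence both conditions hold on a dense open $U'$, and we take $U=U'\cap U''$.

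The only step carrying real content is the universal local acyclicity input. The ${}^{\mathfrak p}D^{\geq 0}$ half of perversity involves the Verdier dual, and $\dual$ does not commute with restriction to a closed fibre $Y_\mtrx{b}$, so generic flatness alone will not do: one needs universal local acyclicity of $\mathrm{mult}^*A$ over a dense open — equivalently, that a generic linear subspace $\Pp^m\subset\Pp^n$ is non-characteristic for $A$ — in order to replace $i_\mtrx{b}^!$ by $i_\mtrx{b}^*$ up to the predictable Tate twist and shift $(-N)[-2N]$, at which point the dual is again governed by the generic fibre. The ${}^{\mathfrak p}D^{\leq 0}$ half is straightforward and uses only generic flatness. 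Apart from this, the work is bookkeeping of the shift $[-N]$ relating $P$ on $Y$ to $l_\mtrx{b}^*A[m-n]$ on the fibre, and of the matching $(-N)[-2N]$ in the duality identity, both governed by $\dim M^{n+1,m+1}=N$.
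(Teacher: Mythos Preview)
Your proof is correct, and the overall architecture (factor $l_{\mtrx b}$ through the smooth matrix-multiplication map, invoke Lemma~\ref{lem-complexity-open-set} for the Betti numbers, and use local acyclicity over a dense open for the perversity) matches the paper. The difference lies in how the perversity on the fibre is extracted from local acyclicity. The paper passes to a $\overline{\mathbf Z}_\ell$-model of $\mathrm{mult}^*A[(m{+}1)(n{+}1)-(n{-}m)]$, takes $U$ to be an open set over which its reduction mod~$\ell$ is locally acyclic, and then invokes Saito's result \cite[Cor.\,8.10]{saito1}: this says directly that the inclusion of the fibre is $K$-transversal, so pullback preserves perversity up to the predicted shift. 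You instead stay with $\Ql$-coefficients, invoke Deligne's generic ULA, and verify the two halves of perversity by hand: generic flatness (really, semicontinuity of fibre dimension) on the supports of $\mathcal H^j(P)$ for ${}^{\mathfrak p}D^{\leq 0}$, and the ULA identity $i_{\mtrx b}^{!}\simeq i_{\mtrx b}^{*}(-N)[-2N]$ to reduce ${}^{\mathfrak p}D^{\geq 0}$ to the same argument applied to $Q=\dual_Y(P)$. Your route is more elementary in that it avoids the characteristic-cycle machinery and the detour through integral and torsion coefficients; the paper's route is shorter once Saito's theorem is available, since transversality gives perversity preservation in one stroke rather than two separate support checks.
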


\begin{proof}
  Similarly to the proof of Lemma \ref{generic-dual}, we view the linear
  map $l_\mtrx{b}$ as the composition of the
  map~$x \mapsto (x,\mtrx{b})$ from $\Pp^m$ to
  $\Pp^m \times M^{m+1,n+1}$ with the matrix multiplication
  map~\hbox{$\mathrm{mult} \colon \Pp^m \times M^{m+1,n+1} \to
    \Pp^n$}. Since $\mathrm{mult}$ is smooth, the pullback of $A$ along it is
  perverse up to shift by $(m+1)(n+1)- (n-m)$, so it remains to show
  that, for any perverse $\Ql$-sheaf
  $\mathrm{mult}^* A [ (m+1)(n+1) - (n-m)]$ on
  $\Pp^m \times M^{m+1,n+1}$, for $\mtrx{b}$ in some dense open subset
  $U$, the pullback to the fiber over $\mtrx{b}$ is perverse up to shift
  by $(m+1)(n+1)$.
  
  To do this, we choose a $\overline{\mathbf Z}_{\ell}$-structure on
  $\mathrm{mult}^* A [ (m+1)(n+1) - (n-m)]$, obtaining a perverse
  $\overline{\mathbf Z}_{\ell}$-sheaf $K$, and we let $U$ be an open set
  over which $K\otimes\bar{\mathbf{F}}_{\ell}$ is locally acyclic. For
  any point $\mtrx{b}\in U(\bar{k})$, after passing to a further open subset we
  may assume the closure of $\mtrx{b}$ is smooth. Then, by
  \cite[Cor.\,8.10]{saito1}, the immersion $x \mapsto (x,\mtrx{b})$ is
  $K$-transversal and thus the pullback along this immersion is a
  shifted perverse sheaf, as desired.
  
  The second property is granted by Lemma \ref{lem-complexity-open-set}.
\end{proof}

We need the following corollary of Theorem \ref{geneufor}.

\begin{cor}\label{projectiveeulerformula} Let $A$ and $B$ be 
  objects of $\Der(\Pp^{n}_k)$. For each geometric generic point $\mtrx{g}$ of $\GL_{n+1,k}$ over an algebraically closed field $k'$, the following equality holds: 
  \begin{displaymath} 
  \chi( \Pp^{n}_{k'}, A \otimes l_{\mtrx{g}}^* B) =(-1)^n
    \overline{\CC(A)} \cdot \overline{\CC(B)}.
  \end{displaymath}
\end{cor}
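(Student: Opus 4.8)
The plan is to apply Theorem~\ref{geneufor}\,\eqref{char:item5} to the pair $(A,l_{\mtrx{g}}^*B)$ on $X=\Pp^n_{k'}$ and then to rewrite the resulting intersection number. Granting the hypotheses for the moment, that theorem yields
\[
\chi(\Pp^n_{k'},A\otimes l_{\mtrx{g}}^*B)=(-1)^n\,\overline{\CC(A)}\cdot\overline{\CC(l_{\mtrx{g}}^*B)},
\]
so it suffices to see that $\overline{\CC(A)}\cdot\overline{\CC(l_{\mtrx{g}}^*B)}=\overline{\CC(A)}\cdot\overline{\CC(B)}$. For this I would use that $\mtrx{g}$ is invertible: then $l_{\mtrx{g}}$ is an isomorphism of $\Pp^n_{k'}$, hence (being smooth) properly $\SS(B)$-transversal, and \cite[Th.\,7.6]{saito1} gives $\CC(l_{\mtrx{g}}^*B)=l_{\mtrx{g}}^{!}\CC(B)$, so after taking closures in $\overline{T^*\Pp^n}$ the class $\overline{\CC(l_{\mtrx{g}}^*B)}$ is the image of $\overline{\CC(B)}$ under the automorphism of $\overline{T^*\Pp^n}$ induced by $l_{\mtrx{g}}$. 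That automorphism lies over the automorphism $l_{\mtrx{g}}$ of $\Pp^n$, which acts trivially on $\CH_*(\Pp^n)\cong\Zz^{n+1}$, and it preserves the tautological class $h$ entering Definition~\ref{def-cc}; hence it acts trivially on $\CH_n(\overline{T^*\Pp^n})$, so $\cc(l_{\mtrx{g}}^*B)=\cc(B)$, and the two intersection numbers agree since the pairing depends only on these classes (Definition~\ref{def-cc} and Lemma~\ref{constantsheafbasis}).

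It remains to verify, for $\mtrx{g}$ a geometric generic point of $\GL_{n+1,k}$ and a choice of $\overline{\mathbf Z}_\ell$-structures, the two hypotheses of Theorem~\ref{geneufor}\,\eqref{char:item5} for $(A,l_{\mtrx{g}}^*B)$: that $\SS(A)\cap\SS(l_{\mtrx{g}}^*B)$ is supported on the zero section, and that every component of the fibre product $\SS(A)\times_{\Pp^n}\SS(l_{\mtrx{g}}^*B)$ has dimension at most $n$. Since $\SS(l_{\mtrx{g}}^*B)$ is the image of $\SS(B)$ under the automorphism of $T^*\Pp^n_{\bar k}$ induced by $l_{\mtrx{g}}$, both become genericity statements about a general translate of $\SS(B)$ under the $\PGL_{n+1}$-action. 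For the first, I would work on the projectivized cotangent bundle $\Pp(T^*\Pp^n)$ — the incidence variety of pairs $(x,H)$ with $x\in H$, of dimension $2n-1$ and acted on transitively by $\PGL_{n+1}$ — where $\Pp(\SS(A))$ and $\Pp(\SS(B))$ have dimension at most $n-1$, so that a generic translate of $\Pp(\SS(B))$ misses $\Pp(\SS(A))$ by a dimension count on the incidence correspondence over $\PGL_{n+1}$; by Lemma~\ref{simpinteq} this is precisely the first hypothesis. For the second, the components of $\SS(A)\times_{\Pp^n}\SS(l_{\mtrx{g}}^*B)$ on which one of the two cotangent coordinates vanishes generically already have dimension $n$, and for the remaining ones the $\Gg_m\times\Gg_m$-action reduces matters to the bound $\dim\bigl(\Pp(\SS(A))\times_{\Pp^n}(\mtrx{g}\cdot\Pp(\SS(B)))\bigr)\le n-2$; this follows by a dimension count on the correspondence over $\PGL_{n+1}$ parametrising triples $(\mtrx{h},(x,H_1),(y,K))$ with $(x,H_1)\in\Pp(\SS(A))$, $(y,K)\in\Pp(\SS(B))$ and $\mtrx{h}y=x$, since fixing the source and target points leaves a coset of a parabolic subgroup while the generic fibre over $\PGL_{n+1}$ is exactly the fibre product in question. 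Both conditions hold for all $\mtrx{h}$ in a dense open subset of $\GL_{n+1}$ defined over $\bar k$, and as the function field of $\GL_{n+1,k}$ is a regular extension of $k$ the geometric generic point $\mtrx{g}$ lies in it; hence Theorem~\ref{geneufor}\,\eqref{char:item5} applies.

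I expect the second hypothesis to be the delicate point: a naive fibrewise estimate does not bound the dimension of a fibre product of conormal varieties over bad singular loci, so one genuinely has to use the genericity of the translate, through the incidence-correspondence count above. An alternative that avoids this is to realise $A\otimes l_{\mtrx{g}}^*B$ as the restriction to a generic fibre of the pullback of $A\boxtimes B$ along the smooth ``universal'' family $\Pp^n\times\GL_{n+1}\to\Pp^n\times\Pp^n$, $(x,\mtrx{h})\mapsto(x,\mtrx{h}x)$, and to invoke the generic local acyclicity and proper transversality of \cite[Cor.\,8.10]{saito1}, exactly as in the proof of Lemma~\ref{lem-indepl}.
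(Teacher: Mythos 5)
Your proof is correct and follows essentially the same route as the paper: it verifies the hypotheses of Theorem~\ref{geneufor}\,\eqref{char:item5} for the pair $(A,l_{\mtrx{g}}^*B)$ by dimension counts on incidence correspondences over $\GL_{n+1}$ (using Lemma~\ref{simpinteq} for the zero-section condition), and then identifies $\overline{\CC(A)}\cdot\overline{\CC(l_{\mtrx{g}}^*B)}$ with $\overline{\CC(A)}\cdot\overline{\CC(B)}$. The only cosmetic difference is in that last step, where the paper invokes deformation-invariance of intersection numbers for the family $l_{\mtrx{h}}^*\overline{\CC(B)}$ parameterized by $\GL_{n+1}$, rather than your equally valid observation that $l_{\mtrx{g}}$ acts trivially on $\CH_n\bigl(\overline{T^*\Pp^n}\bigr)$.
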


\begin{proof} We will prove the statement for objects $A$ and $B$ of $\Der(\Pp^{n}_k,\overline{\mathbf Z}_\ell)$; the result then follows for $A$ and $B$ in $\Der(\Pp^{n}_k,\Ql)$ by choosing an integral structure and noting that both the Euler characteristic and the characteristic cycle are preserved by inverting~$\ell$.

We will check that the singular supports of $A$ and
  $l_{\mtrx{g}}^* B$ fulfil the conditions of Theorem~\ref{geneufor}. From \eqref{char:item5} of \loccit, we will then get the equality
  \[
    \chi( \Pp^{n}_{k'}, A \otimes l_{\mtrx{g}}^* B) = (-1)^n \overline{\CC(A)} \cdot
    \overline{\CC(l_{\mtrx{g}}^* B)} = (-1)^n \overline{\CC(A)} \cdot
    \overline{\CC(B)}, 
  \]
  where the second identity follows from deformation-invariance of intersection numbers, on noting that the algebraic cycles $\overline{\CC(B)}$ and $ \overline{\CC(l_{\mtrx{g}}^* B)} = l_{\mtrx{g}}^*\overline{\CC(B)}$ lie in a family parameterized by $\GL_{n+1, k}$. 

The first condition is that $\SS(A) \cap \SS( l_{\mtrx{g}}^* B)$ is
contained in the zero section. In view of Lemma \ref{simpinteq}, this amounts to showing that $\overline{\SS(A)} \cap
\overline{\SS(l_{\mtrx{g}}^* B)}$ does not intersect
$\Pp(T^*\Pp^n)$ inside $ \overline{ T^*\Pp^n}$. Note the equality
$\overline{\SS(l_{\mtrx{g}}^* B)} =l_{\mtrx{g}}^* \overline{\SS(
  B)}$. The space of triples $(x, y, g)$ such that
  \[
  x \in  \overline{\SS(A)} \cap \Pp(T^*\Pp^n), \quad y \in  \overline{\SS(B)} \cap \Pp(T^*\Pp^n), \quad g \in \GL_{n+1} \quad \text{and}\quad l_g(y) =x
\] is a bundle over
$\overline{\SS(A)} \cap \Pp(T^*\Pp^n) \times \overline{\SS(B)} \cap
\Pp(T^*\Pp^n)$. Since $\GL_{n+1}$ acts transitively on~$\Pp(T^*\Pp^n)$,
all its fibers have dimension $(n+1)^2 - (2n-1)$, and hence the space
itself has dimension at most\begin{multline*} \dim \left(
    \overline{\SS(A)} \cap \Pp(T^*\Pp^n) \right) + \dim\left(
    \overline{\SS(B)} \cap \Pp(T^*\Pp^n) \right) + (n+1)^2 - (2n-1)
  \\
  = (n-1)+(n-1)+(n+1)^2 - (2n-1) =(n+1)^2 -1.
\end{multline*}
This dimension being less than $(n+1)^2 = \dim \GL_{n+1}$, the fiber
over generic $\mtrx{g}$ is empty, which means that
$\overline{\SS(A)} \cap \overline{\SS(l_{\mtrx{g}}^* B)}$ does not
intersect $\Pp(T^*\Pp^n)$.

The second condition is that every irreducible component of
\[
\SS(A) \times_{\Pp^n} \SS( l_{\mtrx{g}}^* B) =\SS(A)
\times_{\Pp^n}l_{\mtrx{g}}^* \SS( B)
\] has dimension at most
$n$. It is sufficient to prove that every irreducible component of the
scheme over $\GL_{n+1}$ whose fiber over a point $\mtrx{h}$ is
$\SS(A) \times_{\Pp^n}l_{\mtrx{h}}^* \SS( B) $ has dimension at
most $(n+1)^2+n$, because then the generic fiber has dimension at most~$n$. This scheme maps to $\SS(A) \times \SS(B)$, which has dimension
$2n$, and the fiber over any point consists of all elements of
$\GL_{n+1}$ that send one point in $\Pp^n$ to another, so the
fiber has dimension~$(n+1)^2-n$ and the total space has dimension
$(n+1)^2+n$, as desired.
\end{proof}

\begin{lem}\label{pullbacklinearity}
  There exists a unique linear map $f_n\colon \Zz^{n+1} \to \Zz^n$
  such that, for any perverse sheaf~$A$ on $\Pp^n_k$ and any geometric
  generic point $\mtrx{b}$ of $M^{n+1,n}_k$, the equality  
  \[
    \cc(l_{\mtrx{b}}^*A)= f_n\left( \cc(A)\right)
  \]
  holds. This linear map does not depend on~$k$ and~$\ell$.
\end{lem}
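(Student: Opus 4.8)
The plan is to show that the characteristic class $\cc(A)$ on $\Pp^n$ and the tuple of Euler characteristics of generic linear sections of $A$ determine one another through a fixed $\Zz$-linear isomorphism independent of $k$ and $\ell$, and that applying $l_{\mtrx{b}}^*$ for a geometric generic $\mtrx{b}\in M^{n+1,n}$ simply discards the last of these Euler characteristics. Uniqueness of $f_n$ is immediate once this is in place: the sheaves $K_0,\dots,K_n$ of Lemma~\ref{constantsheafbasis} are perverse on $\Pp^n$ and their characteristic classes form a basis of $\Zz^{n+1}$, so any $f_n$ with the stated property is forced to send $\cc(K_m)$ to $\cc(l_{\mtrx{b}}^*K_m)$ and is thereby determined; independence of $k,\ell$ will be visible from the construction. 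I assume $n\ge 1$ throughout, the case $n=0$ being trivial.

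The first and main step is to prove that, for every $A\in\Der(\Pp^n_k)$ and every $0\le m\le n$, the Euler characteristic $\chi_m(A):=\chi(\Pp^m,l_{\mtrx{a}_m}^*A)$ of a generic $m$-dimensional linear section of $A$ — well defined by Lemma~\ref{lem-complexity-open-set} — equals $(-1)^{n+m}\,\cc(A)\cdot\cc(K_m)$. To see this I would write a geometric generic $\mtrx{a}_m$ as $\mtrx{g}\mtrx{a}_0$, where $\mtrx{a}_0\in M^{n+1,m+1}(k)$ is a fixed matrix whose image is a fixed $m$-plane $L_0$ and $\mtrx{g}$ is a geometric generic point of $\GL_{n+1,k}$ (Lemma~\ref{generic-mixing}); then invariance of the Euler characteristic under $l_{\mtrx{a}_m!}$ together with the projection formula rewrites $\chi_m(A)$ as $\chi(\Pp^n,A\otimes l_{\mtrx{a}_m!}\Ql_{\Pp^m})$, and since $l_{\mtrx{a}_0}$ is a closed immersion and $l_{\mtrx{g}}$ an automorphism one gets $l_{\mtrx{a}_m!}\Ql_{\Pp^m}=l_{\mtrx{g}^{-1}}^*B_0$, where $B_0=K_m[-m]$ is the degree-zero constant sheaf on $L_0$ extended by zero. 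Applying Corollary~\ref{projectiveeulerformula} to $A$ and $B_0$ (noting that $\mtrx{g}^{-1}$ is again a geometric generic point of $\GL_{n+1,k}$), together with $\overline{\CC(B_0)}=(-1)^m\overline{\CC(K_m)}$ and Definition~\ref{def-cc}, yields the claimed formula. Now the matrix $\bigl((-1)^{n+m}\cc(K_m)\cdot\cc(K_j)\bigr)_{m,j}$ is the product of a diagonal sign matrix with the intersection matrix $\bigl(\cc(K_i)\cdot\cc(K_j)\bigr)$, which the proof of Lemma~\ref{constantsheafbasis} shows to be anti-triangular with $1$'s on the anti-diagonal and independent of $k,\ell$; hence it is unimodular, and I obtain a $\Zz$-linear isomorphism $\Phi_n\colon\Zz^{n+1}\to\Zz^{n+1}$, independent of $k$ and $\ell$, with $\cc(A)=\Phi_n(\chi_0(A),\dots,\chi_n(A))$ for all $A$.

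The second step relates pullback along $l_{\mtrx{b}}\colon\Pp^{n-1}\to\Pp^n$ to these invariants. For $0\le m\le n-1$, let $\mtrx{a}_m'$ be a geometric generic point of $M^{n,m+1}$ over $k(\mtrx{b})$; then $l_{\mtrx{a}_m'}^*l_{\mtrx{b}}^*A=l_{\mtrx{b}\mtrx{a}_m'}^*A$, and $\mtrx{b}\mtrx{a}_m'$ is a geometric generic point of $M^{n+1,m+1}_k$ because the multiplication map $M^{n+1,n}\times M^{n,m+1}\to M^{n+1,m+1}$ is surjective — given a target matrix $\mtrx{d}$, write $\mtrx{d}=\mtrx{b}\mtrx{c}$ with $\mtrx{c}$ the $n\times(m+1)$ matrix formed by the first $m+1$ standard basis column vectors and $\mtrx{b}$ any rank-$n$ matrix whose first $m+1$ columns are those of $\mtrx{d}$ — hence dominant. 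By Lemma~\ref{lem-complexity-open-set} this gives $\chi_m(l_{\mtrx{b}}^*A)=\chi_m(A)$ for $0\le m\le n-1$. Applying the first step on $\Pp^{n-1}$ to $l_{\mtrx{b}}^*A$ and then this identity,
\[
\cc(l_{\mtrx{b}}^*A)=\Phi_{n-1}\bigl(\chi_0(A),\dots,\chi_{n-1}(A)\bigr)=\Phi_{n-1}\bigl(\pi_{n-1}(\Phi_n^{-1}(\cc(A)))\bigr),
\]
where $\pi_{n-1}\colon\Zz^{n+1}\to\Zz^{n}$ is the projection onto the first $n$ coordinates, so $f_n:=\Phi_{n-1}\circ\pi_{n-1}\circ\Phi_n^{-1}$ works, is manifestly linear, and is independent of $k$ and $\ell$.

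The only deep input is Corollary~\ref{projectiveeulerformula} (and hence Theorem~\ref{geneufor}); everything else is bookkeeping with the generic-point lemmas of Section~\ref{sec-generic}, the projection formula, and the unimodularity extracted from Lemma~\ref{constantsheafbasis}. The step I expect to need the most care is the matching of generic points under the substitutions $\mtrx{a}_m\rightsquigarrow\mtrx{g}\mtrx{a}_0$, $\mtrx{g}\rightsquigarrow\mtrx{g}^{-1}$, and $\mtrx{a}_m'\rightsquigarrow\mtrx{b}\mtrx{a}_m'$ — that is, checking that each relevant map between matrix spaces is dominant so that genericity is preserved — together with keeping the signs coming from the shifts straight, although, since in the end only a unimodular change of basis is required, those exact signs are immaterial.
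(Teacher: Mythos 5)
Your proof is correct, and it rests on the same two pillars as the paper's — Corollary~\ref{projectiveeulerformula} and the basis of Lemma~\ref{constantsheafbasis} — but it is organized along a genuinely different route. The paper decomposes $\mtrx{b}=\mtrx{g}\mtrx{a}\mtrx{h}^{-1}$ via Lemma~\ref{genlem2} and directly transfers the pairing $\cc(l_{\mtrx{b}}^*A)\cdot\cc(K_m)$ to $\cc(A)\cdot\cc(l_{\mtrx{a}*}K_m)$ through two applications of the projection formula, concluding linearity abstractly because pairings against a basis are coordinates. You instead introduce the intermediate invariants $\chi_m(A)$ (Euler characteristics of generic linear sections), show they are a unimodular integer change of coordinates away from $\cc(A)$, and observe that generic hyperplane pullback is the coordinate projection forgetting $\chi_n$ — replacing the $\mtrx{g}\mtrx{a}\mtrx{h}^{-1}$ juggling by the simpler fact that a generic $m$-plane inside a generic hyperplane is a generic $m$-plane of $\Pp^n$. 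What your version buys is an explicit closed form $f_n=\Phi_{n-1}\circ\pi_{n-1}\circ\Phi_n^{-1}$, which the paper only extracts later (in Section~\ref{sec-effective}, where $f_n$ is computed to send $e_m$ to $e_{m-1}$); what it costs is one extra verification you gloss over slightly: for $\Phi_n$ to be independent of $k$ and $\ell$ you need the \emph{full} intersection matrix $\bigl(\cc(K_i)\cdot\cc(K_j)\bigr)$, including the entries with $i+j>n$, whereas the proof of Lemma~\ref{constantsheafbasis} only computes those with $i+j\leq n$. This is not a gap — your own first step applied to $A=K_j$ gives $\cc(K_i)\cdot\cc(K_j)=(-1)^{n+i+j}(i+j+1-n)$ for $i+j\geq n$, manifestly independent of $k$ and $\ell$ — but the sentence should cite that computation rather than Lemma~\ref{constantsheafbasis} alone.
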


\begin{proof}Let $K_m$ be the perverse sheaf on $\Pp^{n-1}$ described in Lemma \ref{constantsheafbasis}. 

It does not matter what geometric generic point $\mtrx{b}$ of $M^{n+1,n}_k$ we take, because any pair of geometric generic points defined over two different fields are isomorphic after a suitable extension of both fields. Let $\mtrx{a}$ be a geometric generic point of $M^{n+1,n}_{k}$, $\mtrx{g}$ a geometric generic point of $\GL_{n+1,k(\mtrx{a})}$, and $\mtrx{h}$ a geometric generic point of $\GL_{n,k(\mtrx{a},\mtrx{g})}$. Let $\mtrx{b}= \mtrx{g}\mtrx{a}\mtrx{h}^{-1}$. Then Lemma \ref{genlem2} guarantees that $\mtrx{b}$ is a geometric generic point of $M^{n+1,n}_k$ and so we may work with~$\mtrx{b}$.

Furthermore, $\mtrx{h}$ is a geometric generic
point of $\GL_{n,k(\mtrx{b})}$ by Lemma \ref{genlem2}, and hence Corollary \ref{projectiveeulerformula} yields the equality
\[
  \cc(l_{\mtrx{b}}^* A) \cdot \cc(K_m)= \chi\left(\Pp^{n-1}_{k'},
    l_{\mtrx{b}}^*A \otimes l_{\mtrx{h}^{-1}}^* K_m\right). 
\]
Furthermore, since
$\mtrx{h}$ is generic, the right-hand side is equal to
\begin{align*}
  \chi\left(\Pp^{n-1}_{k'}, l_{\mtrx{h}}^* l_{\mtrx{b}}^* A \otimes
  K_m\right)
  &
    = \chi\left(\Pp^{n-1}_{k'}, l_{\mtrx{a}}^* l_{\mtrx{g}}^* A
    \otimes K_m \right)
  \\
  &=\chi\left(\Pp^{n}_{k'}, l_{\mtrx{g}}^* A \otimes l_{\mtrx{a}*} K_m
    \right) 
    \\
    &= \cc(A) \cdot \cc(l_{\mtrx{a}*} K_m )
\end{align*}
by the projection formula and Corollary \ref{projectiveeulerformula}, along with the fact that $\mtrx{g}$ is a geometric
generic point of~$\GL_{n+1,k(\mtrx{a})}$. Thus,
$\cc(l_{\mtrx{b}}^* A) \cdot \cc(K_m)$ is a linear function of~$\cc(A)$.
  
Because the $\cc(K_m)$ form a basis, by Lemma
\ref{constantsheafbasis}, the pairings
$\cc(l_{\mtrx{b}}^* A) \cdot \cc(K_m)$ of $\cc(l_{\mtrx{b}}^* A)$ with
that basis can be used as coordinates of $\cc(l_{\mtrx{b}}^*
A)$. Because these pairings are linear functions of $\cc(A)$, the
class $\cc(l_{\mtrx{b}}^* A)$ is a linear function of $A$.
  
By Lemma \ref{constantsheafbasis}, the characteristic classes $\cc(K_m)$ and
$\cc(l_{\mtrx{a}*} K_m)$ (which is just $\cc(K_m)$ in a different
projective space) are independent of $\ell$ and $k$, and so the linear
map is independent of $\ell$ and $k$.
\end{proof}

\begin{defi}
  For~$n\geq 0$, let $f_n\colon \Zz^{n+1}\to \Zz^n$ be the linear map
  uniquely determined in Lemma~\ref{pullbacklinearity}. We define inductively a bilinear form
  $$
  b_n \colon \Zz^{n+1}\times \Zz^{n+1}\to \Zz
  $$ by setting~$b_0(x,y)=xy$ for $(x,y)\in\Zz\times \Zz$ and
  $$
  b_n(x,y)=x\cdot y+4b_{n-1}(f_n(x),f_n(y))
  $$
  for~$n\geq 1$ and $(x, y) \in \Zz^{n+1}\times \Zz^{n+1}$, where~$x\cdot y$ is the scalar product
  on~$\Zz^{n+1}$.
\end{defi}

\begin{prop}\label{bilinearbettibound} 
  For any perverse sheaves $A$ and $B$ in $\Der(\Pp^n_k)$, and for any
  geometric generic point $\uple{g}$ of $\GL_{n+1,k}$ defined over $k'$, the following holds: 
  \[ \sum_{i \in \Zz} h^i (\Pp^n_{k'}, A \otimes l_{\mtrx{g}}^* B)
    \leq b_n\left( \cc(A) , \cc(B) \right).\]
\end{prop}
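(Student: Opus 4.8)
Here is how I would attack it.

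\medskip

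\noindent\textbf{Plan of proof.}
I would argue by induction on $n$. For $n=0$ the matrix $\mtrx{g}$ is trivial, $A$ and $B$ are finite‑dimensional vector spaces in degree $0$, one has $\cc(A)=\dim A$ and $\cc(B)=\dim B$, and both sides of the inequality equal $\dim A\cdot\dim B$. For the inductive step set $P=A\otimes l_{\mtrx{g}}^*B[-n]$, which is perverse on $\Pp^n$ by Lemma~\ref{genericperversity}; note $\sum_i h^i(\Pp^n_{k'},A\otimes l_{\mtrx{g}}^*B)=\sum_i h^i(\Pp^n_{k'},P)$. The first point is that the leading term of $b_n$ is an Euler characteristic: by the definition of the intersection pairing on $\Zz^{n+1}$ (Definition~\ref{def-cc}) and Corollary~\ref{projectiveeulerformula},
\[
\cc(A)\cdot\cc(B)=\overline{\CC(A)}\cdot\overline{\CC(B)}=(-1)^n\chi(\Pp^n,A\otimes l_{\mtrx{g}}^*B)=\chi(\Pp^n,P).
\]
Since $\sum_i h^i(\Pp^n,P)-\chi(\Pp^n,P)=2\sum_{i\ \mathrm{odd}}h^i(\Pp^n,P)\leq 2\sum_{i\neq 0}h^i(\Pp^n,P)$, it suffices to prove $\sum_{i\neq 0}h^i(\Pp^n,P)\leq 2\,b_{n-1}\bigl(f_n(\cc(A)),f_n(\cc(B))\bigr)$, because then $\sum_i h^i(\Pp^n,P)\leq\chi(\Pp^n,P)+4\,b_{n-1}(f_n\cc(A),f_n\cc(B))=b_n(\cc(A),\cc(B))$.

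\medskip

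\noindent\textbf{Weak Lefschetz step.}
Choose a generic hyperplane $i\colon H\hookrightarrow\Pp^n$ — the image of $l_{\mtrx{a}}$ for $\mtrx{a}$ a geometric generic point of $M^{n+1,n}$ over $k(\mtrx{g})$ — with complementary open immersion $j\colon\Aa^n=\Pp^n\setminus H\hookrightarrow\Pp^n$. Then $j^*P$ is perverse on the \emph{affine} variety $\Aa^n$, so $\rmH^k_c(\Aa^n,j^*P)=0$ for $k<0$ by Artin vanishing; feeding this into the cohomology long exact sequence of the excision triangle $j_!j^*P\to P\to i_*i^*P$ shows that $\rmH^k(\Pp^n,P)\hookrightarrow\rmH^k(H,i^*P)$ for every $k\leq -1$, hence $\sum_{k\leq -1}h^k(\Pp^n,P)\leq\sum_k h^k(H,i^*P)$. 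Applying the same to the perverse sheaf $\dual(P)$ (which has the same singular support as $P$, so $H$ is generic for it too) and using Poincaré--Verdier duality $h^k(\Pp^n,P)=h^{-k}(\Pp^n,\dual(P))$ on the proper variety $\Pp^n$ gives $\sum_{k\geq 1}h^k(\Pp^n,P)\leq\sum_k h^k(H,i^*\dual(P))$. Adding, $\sum_{k\neq 0}h^k(\Pp^n,P)\leq\sum_k h^k(H,i^*P)+\sum_k h^k(H,i^*\dual(P))$, and it remains to bound each of these two hyperplane‑section Betti sums by $b_{n-1}(f_n\cc(A),f_n\cc(B))$.

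\medskip

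\noindent\textbf{Descent to $H\cong\Pp^{n-1}$ and the inductive hypothesis.}
Since $i^*P=l_{\mtrx{a}}^*A\otimes l_{\mtrx{g}\mtrx{a}}^*B[-n]$, I would invoke Lemmas~\ref{transcendenceswapping} and~\ref{genlem2} to produce a geometric generic point $\mtrx{h}$ of $\GL_n$ and $\mtrx{c}=\mtrx{g}\mtrx{a}\mtrx{h}^{-1}$, a geometric generic point of $M^{n+1,n}$, with $\mtrx{g}\mtrx{a}=\mtrx{c}\mtrx{h}$; then $i^*P=\bigl(A''\otimes l_{\mtrx{h}}^*B''\bigr)[2-n]$ with $A''=l_{\mtrx{a}}^*A[-1]$ and $B''=l_{\mtrx{c}}^*B[-1]$ perverse on $\Pp^{n-1}$ (Lemma~\ref{lem-indepl}) and $\mtrx{h}$ generic over a common field of definition of $A''$ and $B''$. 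The inductive hypothesis gives $\sum_k h^k(H,i^*P)\leq b_{n-1}(\cc(A''),\cc(B''))$, and since $\cc(A'')=\cc(l_{\mtrx{a}}^*A[-1])=-f_n(\cc(A))$ by Lemma~\ref{pullbacklinearity} — the sign coming from the shift $[-1]$ — and likewise $\cc(B'')=-f_n(\cc(B))$, bilinearity of $b_{n-1}$ turns this into $\sum_k h^k(H,i^*P)\leq b_{n-1}(f_n\cc(A),f_n\cc(B))$. For the $\dual(P)$ term one checks that $\dual(P)=\bigl(\dual(A)\otimes_!l_{\mtrx{g}}^*\dual(B)\bigr)[n]$ is, up to a Tate twist, again of the form "(perverse)$\otimes\,l_{\mtrx{g}}^*$(perverse)$[-n]$": the transversality of the generic diagonal established in the proof of Corollary~\ref{projectiveeulerformula} lets one replace $\otimes_!$ by $\otimes$ at the cost of a shift and twist. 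Applying the previous argument to $\dual(A),\dual(B)$ and using the self‑duality $\cc(\dual(A))=\cc(A)$, $\cc(\dual(B))=\cc(B)$ of characteristic cycles yields $\sum_k h^k(H,i^*\dual(P))\leq b_{n-1}(f_n\cc(A),f_n\cc(B))$ as well, and combining everything finishes the induction.

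\medskip

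\noindent\textbf{Main obstacle.}
The conceptual heart — recognising $\cc(A)\cdot\cc(B)$ as $\chi(\Pp^n,P)$ and thereby reducing to a weak‑Lefschetz estimate on the non‑zero‑degree cohomology — is short once spotted. The real work is the bookkeeping of generic points in the descent step: one must arrange the chain of generic matrices so that \emph{simultaneously} $H$ is generic relative to $\mtrx{g}$ (so that $i^*P[-1]$ is perverse and the excision/Artin argument applies), $\mtrx{h}$ is a genuine geometric generic point of $\GL_n$ over the field of definition of $A''$ and $B''$ (so that the inductive hypothesis at level $n-1$ is applicable), and $\mtrx{a},\mtrx{c}$ are geometric generic points of $M^{n+1,n}$ (so that Lemmas~\ref{lem-indepl} and~\ref{pullbacklinearity} apply and $\cc(A''),\cc(B'')$ are governed by $f_n$); this is exactly what Lemmas~\ref{transcendenceswapping}--\ref{genlem2} are designed for. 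A secondary subtlety, already isolated in Corollary~\ref{projectiveeulerformula}, is the transversality of the generic diagonal needed to identify $\dual(P)$ with an ordinary tensor product rather than a shriek‑tensor product.
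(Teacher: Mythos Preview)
Your proof is correct and follows essentially the same strategy as the paper's: induction on $n$, identification of $\cc(A)\cdot\cc(B)$ with the Euler characteristic via Corollary~\ref{projectiveeulerformula}, weak Lefschetz/Artin vanishing to control the non-middle Betti numbers by a generic hyperplane section, and then the genericity bookkeeping of Lemma~\ref{genlem2} together with Lemma~\ref{pullbacklinearity} to apply the inductive hypothesis.

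The one point where you take a small detour is the treatment of the degrees $k\geq 1$: you dualize $P$ globally and then need to identify $\dual(P)$ with a generic tensor product, invoking ``transversality from Corollary~\ref{projectiveeulerformula}'' to pass from $\otimes_!$ to $\otimes$. That passage is true but is really Lemma~\ref{shriektensor} (proved later), not a consequence of the transversality in Corollary~\ref{projectiveeulerformula}. The paper avoids this entirely: since $l_{\mtrx{a}}$ is generic, Lemma~\ref{generic-dual} gives $l_{\mtrx{a}}^{!}=l_{\mtrx{a}}^{*}[-2](-1)$, so one can run the excision/Artin argument with $l_{\mtrx{a}}^{!}$ for degrees $>-n$ and immediately conclude $\sum_i h^i(\Pp^{n-1},l_{\mtrx{a}}^{!}(A\otimes l_{\mtrx{g}}^*B))=\sum_i h^i(\Pp^{n-1},l_{\mtrx{a}}^{*}(A\otimes l_{\mtrx{g}}^*B))$, giving the factor $4$ directly without ever rewriting $\dual(P)$. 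Equivalently, in your framework, $\sum_k h^k(H,i^*\dual(P))=\sum_k h^k(H,\dual(i^{!}P))=\sum_k h^k(H,i^{!}P)=\sum_k h^k(H,i^{*}P)$ already, so the shriek-tensor step is unnecessary.
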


\begin{proof} We prove this proposition by induction on $n$.  The case
  $n=0$ is trivial, as then $A$ and $B$ are simply vector spaces,
  their characteristic classes are their dimensions, and the sum of
  Betti numbers is the product of the dimensions.

  Assume the inequality holds in dimension $n-1$ and let~$A$ and~$B$
  be perverse sheaves in~$\Der(\Pp^n)$.  By Lemma
  \ref{genericperversity}, the object
  $ A \otimes l_{\mtrx{g}}^* B[-n]$ is perverse. Set
  $$
  \sigma= \sum_{i \in \Zz} h^i (\Pp^n_{k'}, A \otimes l_{\mtrx{g}}^* B),
  $$
which is the quantity we want to estimate. By the definition of the Euler--Poincar\'e characteristic, we have
  \begin{align*} h^{-n}(\Pp^n_{k'}, A \otimes l_{\mtrx{g}}^* B)
    &=(-1)^n\chi(\Pp^n_{k'}, A \otimes l_{\mtrx{g}}^* B) + \sum_{i
      \neq
      -n} (-1)^{i+n+1} h^i (\Pp^n_{k'}, A \otimes l_{\mtrx{g}}^* B) \\
    & \leq (-1)^n\chi(\Pp^n_{k'}, A \otimes l_{\mtrx{g}}^* B) +
      \sum_{i \neq -n}h^i (\Pp^n_{k'}, A \otimes l_{\mtrx{g}}^* B), 
  \end{align*}
  hence the inequality 
  \begin{align*}
    \sigma &=\sum_{i <-n} h^i (\Pp^n_{k'}, A \otimes l_{\mtrx{g}}^* B) +
             h^{-n} (\Pp^n_{k'}, A \otimes l_{\mtrx{g}}^* B) + \sum_{i>
             -n} h^i (\Pp^n_{k'}, A \otimes l_{\mtrx{g}}^* B)
    \\
           &\leq  2 \sum_{i <-n} h^i(\Pp^{n}_{k'},  A \otimes l_{\mtrx{g}}^* B) +  (-1)^n \chi(\Pp^n_{k'}, A \otimes l_{\mtrx{g}}^* B)  + 2\sum_{i >-n} h^i(\Pp^{n-1}_{k'}, A \otimes l_{\mtrx{g}}^* B).
  \end{align*}

  Let now $\mtrx{a} \in M^{n+1,n}(k')$ be a geometric generic point of
  $M^{n+1,n}_{k(\mtrx{g})}$ defined over an algebraically closed field
  $k'$. By excision and the $\mathrm{t}$-exactness of affine morphisms,
  the canonical map
  $$
  \rmH^i(\Pp^n_{k'}, A \otimes l_{\mtrx{g}}^* B) \to \rmH^i(\Pp^{n-1}_{k'},
  l_{\mtrx{a}}^* (A \otimes l_{\mtrx{g}}^* B))
  $$
  is an isomorphism in degrees less than $-n$.  Because duality
  exchanges $l^*_{\mtrx{a}}$ and $l^!_{\mtrx{a}}$, and because the
  dual of $A \otimes l_{\mtrx{g}}^*B[-n]$ is also perverse, the
  canonical map
  $$
  \rmH^i(\Pp^{n-1}_{k'}, l_{\mtrx{a}}^! (A \otimes l_{\mtrx{g}}^* B))
  \to \rmH^i(\Pp^n_{k'}, A \otimes l_{\mtrx{g}}^* B)
  $$
  is an isomorphism in degrees greater than $-n$.
  We thus obtain
 $$
    \sigma \leq 2 \sum_{i \in \Zz} h^i(\Pp^{n-1}_{k'}, l_{\mtrx{a}}^*
    (A \otimes l_{\mtrx{g}}^* B)) + (-1)^n \chi(\Pp^n_{k'}, A \otimes
    l_{\mtrx{g}}^* B) + 2\sum_{i \in \Zz} h^i(\Pp^{n-1}_{k'},
    l^!_{\mtrx{a}} (A \otimes l_{\mtrx{g}}^* B)).
 $$


  For a geometric generic point $\mtrx{a}$, the functor
  $l^!_{\mtrx{a}}$ coincides with $l^*_{\mtrx{a}}$, up to a shift and
  Tate twist, and hence
  $$
  \sigma \leq 4 \sum_{i \in \Zz} h^i(\Pp^{n-1}_{k'}, l_{\mtrx{a}}^* (A
  \otimes l_{\mtrx{g}}^* B)) + (-1)^n \chi(\Pp^n_{k'}, A \otimes
  l_{\mtrx{g}}^* B).
  $$


  Now we observe that
  $l_{\mtrx{a}}^* (A \otimes l_{\mtrx{g}}^* B) = l_{\mtrx{a}}^*A
  \otimes l_{\mtrx{a}}^* l_{\mtrx{g}}^* B$. Take $\mtrx{h}$ to be a
  geometric generic point of $\GL_{n, k(\mtrx{a}, \mtrx{g})}$ and
  $\mtrx{b} = \mtrx{g} \mtrx{a} \mtrx{h}^{-1}$ . Then
  $l_{\mtrx{a}}^* l_{\mtrx{g}}^* B = l_{\mtrx{h}}^* l_{\mtrx{b}}^*
  B$. By Lemma \ref{genlem2}, $\mtrx{h}$ is generic over
  $k(\mtrx{a},\mtrx{b})$. Together with the induction hypothesis and
  Lemma~\ref{pullbacklinearity}, we deduce that
  $$
  \sum_{i \in \Zz} h^i(\Pp^{n-1}_{k'}, l_{\mtrx{a}}^* A \otimes
  l_{\mtrx{h}}^* l_{\mtrx{b}}^* B) \leq b_{n-1} ( \cc(l_{\mtrx{a}}^*
  A),\cc(l_{\mtrx{b}}^* B))= b_{n-1} ( f_n(\cc(A)),f_n(\cc(B))).
  $$

  Finally, in view of the equality 
  $$
  (-1)^n \chi(\Pp^n_{k'}, A \otimes l_{\mtrx{g}}^* B)= \cc(A)\cdot
  \cc(B)
  $$
  given by Lemma~\ref{projectiveeulerformula}, we obtain
  $\sigma\leq b_n(\cc(A),\cc(B))$ by definition of~$b_n$.
\end{proof}


\section{Test sheaves}\label{sec-test}

The main result of this section is the construction, achieved in Corollary \ref{testsheafindependence}, of a specific basis of the group $\CH(\Pp^n_k)$, which will play an essential role in controlling the complexity of a tensor product in the next section. 


\begin{defi}[Test sheaf] A \emph{test sheaf} on $\Pp^n_k$ is a
  perverse sheaf $A$ on $\Pp^n_k$ such that, for any field extension
  $k'$ of $k$, any perverse sheaf $B$ on $\Pp^n_{k'}$, and any generic
  point $\mtrx{g}$ of~$\GL_{n+1,k'}$ defined over an algebraically
  closed field extension $k^{''}$ of $k'$, the cohomology group
  $\rmH^i(\Pp^n_{k'}, A\otimes l_{\mtrx{g}}^* B)$ vanishes in all degrees
  $i \neq -n$.
\end{defi}

For example, a skyscraper sheaf~$A$ on $\Pp^n_k$ is a test sheaf, since
the object $A\otimes l_{\mtrx{g}}^* B$ is a skyscraper sheaf sitting in
degree~$-n$ in that case.

\begin{remark}
  The key properties of a test sheaf~$A$ are the following: 
  \begin{enumerate}
  \item the functor defined on perverse sheaves by
  $B\mapsto \rmH^{-n}(\Pp^n_{k'},A\otimes l_{\mtrx{g}}^*B)$ is exact,
 \item the cohomology
  groups $\rmH^i(\Pp^n_{k'},A\otimes l_{\mtrx{g}}^*B)$ and $\rmH^{-n}(\Pp^n_{k'},A\otimes \PH^i(B))$ agree, up to renumbering, for all objects~$B$ of $\Der(\Pp^n_k)$. 
  \end{enumerate}
\end{remark}

Test sheaves are most useful when they are of the form highlighted in the next definition:

\begin{defi}[Strong test sheaf]\label{def-strong}
  Let~$d\geq 1$ be an integer.  A \emph{strong test sheaf of
    depth~$d$} on $\Pp^n_k$ is a test sheaf~$A$ that admits a
  filtration
\[
  0 = F_0 \subset F_1 \dots \subset F_d = A
\]
such that, for each $1 \leq j \leq d$, the quotient $F_j/ F_{j-1}$ is
isomorphic to a perverse sheaf (see Lemma~\ref{generic-dual}) of the form $l_{\mtrx{a}_j * } \Ql[m_j]$ for some
$0\leq m_j \leq n$ and some $\mtrx{a}_j \in M^{n+1,m_j+1}(k)$.
\end{defi}

Strong test sheaves have the following crucial property:

\begin{prop}\label{characteristicclasscontrol}
  Let $A$ be a strong test sheaf of depth~$d$ on~$\Pp^n_k$.  The inequality
  \[
    \sum_{i \in \Zz} \left|\cc(A) \cdot \cc(\PH^i(B))\right| \leq d
    c(B)
  \]
 holds for any field extension $k'$ of $k$ and any object $B$ of
  $\Der(\Pp^n_{k'})$.
\end{prop}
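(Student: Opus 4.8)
The strategy is to leverage the defining property of a strong test sheaf twice: once to linearize the problem via the filtration, and once to reduce the computation of intersection numbers $\cc(A)\cdot\cc(\PH^i(B))$ to Euler characteristics that can in turn be bounded by Betti number sums entering the complexity. First I would fix a field extension $k'/k$, an object $B$ of $\Der(\Pp^n_{k'})$, and a geometric generic point $\mtrx{g}$ of $\GL_{n+1,k'}$ defined over an algebraically closed field $k''$. The key input from the test sheaf property is that $\rmH^i(\Pp^n_{k''}, A\otimes l_{\mtrx{g}}^*B')=0$ for $i\neq -n$ whenever $B'$ is perverse; combined with the long exact sequence in cohomology and the fact that any object is built from its perverse cohomology sheaves $\PH^i(B)$ by distinguished triangles, this forces a degeneration so that
\[
\rmH^{i}(\Pp^n_{k''}, A\otimes l_{\mtrx{g}}^*B) \cong \rmH^{-n}(\Pp^n_{k''}, A\otimes l_{\mtrx{g}}^*\PH^{i+n}(B))
\]
for all $i$ (this is exactly the second key property flagged in the remark after the definition of test sheaf). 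Hence $\sum_i h^i(\Pp^n_{k''}, A\otimes l_{\mtrx{g}}^*B) = \sum_i h^{-n}(\Pp^n_{k''},A\otimes l_{\mtrx{g}}^*\PH^i(B))$, and since each summand is a single nonvanishing cohomology group, $(-1)^n\chi(\Pp^n_{k''},A\otimes l_{\mtrx{g}}^*\PH^i(B)) = h^{-n}(\Pp^n_{k''},A\otimes l_{\mtrx{g}}^*\PH^i(B)) \geq 0$.

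Next I would bring in Corollary~\ref{projectiveeulerformula}, which identifies $(-1)^n\chi(\Pp^n_{k''}, A\otimes l_{\mtrx{g}}^*\PH^i(B))$ with $(-1)^{2n}\overline{\CC(A)}\cdot\overline{\CC(\PH^i(B))} = \cc(A)\cdot\cc(\PH^i(B))$ (using the identification of the intersection pairing on $\CH_*(\Pp^n)$ with the one on $\Zz^{n+1}$ from Definition~\ref{def-cc}). Wait—I should be careful that Corollary~\ref{projectiveeulerformula} as stated gives $(-1)^n\overline{\CC(A)}\cdot\overline{\CC(B)}$ for the Euler characteristic, so combined with the sign this yields $\cc(A)\cdot\cc(\PH^i(B)) = (-1)^n\chi \geq 0$, meaning each intersection number is automatically non-negative; therefore $\left|\cc(A)\cdot\cc(\PH^i(B))\right| = \cc(A)\cdot\cc(\PH^i(B))$ and the absolute values can be dropped. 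Summing over $i$ then gives
\[
\sum_{i\in\Zz}\left|\cc(A)\cdot\cc(\PH^i(B))\right| = \sum_{i\in\Zz} h^{-n}(\Pp^n_{k''},A\otimes l_{\mtrx{g}}^*\PH^i(B)) = \sum_{i\in\Zz} h^i(\Pp^n_{k''},A\otimes l_{\mtrx{g}}^*B).
\]
It remains to bound this last quantity by $d\,c(B)$.

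For the final bound I would use the filtration $0=F_0\subset F_1\subset\cdots\subset F_d=A$ with successive quotients $l_{\mtrx{a}_j*}\Ql[m_j]$. Tensoring the filtration with $l_{\mtrx{g}}^*B$ (or rather, applying the exact triangles) and taking cohomology gives $\sum_i h^i(\Pp^n_{k''}, A\otimes l_{\mtrx{g}}^*B) \leq \sum_{j=1}^d \sum_i h^i(\Pp^n_{k''}, l_{\mtrx{a}_j*}\Ql[m_j]\otimes l_{\mtrx{g}}^*B)$ by subadditivity of Betti numbers along distinguished triangles. For each $j$, the projection formula rewrites $\rmH^i(\Pp^n_{k''}, l_{\mtrx{a}_j*}\Ql[m_j]\otimes l_{\mtrx{g}}^*B) \cong \rmH^i(\Pp^{m_j}_{k''}, l_{\mtrx{a}_j}^*l_{\mtrx{g}}^*B[m_j])$, and since $\mtrx{g}\mtrx{a}_j$ (up to composing with a generic element, via Lemma~\ref{generic-mixing}, or directly since $l_{\mtrx{g}}$ is a generic automorphism) is a geometric generic point of $M^{n+1,m_j+1}_{k'}$, the definition of complexity bounds $\sum_i h^i(\Pp^{m_j}_{k''}, l_{\mtrx{a}_j}^*l_{\mtrx{g}}^*B)$ by $c(B)$. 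Summing over the $d$ steps of the filtration yields the desired $d\,c(B)$. \emph{The main obstacle} I anticipate is the bookkeeping around genericity: making precise that $l_{\mtrx{g}}^*$ composed with the fixed pushforward maps $l_{\mtrx{a}_j*}$ produces something controlled by the complexity of $B$ — one must check that $\mtrx{g}\mtrx{a}_j$ (or $l_{\mtrx{a}_j}\circ l_{\mtrx{g}}$ appropriately interpreted) is genuinely a geometric generic point of the relevant matrix space over $k'$, which is where Lemmas~\ref{generic-mixing} and~\ref{genlem2} do the real work, together with the observation from Lemma~\ref{lem-complexity-open-set} that complexity is insensitive to the precise choice of generic point. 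The degeneration argument collapsing $\rmH^i(A\otimes l_{\mtrx{g}}^*B)$ onto the perverse pieces of $B$ is the other place requiring care, but it follows formally from the test sheaf hypothesis applied to each $\PH^i(B)$ and induction on the number of nonzero perverse cohomology sheaves.
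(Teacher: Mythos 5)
Your proposal is correct and follows essentially the same route as the paper's proof: identify $\cc(A)\cdot\cc(\PH^i(B))$ with Euler characteristics via Corollary~\ref{projectiveeulerformula}, use the test-sheaf vanishing to degenerate the perverse spectral sequence and collapse everything onto $\sum_i h^i(\Pp^n, A\otimes l_{\mtrx{g}}^*B)$, then run the filtration of $A$, the projection formula, and Lemma~\ref{generic-mixing} to land on $d\,c(B)$. The only (harmless) cosmetic difference is that you observe $(-1)^n\chi=h^{-n}\geq 0$ and so obtain an equality where the paper merely bounds $|\chi|$ by the sum of Betti numbers before invoking the same degeneration.
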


\begin{proof}
  Denote by~$(F_j)_{1\leq j\leq d}$ the terms of the filtration from
  Definition~\ref{def-strong}.
  \par
  Let $\mtrx{g}$ be a generic point of $\GL_{n+1,k'}$. By Corollary
  \ref{projectiveeulerformula}, we have:
\begin{align*}
  \sum_{i \in \Zz} \left|\cc(A) \cdot \cc(\PH^i(B))\right| &= \sum_{i
    \in \Zz} | \chi(\Pp^n_{k'}, A \otimes l_{\mtrx{g}}^* \PH^i(B))| \\
  &\leq \sum_{i \in \Zz} \sum_{j \in \Zz} h^j ( \Pp^n_{k'}, A \otimes
  l_{\mtrx{g}}^* \PH^i(B)).
\end{align*}
\par
Since $A$ is a test sheaf, the vanishing
$\rmH^j ( \Pp^n_{k'}, A \otimes l_{\mtrx{g}}^* \PH^i(B))=0$ holds for all
$j\neq -n$, and hence the spectral sequence computing
$\rmH^i (\Pp^n_{k'}, A \otimes l_{\mtrx{g}}^* B)$ via the perverse
filtration on $B$ degenerates at the first page. It follows that
\[
  \sum_{i \in \Zz} \sum_{j \in \Zz} h^j ( \Pp^n_{k'}, A \otimes
  l_{\mtrx{g}}^* \PH^i(B)) = \sum_{i \in \Zz} h^i (\Pp^n_{k'}, A
  \otimes l_{\mtrx{g}}^* B).
\]
\par
On the other hand, the spectral sequence associated to the
filtration $(F_j)$ of $A$ yields the inequality of sums of Betti numbers
\[
  \sum_{i \in \Zz} h^i (\Pp^n_{k'}, A \otimes l_{\mtrx{g}}^* B) \leq
  \sum_{j=1}^d \sum_{i \in \Zz} h^i (\Pp^n_{k'}, (F_j/F_{j-1}) \otimes
  l_{\mtrx{g}}^* B).
\]
By assumption, there exist isomorphisms
$F_j /F_{j-1}= l_{\mtrx{a}_j*} \Ql[m_j]$ for some integer \hbox{$0 \leq m_j \leq n$} and
$k$-point $\mtrx{a}_j$ of $M^{n+1,m_j+1}$, so that the following equalities hold:
\begin{align*}
  \sum_{j=1}^d \sum_{i \in \Zz} h^i (\Pp^n_{k'},(F_j/F_{j-1}) \otimes
  l_{\mtrx{g}}^* B)
  &= \sum_{j=1}^d \sum_{i \in \Zz} h^i (\Pp^n_{k'},
  l_{\mtrx{a}_j *} \Ql \otimes l_{\mtrx{g}}^* B)
  \\
  &= \sum_{j=1}^d \sum_{i \in \Zz} h^i (\Pp^{m_j} _{k'},
  l_{\mtrx{a}_j}^* l_{\mtrx{g}}^* B).
\end{align*}
\par
Since $\mtrx{g}$ is generic over $k'$, the product
$\mtrx{g} \mtrx{a}_j$ is a generic point of $M^{n+1,m+1}_{k'}$, and
therefore  the definition of $c(B)$ implies the inequality
\[
  \sum_{j=1}^d \sum_{i \in \Zz} h^i (\Pp^{m_j} _{k'}, l_{\mtrx{a}_j}^*
  l_{\mtrx{g}}^* B)\leq \sum_{j=1}^d c(B) = d c (B), 
\]
which concludes the proof. 
\end{proof}

We will now construct strong test sheaves forming a basis
of~$\CH(\Pp^n_k$).

\begin{lem}\label{shriektensor}
  Let $A$ and $B$ be objects of $\Der(\Pp^n_k)$. Let $\mtrx{g}$ be a
  generic point of $\GL_{n,k}$. Then there is a canonical
  isomorphism
  \[
A \otimes l_{\mtrx{g}}^* B \cong (A \otimes_! l_{\mtrx{g}}^* B)[-2n].
\]
\end{lem}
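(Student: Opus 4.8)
The plan is to recognize both tensor products as pullbacks of $A\boxtimes l_{\mtrx g}^*B$ along the diagonal and then invoke a transversality/purity argument. Writing $\Delta\colon\Pp^n\to\Pp^n\times\Pp^n$ for the diagonal, we have by definition $A\otimes l_{\mtrx g}^*B=\Delta^*(A\boxtimes l_{\mtrx g}^*B)$ and $A\otimes_! l_{\mtrx g}^*B=\Delta^!(A\boxtimes l_{\mtrx g}^*B)$. Since $\Delta$ is a regular closed immersion of codimension $n$ between smooth varieties, it suffices to show that $\Delta$ is properly non-characteristic for the object $K=A\boxtimes l_{\mtrx g}^*B$: purity then provides a canonical isomorphism relating $\Delta^!K$ and $\Delta^*K$ by the shift $[-2n]$ and a Tate twist $\Ql(-n)$, the latter being trivial because $k$ is algebraically closed, which is exactly the assertion of the lemma.

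So the first task is to translate the non-characteristic condition for $K$ into something already under control. Choosing compatible $\overline{\mathbf Z}_\ell$-structures on $A$ and $B$, and using that the singular support of an external product is the product of the singular supports together with the equality $\SS(l_{\mtrx g}^*B)=l_{\mtrx g}^*\SS(B)$ (valid since $l_{\mtrx g}$ is an automorphism), we get $\SS(K)=\SS(A)\times\SS(l_{\mtrx g}^*B)$ inside $T^*\Pp^n\times T^*\Pp^n=T^*(\Pp^n\times\Pp^n)$. The conormal variety of $\Delta$ consists of the covectors of the form $((x,\xi),(x,-\xi))$, and since $\SS(l_{\mtrx g}^*B)$ is conical, hence invariant under $\xi\mapsto-\xi$, the immersion $\Delta$ is $\SS(K)$-transversal precisely when $\SS(A)\cap\SS(l_{\mtrx g}^*B)$ is contained in the zero section; moreover the pullback $\Delta^*\SS(K)$ is then identified with the fibre product $\SS(A)\times_{\Pp^n}\SS(l_{\mtrx g}^*B)$, so that \emph{proper} transversality requires in addition that this fibre product have pure dimension $n$.

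Both conditions have already been verified, for a geometric generic point $\mtrx g$ of $\GL_{n+1,k}$, in the proof of Corollary~\ref{projectiveeulerformula}: there one checks, via Lemma~\ref{simpinteq} and a dimension count on triples $(x,y,\mtrx g)$ with $x\in\overline{\SS(A)}\cap\Pp(T^*\Pp^n)$, $y\in\overline{\SS(B)}\cap\Pp(T^*\Pp^n)$ and $l_{\mtrx g}(y)=x$, that $\SS(A)\cap\SS(l_{\mtrx g}^*B)$ lies in the zero section, and that every irreducible component of $\SS(A)\times_{\Pp^n}\SS(l_{\mtrx g}^*B)$ has dimension at most $n$, hence exactly $n$, as in Theorem~\ref{geneufor}\eqref{char:item1}. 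Thus $\Delta$ is properly $\SS(K)$-transversal, and the purity isomorphism for the codimension-$n$ regular immersion $\Delta$ yields the claimed identification $A\otimes l_{\mtrx g}^*B\cong (A\otimes_! l_{\mtrx g}^*B)[-2n]$.

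The main obstacle is essentially expository: locating and applying the purity input in the precise form ``a non-characteristic regular closed immersion of codimension $c$ satisfies $i^!K\cong i^*K(-c)[-2c]$'' in the $\Ql$-adic setting — this comes from the canonical map $i^*K\otimes i^!\Ql\to i^!K$, which is an isomorphism exactly under the non-characteristic hypothesis, combined with absolute purity $i^!\Ql\cong\Ql(-c)[-2c]$ — and checking that the transversality computation matches the one carried out in the proof of Corollary~\ref{projectiveeulerformula} closely enough to be quoted rather than repeated. Everything else (the rewriting via $\Delta$, the description of the conormal of the diagonal, and the external-product formula for $\SS$) is routine.
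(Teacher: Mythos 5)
Your argument is correct in substance but follows a genuinely different route from the paper's. The paper's proof is purely formal: it spreads the situation out over $\GL_{n+1}$ via the universal morphism $l(x,h)=(h\cdot x,h)$, observes that the smooth map $(\pi,\pi\circ l)\colon\Pp^n\times\GL_{n+1}\to\Pp^n\times\Pp^n$ identifies the two universal objects with pullbacks of $\mathrm{pr}_1^*A\otimes\mathrm{pr}_2^*B$ and $\mathrm{pr}_1^*A\otimes_!\mathrm{pr}_2^*B$, and then checks by a short Verdier-duality/K\"unneth computation that these differ only by a shift; genericity of $\mtrx g$ enters solely through taking generic fibres, and no singular supports, transversality or purity beyond $\mathrm{pr}_i^!=\mathrm{pr}_i^*[2n]$ are needed. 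Your route instead imports the microlocal machinery: proper $\SS$-transversality of the diagonal (which is indeed exactly the computation already carried out for Corollary~\ref{projectiveeulerformula} and Theorem~\ref{geneufor}, so quoting it is legitimate and creates no circularity, as that corollary precedes the test-sheaf section), Saito's theorem that $\SS(K)$-transversality of an immersion makes the canonical map $\Delta^*K\otimes\Delta^!\Ql\to\Delta^!K$ an isomorphism, and absolute purity. This works and explains \emph{why} the statement holds microlocally, but it trades an elementary argument for two nontrivial external inputs; in particular the implication ``$\SS(K)$-transversal $\Rightarrow$ $K$-transversal'' is not among the results quoted elsewhere in the paper (Lemma~\ref{lem-indepl} cites only the local-acyclicity-along-the-base variant), so you must supply the precise reference from Saito's \S 8 rather than gesture at ``purity''.

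The one concrete error is the final bookkeeping. Your derivation gives $\Delta^!K\cong\Delta^*K(-n)[-2n]$, i.e. $A\otimes l_{\mtrx g}^*B\cong(A\otimes_! l_{\mtrx g}^*B)(n)[2n]$, and you assert this ``is exactly the assertion of the lemma''; it is not --- the lemma states the shift $[-2n]$ in the opposite direction, a discrepancy of $[4n]$ plus a twist. Testing with $A=B=\Ql$, for which $\Ql\otimes_!\Ql=\Delta^!\Ql_{\Pp^n\times\Pp^n}=\Ql(-n)[-2n]$ by purity while $\Ql\otimes l_{\mtrx g}^*\Ql=\Ql$, shows that your formula is the correct one and that the shift in the lemma as printed is reversed; the paper's own intermediate identity $\mathrm{pr}_1^*A\otimes_!\mathrm{pr}_2^*B=(\mathrm{pr}_1^*A\otimes\mathrm{pr}_2^*B)[-4n]$ on the $2n$-dimensional $\Pp^n\times\Pp^n$ points the same way. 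Since only ``isomorphic up to shift and Tate twist'' is exploited downstream, nothing breaks, but you should flag and resolve the sign rather than declare agreement with a statement your computation contradicts.
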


\begin{proof} Let
  $l\colon \Pp^n \times \GL_{n+1} \to \Pp^n \times \GL_{n+1}$ be the
  universal morphism given by
  $$
  l(x,h) = (h\cdot x,h)
  $$
  and let $\pi\colon \Pp^n \times \GL_{n+1} \to \Pp^n$ be the
  projection. Then $A \otimes l_{\mtrx{g}}^* B$ and
  $A \otimes_! l_{\mtrx{g}}^* B$ are the generic fibers of
  $\pi^* A \otimes l^* \pi^* B$ and $(\pi^*A \otimes_! l^* \pi^* B)[-2(n+1)^2]$
  respectively, so it suffices to prove that the objects 
  $\pi^* A \otimes l^* \pi^* B$ and $\pi^* A \otimes_! l^* \pi^* B$
  are isomorphic up to shift. 
  
  For this, consider the morphism
  $(\pi, \pi \circ l)\colon \Pp^n \times \GL_{n+1} \to \Pp^n \times
  \Pp^n$, which is smooth because $\GL_{n+1}$ acts transitively on $\Pp^n$. Since $\otimes$ commutes with smooth pullbacks and~$\otimes_!$ commutes with smooth pullbacks up to shifting by twice the relative dimension, we obtain isomorphisms
\begin{align*}
  \pi^* A \otimes l^* \pi^* B &=(\pi, \pi \circ l)^*
  \left(\mathrm{pr}_1^* A \otimes \mathrm{pr}_2^* B\right),
  \\
  \pi^* A \otimes_! l^* \pi^* B &=(\pi, \pi \circ l)^*
  \left(\mathrm{pr}_1^* A \otimes_! \mathrm{pr}_2^* B\right)[2(n+1)^2-2n]. 
\end{align*}
It is then enough to prove that
$\mathrm{pr}_1^* A \otimes \mathrm{pr}_2^* B$ and
$\mathrm{pr}_1^* A \otimes_! \mathrm{pr}_2^* B$ are isomorphic up to shift. This follows from the computation
\begin{align*}
\mathrm{pr}_1^* A \otimes_! \mathrm{pr}_2^* B&=\dual(\dual(\mathrm{pr}_1^* A ) \otimes \dual(\mathrm{pr}_2^* B)) \\
&=\dual(\mathrm{pr}_1^*\dual(A)[2n] \otimes \mathrm{pr}_2^*\dual(B)[2n]) \\
&=\mathrm{pr}_1^* \dual(\dual(A)[2n]) \otimes \mathrm{pr}_2^*\dual(\dual(B)[2n]) \\ &=(\mathrm{pr}_1^* A \otimes \mathrm{pr}_2^* B)[-4n], 
\end{align*} which uses the standard properties of Verdier duality and $\mathrm{pr}_i^!=\mathrm{pr}_i^\ast[2n]$. 
\end{proof}

\begin{lem}\label{exampleoftestsheaf}
  Let $n \geq 0$ be an integer. Let $H_1$ and $H_2$ be hyperplanes in
  $\Pp^n_k$ that intersect transversely, and consider the commutative
  diagram
 \[
  \begin{tikzcd}
    \Pp^n_k-H_1 \arrow{r}{a}
    & \Pp^n_k\\
    \arrow{u}{b}\Pp^n_k-(H_1\cup H_2) \arrow{r}{d} &\Pp^n_k-H_2. 
    \arrow{u}{c}
  \end{tikzcd}
 \]
  The object $a_* b_! \Ql[n]$ is isomorphic to $c_! d_* \Ql[n]$ and is a
  test sheaf.
\end{lem}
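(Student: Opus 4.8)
The plan is to prove both assertions by manipulating the six operations; the genericity of the linear map intervenes only in the test sheaf property. Throughout, write $U=\Pp^n_k-(H_1\cup H_2)$, $V_1=\Pp^n_k-H_1$ and $V_2=\Pp^n_k-H_2$.

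\emph{The isomorphism.} The commutative square in the statement is Cartesian, since $V_1\times_{\Pp^n}V_2=U$. Hence the base change identity for it gives $a^*(c_!d_*\Ql)\simeq b_!d^*d_*\Ql$, and because $d$ is an open immersion the counit $d^*d_*\Ql\to\Ql$ is an isomorphism, so $a^*(c_!d_*\Ql)\simeq b_!\Ql$. Let $i\colon H_1\hookrightarrow\Pp^n_k$ be the closed immersion complementary to $a$. By the recollement triangle $i_*i^!K\to K\to a_*a^*K$, the composite $c_!d_*\Ql\to a_*a^*(c_!d_*\Ql)\simeq a_*b_!\Ql$ of the unit with $a_*$ of the previous isomorphism is itself an isomorphism as soon as $i^!(c_!d_*\Ql)=0$. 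I would check the latter by showing each costalk $i_z^!(c_!d_*\Ql)$, $z\in H_1$, vanishes; since $i_z^!$ depends only on the restriction to any open neighbourhood of $z$, there are two cases. If $z\in H_1\setminus H_2$, then $z\in V_2$ and $(c_!d_*\Ql)|_{V_2}=d_*\Ql$, while $z$ lies on the closed complement of the open immersion $d$ inside $V_2$; since $(\cdot)^!d_*=0$ for the inclusion of the complement of $d$, we get $i_z^!(c_!d_*\Ql)=0$. If $z\in H_1\cap H_2$, choose an affine chart $\Aa^n\subset\Pp^n_k$ around $z$ in which $H_1=\{u_1=0\}$ and $H_2=\{u_2=0\}$; there $c_!d_*\Ql$ is the external product $d_{1*}\Ql_{\Gg_m}\boxtimes\Ql_{\Aa^{n-2}}\boxtimes c_{2!}\Ql_{\Gg_m}$, with $d_1,c_2\colon\Gg_m\hookrightarrow\Aa^1$ the two coordinate open immersions, so by the Künneth formula $i_z^!$ of it is the derived tensor product of the three costalks, the first of which, $i_0^!\,d_{1*}\Ql_{\Gg_m}$, vanishes again because $d_1$ is complementary to the origin. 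Hence $i^!(c_!d_*\Ql)=0$, and shifting by $[n]$ gives $a_*b_!\Ql[n]\simeq c_!d_*\Ql[n]$.

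\emph{Perversity and the lower vanishing range.} Since $U$ is smooth of dimension $n$, $\Ql[n]$ is perverse on it. The immersion $b$ realises $U$ as the complement in $V_1\simeq\Aa^n$ of the affine hyperplane $H_2\cap V_1$, hence is an affine open immersion, so $b_!$ is $\mathrm t$-exact and $b_!\Ql[n]$ is perverse on $V_1$; likewise $a$ is the complement of the hyperplane $H_1$, hence affine, so $a_*$ is $\mathrm t$-exact and $A:=a_*b_!\Ql[n]$ is perverse. Now fix an extension $k'/k$, a perverse sheaf $B$ on $\Pp^n_{k'}$ and a generic point $\mtrx{g}$ of $\GL_{n+1,k'}$; put $B'=l_{\mtrx{g}}^*B$, which is perverse since $l_{\mtrx{g}}$ is an automorphism, and recall that $(A\otimes B')[-n]$ is perverse on $\Pp^n$ by Lemma~\ref{genericperversity}. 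Because $A=c_!d_*\Ql[n]$ is extended by zero along $c$ we have $A|_{H_2}=0$, hence $A\otimes B'=c_!\bigl((A\otimes B')|_{V_2}\bigr)$ and therefore $R\Gamma(\Pp^n_{k'},A\otimes B')=R\Gamma_c\bigl(V_2,(A\otimes B')|_{V_2}\bigr)$; as $V_2$ is affine of dimension $n$ and $(A\otimes B')|_{V_2}[-n]$ is perverse, Artin vanishing for cohomology with compact supports forces $\rmH^i(\Pp^n_{k'},A\otimes B')=0$ for $i<-n$.

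\emph{The upper vanishing range and the main obstacle.} For $i>-n$ I would run the dual argument using the description $A=a_*b_!\Ql[n]$. By the projection formula $(A\otimes B')|_{V_1}=b_!(B'|_U)[n]$, with $b_!(B'|_U)$ perverse on the affine variety $V_1$. The point is to show $A\otimes B'\simeq a_*\bigl((A\otimes B')|_{V_1}\bigr)$, equivalently (by the recollement triangle for the pair $(V_1,H_1)$) that $i^!(A\otimes B')=0$ for $i\colon H_1\hookrightarrow\Pp^n_{k'}$; granting this, $R\Gamma(\Pp^n_{k'},A\otimes B')=R\Gamma(V_1,(A\otimes B')|_{V_1})$, and Artin vanishing for ordinary cohomology on the affine $V_1$ gives $\rmH^i(\Pp^n_{k'},A\otimes B')=0$ for $i>-n$. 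Combining the two ranges yields $\rmH^i(\Pp^n_{k'},A\otimes B')=0$ for all $i\neq-n$, so $A$ is a test sheaf. The hard part is precisely the vanishing $i^!(A\otimes l_{\mtrx{g}}^*B)=0$: in contrast with $i^!A=0$, which is immediate from $A=a_*b_!\Ql[n]$ and the identity $i^!a_*=0$, this property is \emph{not} preserved under tensoring by an arbitrary perverse sheaf — for a non-generic $\mtrx{g}$ one could make $l_{\mtrx{g}}^*B$ a skyscraper sheaf supported on $H_1$, which breaks it — so the genericity of $\mtrx{g}$ must be used here, for instance through the fact that a generic hyperplane section of a perverse sheaf is perverse up to shift, or through a direct local computation along $H_1$ analogous to the one in the first part.
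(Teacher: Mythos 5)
Your construction of the isomorphism $a_*b_!\Ql\simeq c_!d_*\Ql$ (base change in the Cartesian square, then killing $i^!(c_!d_*\Ql)$ by a costalk computation with the local Künneth model $j_*\Ql\boxtimes j_!\Ql\boxtimes\Ql$) is correct and is essentially the same local computation the paper performs when it checks the adjunction map $c_!d_*\Ql\to a_*b_!\Ql$ on stalks. The perversity of $A=a_*b_!\Ql[n]$ and the lower vanishing range $i<-n$ are also handled exactly as in the paper: write $A\otimes B'=c_!\bigl(c^*(A\otimes B')\bigr)$ and apply Artin vanishing for $\rmH^\ast_c$ of a perverse sheaf on the affine $\Pp^n-H_2$.

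The upper range $i>-n$, however, is a genuine gap, and you have correctly located it: your strategy requires $i^!(A\otimes l_{\mtrx{g}}^*B)=0$ along $H_1$, which does not follow from $i^!A=0$ because $i^!$ does not commute with $\otimes$ (the cofiber of $A\otimes B'\to a_*a^*(A\otimes B')$ measures exactly the failure of the projection formula for $a_*$, which holds for $a_!$ but not for $a_*$ against a non-lisse $B'$). You gesture at transversality of $H_1$ with $\SS(B')$ for generic $\mtrx{g}$, but you do not carry this out, and it is not a one-line deduction. The paper sidesteps the issue entirely with Lemma~\ref{shriektensor}: for $\mtrx{g}$ generic one has $A\otimes l_{\mtrx{g}}^*B\simeq(A\otimes_! l_{\mtrx{g}}^*B)[-2n]$, and the shriek tensor product $\otimes_!=\dual(\dual(-)\otimes\dual(-))$ \emph{does} satisfy the projection formula against $a_*$ (it is the Verdier dual of the $a_!$/$\otimes$ projection formula, using $a^!=a^*$ for the open immersion $a$). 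Hence
\[
\rmH^i(\Pp^n_{k'},A\otimes l_{\mtrx{g}}^*B)\simeq\rmH^{i-n}\bigl(\Pp^n_{k'}-H_1,\;b_!\Ql\otimes_! a^*l_{\mtrx{g}}^*B\bigr),
\]
and the coefficient object is (a shift of) a perverse sheaf on the affine variety $\Pp^n_{k'}-H_1$, so ordinary-cohomology Artin vanishing gives the vanishing for $i>-n$ with no discussion of $i^!$ at all. This is the one idea your argument is missing; with Lemma~\ref{shriektensor} in hand, the rest of your outline goes through.
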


\begin{proof} By adjunction, there is a natural morphism
  $c_! d_* \Ql \to a_* b_! \Ql$ extending the identity on
  $\Pp^n_k-(H_1\cup H_2)$, and it suffices to prove that this map is
  an isomorphism on stalks at all geometric points of $H_1\cup
  H_2$. This is obvious save for points of the
  intersection~$H_1 \cup H_2$. For those, we argue as follows: since
  $H_1$ and $H_2$ intersect transversely, étale locally around each
  such point the diagram looks like
\[
  \begin{tikzcd}
    \Aa^{n-2}_k \times (\Aa^1_k - \{0\}) \times \Aa^1_k \arrow{r}{a}
    & \Aa^n_k\\
    \arrow{u}{b}\Aa^{n-2}_k \times (\Aa^1_k - \{0\}) \times (\Aa^1_k - \{0\}) \arrow{r}{d} &  \Aa^{n-2}_k \times \Aa^1 \times (\Aa^1_k - \{0\}).
    \arrow{u}{c}
  \end{tikzcd}
 \] Expressing the constant sheaf on the down left corner as an external product and letting~$j \colon \Aa^1-\{0\} \hookrightarrow \Aa^1$ denote the inclusion, the K\"unneth formula then implies that the map~$c_! d_* \Ql \to a_* b_! \Ql$ is locally given by the identity on $\Ql \boxtimes j_\ast \Ql \boxtimes j_!\Ql$. 

  We now proceed to the proof that
  $a_* b_! \Ql[n]\simeq c_! d_* \Ql[n]$ is a test sheaf. First of all,
  we check that $a_*b_!\Ql[n]$ is perverse. Indeed, all maps in the diagram are affine open immersions, and the direct image and exceptional direct image by those preserve perversity. Next, let~$B$ be a perverse sheaf on $\Pp^n_{k}$ and
  let~$i\in\Zz$. Using the projection formula, we get the equalities 
\begin{align*}
  \rmH^i ( \Pp^n_{k'}, c_! d_* \Ql[n] \otimes l_{\mtrx{g}}^* B) &
  = \rmH^{i+n} ( \Pp^n_{k'}, c_! (d_* \Ql \otimes c^*l_{\mtrx{g}}^* B)) \\
  &
  =\rmH^{i+n}_c (\Pp^n_{k'} - H_2, d_* \Ql \otimes c^* l_{\mtrx{g}}^* B) \\
  & = \rmH^{i+n}_c (\Pp^n_{k'} - H_2, c^* ( c_! d_* \Ql \otimes
  l_{\mtrx{g}}^* B) ).
\end{align*}
\par
The object $c_!d_* \Ql \otimes l_{\mtrx{g}}^* B$ is perverse by Lemma
\ref{genericperversity}. Since pullbacks by open immersions preserve
perversity, the object
$c^* ( c_! d_* \Ql \otimes c^* l_{\mtrx{g}}^* B)$ is a perverse sheaf
on the affine variety~$\Pp^n_{k'} - H_2$, and hence its compactly
supported cohomology vanishes when $i+n<0$ by Artin's vanishing
theorem.
\par
Dually, using Lemma \ref{shriektensor}, we get
\begin{align*}
  \rmH^i ( \Pp^n_{k'}, a_* b_! \Ql[n] \otimes l_{\mtrx{g}}^* B) &=
  \rmH^{i-n} ( \Pp^n_{k'}, a_* b_! \Ql \otimes_! l_{\mtrx{g}}^* B) \\
  &= \rmH^{i-n} ( \Pp^n_{k'}, a_* (b_! \Ql \otimes_! a^*
  l_{\mtrx{g}}^* B)) \\
  &= \rmH^{i-n} (\Pp^n_{k'} - H_1, b_!
  \Ql \otimes_! a^* l_{\mtrx{g}}^* B)\\
  &= \rmH^{i+n} (\Pp^n_{k'}- H_1, a^* (a_* b_! \Ql \otimes_!
  l_{\mtrx{g}}^* B)[-2n]).
\end{align*}
\par
Arguing as above using Lemma \ref{shriektensor}, we see that
$a^* (a_* b_! \Ql \otimes_!  l_{\mtrx{g}}^* B)[-2n]$ is a perverse sheaf on
the affine variety $\Pp^n_{k'}-H_1$, so that its cohomology vanishes
for $i+n>0$ by Artin's vanishing theorem. This completes the proof.
\end{proof}

\begin{lem}\label{testsheaftransferral} 
  Let $\mtrx{a} \in M^{n+1,m+1}_k(k)$. If $A$ is a test sheaf on
  $\Pp^m_k$, then $l_{\mtrx{a}*} A$ is a test sheaf on $\Pp^n_k$.
\end{lem}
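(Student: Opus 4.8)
The strategy is to transport the defining property of a test sheaf along the closed immersion $l_{\mtrx{a}}\colon\Pp^m_k\hookrightarrow\Pp^n_k$ (it is a closed immersion precisely because $\mtrx{a}$ has maximal rank, identifying $\Pp^m$ with a linear subspace of $\Pp^n$). Since $l_{\mtrx{a}*}=l_{\mtrx{a}!}$ is $\mathrm{t}$-exact for the perverse $\mathrm{t}$-structures, $l_{\mtrx{a}*}A$ is perverse, so only the cohomological vanishing needs to be checked. Fix a field extension $k'$ of $k$, a perverse sheaf $B$ on $\Pp^n_{k'}$, and a geometric generic point $\mtrx{g}$ of $\GL_{n+1,k'}$ defined over an algebraically closed field $k''$. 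The projection formula (combined with the properness of $l_{\mtrx{a}}$, so that $l_{\mtrx{a}!}=l_{\mtrx{a}*}$) and the identity $l_{\mtrx{a}}^*l_{\mtrx{g}}^*=l_{\mtrx{g}\mtrx{a}}^*$ give
\[
  l_{\mtrx{a}*}A\otimes l_{\mtrx{g}}^* B\;\cong\; l_{\mtrx{a}*}\bigl(A\otimes l_{\mtrx{a}}^* l_{\mtrx{g}}^* B\bigr)\;=\;l_{\mtrx{a}*}\bigl(A\otimes l_{\mtrx{g}\mtrx{a}}^* B\bigr),
\]
hence $\rmH^i(\Pp^n_{k''},l_{\mtrx{a}*}A\otimes l_{\mtrx{g}}^*B)\cong\rmH^i(\Pp^m_{k''},A\otimes l_{\mtrx{g}\mtrx{a}}^*B)$. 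So it suffices to show the right-hand side vanishes for $i\neq -n$.

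By Lemma~\ref{generic-mixing}, $\mtrx{g}\mtrx{a}$ is a geometric generic point of $M^{n+1,m+1}_{k'}$, so by Lemma~\ref{generic-dual} the complex $l_{\mtrx{g}\mtrx{a}}^*B[m-n]$ is perverse on $\Pp^m$; equivalently, the desired vanishing for $i\neq -n$ is the vanishing of $\rmH^i(\Pp^m,A\otimes l_{\mtrx{g}\mtrx{a}}^*B[m-n])$ for $i\neq -m$. To put this in the shape required by the test-sheaf hypothesis for $A$, I would factor out a generic automorphism of $\Pp^m$: let $\mtrx{h}$ be a geometric generic point of $\GL_{m+1}$ that is generic over the field $k''$ over which $\mtrx{g}$ is defined, and set $\mtrx{b}=\mtrx{g}\mtrx{a}\mtrx{h}^{-1}$, so that $l_{\mtrx{g}\mtrx{a}}^*B=l_{\mtrx{h}}^*l_{\mtrx{b}}^*B$. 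The hypotheses of Lemma~\ref{genlem2} are met (since $\mtrx{a}$ is a $k$-point, $\mtrx{g}$ is generic over $k'$, and $\mtrx{h}$ is generic over $k'(\mtrx{g})$), so $\mtrx{b}$ is a geometric generic point of $M^{n+1,m+1}$ and $\mtrx{h}$ is a geometric generic point of $\GL_{m+1,k'(\mtrx{b})}$. By Lemma~\ref{generic-dual} again, $C:=l_{\mtrx{b}}^*B[m-n]$ is a perverse sheaf on $\Pp^m$, so applying the test-sheaf property of $A$ to $C$ and the generic point $\mtrx{h}$ yields $\rmH^i(\Pp^m,A\otimes l_{\mtrx{h}}^*C)=0$ for $i\neq -m$. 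Since $l_{\mtrx{h}}^*C=l_{\mtrx{h}}^*l_{\mtrx{b}}^*B[m-n]=l_{\mtrx{g}\mtrx{a}}^*B[m-n]$, this is exactly the vanishing we needed, and unwinding the isomorphism of the first paragraph finishes the proof.

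The steps requiring care are routine: bookkeeping of the base fields (all the cohomology groups involved are geometric, hence unchanged under extension of the algebraically closed ground field), and checking the genericity hypotheses of Lemma~\ref{genlem2} for the particular choices made. The one substantive idea is the factorization $\mtrx{g}\mtrx{a}=\mtrx{b}\mtrx{h}$ with $\mtrx{h}$ generic in $\GL_{m+1}$: this is what converts ``pullback of a perverse sheaf along a generic linear map $\Pp^m\to\Pp^n$'' into ``pullback along a generic automorphism of $\Pp^m$ of a perverse sheaf on $\Pp^m$'', which is precisely the form in which the test-sheaf hypothesis on $A$ applies. I expect this to be the crux, although with Lemmas~\ref{generic-mixing}, \ref{generic-dual} and~\ref{genlem2} already available the argument is short.
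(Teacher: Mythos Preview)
Your proof is correct and follows essentially the same route as the paper: apply the projection formula to pass to $\Pp^m$, factor $\mtrx{g}\mtrx{a}=\mtrx{b}\mtrx{h}$ with $\mtrx{h}$ generic in $\GL_{m+1}$ via Lemma~\ref{genlem2}, use Lemma~\ref{generic-dual} to see that $l_{\mtrx{b}}^*B[m-n]$ is perverse, and invoke the test-sheaf property of $A$. Your explicit check that $l_{\mtrx{a}*}A$ is perverse is a welcome addition the paper leaves implicit; the preliminary appeal to Lemma~\ref{generic-mixing} is harmless but not needed, since Lemma~\ref{genlem2} already delivers the genericity of $\mtrx{b}$.
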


\begin{proof} 
Let $B$ be a perverse sheaf on $\Pp^n_{k'}$ and let $\mtrx{g}$ be a
generic point in $\GL_{n+1,k'}$. By the projection formula, we have
\[
  \rmH^i(\Pp^n_{k'}, l_{\mtrx{a}*} A \otimes l_{\mtrx{g}}^* B) =
  \rmH^{i} (\Pp^m_{k'}, A \otimes l_{\mtrx{a}}^* l_{\mtrx{g}}^* B).
\]
\par
Let $\mtrx{h}$ be a generic point of $\GL_{m+1,k'(\mtrx{a},\mtrx{g})}$. Set
$\mtrx{b} =\mtrx{g} \mtrx{a} \mtrx{h}^{-1}$ and
$k'' = k'(\mtrx{a}, \mtrx{g}, \mtrx{h})$. By Lemma \ref{genlem2}, the
point $\mtrx{b}$ is a generic linear embedding and $\mtrx{h}$ is
generic over the field of definition of $k'(\mtrx{b})$, so that
\[ 
  \rmH^{i} (\Pp^m_{k''}, A \otimes l_{\mtrx{a}}^* l_{\mtrx{g}}^* B) =
  \rmH^{i} (\Pp^m_{k''}, A \otimes l_{\mtrx{h}}^* l_{\mtrx{b}}^* B)
  =\rmH^{i+n-m} (\Pp^m_{k''}, A \otimes l_{\mtrx{h}}^* l_{\mtrx{b}}^*
  B[m-n]).
\]
Because $\mtrx{b}$ is generic, the object $l_{\mtrx{b}}^* B[m-n]$ is
perverse by Lemma~\ref{generic-dual}, and therefore this cohomology
group vanishes for $i+n-m \neq -m$, i.e., for $i \neq -n$.
\end{proof}

\begin{defi}[Standard test sheaf]\label{def-test-am}
  For each integer $0 \leq m \leq n$, pick a point
  $\mtrx{a}_m \in M^{n+1,m+1}(k)$ and pick transversal
  hyperplanes~$H_1$ and~$H_2$ in $\Pp^m_k$. As in Lemma
  \ref{exampleoftestsheaf}, let $c \colon \Pp^m_k - H_2 \to \Pp^m_k$
  and $d \colon \Pp^n_k - (H_1 \cup H_2) \to \Pp^m_k - H_2 $ be the
  corresponding immersions. Define the $m$-th \emph{standard test
    sheaf}~$A_m$ as 
\[
A_m= l_{\mtrx{a}_m*} c_! d_* \Ql[m]. 
\]
\end{defi}

\begin{remark}
  In the case $m=0$, choosing $\mtrx{a}_0$ amounts to choosing a
  $k$-point of $\Pp^n_k$, the only possible choices for $H_1$ and
  $H_2$ are the empty hyperplanes in $\Pp^0_k=\Spec k$, and $A_0$ is a
  skyscraper sheaf supported at the chosen point.
\end{remark}

\begin{cor}\label{testsheafconstruction}
  For each $0 \leq m \leq n$, the object $A_m$ is a strong test sheaf
  on $\Pp^n_k$ of depth at most~$4$.
\end{cor}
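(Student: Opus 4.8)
The plan is as follows. We already know that $A_m$ is a test sheaf: by Lemma~\ref{exampleoftestsheaf} the object $c_!d_*\Ql[m]$ is a test sheaf on $\Pp^m_k$, and by Lemma~\ref{testsheaftransferral} its pushforward $A_m=l_{\mtrx{a}_m*}(c_!d_*\Ql[m])$ along the linear map $l_{\mtrx{a}_m}\colon\Pp^m_k\to\Pp^n_k$ is a test sheaf on $\Pp^n_k$. So the only thing left to do is to exhibit a filtration $0=F_0\subset\cdots\subset F_d=A_m$ with $d\le 4$ whose successive quotients are perverse sheaves of the shape $l_{\mtrx{b}_j*}\Ql[m_j]$ with $0\le m_j\le n$. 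Since $l_{\mtrx{a}_m}$ is a closed immersion, $l_{\mtrx{a}_m*}=l_{\mtrx{a}_m!}$ is $t$-exact for the perverse $t$-structure and satisfies $l_{\mtrx{a}_m*}l_{\mtrx{c}*}\Ql[i]=l_{\mtrx{a}_m\mtrx{c}*}\Ql[i]$; it therefore suffices to build a length-$4$ filtration of $c_!d_*\Ql[m]$ on $\Pp^m_k$ whose graded pieces are of the form $l_{\mtrx{c}*}\Ql[i]$ for linear immersions into $\Pp^m_k$, the shifts landing in $\{m-2,m-1,m\}$.

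First I would replace $c_!d_*\Ql[m]$ by the isomorphic object $a_*b_!\Ql[m]$ of Lemma~\ref{exampleoftestsheaf}, where $a\colon V_1:=\Pp^m_k-H_1\hookrightarrow\Pp^m_k$ and $b\colon U:=\Pp^m_k-(H_1\cup H_2)\hookrightarrow V_1$. On $V_1\cong\Aa^m_k$, with $i\colon H_2':=H_2\cap V_1\hookrightarrow V_1$ the inclusion of the smooth divisor complementary to $U$, the excision triangle $b_!\Ql_U\to\Ql_{V_1}\to i_*\Ql_{H_2'}$, shifted by $[m]$ and rotated, is a short exact sequence of perverse sheaves
\[
0\longrightarrow i_*\Ql_{H_2'}[m-1]\longrightarrow b_!\Ql[m]\longrightarrow\Ql_{V_1}[m]\longrightarrow 0
\]
(all three terms perverse because $V_1$ and $H_2'$ are smooth of pure dimensions $m$ and $m-1$). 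Since $a$ is an affine open immersion, $a_*$ is $t$-exact for the perverse $t$-structure, as already used in the proof of Lemma~\ref{exampleoftestsheaf}; applying it produces a two-step filtration of $c_!d_*\Ql[m]\cong a_*b_!\Ql[m]$ with graded pieces $(a\circ i)_*\Ql_{H_2'}[m-1]$ and $a_*\Ql_{V_1}[m]$.

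It then remains to split each of these two perverse sheaves into two linear pieces by a single purity triangle, and finally to apply $l_{\mtrx{a}_m*}$. For $a_*\Ql_{V_1}[m]$, with $\iota_1\colon H_1\hookrightarrow\Pp^m_k$ the closed immersion, purity $\iota_1^!\Ql_{\Pp^m}\simeq\Ql_{H_1}(-1)[-2]$ turns the localization triangle $\iota_{1*}\iota_1^!\Ql_{\Pp^m}\to\Ql_{\Pp^m}\to a_*\Ql_{V_1}$ into a short exact sequence $0\to\Ql_{\Pp^m}[m]\to a_*\Ql_{V_1}[m]\to\iota_{1*}\Ql_{H_1}(-1)[m-1]\to 0$ of perverse sheaves. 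For $(a\circ i)_*\Ql_{H_2'}[m-1]$, I would factor the locally closed immersion $a\circ i$ as $\iota_2\circ\beta$, with $\iota_2\colon H_2\hookrightarrow\Pp^m_k$ the closed immersion and $\beta\colon H_2'\hookrightarrow H_2$ the open immersion complementary to the hyperplane $H_1\cap H_2$ of $H_2\cong\Pp^{m-1}_k$, and push the analogous localization triangle on $H_2$ forward by the $t$-exact functor $\iota_{2*}$, obtaining a short exact sequence $0\to\iota_{2*}\Ql_{H_2}[m-1]\to(a\circ i)_*\Ql_{H_2'}[m-1]\to(\iota_2\circ\gamma)_*\Ql_{H_1\cap H_2}(-1)[m-2]\to 0$ with $\gamma\colon H_1\cap H_2\hookrightarrow H_2$. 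Over the algebraically closed field $k$ the Tate twists are immaterial, so all four graded pieces are pushforwards of shifted constant sheaves along the linear immersions of $\Pp^m_k$, $H_1\cong\Pp^{m-1}_k$, $H_2\cong\Pp^{m-1}_k$ and $H_1\cap H_2\cong\Pp^{m-2}_k$ into $\Pp^m_k$, with shifts $m,m-1,m-1,m-2$ (a graded piece supported on an empty linear subspace, which only arises when $m\le 1$, being dropped). Refining the two-step filtration of $c_!d_*\Ql[m]$ by these two short exact sequences and applying $l_{\mtrx{a}_m*}$ yields the desired filtration of $A_m$ of depth at most $4$. The only points needing care are the $t$-exactness of $a_*$ (an affine open immersion) and of $\iota_{2*}$ (a closed immersion), which is exactly what makes the purity triangles and the $a_*$-pushforward produce genuine sub-quotients, together with the bookkeeping of shifts through the rotated excision and localization triangles; I do not expect any essential obstacle, and no geometric input beyond Lemmas~\ref{exampleoftestsheaf}, \ref{testsheaftransferral} and the purity isomorphism for a hyperplane is required.
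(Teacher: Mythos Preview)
Your proof is correct and follows essentially the same approach as the paper: both identify the four graded pieces as the shifted constant sheaves on $\Pp^m$, $H_1$, $H_2$, and $H_1\cap H_2$, and both reduce to building the filtration on $\Pp^m$ before applying $l_{\mtrx a_m*}$. The only cosmetic difference is that you work with the presentation $a_*b_!\Ql[m]$ and obtain the first short exact sequence by pushing the excision triangle on $V_1$ forward along the $t$-exact functor $a_*$, whereas the paper uses $c_!d_*\Ql$ and the adjunction triangle $c_!c^*a_*\Ql\to a_*\Ql\to h_*h^*a_*\Ql$; your version has the virtue of making the $t$-exactness bookkeeping (and hence the fact that one really gets a filtration in the abelian category of perverse sheaves, as Definition~\ref{def-strong} requires) fully explicit.
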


\begin{proof} The fact that $A_m$ is a test sheaf follows from
  applying Lemma \ref{exampleoftestsheaf} and
  Lemma~\ref{testsheaftransferral}. Because $l_{\mtrx{a}*}$ preserves
  constant sheaves on linear subspaces, it preserves any filtration
  into constant sheaves on linear subspaces of the type required for a
  strong test sheaf, and thus it suffices to find such a filtration
  for~$c_!d_* \Ql$.
  
  We use the notation of Lemma~\ref{exampleoftestsheaf}. Let $h$ be the
  closed immersion of $H_2$ into~$\Pp^m_k$. Since~$c$ is an open
  immersion, there is a base change isomorphism $d_* \Ql = c^*a_* \Ql $. By adjunction, there is hence a morphism
  $c_!d_* \Ql = c_! c^*a_* \Ql \to a_* \Ql$, which is an isomorphism away
  from~$H_2$. Because $c_!d_* \Ql$ vanishes on $H_2$, the morphism
  $c_!d_* \Ql \to a_* \Ql$ has mapping cone~$h^* a_* \Ql$. The complex
  $a_* \Ql$ has a filtration whose associated quotients are the constant
  sheaf and a shift of the constant sheaf on $H_1$, so $h^*a_* \Ql$ has a
  filtration whose associated quotients are the constant sheaf on $H_2$ and
  a shift of the constant sheaf on the intersection~$H_1 \cap H_2$. The
  mapping cone triangle gives the desired filtration of~$c_!d_* \Ql$.
\end{proof}

\begin{remark}
  (1) One can also give a proof ``by pure thought'' of this proposition, based on the fact that $A_m$ is equivariant
  for the subgroup of~$\PGL_{m+1}$ acting on~$\Pp^m_k$ and preserving the hyperplanes $H_1$
  and $H_2$. Because this group action has finitely many orbits, and its
  stabilizers are connected, one can show that the only irreducible
  elements of the category of perverse sheaves invariant under this group
  are the intersection cohomology complexes of the closures of the orbits,
  which in this case are simply the constant sheaves on $\Pp^m$, $H_1$,
  $H_2,$ and $H_1 \cap H_2$. An analogous method is used in
  the theory of the geometric Satake isomorphism to classify perverse
  sheaves on the affine Grassmanian that are equivariant for the left action of the
  formal arc group (see, e.g.,~\cite[proof of Prop.\,1]{GaitsgoryNearbyCycles}).
  \par
  (2) The standard test sheaves depend on the choices in
  Definition~\ref{def-test-am}; whenever we use them, we assume
  implicitly these choices have been made for all~$m$, and that they
  are the same in the remainder of the arguments.
\end{remark}

\begin{lem}\label{testsheafpairing} Let $(A_j)_{0\leq j\leq n}$ be
  standard test sheaves. The following holds: 
  \begin{displaymath}
   \cc(A_{m_1}) \cdot \cc(A_{m_2})=\begin{cases} 0 & \text{ if  $m_1+m_2<n$,} \\ 1 & \text{ if $m_1 + m_2 = n$.}
   \end{cases} 
  \end{displaymath}
\end{lem}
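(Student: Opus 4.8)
The plan is to compute the intersection number $\cc(A_{m_1})\cdot\cc(A_{m_2})$ via the Euler--Poincaré characteristic formula supplied by Corollary~\ref{projectiveeulerformula}: for a geometric generic point $\mtrx{g}$ of $\GL_{n+1,k}$ one has
\[
\cc(A_{m_1})\cdot\cc(A_{m_2}) = (-1)^n\chi\bigl(\Pp^n_{k'},\, A_{m_1}\otimes l_{\mtrx g}^*A_{m_2}\bigr),
\]
provided the hypotheses of Theorem~\ref{geneufor} are met for the singular supports of $A_{m_1}$ and $l_{\mtrx g}^*A_{m_2}$; but those are exactly the transversality conditions checked inside the proof of Corollary~\ref{projectiveeulerformula}, valid for generic $\mtrx g$. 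Since $A_{m_i}=l_{\mtrx{a}_{m_i}*}c_!d_*\Ql[m_i]$, the pushforward along the closed-on-the-image linear embedding lets us rewrite the tensor product using the projection formula and reduce the computation to cohomology on a linear subspace: $A_{m_1}\otimes l_{\mtrx g}^*A_{m_2}$ is supported on (a twist of) the image of $l_{\mtrx{a}_{m_1}}$, which is a generic $\Pp^{m_1}$, and pulling back $l_{\mtrx g}^*A_{m_2}$ to it amounts to intersecting with a generic $\Pp^{m_2}$.

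First I would treat the case $m_1+m_2<n$. Two generic linear subspaces of dimensions $m_1$ and $m_2$ in $\Pp^n$ are disjoint when $m_1+m_2<n$ (as already used in the proof of Lemma~\ref{constantsheafbasis}). Since $A_{m_1}$ is supported on the image of $l_{\mtrx{a}_{m_1}}$ and $l_{\mtrx g}^*A_{m_2}$ is supported on the $l_{\mtrx g}^{-1}$-preimage of the image of $l_{\mtrx{a}_{m_2}}$ — and genericity of $\mtrx g$ makes these two supports disjoint — the tensor product vanishes, so $\chi=0$ and the intersection number is $0$.

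Next, for $m_1+m_2=n$: by Lemma~\ref{generic-intersection} (or directly, since the two generic subspaces meet transversely at one $k'$-point $P$), we get $l_{\mtrx{a}_{m_1}*}\Ql\otimes l_{\mtrx g}^*l_{\mtrx{a}_{m_2}*}\Ql=l_{\mtrx{c}*}\Ql$ where $\mtrx{c}$ cuts out the single point $P$. Using the projection formula along $l_{\mtrx{a}_{m_1}}$ as in Lemma~\ref{testsheaftransferral}, together with Lemma~\ref{genlem2} to reposition the generic $\mtrx g$, the computation of $\chi(\Pp^n_{k'},A_{m_1}\otimes l_{\mtrx g}^*A_{m_2})$ reduces to $\chi$ of $c_!d_*\Ql[m_1]$ restricted to a generic $\Pp^{m_2}$ inside $\Pp^{m_1}$, i.e. to the Euler characteristic of the restriction of $c_!d_*\Ql$ to a generic $m_2$-plane, shifted by $m_1$. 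Concretely, since the point $P$ is generic it lies in $\Pp^{m_1}_{k'}-(H_1\cup H_2)$ where $c_!d_*\Ql$ is just the constant sheaf $\Ql$; a generic $\Pp^{m_2}$ through $P$ then meets $H_1$ in a generic hyperplane of $\Pp^{m_2}$ and likewise $H_2$, and on $\Pp^{m_2}$ the restriction of $c_!d_*\Ql$ is (up to the shift) the object $c'_!d'_*\Ql$ attached to those two transverse hyperplanes of $\Pp^{m_2}$. So one is reduced to evaluating $(-1)^n\chi(\Pp^{m_2}_{k'}, c'_!d'_*\Ql[m_1])=(-1)^{m_2}\chi(\Pp^{m_2}_{k'},c'_!d'_*\Ql)$. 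This Euler characteristic is computed directly: $c'_!d'_*\Ql$ fits in the excision/mapping-cone triangle used in the proof of Corollary~\ref{testsheafconstruction}, with graded pieces the constant sheaf on $\Pp^{m_2}$, a shift of the constant sheaf on a hyperplane $H_1$, the constant sheaf on the complementary open of $H_2$, and a shift of the constant sheaf on $H_1\cap H_2$; taking Euler characteristics of $\Pp^{m_2}$, $\Pp^{m_2-1}$, $\Aa^{m_2}$ and $\Aa^{m_2-1}$ with the appropriate signs gives exactly $(-1)^{m_2}$, so the total is $1$.

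I expect the main obstacle to be the bookkeeping in the $m_1+m_2=n$ case: ensuring that all the generic points are mutually generic in the right order (so that Lemmas~\ref{genlem2}, \ref{generic-mixing} and \ref{transcendenceswapping} apply and the repeated use of the projection formula and Corollary~\ref{projectiveeulerformula} is legitimate), and carefully tracking which two hyperplanes of $\Pp^{m_2}$ the restricted object $c_!d_*\Ql$ corresponds to. A small point to verify is that the generic $\Pp^{m_2}$ through the generic point $P$ is transversal to $H_1$, $H_2$ and $H_1\cap H_2$, so that the restricted graded pieces really are constant sheaves on the expected linear subspaces; this is a routine genericity argument. Alternatively — and perhaps more cleanly — one can bypass the explicit restriction by arguing that $c_!d_*\Ql$ has a filtration with graded pieces $\Ql$, $\Ql_{H_1}$ (shifted), $\Ql$ on the complement of $H_2$, and $\Ql_{H_1\cap H_2}$ (shifted), apply $\cc$, and pair termwise using Lemma~\ref{constantsheafbasis}'s intersection matrix; the cross terms all contribute through the diagonal of that matrix and the alternating signs telescope to $1$.
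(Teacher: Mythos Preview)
Your overall strategy --- invoke Corollary~\ref{projectiveeulerformula} to convert the intersection number into $(-1)^n\chi(\Pp^n_{k'},A_{m_1}\otimes l_{\mtrx g}^*A_{m_2})$ and then analyse the supports --- is exactly what the paper does, and your treatment of the case $m_1+m_2<n$ matches the paper's word for word.

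Where you go astray is in the case $m_1+m_2=n$. After correctly noting that the two generic linear subspaces meet transversely in a single point $P$, you then speak of ``a generic $\Pp^{m_2}$ inside $\Pp^{m_1}$'' and proceed to restrict $c_!d_*\Ql$ to it and compute an Euler characteristic on $\Pp^{m_2}$. This step is not well-posed: when $m_1+m_2=n$, a generic $m_2$-plane in $\Pp^n$ does not sit inside the $m_1$-plane supporting $A_{m_1}$; it meets it in exactly one point. So the object you are trying to compute the Euler characteristic of on $\Pp^{m_2}$ does not arise from the reduction you describe. The subsequent computation of $(-1)^{m_2}\chi(\Pp^{m_2},c'_!d'_*\Ql)$ happens to land on the right number, but for the wrong reason.

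The paper's argument at this point is much shorter and is essentially contained in a sentence you already wrote: since $P$ is a generic point of each $m_i$-plane, it lies in the open locus $\Pp^{m_i}-(H_1\cup H_2)$ where $c_!d_*\Ql$ is the constant sheaf $\Ql$. Hence the stalk of $A_{m_i}$ at $P$ is $\Ql[m_i]$, the tensor product $A_{m_1}\otimes l_{\mtrx g}^*A_{m_2}$ is a skyscraper $\Ql[m_1+m_2]=\Ql[n]$ at $P$, and $(-1)^n\chi=(-1)^n\cdot(-1)^n=1$. No projection-formula gymnastics, no filtration, no restriction to an auxiliary $\Pp^{m_2}$ are needed; you can delete the entire paragraph after ``since the point $P$ is generic it lies in $\Pp^{m_1}_{k'}-(H_1\cup H_2)$ where $c_!d_*\Ql$ is just the constant sheaf $\Ql$'' and finish in one line.
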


\begin{proof}
  This is a straightforward consequence of Corollary~\ref{projectiveeulerformula}, which identifies the intersection number $ \cc(A_{m_1}) \cdot \cc(A_{m_2})$ with $(-1)^n \chi(\Pp^{n}_{k'}, A_{m_1} \otimes l_{\mtrx{g}}^* A_{m_2})$. Indeed, the support
  of~$A_m$ is of dimension~$m$. If $m_1+m_2<n$, it follows that the
  support of~$A_{m_1}$ does not intersect that of~$A_{m_2}$ after
  generic translation, and consequently their tensor product vanishes,
  from which $\cc(A_{m_1})\cdot \cc(A_{m_2})=0$ follows. Similarly, if $m_1+m_2=n$, the support of of~$A_{m_1}$ intersects
  that of~$A_{m_2}$ after generic translation at a single point. At
  this point, both sheaves have stalk~$\Ql$, so their tensor product
  is a skyscraper sheaf and has Euler--Poincar\'e characteristic~$1$.
\end{proof}

\begin{cor}\label{testsheafindependence}
  Let $(A_j)_{0\leq j\leq n}$ be standard test sheaves on~$\Pp^n_k$.
  Then the characteristic classes $(\cc(A_j))_{0\leq j\leq n}$ form a basis of
  $\CH(\Pp^n_k)\simeq \mathbf{Z}^{n+1}$.
\end{cor}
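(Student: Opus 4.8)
The plan is to deduce the statement purely formally from the pairing computation in Lemma~\ref{testsheafpairing}, using that the intersection form on $\CH(\Pp^n_k)\cong\Zz^{n+1}$ takes integer values on integral classes.

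First I would record the shape of the Gram matrix $G=\bigl(\cc(A_i)\cdot\cc(A_j)\bigr)_{0\le i,j\le n}$: by Lemma~\ref{testsheafpairing} its entries vanish when $i+j<n$ and equal $1$ when $i+j=n$ (the entries with $i+j>n$ will play no role). Applying the column permutation $j\mapsto n-j$ carries the anti-diagonal onto the main diagonal and turns $G$ into a triangular matrix with all diagonal entries equal to $1$; hence $\det G=\pm1$.

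Next I would fix an arbitrary $\Zz$-basis of $\CH(\Pp^n_k)$, let $A\in M_{n+1}(\Zz)$ be its Gram matrix and $P\in M_{n+1}(\Zz)$ the integer matrix whose columns express $\cc(A_0),\dots,\cc(A_n)$ in that basis, so that $G=P^{\mathrm{T}}AP$ and therefore $(\det P)^2\det A=\det G=\pm1$. Since $\det P$ and $\det A$ are integers, the relation $(\det P)^2\det A=\pm1$ forces $(\det P)^2=1$, hence $\det P=\pm1$ and $P\in\GL_{n+1}(\Zz)$, which is exactly the assertion that $(\cc(A_j))_{0\le j\le n}$ is a $\Zz$-basis. (As a by-product one also gets $\det A=\pm1$, i.e.\ the intersection form is unimodular; alternatively one could establish this first by running the same triangularization on the basis $(\cc(K_j))_j$ of Lemma~\ref{constantsheafbasis}, whose Gram matrix has the same anti-triangular shape, and then invoke the standard fact that a full tuple whose Gram matrix has unit determinant under a unimodular form is a basis, but the identity $G=P^{\mathrm{T}}AP$ already suffices.)

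There is essentially no obstacle here; the one point to be careful about is to argue through the integral identity $\det G=(\det P)^2\det A$ rather than merely observing $\det G\neq0$. The latter only shows that the $\cc(A_j)$ are $\Qq$-linearly independent, i.e.\ that they span a subgroup of finite index, whereas the squared-determinant identity upgrades this to index $1$. That this is legitimate rests on the integrality and base-field-independence of the classes $\cc(A_j)$, of the intersection pairing, and of the basis $\cc(K_j)$, all of which are built into Definition~\ref{def-cc} and Lemmas~\ref{constantsheafbasis} and~\ref{testsheafpairing}.
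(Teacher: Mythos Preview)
Your argument is correct and follows the same route as the paper, namely reading off the result from the Gram matrix computed in Lemma~\ref{testsheafpairing}. In fact you are more careful than the paper's one-line proof: the paper simply notes that the intersection matrix is ``invertible,'' which taken at face value only yields a $\Qq$-basis, whereas you observe that the anti-triangular shape forces $\det G=\pm1$ and then push this through $G=P^{\mathrm{T}}AP$ to obtain $\det P=\pm1$, which is exactly what the $\Zz$-basis statement requires. (For the paper's purposes the distinction is harmless, since the only use of the corollary is to produce a norm on $\CH_*(\Pp^n)\otimes\Rr$, for which a $\Qq$-basis would already suffice.)
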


\begin{proof}
  Indeed, by Lemma \ref{testsheafpairing}, the matrix of intersection
  pairings between the characteristic classes~$\cc(A_i)$ is invertible.
\end{proof}


\section{Complexity of a tensor product}\label{sec-tensor}

In this section, we prove the crucial theorem showing that the
complexity controls the sum of Betti numbers of a tensor product of
complexes of $\ell$-adic sheaves, namely: 

\begin{theo}\label{main}
  Let $A$ and $B$ be objects of $\Der(\Pp^n_k)$. Then the estimate
  \[ 
  \sum_{i \in \Zz} h^i(\Pp^n_k, A \otimes B) \ll c(A) c(B)
  \]
  holds, with an implied constant that only depends on $n$.
\end{theo}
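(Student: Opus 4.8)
The plan is to reduce the general statement to the case of perverse sheaves, where Proposition~\ref{bilinearbettibound} and the theory of strong test sheaves from Section~\ref{sec-test} can be combined. First I would reduce to perverse sheaves: for an arbitrary object $A$ of $\Der(\Pp^n_k)$, the perverse cohomology sheaves $\PH^a(A)$ satisfy $\sum_a c(\PH^a(A)) \ll c(A)$ (the number of nonzero $\PH^a(A)$ is bounded in terms of $n$, and each has complexity controlled by $c(A)$, since pulling back by a generic linear map is $t$-exact up to shift by Lemma~\ref{lem-indepl} and Lemma~\ref{generic-dual}, so the perverse filtration is respected and the associated spectral sequence gives the bound). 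Using the distinguished triangles of the perverse filtration for both $A$ and $B$, together with the bilinearity of the target bound, the estimate $\sum_i h^i(\Pp^n_k, A\otimes B) \ll c(A)c(B)$ follows once it is known for perverse $A$ and $B$ (note $A\otimes B$ need not be perverse, but the triangles still give $\sum_i h^i(A\otimes B)\leq \sum_{a,b}\sum_i h^i(\PH^a(A)\otimes\PH^b(B))$ up to the constant counting the number of pairs).

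So assume $A$ and $B$ are perverse. The key point is that $\sum_i h^i(\Pp^n_k, A\otimes B)$ differs from $\sum_i h^i(\Pp^n_{k'}, A\otimes l_{\mtrx{g}}^* B)$ — the quantity bounded by Proposition~\ref{bilinearbettibound} — only through the translation by a generic $\mtrx{g}$, so I must pass from a fixed tensor product to a generic one. This is where the strong test sheaves $A_0,\dots,A_n$ of Corollary~\ref{testsheafconstruction} enter. By Corollary~\ref{testsheafindependence}, their characteristic classes form a basis of $\CH(\Pp^n_k)\cong\Zz^{n+1}$, with intersection matrix made explicit (and unitriangular) by Lemma~\ref{testsheafpairing}. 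The idea is: write $\cc(A)$ and $\cc(B)$ in the dual basis, and use that, by Proposition~\ref{characteristicclasscontrol}, the pairings $|\cc(A_m)\cdot\cc(\PH^i(B))|$ are bounded by $d\, c(B)\le 4c(B)$ for each $m$; combined with the inverse of the (bounded, unitriangular, base-field-independent) intersection matrix from Lemma~\ref{testsheafpairing}, this shows that $\cc(B)$ — written in the basis $\cc(A_m)$ — has coordinates bounded by $O_n(c(B))$, and likewise $\cc(A)$ has coordinates $O_n(c(A))$ in that basis (apply the same to $A$, using $\PH^0(A)=A$).

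Now I would feed this into the formula from Corollary~\ref{projectiveeulerformula}: $(-1)^n\chi(\Pp^n_{k'},A\otimes l_{\mtrx{g}}^* B) = \cc(A)\cdot\cc(B)$, which is a bounded bilinear form in the coordinates of $\cc(A)$ and $\cc(B)$ (the intersection pairing on $\CH(\Pp^n)$ is fixed and base-field independent by Definition~\ref{def-cc}). Hence $|\chi(\Pp^n_{k'},A\otimes l_{\mtrx{g}}^* B)| \ll_n c(A)c(B)$. Together with Proposition~\ref{bilinearbettibound} this controls $\sum_i h^i(\Pp^n_{k'}, A\otimes l_{\mtrx{g}}^* B)$; but $b_n(\cc(A),\cc(B))$ is itself a bounded bilinear form in the same coordinates (built inductively from $f_n$ and the scalar products, all base-field independent), so in fact Proposition~\ref{bilinearbettibound} already gives $\sum_i h^i(\Pp^n_{k'}, A\otimes l_{\mtrx{g}}^* B)\ll_n c(A)c(B)$ directly, once the coordinates of the characteristic classes are known to be $O_n(c(A))$ and $O_n(c(B))$. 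The remaining step — the main obstacle — is to remove the genericity: pass from $\sum_i h^i(\Pp^n_{k'}, A\otimes l_{\mtrx{g}}^* B)$ back to $\sum_i h^i(\Pp^n_k, A\otimes B)$. For this I would not try to compare the two directly; instead, observe that the whole argument above actually bounds $\sum_i h^i(\Pp^n, A\otimes B)$ for \emph{any} fixed embedding-compatible tensor product by running it with $B$ replaced by its translate under a fixed $\mtrx{a}\in\GL_{n+1}(k)$, since $c(l_{\mtrx{a}}^* B)=c(B)$ and $\cc(l_{\mtrx{a}}^* B)$ has the same coordinates; the generic $\mtrx{g}$ was only a device to apply Corollary~\ref{projectiveeulerformula}, and a more careful statement is that the bound $\sum_i h^i(\Pp^n_{k'},A\otimes l_{\mtrx{g}}^* B)\ll c(A)c(B)$ combined with semicontinuity of Betti numbers in the family over $\GL_{n+1}$ — with the identity translation being a specialization of the generic point — yields $\sum_i h^i(\Pp^n_k, A\otimes B)\leq \sum_i h^i(\Pp^n_{k'}, A\otimes l_{\mtrx{g}}^* B)$. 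That upper semicontinuity of fiberwise cohomology dimensions along a specialization is the technical crux, and is where I expect to spend the most care.
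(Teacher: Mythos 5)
The first two-thirds of your plan (reduction to perverse sheaves, bounding the coordinates of $\cc(A)$ and $\cc(B)$ via strong test sheaves, and the generic bound $\sum_i h^i(\Pp^n_{k'},A\otimes l_{\mtrx{g}}^*B)\ll c(A)c(B)$) matches the paper's Proposition~\ref{characteristicclassbound} and Corollary~\ref{genericbettibound}. But your final step — the one you yourself flag as the crux — fails: the semicontinuity you invoke goes the wrong way. For a constructible complex on $\GL_{n+1}$ obtained by pushing forward $A\otimes l_{\mu}^*B$, stalk dimensions can only \emph{jump up} at special points, so the generic fiber gives a lower bound, not an upper bound, for the fiber at the identity. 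Concretely, take $A=B$ the constant sheaf on a line $L\subset\Pp^2$, extended by zero and suitably shifted. For generic $\mtrx{g}$ the support of $A\otimes l_{\mtrx{g}}^*B$ is the single point $L\cap l_{\mtrx{g}}(L)$ and the total Betti number is $1$, whereas $A\otimes B$ is supported on all of $L\simeq\Pp^1$ and has total Betti number $2$. So the inequality $\sum_i h^i(\Pp^n_k,A\otimes B)\leq\sum_i h^i(\Pp^n_{k'},A\otimes l_{\mtrx{g}}^*B)$ is simply false. Your fallback — rerunning the argument with a fixed $\mtrx{a}\in\GL_{n+1}(k)$ — also does not work: genericity of $\mtrx{g}$ is not "only a device to apply Corollary~\ref{projectiveeulerformula}" but is what guarantees both the transversality of singular supports needed for the intersection-theoretic Euler characteristic formula (Theorem~\ref{geneufor}) and the perversity of $A\otimes l_{\mtrx{g}}^*B[-n]$ (Lemma~\ref{genericperversity}), without which Proposition~\ref{bilinearbettibound} collapses.

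The paper closes exactly this gap with a substantially more elaborate mechanism. The key input is Lemma~\ref{gmformula}: for a complex $K$ on $\Gg_m$, one has
\[
\sum_i\dim\mathcal H^i(K)_1\ \leq\ \sum_i\dim\mathcal H^i(K)_\eta+\sum_i h^i_c(\Gg_m,K),
\]
i.e.\ the jump at the special point is controlled by the compactly supported cohomology of the whole family (this is where the Euler--Poincar\'e formula on curves enters). One then degenerates $\mtrx{g}$ to the identity one diagonal entry at a time: each step is a $\Gg_m$-family of translations by $\Diag(\lambda,1,\dots,1)$ conjugated by a generic matrix, and the correction term $\sum_i h^i_c(\Gg_m,K)$ is computed via the fixed-point decomposition of $\Pp^n$ under this $\Gg_m$-action (an isolated point, a hyperplane $\Pp^{n-1}$, and a free locus fibered over $\Pp^{n-1}$), reducing it to tensor-product Betti numbers on $\Pp^{n-1}$ and hence to an induction on $n$ (Lemmas~\ref{gmdegenerationbound} and~\ref{complexityreductionbound}). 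Without some version of this double induction, or another genuine idea to bound the jump at the identity, your argument does not establish the theorem.
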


\begin{remark}
  In Theorem~\ref{th-main-effective}, we state and sketch a proof of a
  version of this result with an explicit constant.
\end{remark}

Before starting the proof, we show how this implies an important
corollary.

\begin{cor}\label{tensor1}
  Let $A$ and $B$ be objects of $\Der(\Pp^n_k)$. Then the estimate
  \[
    c( A \otimes B) \ll c(A) c(B)
  \]
  holds, with an implied constant that only depends on~$n$. 
\end{cor}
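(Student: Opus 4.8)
The plan is to deduce the bound on $c(A\otimes B)$ from Theorem \ref{main} by unwinding the definition of complexity. Recall that $c(A\otimes B)$ is the maximum over $0\leq m\leq n$ of the sum of Betti numbers of $l_{\mtrx{a}_m}^*(A\otimes B)$ on $\Pp^m_{k'}$, where $\mtrx{a}_m$ is a geometric generic point of $M^{n+1,m+1}_k$. So it suffices to bound, for each fixed $m$, the quantity $\sum_{i\in\Zz} h^i(\Pp^m_{k'}, l_{\mtrx{a}_m}^*(A\otimes B))$ by a constant (depending only on $n$) times $c(A)c(B)$.

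The key point is that $l_{\mtrx{a}_m}^*$ commutes with $\otimes$, so that
\[
l_{\mtrx{a}_m}^*(A\otimes B) = l_{\mtrx{a}_m}^*A \otimes l_{\mtrx{a}_m}^*B
\]
as objects of $\Der(\Pp^m_{k'})$. Applying Theorem \ref{main} \emph{on $\Pp^m_{k'}$} to the pair $l_{\mtrx{a}_m}^*A$ and $l_{\mtrx{a}_m}^*B$, we obtain
\[
\sum_{i\in\Zz} h^i(\Pp^m_{k'}, l_{\mtrx{a}_m}^*A\otimes l_{\mtrx{a}_m}^*B) \ll c(l_{\mtrx{a}_m}^*A)\, c(l_{\mtrx{a}_m}^*B),
\]
with an implied constant depending only on $m$, hence one may take a single constant depending only on $n$ that works for all $0\leq m\leq n$. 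Finally, we invoke the inequality $c(l_{\mtrx{b}}^*A)\leq c(A)$ noted after Lemma \ref{lem-complexity-open-set}: since a geometric generic point $\mtrx{a}_m$ is, after an extension of the base field, specialized by (or rather, has the same Betti numbers as) points $\mtrx{b}$ in a dense open subset of $M^{n+1,m+1}$, we get $c(l_{\mtrx{a}_m}^*A)\leq c(A)$ and likewise for $B$. Combining these, $\sum_i h^i(\Pp^m_{k'}, l_{\mtrx{a}_m}^*(A\otimes B)) \ll c(A)c(B)$ for every $m$, and taking the maximum over $m$ gives $c(A\otimes B)\ll c(A)c(B)$.

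There is essentially no obstacle here: the corollary is a formal consequence of Theorem \ref{main} together with the two elementary facts that pullback commutes with tensor product and that complexity does not increase under pullback by a (generic) linear map. The only mild subtlety is the bookkeeping with geometric generic points and fields of definition $k'$, which is handled by base-change invariance of étale cohomology and Lemma \ref{lem-complexity-open-set}; one should be slightly careful that the implied constant from Theorem \ref{main} depends only on the dimension of the projective space one works in, so that taking the maximum over $m\leq n$ still yields a constant depending only on $n$.
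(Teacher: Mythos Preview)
Your approach is correct and takes a genuinely different route from the paper's proof. The paper rewrites each term $\sum_i h^i(\Pp^m,l_{\mtrx{a}_m}^*(A\otimes B))$ via the projection formula as $\sum_i h^i(\Pp^n, A\otimes B\otimes l_{\mtrx{a}_m*}\Ql)$, applies Theorem~\ref{main} once on $\Pp^n$ to bound this by $c(A)\,c(B\otimes l_{\mtrx{a}_m*}\Ql)$, and then unwinds $c(B\otimes l_{\mtrx{a}_m*}\Ql)$ using Lemma~\ref{generic-intersection} (the product $l_{\mtrx{a}_m*}\Ql\otimes l_{\mtrx{a}'_{m'}*}\Ql$ is again of the form $l_{\mtrx{b}*}\Ql$ for a generic $\mtrx{b}$). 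Your argument instead pulls back first, applies Theorem~\ref{main} on $\Pp^m$, and uses $c(l_{\mtrx{a}_m}^*A)\leq c(A)$. This is arguably cleaner: it avoids Lemma~\ref{generic-intersection} entirely, at the cost of invoking Theorem~\ref{main} in every dimension $m\leq n$ rather than only in dimension $n$.

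One small point deserves tightening. The remark after Lemma~\ref{lem-complexity-open-set} asserts $c(l_{\mtrx{b}}^*A)\leq c(A)$ for \emph{closed} points $\mtrx{b}\in U(\bar k)$, whereas you need it for a geometric generic point $\mtrx{a}_m$, and your parenthetical ``has the same Betti numbers as'' does not quite bridge this, since $c(l_{\mtrx{a}_m}^*A)$ itself involves further generic slicing. The direct justification is: for $\mtrx{c}_p$ a geometric generic point of $M^{m+1,p+1}_{k(\mtrx{a}_m)}$, the product $\mtrx{a}_m\mtrx{c}_p$ is a geometric generic point of $M^{n+1,p+1}_k$ because matrix multiplication $M^{n+1,m+1}\times M^{m+1,p+1}\to M^{n+1,p+1}$ is dominant (in the spirit of Lemma~\ref{generic-mixing}). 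Hence each term $\sum_i h^i(\Pp^p, l_{\mtrx{c}_p}^*l_{\mtrx{a}_m}^*A)=\sum_i h^i(\Pp^p, l_{\mtrx{a}_m\mtrx{c}_p}^*A)$ already appears in the maximum defining $c(A)$, giving $c(l_{\mtrx{a}_m}^*A)\leq c(A)$ as you want.
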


\begin{proof}
  Let~$\uple{g}$ be a geometric generic point of~$\GL_{n+1,k}$.  For
  $0\leq m\leq n$, let $\uple{a}_m$ be a geometric generic point of
  $M^{n+1,m+1}_k$. From the definition of complexity and Theorem~\ref{main}, we
  get the estimate
  \[ c(A \otimes B) = \sup_ {0 \leq m \leq n} \sum_{i \in \Zz} h^i(
    \Pp^n_k, A \otimes B \otimes l_ {\mtrx{a}_m*}\Ql)\]
  \[ \ll c(A)\ \sup_{0 \leq m \leq n} c ( B \otimes l_{\mtrx{a}_m *}
    \Ql).
  \]
  \par
  For $0\leq m'\leq n$, let $\mtrx{a}'_{m'}$ be a geometric generic point of
  $M^{n+1,m'+1}_{k(\mtrx{a}_m)}$.  Again by definition, the complexity above is given by
  \[c ( B \otimes l_{\mtrx{a}_m *} \Ql) = \sup_{0 \leq m' \leq n}
    \sum_{i \in \Zz} h^i(\Pp^n_k, B \otimes l_{\mtrx{a}_m*} \Ql
    \otimes l_{\mtrx{a}'_{m'}*} \Ql)
  \]
  for any~$m\leq n$.
  By Lemma \ref{generic-intersection}, the equality
  $l_{\mtrx{a}_m*} \Ql \otimes l_{\mtrx{a}'_{m'}*}
  \Ql=l_{\mtrx{b}*}\Ql$ holds for a geometric generic point $\mtrx{b}$ of
  $M^{n+1, m+m'-n+1}_k$. This implies the inequality 
  \[
    \sum_{i \in \Zz} h^i(\Pp^n_k, B \otimes l_{\mtrx{a}_m*} \Ql
    \otimes l_{\mtrx{a}'_{m'}*} \Ql) \leq c(B)
  \]
  for any $m$ and~$m'$, from which the result follows.
\end{proof}

The strategy for proving Theorem \ref{main} consists in first establishing a ``generic''
version of it, and then deducing from this the precise statement. In what follows, we denote by $(A_m)_{0\leq m\leq n}$ the family of standard test sheaves from Definition~\ref{def-test-am}.  For
$x \in \Rr^{n+1}= \CH_*(\Pp^n) \otimes \Rr$, we set
$$
\|x\| = \sum_{m=0}^n |x \cdot \cc(A_m)|
$$
Because $(\cc(A_m))_{0\leq m\leq n}$ is a basis of $\CH_*(\Pp^n)$, by
Corollary~\ref{testsheafindependence}, and the intersection form is
non-degenerate, this is simply the $\ell^1$-norm in the dual basis, and
in particular defines a norm on $\Rr^{n+1}$.

\begin{prop}\label{characteristicclassbound} For any object $B$ of
  $\Der(\Pp^n_k)$, the following inequality holds: 
  \[ \sum_{i \in \Zz} \| \cc(\PH^i(B))\| \leq 4 (n+1) c(B).\]
\end{prop}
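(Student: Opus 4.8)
The plan is to combine the crucial property of strong test sheaves, Proposition~\ref{characteristicclasscontrol}, with the fact, established in Corollary~\ref{testsheafindependence}, that the characteristic classes $\cc(A_0),\dots,\cc(A_n)$ of the standard test sheaves form a basis of $\CH_*(\Pp^n_k)$. By construction, $\|x\|=\sum_{m=0}^n|x\cdot\cc(A_m)|$ is precisely the $\ell^1$-norm of $x$ expressed in the basis dual to $(\cc(A_m))_m$ with respect to the intersection pairing; this is exactly what lets us bound $\|\cc(\PH^i(B))\|$ by pairing against the $\cc(A_m)$.

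More precisely, I would write, using the definition of $\|\cdot\|$ and then exchanging the order of summation,
\[
\sum_{i\in\Zz}\|\cc(\PH^i(B))\|=\sum_{i\in\Zz}\sum_{m=0}^n\bigl|\cc(\PH^i(B))\cdot\cc(A_m)\bigr|=\sum_{m=0}^n\sum_{i\in\Zz}\bigl|\cc(A_m)\cdot\cc(\PH^i(B))\bigr|.
\]
By Corollary~\ref{testsheafconstruction}, each standard test sheaf $A_m$ is a strong test sheaf of depth at most $4$, so Proposition~\ref{characteristicclasscontrol} applied to $A_m$ and $B$ gives
\[
\sum_{i\in\Zz}\bigl|\cc(A_m)\cdot\cc(\PH^i(B))\bigr|\leq 4\,c(B)
\]
for each $m$ with $0\leq m\leq n$. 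Summing over the $n+1$ values of $m$ yields
\[
\sum_{i\in\Zz}\|\cc(\PH^i(B))\|\leq\sum_{m=0}^n 4\,c(B)=4(n+1)\,c(B),
\]
which is the desired inequality.

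There is essentially no obstacle here: the entire content has been pushed into Proposition~\ref{characteristicclasscontrol} and the basis property from Corollary~\ref{testsheafindependence}, and this proposition is a clean bookkeeping step that packages them together — unwind the definition of the norm, swap the finite sums, apply the depth-$\leq 4$ bound term by term. The only point requiring a moment's care is making sure the interchange of the (finite) sum over $m$ with the (finite-support) sum over $i$ is legitimate, which it is since only finitely many $\PH^i(B)$ are nonzero; no convergence issue arises. One should also note explicitly that $\PH^i(B)$ is an object of $\Der(\Pp^n_{k})$ to which Proposition~\ref{characteristicclasscontrol} applies with $k'=k$, so no extension of scalars is needed at this stage.
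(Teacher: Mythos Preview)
Your proof is correct and is essentially identical to the paper's own argument: unwind the definition of the norm, apply Proposition~\ref{characteristicclasscontrol} to each standard test sheaf $A_m$ (using the depth bound from Corollary~\ref{testsheafconstruction}), and sum over $m$. The paper's proof is slightly terser but follows exactly the same steps.
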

 
\begin{proof} Let $0 \leq m \leq n$. Because $A_m$ is a strong test sheaf of depth at most $4$, Proposition~\ref{characteristicclasscontrol} yields the inequality 
  \[
\sum_{p \in \Zz} \left|\cc(\PH^p(B)) \cdot \cc(A_m)\right| \leq 4 c(B). 
\]
Thus, we have 
\[
\sum_{p \in \Zz} \|\cc(\PH^p(B))\| = \sum_{m=0}^n \ \sum_{p \in \Zz}
  \left| \cc(\PH^p(B) \cdot \cc(A_m)\right| \leq 4 (n+1)c(B). \qedhere
  \]
\end{proof}

The second part of the next corollary shows that the complexity of a complex of sheaves is, up
to constants, the sum of the norms of the characteristic cycles of its perverse cohomology sheaves. 

\begin{cor}\label{genericbettibound}
  Let~$A$ and~$B$ be objects of~$\Der(\Pp^n_k)$.
  \begin{enumerate}
  \item\label{genericbettibound:item1} For a generic point $\mtrx{g}$ of $\GL_{n+1,k}$ defined over
    $k'$, the estimate
    \[ \sum_{i \in \Zz} h^i(\Pp^n_{k'}, A \otimes l_{\mtrx{g}}^* B ) \ll
      c(A) c(B) \]
   holds, with an implied constant that only depends on $n$.
  \item\label{genericbettibound:item2} We have
    $$
    c(B) \asymp \sum_{i \in \Zz} \| \cc(\PH^i(B))\|, 
    $$
    where the implied constants depend only on $n$.
  \end{enumerate}
\end{cor}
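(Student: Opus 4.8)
The plan is to derive both parts from Proposition~\ref{bilinearbettibound} and Proposition~\ref{characteristicclassbound}, together with the elementary remark that, since $b_n$ extends to a bilinear form on the finite-dimensional space $\Rr^{n+1}$, there is a constant $M_n$ depending only on~$n$ such that $|b_n(x,y)|\leq M_n\,\|x\|\,\|y\|$ for all $x,y\in\Rr^{n+1}$.

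For part~\eqref{genericbettibound:item1}, I would first reduce to the case of perverse sheaves. Applying the triangulated functor $-\otimes l_{\mtrx{g}}^*B$ to a (finite) Postnikov tower of~$A$ with graded pieces $\PH^p(A)[-p]$, and then the functor $\PH^p(A)\otimes l_{\mtrx{g}}^*(-)$ to a Postnikov tower of~$B$, the associated long exact cohomology sequences give
\[
\sum_{i\in\Zz} h^i(\Pp^n_{k'},A\otimes l_{\mtrx{g}}^*B)\ \leq\ \sum_{p,q\in\Zz}\ \sum_{i\in\Zz} h^i\bigl(\Pp^n_{k'},\PH^p(A)\otimes l_{\mtrx{g}}^*\PH^q(B)\bigr).
\]
Since $\PH^p(A)$ and $\PH^q(B)$ are perverse, each inner sum is at most $b_n\bigl(\cc(\PH^p(A)),\cc(\PH^q(B))\bigr)$ by Proposition~\ref{bilinearbettibound}, which holds for the given generic $\mtrx{g}$ and all pairs of perverse sheaves simultaneously. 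Using $|b_n|\leq M_n\|\cdot\|\,\|\cdot\|$ and factoring the double sum, the right-hand side is at most $M_n\bigl(\sum_p\|\cc(\PH^p(A))\|\bigr)\bigl(\sum_q\|\cc(\PH^q(B))\|\bigr)$, and Proposition~\ref{characteristicclassbound} bounds the two factors by $4(n+1)c(A)$ and $4(n+1)c(B)$; this yields the estimate, with implied constant $16(n+1)^2M_n$.

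For part~\eqref{genericbettibound:item2}, the bound $\sum_i\|\cc(\PH^i(B))\|\leq 4(n+1)c(B)$ is precisely Proposition~\ref{characteristicclassbound}, so only $c(B)\ll\sum_i\|\cc(\PH^i(B))\|$ remains. Applying $l_{\mtrx{a}_m}^*$ to a Postnikov tower of~$B$ and using $\sum_i h^i(\Pp^m_{k'},l_{\mtrx{a}_m}^*\PH^j(B))\leq c(\PH^j(B))$ (which holds by definition of complexity, the value being independent of the chosen geometric generic point), one obtains $c(B)\leq\sum_j c(\PH^j(B))$, so it suffices to treat a perverse sheaf~$C$. For each $0\leq m\leq n$ I would write the geometric generic point $\mtrx{a}_m$ as $\mtrx{g}\mtrx{a}_m^0$, with $\mtrx{a}_m^0\in M^{n+1,m+1}(k)$ a fixed linear embedding and $\mtrx{g}$ a geometric generic point of $\GL_{n+1,k}$ (Lemma~\ref{generic-mixing}), and set $D_m=l_{\mtrx{a}_m^0*}\Ql[m]$; this is a fixed perverse sheaf on $\Pp^n_k$, being the direct image along a closed immersion of the shifted constant sheaf on an $m$-dimensional linear subspace. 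The projection formula then gives
\[
\sum_i h^i(\Pp^m_{k'},l_{\mtrx{a}_m}^*C)\ =\ \sum_i h^i\bigl(\Pp^n_{k'},D_m\otimes l_{\mtrx{g}}^*C\bigr)\ \leq\ b_n(\cc(D_m),\cc(C))\ \leq\ M_n\,\|\cc(D_m)\|\,\|\cc(C)\|
\]
by Proposition~\ref{bilinearbettibound}. As only finitely many dimensions~$m$ occur, the quantities $\|\cc(D_m)\|$ are bounded by a constant depending only on~$n$, so taking the maximum over~$m$ gives $c(C)\ll\|\cc(C)\|$; applying this to each $C=\PH^j(B)$ and summing over $j$ finishes the proof.

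The only genuinely delicate point is the bookkeeping with generic points: one must check that a single geometric generic point $\mtrx{g}$ may serve simultaneously for the finitely many perverse constituents $\PH^p(A)$ and $\PH^q(B)$, that $\mtrx{g}\mtrx{a}_m^0$ is again a geometric generic point of $M^{n+1,m+1}_k$ (Lemma~\ref{generic-mixing}), and that replacing one geometric generic point by another leaves all the relevant Betti numbers unchanged (invariance of étale cohomology under extensions of algebraically closed fields, as used in Lemma~\ref{lem-complexity-open-set}). None of this is hard, but it is where the genericity results of Section~\ref{sec-generic} enter.
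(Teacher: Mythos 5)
Your proof is correct and follows essentially the same route as the paper: part (1) via the perverse spectral sequences, Proposition~\ref{bilinearbettibound} and Proposition~\ref{characteristicclassbound}, and the hard direction of part (2) by testing against pushforwards of constant sheaves on linear subspaces and invoking Lemma~\ref{generic-mixing}. The only (harmless) difference is that you first reduce part (2) to a single perverse sheaf before applying the bilinear bound, whereas the paper applies the general-object form of the estimate from part (1) directly.
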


\begin{proof}
  For the proof of \eqref{genericbettibound:item1}, we apply the spectral sequences associated to
  the perverse filtrations of $A$ and $B$ to get
  \[
    \sum_{i \in \Zz} h^i(\Pp^n_{k'}, A \otimes l_{\mtrx{g}}^* B ) \leq
    \sum_{p\in\Zz} \sum_{q\in\Zz} \sum_{i \in \Zz} h^i(\Pp^n_{k'},
    \PH^{p}(A) \otimes l_{\mtrx{g}}^* \PH^q(B) ).
  \]
  Applying Proposition \ref{bilinearbettibound}, and denoting by~$M_n$
  the norm of the bilinear form~$b_n$, so that the inequality
  $b_n(\alpha,\beta) \leq M_n \|\alpha\| \ \|\beta\| $ holds for
  any~$\alpha$ and~$\beta$, we get
  \[
    \sum_{i \in \Zz} h^i(\Pp^n_{k'}, A \otimes l_{\mtrx{g}}^* B )\leq
    M_n \sum_{p\in\Zz} \sum_{q\in\Zz} \| \cc(\PH^p(A))\|
    \ \|    \cc(\PH^q(B))\|,
  \]
  which is at most $16(n+1)^2M_n c(A)c(B)$ by Proposition
  \ref{characteristicclassbound}. 
  
  For~\eqref{genericbettibound:item2}, one bound follows from
  Proposition~\ref{characteristicclassbound}. In the other direction,
  we begin as above (with the same~$\uple{g}$) until we reach the
  bound
  \[
    \sum_{i \in \Zz} h^i(\Pp^n_{k'}, A \otimes l_{\mtrx{g}}^* B ) \ll
     \sum_{p \in \Zz} \| \cc(\PH^p(A))\| \  \sum_{q
      \in\Zz} \| \cc(\PH^q(B))\|.
  \]
  Taking~$A = l_{\mtrx{a_m}*} \Ql$, where $\mtrx{a}_m$ is
  an arbitrary element of $M^{n+1, m+1}(k)$, this gives
  \[\sum_{i \in \Zz} h^i(\Pp^m_{k'}, l_{\mtrx{a}_m}^* l_{\mtrx{g}}^* B )
    \ll \sum_{q \in \Zz} \| \cc(\PH^q(B))\|
  \]
  for $0\leq m\leq n$. This implies the desired bound, since
  $\mtrx{g} \mtrx{a}_m$ is a generic point of~$M^{n+1,m+1}_k$ by Lemma
  \ref{generic-mixing}.
\end{proof}

Our main theorem in this section is the same as
Corollary~\ref{genericbettibound}, but without the generic pullback
$l_{\mtrx{g}}^*$. To remove it, we will use a double induction
strategy. We view a generic point $\uple{g}$ of $\GL_{n+1}$ as a generic
diagonal matrix conjugated by a generic invertible matrix. We will
show that the theorem remains true if some of the entries of the
diagonal matrix are set to $1$, by induction on the number of
entries. Once all the diagonal entries are set to~$1$, any conjugate
is the identity matrix, and so we obtain our desired statement.

At each step, we have a family of cohomology groups parameterized by
$\Gg_m$, i.e. a complex of sheaves on $\Gg_m$, and we want to bound
the stalk at the identity using the stalk at the generic point. In
general, the stalk at the generic point could be very small and the
stalk at the identity could still be very large. However, we will now
show that this cannot happen as long as we also control the
cohomology. In our case, it turns out that controling the cohomology
corresponds to a lower-dimensional version of the problem, and we may
use induction on the dimension to achieve our goal.

\begin{lem}\label{gmformula}
  For each object $A$ of $\Der(\Gg_{m,k})$, the inequality 
  \begin{equation}\label{gminequality}
    \sum_{i \in \Zz} \dim \mathcal
    H^i(A)_1 \leq \sum_{i \in \Zz} \dim \mathcal H^i(A)_\eta + \sum_{i
      \in \Zz} h^i_c(\Gg_{m,k},A)
  \end{equation} holds, where $\eta$ denotes the generic point of $\Gg_{m, k}$.  
\end{lem}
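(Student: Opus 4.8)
The plan is to reduce \eqref{gminequality} to the case where $A$ is perverse, and then to read it off from the Euler--Poincaré formula.

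\emph{Reduction to the perverse case.} I would filter $A$ by the perverse truncation triangles ${}^{\mathfrak p}\tau_{\le q-1}A\to{}^{\mathfrak p}\tau_{\le q}A\to\PH^q(A)[-q]$ and compare the three quantities occurring in \eqref{gminequality} with their analogues for the perverse sheaves $\PH^q(A)$. Applying the exact stalk functor at $1$ and the long exact sequences of stalks gives $\sum_i\dim\mathcal H^i(A)_1\le\sum_q\sum_i\dim\mathcal H^i(\PH^q(A))_1$. For the generic stalk there is no loss: since $\Gg_{m,k}$ is a smooth curve, each $\PH^q(A)$ has $\mathcal H^j(\PH^q(A))=0$ for $j\notin\{-1,0\}$ with $\mathcal H^0(\PH^q(A))$ supported at finitely many closed points, so $(\PH^q(A))_\eta$ is concentrated in degree $-1$; hence the graded pieces $(\PH^q(A))_\eta[-q]$ of the filtered complex $A_\eta$ are concentrated in the pairwise distinct degrees $q-1$, this degree is increasing in $q$, and therefore all connecting maps of the corresponding long exact sequences vanish, yielding $\sum_i\dim\mathcal H^i(A)_\eta=\sum_q\dim\mathcal H^{-1}(\PH^q(A))_\eta=\sum_q\sum_i\dim\mathcal H^i(\PH^q(A))_\eta$. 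The analogue for compactly supported cohomology is the crux: the spectral sequence $E_2^{p,q}=\rmH^p_c(\Gg_{m,k},\PH^q(A))\Rightarrow\rmH^{p+q}_c(\Gg_{m,k},A)$ degenerates at $E_2$. Indeed, for a perverse sheaf $P$ on the affine curve $\Gg_{m,k}$ one has $\rmH^p_c(\Gg_{m,k},P)=0$ for $p\notin\{0,1\}$ --- vanishing for $p<0$ is Artin's vanishing theorem, for $p>2$ it is the bound on cohomological dimension, and for $p=2$ the only possible contribution in the hypercohomology spectral sequence is $\rmH^2_c(\Gg_{m,k},\mathcal H^0(P))$, which vanishes because $\mathcal H^0(P)$ is punctual --- and every differential $d_r$, $r\ge2$, raises the index $p$ by at least $2$ and hence vanishes. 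Therefore $\sum_i h^i_c(\Gg_{m,k},A)=\sum_q\sum_i h^i_c(\Gg_{m,k},\PH^q(A))$, and combining the three statements reduces \eqref{gminequality} to the case in which $A$ is perverse.

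\emph{The perverse case.} Assume $A$ perverse. Then $\mathcal H^0(A)$ is punctual, so $\sum_i\dim\mathcal H^i(A)_\eta=\dim\mathcal H^{-1}(A)_\eta=\rank(A)$, while $\drp_1(A)=\rank(A)-\dim\mathcal H^{-1}(A)_1\ge0$ (because $\mathcal H^{-1}(A)$ has no punctual subsheaf) gives
\[
\sum_i\dim\mathcal H^i(A)_1=\dim\mathcal H^{-1}(A)_1+\dim\mathcal H^0(A)_1\le\rank(A)+\jump_1(A).
\]
Since $\chi_c(\Gg_{m,k},\Ql)=0$, we have $\chi_c(\Gg_{m,k},\Ql[1])=0$, so the Euler--Poincaré formula recalled in the introduction gives $|\chi_c(\Gg_{m,k},A)|=\loc(A)$; as every local term in $\loc(A)$ is non-negative, $\loc(A)\ge\jump_1(A)$, whence $\sum_i h^i_c(\Gg_{m,k},A)\ge|\chi_c(\Gg_{m,k},A)|=\loc(A)\ge\jump_1(A)$. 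Adding this to the preceding display yields $\sum_i\dim\mathcal H^i(A)_1\le\rank(A)+\jump_1(A)\le\sum_i\dim\mathcal H^i(A)_\eta+\sum_i h^i_c(\Gg_{m,k},A)$, which is exactly \eqref{gminequality}.

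\emph{Main obstacle.} The heart of the matter is the $E_2$-degeneration used in the reduction: for a general complex $A$, compactly supported cohomology genuinely loses information along the perverse filtration, so a naive dévissage would run the wrong way; it is precisely the fact that $\Gg_{m,k}$ is an \emph{affine} curve---so that the compactly supported cohomology of a perverse sheaf lives only in the two degrees $0$ and $1$---that forces the degeneration. Once that is in place, the perverse case itself is immediate from the Euler--Poincaré formula and the vanishing of $\chi_c(\Gg_{m,k},\Ql)$.
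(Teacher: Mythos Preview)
Your proof is correct and follows essentially the same approach as the paper's: both reduce to the perverse case via the degeneration of the perverse spectral sequence for $\rmH^*_c$ on an affine curve, and then conclude from the Euler--Poincar\'e formula together with $\chi_c(\Gg_{m,k},\Ql)=0$. Your write-up is in fact a bit more careful than the paper's in two places---you spell out the vanishing of the connecting maps at the generic stalk, and your chain of inequalities $\jump_1(A)\le\loc(A)=|\chi_c|\le\sum_i h^i_c$ avoids the sign slip in the paper's displayed inequality ``$\chi_c(\Gg_{m,k},A)\ge\dim\mathcal H^0(A)_1$'' (which should read $-\chi_c$).
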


\begin{proof} 
We first reduce the proof to the case when $A$ is perverse. On the one hand, the spectral sequence associated to the perverse filtration of $A$ gives the inequality
\[ 
\sum_{i \in \Zz} \dim \mathcal H^i(A)_1 \leq \sum_{j \in \Zz} \sum_{i \in \Zz} \dim \mathcal H^i(\PH^j(A))_1.
\]
On the other hand, because perverse sheaves are supported in a single degree at the generic point, we have
\[ \sum_{i \in \Zz} \dim \mathcal H^i(A)_\eta= \sum_{j \in \Zz} \sum_{i \in \Zz} \dim \mathcal H^i(\PH^j(A))_\eta.\] 
Because the compactly supported cohomology of an affine curve with
coefficients in a perverse sheaf is concentrated in degrees $0$ and $1$,
the spectral sequence for the perverse filtration of $A$ degenerates
and
\[ \sum_{i \in \Zz} h^i_c(\Gg_{m,k},A) = \sum_{j \in \Zz} \sum_{i \in
    \Zz}
  h^i_c(\Gg_{m,k},\PH^j(A)).\]
Therefore, it is sufficient to prove the
inequality~(\ref{gminequality}) when~$A$ is a perverse sheaf.

If~$A$ is perverse, most of the terms in the sums
in~(\ref{gminequality}) vanish. Removing all the terms that are known
to vanish, the desired inequality can be stated as
\[
  \dim \mathcal H^0(A)_1 + \dim \mathcal H^{-1}(A)_1  \leq
  \dim \mathcal H^{-1}(A)_\eta +  h^0_c (\Gg_{m,k},A) + h^1_c (
  \Gg_{m,k},A).
\]
Since $\dim \mathcal H^{-1}(A)_1 \leq \dim \mathcal{H}^{-1}(A_\eta)$, it
suffices to show the inequality 
\[ 
\dim \mathcal H^0(A)_1    \leq   h^0_c (\Gg_{m,k},A) +
  h^{1}_c ( \Gg_{m,k},A), 
  \]
or even
\[ 
\chi_c(\Gg_{m,k},A) \geq \dim \mathcal H^0(A)_1.
\]
This follows from the Euler--Poincaré characteristic formula for perverse
sheaves on smooth curves, where the local term at the point $1$ is
at least $ \dim \mathcal H^0(A)_1$, all other local terms are
non-negative, and the global term is a multiple of the Euler--Poincaré
characteristic of $\Gg_m$ and hence vanishes.
\end{proof}

Let~$n\geq 1$ be an integer. For each point $\mtrx{g}$ of $GL_{n+1, k}$ valued in an algebraically closed field, the fixed points of the linear map
$l_{\mtrx{g}}$ are the projectivizations of the eigenspaces of
$\mtrx{g}$.  In particular, for any $\lambda\not=1$, the fixed points
of~$l_{\Diag(\lambda,1,\dots,1)}$ on $\Pp^n$ consist of the isolated
point~$x_0=[1:0:\cdots:0]$ and the hyperplane $H_0=\{[0:a_1:\cdots:a_n]\}$. We identify $H_0$ with $\Pp^{n-1}$.

In the next two lemmas, we will denote by $U \subseteq \Pp^n$ the
complement of $\{x_{0}\}\cup H_{0}$, by~$j\colon U \to \Pp^n$ the corresponding open
immersion, and by $\pi\colon U \to H_0\simeq \Pp^{n-1}$ the projection
map from~$U$ to the hyperplane~$H_{0}$.  We also denote by
$p\colon \{x_{0}\} \to \Pp^n$ the closed immersion of the isolated
fixed point and by $h\colon H_{0}=\Pp^{n-1} \to \Pp^n$ the closed
immersion of the hyperplane.

\begin{lem}\label{gmdegenerationbound}
  Let~$n\geq 1$ be an integer. Let $A$ and $B$ be objects of $\Der(\Pp^n_k)$. Let
  $\lambda$ be a geometric generic point of $\Gg_{m,k}$ defined over
  an algebraically closed field $k'$. With notation as above, the following inequality holds: 
  \begin{align*}
    \sum_{i \in \Zz} h^i(\Pp^n_k, A \otimes B) \leq &\sum_{i \in \Zz}
    h^i(\Pp^n_{k'}, A \otimes l_{\Diag(\lambda,1,\dots,1)}^*B) \\
    &+2 \sum_{i \in \Zz} h^i(\Spec(k), p^*A \otimes p^* B)+ 2 \sum_{i
      \in \Zz}h^i(\Pp^{n-1}_k, h^* A \otimes h^* B)
    \\
    &\qquad \qquad + \sum_{i \in \Zz} h^i(\Pp^{n-1}_k, \pi_! j^* A \otimes \pi_!  j^*
    B).
  \end{align*}
\end{lem}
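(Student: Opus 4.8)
The plan is to use the fixed-point decomposition $\Pp^n = \{x_0\} \sqcup H_0 \sqcup U$ of $l_{\Diag(\lambda,1,\dots,1)}$ together with a homotopy/specialization argument over $\Gg_m$. First I would form the family over $\Gg_{m,k}$ whose fiber over $\mu \in \Gg_m$ computes the cohomology of $A \otimes l_{\Diag(\mu,1,\dots,1)}^* B$: concretely, on $\Pp^n \times \Gg_m$ consider the "twisting" morphism $\Lambda(x,\mu) = (x, l_{\Diag(\mu,1,\dots,1)}^{-1} x)$ or rather the object $R\mathrm{pr}_{2,*}(\mathrm{pr}_1^*A \otimes \Lambda^* \mathrm{pr}_1^* B)$ on $\Gg_m$, whose stalk at $1$ is $R\Gamma(\Pp^n_k, A\otimes B)$ and whose stalk at a geometric generic point $\lambda$ is $R\Gamma(\Pp^n_{k'}, A \otimes l_{\Diag(\lambda,1,\dots,1)}^*B)$. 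Call this object $\mathcal{C}$ on $\Gg_{m,k}$; then Lemma~\ref{gmformula} applied to $\mathcal{C}$ gives
\[
\sum_{i} \dim \mathcal H^i(\mathcal{C})_1 \leq \sum_{i} \dim \mathcal H^i(\mathcal{C})_\eta + \sum_{i} h^i_c(\Gg_{m,k},\mathcal{C}),
\]
and the first two sums are exactly the first two lines of the claimed inequality (the left side is $\sum_i h^i(\Pp^n_k, A\otimes B)$, since taking cohomology of $\Pp^n$ commutes with the proper pushforward to $\Gg_m$ up to the spectral sequence, which only helps). So the real content is to bound $\sum_i h^i_c(\Gg_{m,k}, \mathcal{C})$ by the three "fixed-locus" terms on the right.

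**Bounding the compactly-supported cohomology.** The key point is that $H^*_c(\Gg_{m,k}, \mathcal{C})$ is the cohomology with compact support of $\Pp^n \times \Gg_m$ (fibered over $\Gg_m$) with coefficients in $\mathrm{pr}_1^*A \otimes \Lambda^*\mathrm{pr}_1^*B$, and I would compute this by the excision triangle associated to the $l_{\Diag(\cdot,1,\dots,1)}$-equivariant stratification $\{x_0\} \sqcup H_0 \sqcup U$ of $\Pp^n$. On the locus $\{x_0\} \times \Gg_m$ the twisting map is trivial (it fixes $x_0$), so the contribution is $H^*_c(\Gg_{m,k}, \Ql) \otimes R\Gamma(p^*A \otimes p^*B)$; since $\sum_i h^i_c(\Gg_m, \Ql) = 2$ this contributes at most $2\sum_i h^i(\Spec k, p^*A\otimes p^*B)$. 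Likewise on $H_0 \times \Gg_m$ the map is the identity on $H_0$, so one gets $H^*_c(\Gg_{m,k},\Ql)\otimes R\Gamma(\Pp^{n-1}_k, h^*A \otimes h^*B)$, contributing at most $2\sum_i h^i(\Pp^{n-1}_k, h^*A\otimes h^*B)$. Finally, on $U \times \Gg_m$, one identifies $U$ (via $\pi$, as a line bundle minus zero section over $H_0 \cong \Pp^{n-1}$, or rather a $\Gg_m$-torsor over $\Pp^{n-1}$) in such a way that the twisting by $\Diag(\lambda,1,\dots,1)$ becomes scaling in the $\Gg_m$-direction; then $H^*_c(U \times \Gg_m)$ with the twisted coefficients, after pushing forward along $\pi \times \mathrm{id}$, reduces to $H^*_c(\Gg_m \times \Gg_m)$ with a scaling action, and the $\Gg_m$-equivariance collapses the two $\Gg_m$'s — the upshot being $\sum_i h^i_c$ of this piece is at most $\sum_i h^i(\Pp^{n-1}_k, \pi_! j^*A \otimes \pi_! j^*B)$. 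Summing the three excision contributions via the long exact sequence (which only introduces inequalities in the direction we want) gives exactly the bound on $\sum_i h^i_c(\Gg_{m,k}, \mathcal{C})$, and combining with Lemma~\ref{gmformula} finishes the proof.

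**Main obstacle.** I expect the delicate step to be the analysis of the open stratum $U \times \Gg_m$: one must set up coordinates so that the diagonal twisting action of $l_{\Diag(\lambda,1,\dots,1)}$ on $U$, transported to the line-bundle-minus-zero-section picture over $H_0 = \Pp^{n-1}$, is genuinely identified with the scaling $\Gg_m$-action, and then see that the compactly-supported cohomology of the resulting $\Gg_m$-equivariant family over $\Gg_m$ is computed by $\pi_! j^*A \otimes \pi_! j^* B$ on $\Pp^{n-1}$ — i.e. that the "extra" $\Gg_m$ of the parameter and the "intrinsic" $\Gg_m$ of $U$ over $\Pp^{n-1}$ cancel rather than contribute an extra factor. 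Concretely this is a base-change/homotopy-invariance statement: after the change of variables $(u,\lambda) \mapsto (\lambda u, \lambda)$ on $\Gg_m \times \Gg_m$, the twisted coefficient sheaf becomes a pullback from the first factor, so $R\mathrm{pr}_{2,!}$ of it is $R\Gamma_c(\Gg_m, -) $-free and the surviving contribution is precisely the untwisted one, namely $\pi_! j^*A \otimes \pi_! j^*B$. Everything else (the fixed-point strata, the bookkeeping with $\sum h^i_c(\Gg_m,\Ql)=2$, assembling the excision triangle) is routine, and one should also remark that all the displayed inequalities are of the crude "dimension of a subquotient is at most dimension of the whole, and Betti numbers are subadditive in triangles" type, so no subtle cancellations are needed.
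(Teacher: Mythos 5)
Your proposal is correct and follows essentially the same route as the paper: form the pushforward to $\Gg_{m}$ of the family $A\otimes l_{\Diag(\mu,1,\dots,1)}^*B$, apply Lemma~\ref{gmformula}, and bound the compactly supported cohomology term by excision along the fixed-point stratification $\{x_0\}\sqcup H_0\sqcup U$, with K\"unneth on the fixed strata and the free $\Gg_m$-action on $U$ (your change of variables $(u,\lambda)\mapsto(\lambda u,\lambda)$ is exactly the paper's identification of $U\times\Gg_m$ with $U\times_{\Pp^{n-1}}U$) handling the open stratum. No gaps.
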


\begin{proof}
  Let $K$ be the pushforward of
  $ A \otimes l_{\Diag(\mu,1,\dots,1)}^*B$ from $\Pp^n \times \Gg_m$
  to $\Gg_m$, where $\Gg_m$ has coordinate $\mu$. Proper base
  change yields the equalities 
  \begin{gather*}
    \sum_{i \in \Zz} \dim \mathcal H^i(K)_1= \sum_{i \in \Zz}
    h^i(\Pp^n_k, A \otimes B),
    \\
   \sum_{i \in \Zz} \mathcal H^i(K)_\eta = \sum_{i \in \Zz}
   h^i(\Pp^n_{k'}, A \otimes l_{\Diag(\lambda,1,\dots,1)}^*B).
 \end{gather*}
Hence, by Lemma \ref{gmformula}, it suffices to prove the inequality
  \begin{multline*}
    \sum_{i \in \Zz} h^i_c(\Gg_{m,k},K)\leq
    2 \sum_{i \in \Zz} h^i(\Spec(k), p^*A \otimes p^* B)
    \\+ 2 \sum_{i
      \in \Zz}h^i(\Pp^{n-1}_k, h^* A \otimes h^* B) +
    \sum_{i \in \Zz}
    h^i(\Pp^{n-1}_k, (\pi_! j^* A) \otimes (\pi_!  j^* B)).
  \end{multline*}
  By the Leray spectral sequence, the equality
  \begin{equation}\label{eq-leray}
    \sum_{i \in \Zz} h^i_c(\Gg_{m,k},K)= \sum_{i \in \Zz} h^i_c\left(
      \Pp^n \times \Gg_m, A \otimes
      l_{\Diag(\mu,1,\dots,1)}^*B\right).
  \end{equation} holds. To compute the right-hand side, we partition
  $\Pp^n \times \Gg_m=X_0\cup X_1\cup X_2$ as
  \begin{displaymath}
    X_0=\{x_0\} \times \Gg_m, \quad 
    X_1=H_0\times \Gg_m, \quad
    X_2=U\times \Gg_m. 
  \end{displaymath}
  Note that~$X_0$ and~$X_1$ are closed, and~$X_2$ is open.  By
  excision, the right-hand side of~(\ref{eq-leray}) is bounded by
  $$
  \sum_{j=0}^2\sum_{i\in\Zz} h^i_c(X_j,A \otimes
  l_{\Diag(\mu,1,\dots,1)}^*B).
  $$
  Since $X_0$ and $X_1$ are fixed points of
  $l_{\Diag(\lambda,1,\dots,1)}^*$ for all~$\lambda$, the restriction of the complex $ A \otimes l_{\Diag(\lambda,1,\dots,1)}^*B$ to either of them is the same as that of $A \otimes B$. The Künneth
  formula gives
  \begin{gather*}
    \rmH_c^*(X_0,A\otimes B)\simeq \rmH^*(\{x_0\},A\otimes B)\otimes
    \rmH_c^*(\Gg_m,\Ql) \\
    \rmH_c^*(X_1,A\otimes B)\simeq \rmH^*(H_0,A\otimes B)\otimes
    \rmH_c^*(\Gg_m,\Ql),
  \end{gather*}
 and hence (since the sum of Betti numbers of~$\Gg_m$ is equal to~$2$) we
  get
 \begin{align*}
  \sum_{i\in\Zz} h^i_c(X_0,A\otimes
  B)&=2\sum_{i\in\Zz}h^i(\{x_0\},A\otimes B)= 2 \sum_{i \in \Zz}
  h^i(\Spec(k), p^*A \otimes p^* B) \\
    \sum_{i\in\Zz} h^i_c(X_1,A\otimes B)&=2\sum_{i\in\Zz}h^i(H_0,A\otimes
  B)= 2 \sum_{i \in \Zz} h^i(\Pp^{n-1}, h^*A \otimes h^*B).
  \end{align*}
  Finally, note that the $\Gg_m$-action on $U$ is free, so that there is an isomorphism between $X_2$ and the space of pairs of points~$(x,y)$ in $U\times U$ such that $x$ and $y$ lie in the same orbit of the $\Gg_m$-action, which identifies the projection to $U$ and the pullback along $l_{\Diag(\lambda,1,\dots,1)}$ with the first and the second projections. The space of such pairs is
  isomorphic to the fiber product of~$U$ with itself over the quotient~$U/ \Gg_m$. There is an isomorphism
  $U / \Gg_m \simeq H_0\simeq \Pp^{n-1}$ induced by $\pi$, hence the equality
  \[
    \sum_{i\in\Zz} h^i_c( X_2, A \otimes
    l_{\Diag(\lambda,1,\dots,1)}^*B) =\sum_{i\in\Zz}h^i_c(
    U\times_{\Pp^{n-1}} U, j^*A \boxtimes j^*B). 
  \]
By the K\"{u}nneth formula (or the projection formula applied twice), the right-hand side is also equal to
  \[ \sum_{i\in\Zz} h^i_c( U, A \otimes \pi_! j^* B)= \sum_{i\in\Zz}
    h^i(\Pp^{n-1}_k, \pi_! j^* A \otimes \pi_!  j^*B ).
  \]
  Gathering all these computations, we obtain the result.
\end{proof}

\begin{lem}\label{complexityreductionbound}
  Let~$n\geq 1$.  Let $A$ be an object of $\Der(\Pp^n_k)$ and let
  $\mtrx{g}$ be a geometric generic point of~$\GL_{n+1, k}$ defined
  over an algebraically closed field $k'$. Then the following inequalities hold: 
  \begin{gather*}
    c(h^* l_{\mtrx{g}}^*A) \leq c(A), \\
    c( \pi_! j^* l_{\mtrx{g}}^*A) \leq 3 c(A),\\
    \sum_{i \in \Zz} h^i(\Spec(k'), p^* l_{\mtrx{g}}^*A) \leq c(A).
  \end{gather*}
\end{lem}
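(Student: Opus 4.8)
All three inequalities are instances of the definition of complexity, the recurring mechanism being that left-multiplication of $\mtrx{g}$ by a fixed matrix, over any field over which $\mtrx{g}$ is still a geometric generic point of $\GL_{n+1}$, produces a geometric generic point of the relevant matrix space, by Lemmas~\ref{transcendenceswapping} and~\ref{generic-mixing}.

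For the first and third inequalities this is immediate. Let $\mtrx{a}_{H_0}\in M^{n+1,n}(k)$ and $\mtrx{a}_{x_0}\in M^{n+1,1}(k)$ be the matrices of $h\colon H_0=\Pp^{n-1}\hookrightarrow\Pp^n$ and $p\colon\{x_0\}\hookrightarrow\Pp^n$, so that $h^*l_{\mtrx{g}}^*A=l_{\mtrx{g}\mtrx{a}_{H_0}}^*A$ and $p^*l_{\mtrx{g}}^*A=l_{\mtrx{g}\mtrx{a}_{x_0}}^*A$. By Lemma~\ref{generic-mixing}, $\mtrx{g}\mtrx{a}_{x_0}$ is a geometric generic point of $M^{n+1,1}_k=\Pp^n$, so $\sum_i h^i(\Spec(k'),p^*l_{\mtrx{g}}^*A)$ is exactly the $m=0$ term of the maximum defining $c(A)$, which is the third inequality. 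For the first one, each term $\sum_i h^i(\Pp^{m'}_{k'},l_{\mtrx{c}}^*l_{\mtrx{g}\mtrx{a}_{H_0}}^*A)$ of $c(h^*l_{\mtrx{g}}^*A)$, with $\mtrx{c}$ a geometric generic point of $M^{n,m'+1}_{k'}$, equals $\sum_i h^i(\Pp^{m'}_{k'},l_{\mtrx{g}(\mtrx{a}_{H_0}\mtrx{c})}^*A)$; since $\mtrx{g}$ is generic over $k(\mtrx{c})$ by Lemma~\ref{transcendenceswapping}, the matrix $\mtrx{g}(\mtrx{a}_{H_0}\mtrx{c})$ is a geometric generic point of $M^{n+1,m'+1}_k$ by Lemma~\ref{generic-mixing}, so this term is $\le c(A)$.

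The real content is the second inequality; the plan is to compute $c(\pi_!j^*l_{\mtrx{g}}^*A)$ from the definition, reducing each term to cohomology of a linear subspace of $\Pp^n$ via base change along $\pi$. Fix $0\le m'\le n-1$ and a geometric generic point $\mtrx{c}$ of $M^{n,m'+1}_{k'}$ with image $L\subset H_0=\Pp^{n-1}$. Since $\pi\colon U\to H_0$ is the linear projection with centre $x_0$, the fibre product $U\times_{H_0,\,l_{\mtrx{c}}}\Pp^{m'}$ is identified with $\pi^{-1}(L)=\tilde L\setminus(\{x_0\}\cup L)$, where $\tilde L=\langle x_0,L\rangle$ has dimension $m'+1$ (because $x_0\notin H_0$) and $L=\tilde L\cap H_0$ is a hyperplane of $\tilde L$. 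Writing $U'$ for this set and $i_{U'}\colon U'\hookrightarrow\Pp^n$ for the inclusion, base change for $(-)_!$ along the closed immersion $l_{\mtrx{c}}$ together with the properness of $\Pp^{m'}$ gives
\[
\sum_i h^i(\Pp^{m'}_{k'},l_{\mtrx{c}}^*\pi_!j^*l_{\mtrx{g}}^*A)=\sum_i h^i_c(U',i_{U'}^*l_{\mtrx{g}}^*A).
\]
The excision triangle on the proper variety $\tilde L\cong\Pp^{m'+1}$, for the open $U'$ and its closed complement $\{x_0\}\sqcup L$ (a disjoint union since $x_0\notin H_0$), then yields after taking cohomology
\[
\sum_i h^i_c(U',i_{U'}^*l_{\mtrx{g}}^*A)\le\sum_i h^i(\tilde L,l_{\mtrx{g}}^*A|_{\tilde L})+\sum_i h^i(\{x_0\},l_{\mtrx{g}}^*A|_{x_0})+\sum_i h^i(L,l_{\mtrx{g}}^*A|_{L}).
\]
Each of the three terms on the right is $\le c(A)$: restricting $l_{\mtrx{g}}^*A$ to $\tilde L$, to $L$, and to $\{x_0\}$ amounts to pulling $A$ back along $l_{\mtrx{g}\mtrx{a}_{\tilde L}}$, $l_{\mtrx{g}(\mtrx{a}_{H_0}\mtrx{c})}$ and $l_{\mtrx{g}\mtrx{a}_{x_0}}$ for suitable matrices over $k(\mtrx{c})$, and since $\mtrx{g}$ is generic over $k(\mtrx{c})$ (Lemma~\ref{transcendenceswapping}), Lemma~\ref{generic-mixing} makes these three products geometric generic points of $M^{n+1,m'+2}_k$, $M^{n+1,m'+1}_k$ and $M^{n+1,1}_k$ respectively; as $m'+1$, $m'$ and $0$ are all $\le n$, each of the three sums is one of the terms in the maximum defining $c(A)$. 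Hence $\sum_i h^i(\Pp^{m'}_{k'},l_{\mtrx{c}}^*\pi_!j^*l_{\mtrx{g}}^*A)\le 3c(A)$ for every $m'$, and taking the maximum over $0\le m'\le n-1$ gives $c(\pi_!j^*l_{\mtrx{g}}^*A)\le 3c(A)$.

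The main obstacle is the second inequality, specifically the geometric identification of $U\times_{H_0}\Pp^{m'}$ with $\tilde L\setminus(\{x_0\}\cup L)\subset\Pp^n$: it is precisely this that lets base change convert the complexity of $\pi_!j^*l_{\mtrx{g}}^*A$ into the compactly supported cohomology of a subvariety of $\Pp^n$ on which $l_{\mtrx{g}}^*A$ remains the pullback of $A$ by a \emph{generic} linear map. The remaining delicate point is the bookkeeping of which of $\mtrx{g}$ and $\mtrx{c}$ is generic over which field, so that Lemmas~\ref{transcendenceswapping} and~\ref{generic-mixing} apply; the rest is excision and the definition of complexity.
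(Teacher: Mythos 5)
Your proof is correct and follows essentially the same route as the paper's: the first and third inequalities come from observing that composing $l_{\mtrx{g}}$ with the fixed embeddings (and a further generic linear map, for the first) yields a generic linear embedding, and the second comes from base change along $\pi$, the identification of $\pi^{-1}(L)$ with $\tilde L\setminus(\{x_0\}\cup(H_0\cap\tilde L))$ for a generic $(m'+1)$-plane $\tilde L$ through $x_0$, and excision into three pieces each of complexity at most $c(A)$. Your bookkeeping of genericity via Lemmas~\ref{transcendenceswapping} and~\ref{generic-mixing} matches the paper's (slightly terser) use of the same lemmas.
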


\begin{proof} 
  For each $0\leq m\leq n-1$, let $\mtrx{a}_m$ be a geometric generic point of
  $M^{n,m+1}_{k(\mtrx{g})}$.  By
  definition, 
  $$
  c(h^* l_{\mtrx{g}}^*A) = \sup_m \sum_{i \in \Zz} h^i(\Pp^m,
  l_{\mtrx{a}_m}^* h^* l_{\mtrx{g}}^* A).
  $$
  The composition $l_{\mtrx{g}} \circ h \circ l_{\mtrx{a}_m}$ is a
  generic linear embedding from $\Pp^m$ to $\Pp^n$, and hence this
  quantity is~$\leq c(A)$ by definition.

  For any $i\in\Zz$, by proper base change, there is an isomorphism
  \begin{equation}\label{eq-betti}
    \rmH^i(\Pp^m, l_{\mtrx{a}_m}^* \pi_! j^* l_{\mtrx{g}}^* A)=\rmH^i(\pi^{-1}l_{\uple{a}_m}(\Pp^m),l_{\uple{g}}^*A).
  \end{equation}
The inverse image $\pi^{-1}l_{\uple{a}_m}(\Pp^m)$ is of the form
  $$
  L-\{x_0\} -(H_0\cap L), 
  $$
  where $L$ is a general $(m+1)$-dimensional subspace of $\Pp^n$ that
  contains~$x_0$. By excision, the sum of the Betti
  numbers of~(\ref{eq-betti}) is at most
  $$
  \sum_{i\in\Zz} h^i(L,l_{\uple{g}}^*A) + \sum_{i\in\Zz}
  h^i(\{x_0\},l_{\uple{g}}^*A)+ \sum_{i\in\Zz} h^i(H_0\cap
  L,l_{\uple{g}}^*A).
  $$
  Each of these is bounded by~$c(A)$ (since $\mtrx{a}_m$ and
  $l_{\mtrx{g}}$ are generic, see~Lemmas \ref{transcendenceswapping}
  and~\ref{generic-mixing}), hence
  $c(\pi_!j^*l_{\uple{g}}^*A)\leq 3c(A)$.
\par
Finally, $l_{\mtrx{g}} \circ p$ is the inclusion map of a generic
point, so the inequality 
\[
  \sum_{i \in \Zz} h^i(\Spec(k'), p^* l_{\mtrx{g}}^*A) \leq c(A)
\]
holds by the definition of $c(A)$. 
\end{proof}

We are finally ready to prove the main theorem of this section.

\begin{proof}[Proof of Theorem \ref{main}] We prove the theorem by
  induction on $n\geq 0$. The theorem holds for $n=0$, because $A$ and
  $B$ are then simply complexes of vector spaces so
  $$
  c(A)=\sum_{i\in\Zz} h^i(\Pp^0,A), 
  $$
 and hence
  $$
  \sum_{i \in \Zz} h^i(\Pp^0,A \otimes B)=\sum_{i\in\Zz}h^i(\Pp^0,A)
  \sum_{i\in\Zz}h^i(\Pp^0,B).
  $$

  Now assume that~$n\geq 1$ and that Theorem~\ref{main} holds for
  $\Pp^{n-1}$.

  Let $\mtrx{g}$ be a generic point of $\GL_{n+1,k}$ and let
  $\lambda_1,\dots,\lambda_{n+1}$ be independent transcendentals over
  $k(\mtrx{g})$. Let $k'$ be an algebraically closed extension of
  $k(\mtrx{g})$ containing $\lambda_1,\dots,\lambda_{n+1}$.
  For~$1\leq m\leq n+1$, we denote
  $$
  \delta_m=\Diag(1,\ldots,1,\lambda_{m},\ldots,\lambda_{n+1}).
  $$ Then
  \[
    \sum_{i \in \Zz} h^i(\Pp^n_k, A \otimes B)  = \sum_{i \in \Zz}
    h^i(\Pp^n_{k'}, l_{\mtrx{g}}^* A \otimes l_{\mtrx{g}}^* B) =
    \sum_{i \in \Zz} h^i(\Pp^n_{k'},
    l_{\Diag(1,\dots,1)}^*l_{\mtrx{g}}^* A \otimes l_{\mtrx{g}}^* B).
  \]

On the other hand,  we have
\begin{align*}
  \sum_{i \in \Zz} h^i(\Pp^n_{k'},
  l_{\delta_{n+1}}^*l_{\mtrx{g}}^* A \otimes
  l_{\mtrx{g}}^* B)
  &= \sum_{i \in \Zz} h^i(\Pp^n_{k'},A \otimes  l_{\mtrx{g}
    \delta_{n+1}\mtrx{g}^{-1}}^* B ) \\
  &\ll c(A) c(B)
\end{align*}
by Corollary~\ref{genericbettibound}\,(1) and Lemma~\ref{final
  generic}\,(1).

Therefore, using a telescoping sum, we get
\begin{multline*}
  \sum_{i \in \Zz} h^i(\Pp^n_k, A \otimes B) \\= \sum_{m=1}^{n+1}
  \Bigl( \sum_{i \in \Zz} h^i(\Pp^n_{k'}, l_{\mtrx{g}}^* A \otimes
  l_{\delta_{m+1}}^* l_{\mtrx{g}}^* B) - \sum_{i \in \Zz}
  h^i(\Pp^n_{k'}, l_{\mtrx{g}}^* A \otimes
  l_{\delta_m}^*
  l_{\mtrx{g}}^* B) \Bigr)
  \\
  + O( c(A) c(B)).
\end{multline*}

By Lemma \ref{gmdegenerationbound}, for a fixed~$m$, the term between
parentheses is
\begin{multline*}
  \leq 2 \sum_{i \in \Zz} h^i(\Spec k', p^*l_{\mtrx{g}}^*A \otimes p^*
  l_{\delta_{m+1}}^*
  l_{\mtrx{g}}^*B) \\
  + 2 \sum_{i \in \Zz}h^i(\Pp^{n-1}_{k'}, h^* l_{\mtrx{g}}^*A \otimes
  h^* l_{\delta_{m+1}}^*l_{\mtrx{g}}^*B)
  \\
  +\sum_{i \in \Zz} h^i(\Pp^{n-1}_{k'}, (\pi_! j^* l_{\mtrx{g}}^*A)
  \otimes (\pi_! j^*
  l_{\delta_{m+1}}^* l_{\mtrx{g}}^*B)).
\end{multline*}
Applying the induction hypothesis, this is
$$
\ll 2 c( p^*l_{\mtrx{g}}^*A) c( p^*
l_{\delta_{m+1}}^*l_{\mtrx{g}}^*B) +2 c(h^*l_{\mtrx{g}}^*A) c(h^*
l_{\delta_{m+1}}^* l_{\mtrx{g}}^*B) + c( \pi_!  j^*l_{\mtrx{g}}^*A)
c( \pi_! j^* l_{\delta_{m+1}}^*l_{\mtrx{g}}^*B)
$$
and finally, applying Lemma \ref{complexityreductionbound} and Lemma
\ref{final generic}\,(2), this is
\[
  \leq 2 c(A) c(B) + 2 c(A) c(B) + 9c(A) c(B) =13 c(A)c(B).
\]
This concludes the proof.
\end{proof}


\section{Quantitative sheaf theory on quasi-projective varieties}
\label{sec-6ops}

The goal of this section is to define the complexity of objects of the
derived category for any quasi-projective algebraic variety over a
field, or rather the pair consisting of such a variety and a given
quasi-projective embedding.

\subsection{Definition of complexity and continuity of the six
  operations}

It turns out that it is not really possible to define a complexity
invariant of sheaves on an algebraic variety, with its expected
properties, that only depends on the algebraic variety. We give two
examples, related to two desirable properties of a complexity function
(the second was suggested by an anonymous referee).

\begin{example}\label{ex-counter}
  (1) A hypothetical canonical complexity invariant $c(A)$ for
  objects~$A$ of $\Der(X)$ should of course be invariant under
  automorphisms of~$X$. Another basic requirement for applications of
  a complexity invariant should be that, if~$\Ff$ is a finite field
  and~$k$ an algebraic closure of~$\Ff$, then there should be, up to
  isomorphism over~$k$, only finitely many irreducible perverse
  sheaves of bounded complexity (see
  Corollary~\ref{cor-finiteness}). However, there are examples of
  algebraic varieties~$X$ over~$\Ff$ with infinitely many
  $\Ff$-automorphisms and perverse sheaves~$A$ on~$X$ such that there
  are infinitely many pairwise non-isomorphic perverse sheaves among
  the $\sigma^*A$ for such automorphisms~$\sigma$ (e.g., let~$X$ be
  the affine plane, and let~$A$ be a suitable shift of the constant sheaf~$\Ql$ on an
  irreducible plane curve).
  \par
  (2) Consider $X=\Aa^2$, with coordinates $(x,y)$. For any polynomial
  $f\in k[x,y]$, the map $u_f\colon (x,y)\mapsto (x,y-f(x))$ is an
  automorphism of~$X$. Let $i\colon \Aa^1\to X$ be the closed immersion $x\mapsto (x,0)$. For any complexity function~$c$ on~$\Der(X)$
 that satisfies the ``bilinearity'' property of Corollary~\ref{tensor1} and is invariant under automorphisms, the following~estimate would hold: 
  $$
  c(i_*\Ql[1]\otimes u_f^*(i_*\Ql[1]))\ll 1. 
  $$
  This is impossible since the support of
  $i_*\Ql[1]\otimes u_f^*(i_*\Ql[1])$ is the intersection of the
  line~$y=0$ and its image under~$u_f$, which can be an arbitrarily
  large finite set. A similar projective example arises from
  $X=E\times E$ for an elliptic curve~$E$, with the immersion
  $E\simeq E\times\{0\}\to E\times E$ and the automorphisms
  $(x,y)\mapsto (x,y+nx)$ for $n\in\Zz$.
\end{example}

We will work instead with pairs $(X,u)$ consisting of an algebraic variety $X$ over~$k$ and a locally closed
immersion~$u\colon X\to \Pp^n_k$ for some integer~$n\geq 0$, called the
\emph{embedding dimension} of~$(X,u)$. 

\begin{remark}
  (1) Recall that a locally closed immersion~$u$ is a morphism that can be
  factored as~$u=i\circ j$, where $j$ is an open immersion and $i$ is a
  closed immersion.
  \par
  (2) We will often simply denote by $X$ the pair $(X,u)$, when no
  confusion is possible, and call this simply a \emph{quasi-projective
    variety}. We will also sometimes do this over a more general
  base~$S$ than the spectrum of a field.
\end{remark}

\begin{defi}[Complexity of a complex of sheaves on a quasi-projective
  variety]\label{def-ca}
  Let~$(X,u)$ be a quasi-projective variety over an algebraically closed
  field $k$. The \emph{complexity relative to~$u$} of an object $A$ of
  $\Der(X)$ is the non-negative integer
  \[
    c_u(A) = c( u_{!}  A).
  \]
  When the embedding is clearly understood, we will simply speak of the complexity of~$A$. 
\end{defi}

\begin{remark}
  The complexity $c_u(A)$ vanishes if and only if~$A=0$ (Proposition~\ref{rm-zero}). 
\end{remark}

The first main objective is to prove that the complexity is under
control when performing all usual operations, starting with
Grothendieck's six functors (i.e., in the language used in the
introduction, these functors are ``continuous''). For this purpose, we
will also define a complexity invariant for morphisms.

\begin{defi}[Complexity of a locally closed immersion]
  Let $(X,u)$ be a quasi-projective variety over $k$. We define the
  complexity of~$u$ to be 
  $$
  c(u)=c_u(\Ql)=c (u_! \Ql). 
  $$
\end{defi}

\begin{defi}[Complexity of a morphism]\label{def-cuv}
  Let $f\colon (X,u)\to (Y,v)$ be a morphism of quasi-projective
  varieties over~$k$. Let $m$ and $n$ be the embedding dimension
  of~$X$ and $Y$ respectively. For integers $0\leq p\leq m$ and
  $0\leq q\leq n$, let $\mtrx{a}_{p}$ and $\mtrx{b}_{q}$ be geometric
  generic points of $M^{m+1,p+1}_k$ and $M^{n+1,q+1}_k$ respectively,
  all defined over a common algebraically closed field $k'$. 
  
  The
  \emph{complexity of the morphism~$f$ relative to $(u,v)$} is defined
  as
  \[
    c_{u,v}(f)= \max_{0\leq p\leq m } \max_{0\leq q\leq n} \sum_{i \in
      \Zz} h^i_c(X_{k'}, u^* l_{\mtrx{a}_{p} *}\Ql \otimes f^* v^*
    l_{\mtrx{b}_{q}*} \Ql ) .
  \]
\end{defi}

In the vein of Lemma \ref{lem-complexity-open-set}, the definition of
$c_{u, v}(f)$ can be phrased in terms of closed points over a dense open
subset $W \subset M^{m+1,p+1}_k\times M^{n+1,q+1}_k$ rather than
geometric generic points. Namely, letting
$\Gamma \subset \Pp^m \times \Pp^n$ denote the graph of $f$ relative to
the locally closed immersions $u$ and $v$, there exist such $W$ with the
property that the above Betti numbers are equal to
\[
h^j_c(\Gamma \cap (\ker(l_{\mtrx{a}_{p}}) \times \ker( l_{\mtrx{b}_{q}})), \Ql) 
\] for all points $(\mtrx{a}_{p},\mtrx{b}_{q}) \in W(\bar k)$.

\begin{remark}
\label{rem-complex-immersion}
These definitions are compatible in the sense that, for a locally closed
immersion $u\colon X \to \Pp^n$, using Lemma \ref{generic-intersection}, we
have $c_{u,\mathrm{Id}}(u)=c(u)$.
  
In a similar vein, if $i \colon X\to Y$ is an immersion of quasi-projective
varieties $(X,u)$ and $(Y,v)$, such that $u=v\circ i$, then
$c_{u,v}(i)=c(u)$, which can be seen readily from the definition, the
projection formula and Lemma \ref{generic-intersection}.
\end{remark}

The fundamental result is the following:

\begin{theo}[Continuity of the six operations]\label{pushpull}
  Let $(X,u)$ and $(Y,v)$ be quasi-projective varieties over~$k$, and
  let $f\colon X\to Y$ be a morphism. For objects~$A$ and~$B$ of~$\Der(X)$ and~$C$ of~$\Der(Y)$, the following estimates hold: 
  \begin{align}
    c_{u}(\dual(A))&\ll c(u) c_{u}(A)
                 \label{eq-6dual}
    \\
    c_{u}(A\otimes B)&\ll c_{u}(A)c_{u}(B)
                         \label{eq-6tensor}
    \\
    c_u(A\otimes_! B)&\ll c(u)^3c_u(A)c_u(B)
                       \label{eq-6tensorshriek}
    \\
    c_{u}(\rhom(A,B))
               &\ll c(u)c_{u}(A)c_{u}(B)
                   \label{eq-6rhom}
    \\
    c_{u} ( f^* C)& \ll c_{u,v}(f) c_{v}(C)
                      \label{eq-6pullback}
    \\
    c_{v} (f_! A) &\ll c_{u,v}(f) c_{u}(A)
                      \label{eq-6extzero}
    \\
    c_{u} ( f^! C) &\ll c(u) c(v) c_{u,v}(f) c_{v}(C)
                       \label{eq-6pullbackshriek}
    \\
    c_{v} (f_* A) &\ll c(u) c(v) c_{u,v}(f)c_{u}(A).
                      \label{eq-6pushforward}
  \end{align}
  In all these estimates, the implied constants only depend on the
  embedding dimensions of~$(X,u)$ and $(Y,v)$.
\end{theo}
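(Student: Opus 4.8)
The plan is to reduce every one of the eight estimates to the tensor bound on projective space (Theorem~\ref{main} and Corollary~\ref{tensor1}) by rewriting each of the six operations on $(X,u)$ in terms of sheaves on the ambient $\Pp^n$, using only the projection formula, Verdier duality, and the elementary identities $u^*u_! = u^!u_! = u^*u_* = u^!u_* = \mathrm{id}$ for a locally closed immersion $u$. I would begin with two facts about $\Pp^n_k$ itself. First, $c(\dual(K)) = c(K)$: by Lemma~\ref{generic-dual}, for a generic $l_{\mtrx{a}}\colon\Pp^m\to\Pp^n$ one has $l_{\mtrx{a}}^*\dual(K) = \dual(l_{\mtrx{a}}^*K)[2(n-m)]$, and Poincaré duality on the smooth proper variety $\Pp^m$ gives $\sum_i h^i(\Pp^m,\dual(L)) = \sum_i h^i(\Pp^m,L)$; taking the maximum over $m$ yields the claim. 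Combined with Corollary~\ref{tensor1} and the standard identity $\rhom(K,L) = \dual(K\otimes\dual(L))$, this gives $c(\rhom_{\Pp^n}(K,L))\ll c(K)c(L)$.

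Now fix $(X,u)$ with $u\colon X\to\Pp^n$. The projection formula $u_!(A\otimes u^*E) = u_!A\otimes E$ together with $B = u^*u_!B$ yields $u_!(A\otimes B) = u_!A\otimes u_!B$, so~\eqref{eq-6tensor} is immediate from Corollary~\ref{tensor1}. For~\eqref{eq-6dual}, the internal-Hom adjunction $\rhom_{\Pp^n}(u_!A,E) = u_*\rhom_X(A,u^!E)$ with $A=\Ql$, $E=u_!C$ and $u^!u_! = \mathrm{id}$ gives $u_*C = \rhom_{\Pp^n}(u_!\Ql,u_!C)$, while Verdier duality gives $u_!\dual_X A = \dual_{\Pp^n}(u_*A)$; hence $c_u(\dual_X A) = c(u_!\dual_X A) = c(u_*A) = c(\rhom_{\Pp^n}(u_!\Ql,u_!A))\ll c(u)c_u(A)$. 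Estimate~\eqref{eq-6tensorshriek} then follows formally from~\eqref{eq-6dual} and~\eqref{eq-6tensor} via $A\otimes_! B = \dual_X(\dual_X A\otimes\dual_X B)$. For~\eqref{eq-6rhom}, the same adjunctions give $\rhom_X(A,B) = u^*\rhom_{\Pp^n}(u_!A,u_*B)$; applying $u_!$ and the projection formula $u_!u^*E = u_!\Ql\otimes E$, then rewriting $u_*B = \rhom_{\Pp^n}(u_!\Ql,u_!B)$ and using the identity $u_!A\otimes u_!\Ql = u_!A$ (itself a case of the projection formula), one gets $u_!\rhom_X(A,B) = u_!\Ql\otimes\rhom_{\Pp^n}(u_!A,u_!B)$, whence $c_u(\rhom_X(A,B))\ll c(u_!\Ql)\,c(\rhom_{\Pp^n}(u_!A,u_!B))\ll c(u)c_u(A)c_u(B)$.

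For the pullback and proper pushforward along $f\colon X\to Y$, set $g = v\circ f\colon X\to\Pp^n$ (with $v\colon Y\to\Pp^n$). Using $v^*v_! = \mathrm{id}$ one has $f^*C = g^*(v_!C)$, and the projection formula gives $v_!f_!A = g_!A$. For~\eqref{eq-6extzero}: expanding $c_v(f_!A) = c(g_!A)$ through generic linear sections of $\Pp^n$, pulling the twist into $g_!$, and using $\rmH^i(\Pp^n, g_!(-)) = \rmH^i_c(X,-)$, one reaches $\max_q\sum_i h^i_c(X, A\otimes f^*v^*l_{\mtrx{b}_q*}\Ql)$; pushing this along $u$ and invoking Theorem~\ref{main} bounds it by $c_u(A)\cdot\max_q c(u_!(f^*v^*l_{\mtrx{b}_q*}\Ql))$, and the last maximum is precisely $c_{u,v}(f)$ by Definition~\ref{def-cuv} (once one checks, via the lemmas of Section~\ref{sec-generic}, in particular Lemma~\ref{transcendenceswapping}, that ``generic over $k$'' and ``independent generic'' produce the same complexity). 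For~\eqref{eq-6pullback} one argues symmetrically, expanding $c_u(g^*(v_!C))$ through generic sections of $\Pp^n$ and pushing along $g$, which reduces by Theorem~\ref{main} to $c(v_!C)\cdot c(g_!(u^*l_{\mtrx{a}_p*}\Ql))\leq c_v(C)\cdot c_{u,v}(f)$. Finally~\eqref{eq-6pullbackshriek} and~\eqref{eq-6pushforward} follow from~\eqref{eq-6pullback},~\eqref{eq-6extzero} and~\eqref{eq-6dual} through $f^! = \dual_X\circ f^*\circ\dual_Y$ and $f_* = \dual_Y\circ f_!\circ\dual_X$, the two instances of Verdier duality producing the factors $c(u)$ and $c(v)$.

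The only genuinely new input is Theorem~\ref{main}; everything else is a careful translation through the six-functor formalism. Accordingly I expect the main difficulty to be bookkeeping rather than conceptual: identifying, for each operation, the reformulation that makes the reduction bilinear with the smallest powers of $c(u)$ and $c(v)$ (the identities $u_*C = \rhom_{\Pp^n}(u_!\Ql,u_!C)$ and $u_!A\otimes u_!\Ql = u_!A$ are what make the powers in~\eqref{eq-6dual} and~\eqref{eq-6rhom} sharp), and, in the $f^*$ and $f_!$ step, matching the cohomology groups produced by the projection formula and by the base change $\rmH^i(\Pp^n, g_!(-)) = \rmH^i_c(X,-)$ exactly with the definition of $c_{u,v}(f)$, while keeping track of the transcendence degrees of the auxiliary generic linear maps.
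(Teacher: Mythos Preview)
Your approach is correct and coincides with the paper's (Section~\ref{six}): each estimate is reduced to Theorem~\ref{main}/Corollary~\ref{tensor1} on projective space via the projection formula and Verdier duality, with the same unwinding of $c_{u,v}(f)$ for~\eqref{eq-6pullback} and~\eqref{eq-6extzero} and the same passage through $f^!=\dual f^*\dual$, $f_*=\dual f_!\dual$ for~\eqref{eq-6pullbackshriek} and~\eqref{eq-6pushforward}. The only notable difference is~\eqref{eq-6rhom}: the paper writes the one-line identity ``$\rhom(A,B)=\dual(A)\otimes B$'', which is not literally correct (it should be $\otimes_!$), whereas your route via $u_!\rhom_X(A,B)=u_!\Ql\otimes\rhom_{\Pp^n}(u_!A,u_!B)$ is more careful and more transparently yields the stated power $c(u)^1$ rather than the $c(u)^2$ one would get from the naive $\rhom(A,B)=\dual(A\otimes\dual(B))$.
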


In the next subsection, we will prove Theorem~\ref{pushpull}.  In
later subsections, we will then handle similarly a number of
additional operations, e.g., decomposing a complex into irreducible
perverse sheaves.

\begin{remark}
  Let~$k_0$ be a field of characteristic coprime to~$\ell$.  For any
  quasi-projective variety $(X_0,u_0)$ over $k_0$ and any object~$A_0$
  of $\Der(X_0)$, we set 
  $$
  c_{u_0}(A_0)=c_{u}(A),
  $$ where~$(X,u)$ is the base change of $(X_0, u_0)$ to an algebraically closed field extension~$k$
  of~$k_0$ and $A$ is the base change of~$A_0$. Similarly, we define the complexity of a
  morphism $f_0\colon (X_0,u_0)\to (Y_0,v_0)$ as
  $$
  c_{u_0,v_0}(f_0)=c_{u,v}(f), 
  $$ where $f$ is the base change of~$f_0$ to an algebraically closed field. Since the complexity is independent of the chosen algebraically closed field, Theorem \ref{pushpull} also holds for the complexity over non-algebraically
  closed fields.
\end{remark}

\begin{remark}
  Theorem~\ref{th-sample} from the introduction follows from
  Theorem~\ref{pushpull} applied to $X=\Aa^n$ and~$Y=\Aa^m$ with~$u$
  and~$v$ the standard open immersions to~$\Pp^n$ and~$\Pp^m$
  respectively.
\end{remark}

\subsection{Proof of the continuity of the six functors}\label{six}

We begin with a lemma concerning complexes on projective space.

\begin{lem}\label{dual1}
  For each object $A$ of $\Der(\Pp^n_k)$, the equality $c(A)= c(\dual(A))$ holds.
\end{lem}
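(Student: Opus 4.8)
The plan is to deduce the equality from Poincaré–Verdier duality on the proper varieties $\Pp^m_{k'}$, combined with the behaviour of Verdier duality under generic linear pullback established in Lemma~\ref{generic-dual}. Since the complexity is independent of the chosen geometric generic points, I fix once and for all, for each $0\le m\le n$, a geometric generic point $\mtrx{a}_m$ of $M^{n+1,m+1}_k$ over an algebraically closed field $k'$, and use the \emph{same} points in the computation of $c(A)$ and of $c(\dual(A))$. It then suffices to prove that
\[
\sum_{i\in\Zz} h^i(\Pp^m_{k'},l_{\mtrx{a}_m}^*\dual(A)) \;=\; \sum_{i\in\Zz} h^i(\Pp^m_{k'},l_{\mtrx{a}_m}^*A)
\]
for every $m$, and then take the maximum over $0\le m\le n$.

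To establish this identity, first apply Lemma~\ref{generic-dual} to rewrite $l_{\mtrx{a}_m}^*\dual(A)$ as $\dual(l_{\mtrx{a}_m}^*A)$ shifted by $2(n-m)$ (and Tate-twisted, which is invisible over the algebraically closed field $k'$); as a shift does not change the total dimension of cohomology, the left-hand side equals $\sum_i h^i(\Pp^m_{k'},\dual(l_{\mtrx{a}_m}^*A))$. Next, because $\Pp^m_{k'}$ is proper, ordinary and compactly supported cohomology agree, so Poincaré–Verdier duality supplies a perfect pairing $\rmH^i(\Pp^m_{k'},\dual(K))\simeq \rmH^{-i}(\Pp^m_{k'},K)^{\vee}$ for every $K\in\Der(\Pp^m_{k'})$. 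Applying this with $K=l_{\mtrx{a}_m}^*A$ gives $h^i(\Pp^m_{k'},\dual(l_{\mtrx{a}_m}^*A))=h^{-i}(\Pp^m_{k'},l_{\mtrx{a}_m}^*A)$, and summing over $i$ re-indexes the right-hand side to $\sum_i h^i(\Pp^m_{k'},l_{\mtrx{a}_m}^*A)$, as wanted.

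There is no real obstacle in this argument; it is essentially bookkeeping. The only two points that deserve a word of care are that one must use the same geometric generic point for $A$ and for $\dual(A)$ (which is allowed precisely because $c(-)$ does not depend on that choice), and that the shift $[2(n-m)]$ together with its accompanying Tate twist has no effect on the sum of Betti numbers over $k'$. The same reasoning yields the effective statement with implied constant $1$, as recorded in Section~\ref{sec-effective}.
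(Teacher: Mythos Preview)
Your proof is correct and follows essentially the same approach as the paper's: both apply Lemma~\ref{generic-dual} to identify $l_{\mtrx{a}_m}^*\dual(A)$ with $\dual(l_{\mtrx{a}_m}^*A)$ up to shift, and then invoke Verdier duality on the proper variety $\Pp^m_{k'}$ to match the total Betti numbers. The paper compresses the argument into a single displayed chain of equalities, while you spell out the duality isomorphism and the handling of the shift more explicitly, but there is no substantive difference.
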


\begin{proof}
  Let $\mtrx{a} $ be a geometric generic point of of $M^{n+1,m+1}_k$. Using
  Verdier duality and Lemma~\ref{generic-dual}, the computation
  \[
    \sum_{i \in \Zz} h^i(\Pp^m, l_{\mtrx{a}}^* A) = \sum_{i \in \Zz}
    h^i(\Pp^m, \dual (l_{\mtrx{a}}^* A) ) = \sum_{i \in \Zz} h^i(\Pp^m,
    l_{\mtrx{a}}^* \dual(A))
  \]
  yields $c(A) = c(\dual(A))$ by definition of complexity. 
\end{proof}

\begin{proof}[Proof of~\eqref{eq-6dual}]
 The equalities 
  $$
  c_u(\dual(A)) = c(u_!\dual(A))= c( u_* \dual(A) \otimes u_! \Ql)
  $$ hold by definition, and hence we get the estimate
  $$
  c_u(\dual(A))= c( \dual(u_! A) \otimes u_! \Ql) \ll c(\dual(u_! A))
  c(u_!  \Ql) = c(u_! A) c(u_! \Ql)= c_u(A) c(u)
  $$
  by combining Corollary \ref{tensor1} and Lemma \ref{dual1}.
\end{proof}

\begin{proof}[Proof of~\eqref{eq-6tensor}
  and~\eqref{eq-6tensorshriek}] By definition, we have
  $$
  c_u(A\otimes B)=c(u_!(A\otimes B)).
  $$ Then by Corollary~\ref{tensor1}, we obtain
  \[
    c_u(A \otimes B) = c(u_!(A \otimes B))= c(u_! A \otimes u_! B) \ll
    c(u_! A) c(u_! B) =c_u(A) c_u(B).
  \]
  Combining this with~(\ref{eq-6dual}), we
  obtain~(\ref{eq-6tensorshriek}).
\end{proof}

\begin{proof}[Proof of~\eqref{eq-6rhom}]
  We have by definition of internal Hom 
  \[
    c_u(\rhom(A,B))= c_u (\dual(A) \otimes B) \ll c_u(\dual(A))c_u(B) \ll c(u)
    c_u (A) c_u(B),
  \]
  where the estimates follow by applying~(\ref{eq-6tensor}) and~(\ref{eq-6dual}).
\end{proof}

\begin{proof}[Proof of~\eqref{eq-6pullback} and~\eqref{eq-6extzero}]
  Let $m$ be the embedding dimension of~$X$ and~$n$ that of~$Y$. By
  definition, we have
  \[
    c_{u}( f^* C) = c( u_{!} f^* C) = \max_{0\leq p\leq m} \sum_{i \in
      \Zz} h^i (\Pp^{p}_{k'}, l_{\uple{a}_p}^* u_{!} f^* C).
  \]
  Applying three times the projection formula, we have for any $p$ and
  $i\in\Zz$
  $$
  h^i (\Pp^{p}_{k'}, l_{\uple{a}_p}^* u_{!} f^* C)=
  h^i_c(Y_{k'}, f_! u^* l_{\mtrx{a}_{p}!} \Ql \otimes C).
  $$
  Applying the Leray spectral sequence for étale cohomology with
  compact support to the morphism $v\colon Y\to \Pp^n$, we derive
  $$
  \sum_{i \in \Zz} h^i_c(Y_{k'}, f_! u^* l_{\mtrx{a}_{p}!} \Ql \otimes
  C)= \sum_{i\in\Zz} h^i(\Pp^{n}_{k'}, v_! f_! u^*
  l_{\mtrx{a}_{p}!} \Ql \otimes v_! C).
  $$
  Then, by Theorem~\ref{main} and the definition, we get
  $$
  \sum_{i\in\Zz} h^i(\Pp^{n}_{k'}, v_! f_! u^* l_{\mtrx{a}_{p}!} \Ql
  \otimes v_! C) \ll c_v( f_! u^* l_{\mtrx{a}_{p}!} \Ql ) c_{v}(C).
  $$
  Thus, 
  $$
  c_{u}( f^* C)\ll c_v(C)\max_{0\leq p\leq
    m}c_v(f_!u^*l_{\uple{a}_p!}\Ql).
  $$
  Applying again the definition and the projection formula three
  times, we have
  \begin{align*}
    c_v(f_!u^*l_{\uple{a}_p!}\Ql)
    &=
      \max_{0\leq q\leq n}
      \sum_{i\in\Zz}
      h^i(\Pp^q,l^*_{\uple{b}_q}v_!f_!u^*l_{\uple{a}_p!}\Ql)
    \\
    &=\max_{0\leq q\leq n}\sum_{i\in\Zz}
      h^i_c(X_{k'}, f^*v^*l_{\uple{b}_q!}\Ql\otimes
      u^*l_{\uple{a}_p!}\Ql)
  \end{align*}
  for any~$p$.  This concludes the proof of~(\ref{eq-6pullback}) by definition
  of~$c_{u,v}(f)$, and the proof of~(\ref{eq-6extzero}) is similar.
\end{proof}

\begin{proof}[Proof of~\eqref{eq-6pullbackshriek} and~\eqref{eq-6pushforward}]
  From $f^!C=\dual(f^*\dual(C))$ we get the estimates
  $$
  c_u(f^!C)\ll c(u)c_u(f^*\dual(C))\ll c(u)c_{u,v}(f)c_v(\dual(C))
  \ll c(u)c(v) c_{u,v}(f)c_v(C)
  $$
  by~(\ref{eq-6dual}) and~(\ref{eq-6pullback}). This
  establishes~(\ref{eq-6pullbackshriek}), and~(\ref{eq-6pushforward})
  is proved similarly using~(\ref{eq-6extzero}) instead of~(\ref{eq-6pullback}). 
\end{proof}

We can also bound the complexity, with respect to the Segre embedding,
of the external product of complexes. 

\begin{prop}\label{pr-external}
  Let~$(X,u)$ and~$(Y,v)$ be quasi-projective algebraic varieties
  over~$k$ with embedding dimensions~$n$
  and~$m$ respectively, and consider their product $(X\times Y,u\boxtimes v)$, where~$u\boxtimes v=s\circ (u\times v)$ is the composition of $u \times v$ with the Segre embedding
  $$
  s\colon \Pp^n\times \Pp^m\to\Pp^{(n+1)(m+1)-1}. 
  $$ Let~$p_1\colon X\times Y\to X$ and~$p_2\colon X\times Y\to Y$ denote the projections. 
Then the estimate
  $$
  c_{u\boxtimes v}(A\boxtimes B)\ll c_{u\boxtimes
    v,u}(p_1)c_{u\boxtimes v,v}(p_2)c_u(A)c_v(B)
  $$
 holds for all objects $A$ of~$\Der(X)$ and~$B$ of~$\Der(Y)$, with an  implied constant that only depends on~$(n,m)$.
\end{prop}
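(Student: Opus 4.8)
The plan is to deduce this directly from the continuity estimates~\eqref{eq-6tensor} and~\eqref{eq-6pullback} of Theorem~\ref{pushpull}, using only that the external product is the tensor product of two pullbacks. No genuinely new input is needed.

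First I would record the elementary facts that make the statement meaningful. The map $u\boxtimes v = s\circ(u\times v)$ is a locally closed immersion, being the composition of the closed immersion~$s$ with the locally closed immersion $u\times v$; hence $(X\times Y,u\boxtimes v)$ is a quasi-projective variety, of embedding dimension $(n+1)(m+1)-1$, and $c_{u\boxtimes v}$, $c_{u\boxtimes v,u}(p_1)$ and $c_{u\boxtimes v,v}(p_2)$ are all defined. Moreover, by definition of the external product, $A\boxtimes B = p_1^*A\otimes p_2^*B$ as an object of $\Der(X\times Y)$. The argument is then a three-step chaining: applying~\eqref{eq-6tensor} on $(X\times Y,u\boxtimes v)$ to the objects $p_1^*A$ and $p_2^*B$ gives
\[
c_{u\boxtimes v}(A\boxtimes B)=c_{u\boxtimes v}(p_1^*A\otimes p_2^*B)\ll c_{u\boxtimes v}(p_1^*A)\,c_{u\boxtimes v}(p_2^*B),
\]
and applying~\eqref{eq-6pullback} to the morphism $p_1\colon(X\times Y,u\boxtimes v)\to(X,u)$ with the object~$A$, and to $p_2\colon(X\times Y,u\boxtimes v)\to(Y,v)$ with the object~$B$, gives
\[
c_{u\boxtimes v}(p_1^*A)\ll c_{u\boxtimes v,u}(p_1)\,c_u(A),\qquad c_{u\boxtimes v}(p_2^*B)\ll c_{u\boxtimes v,v}(p_2)\,c_v(B).
\]
Combining the three inequalities yields the claimed bound.

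There is essentially no obstacle here; the only point that deserves attention is the bookkeeping of implied constants. Every constant produced along the way by Theorem~\ref{pushpull} depends only on the embedding dimensions of the quasi-projective varieties it is applied to, and these are $n$, $m$ and $(n+1)(m+1)-1$, all functions of $(n,m)$; hence the final implied constant depends only on $(n,m)$, as asserted. (If one wished, $c_{u\boxtimes v,u}(p_1)$ and $c_{u\boxtimes v,v}(p_2)$ could be analysed further, but since both projections are morphisms of quasi-projective varieties in the sense of Definition~\ref{def-cuv}, they are finite invariants and the stated form of the estimate already suffices for the intended applications.)
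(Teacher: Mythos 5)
Your proof is correct and is essentially identical to the paper's: the paper likewise writes $A\boxtimes B=p_1^*A\otimes p_2^*B$, applies~\eqref{eq-6tensor} on $(X\times Y,u\boxtimes v)$, and then applies~\eqref{eq-6pullback} to each projection. The bookkeeping of implied constants via the embedding dimensions $n$, $m$ and $(n+1)(m+1)-1$ is also exactly as intended.
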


\begin{proof}
  The estimate
  $$
  c_{u\boxtimes v}(A\boxtimes B)\ll c_{u\boxtimes v}(p_1^*A)
  c_{u\boxtimes v}(p_2^*B)
  $$
 holds by~(\ref{eq-6tensor}). Since the first factor on the right-hand side satisfies
  $$
  c_{u\boxtimes v}(p_1^*A)\ll c_{u\boxtimes v,u}(p_1)c_u(A)
  $$
  by~(\ref{eq-6pullback}), and similarly for
  $c_{u\boxtimes v}(p_2^*B)$, the result follows.
\end{proof}  

\begin{remark}
 It should be possible to estimate the complexities $c_{u\boxtimes v,u}(p_1)$ and
  $c_{u\boxtimes v,v}(p_2)$ in terms of~$c(u)$ and~$c(v)$. Indeed, this
  amounts to estimating the cohomology of $u_! \Ql \boxtimes v_! \Ql$ on~$\Pp^n \times \Pp^m$ restricted to the intersection of a general
  linear subspace with the inverse image of a general linear subspace
  under the Segre embedding. One can bound this~by
  $$
  c_s ( u_!  \Ql \boxtimes v_! \Ql) c_s (\mathcal{F}),
  $$
  where $\mathcal{F}$ is the constant sheaf on the intersection of a
  general linear subspace with the inverse image of a general linear
  subspace under the Segre embedding, and then attempt to estimate the two factors
  separately.
\end{remark}

\subsection{Linear operations}\label{lin}

\begin{prop}\label{easylin}
  Let $(X,u)$ be a quasi-projective variety over~$k$.

  \begin{enumerate}
  \item\label{easylin:item1} For any distinguished triangle $A_1 \to A_2 \to A_3$ in
    $\Der(X)$, the following holds: 
    \begin{gather*}
      c_u(A_2) \leq c_u(A_1) + c_u(A_3),\\
      c_u(A_1) \leq c_u(A_2) + c_u(A_3),\\
      c_u(A_3) \leq c_u(A_1) + c_u(A_2).
    \end{gather*}
    
  \item\label{easylin:item2} For any objects $A$ and $B$ of $\Der(X)$, the following equality holds: 
    $$
    c_u(A\oplus B) = c_u(A) + c_u(B).
    $$

  \item\label{easylin:item3} Let $A$ be an object of $\Der(X)$. For any $h\in\Zz$, the following equality holds: 
      $$
      c_u(A[h])=c_u(A).
      $$
  \end{enumerate}
\end{prop}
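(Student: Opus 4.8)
The plan is to reduce everything to projective space via the identity $c_u(A)=c(u_!A)$ of Definition~\ref{def-ca}, and then to use that $u_!$ and, for each $0\le m\le n$, the pullback $l_{\uple{a}_m}^{*}$ along a geometric generic linear map $l_{\uple{a}_m}\colon\Pp^m_{k'}\to\Pp^n_{k'}$ are triangulated functors commuting with shifts and with finite direct sums, whereas $\rmH^{\bullet}(\Pp^m_{k'},-)$ is additive and produces long exact sequences out of distinguished triangles. For $K\in\Der(\Pp^n_k)$ I abbreviate $c(K)=\max_{0\le m\le n}\sum_{i\in\Zz}h^i(\Pp^m_{k'},l_{\uple{a}_m}^{*}K)$, so that $c_u(A)=c(u_!A)$; note also that $u_!=i_{*}j_!$ is triangulated since both $j_!$ and $i_{*}$ are.

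For part~\ref{easylin:item1}: apply $u_!$ and then $l_{\uple{a}_m}^{*}$ to the distinguished triangle $A_1\to A_2\to A_3$, obtaining a distinguished triangle in $\Der(\Pp^m_{k'})$. Writing $\rmH^i(-)$ for $\rmH^i(\Pp^m_{k'},l_{\uple{a}_m}^{*}u_!(-))$, the associated long exact sequence $\cdots\to\rmH^i(A_1)\to\rmH^i(A_2)\to\rmH^i(A_3)\to\rmH^{i+1}(A_1)\to\cdots$ gives, from exactness at each of the three spots, $h^i(A_2)\le h^i(A_1)+h^i(A_3)$, $h^i(A_1)\le h^{i-1}(A_3)+h^i(A_2)$, and $h^i(A_3)\le h^i(A_2)+h^{i+1}(A_1)$ (here $h^i$ denotes $\dim\rmH^i$). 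Summing over $i\in\Zz$ — the index shifts are harmless — and bounding each resulting total Betti number by $c_u(A_1)$, $c_u(A_2)$, or $c_u(A_3)$ as appropriate, then taking the maximum over $0\le m\le n$, yields the three claimed inequalities. Part~\ref{easylin:item3} is then immediate: since $u_!(A[h])=(u_!A)[h]$ and $l_{\uple{a}_m}^{*}((-)[h])=(l_{\uple{a}_m}^{*}(-))[h]$, we get $h^i(\Pp^m_{k'},l_{\uple{a}_m}^{*}u_!(A[h]))=h^{i+h}(\Pp^m_{k'},l_{\uple{a}_m}^{*}u_!A)$, so the inner sum over $i$, and hence its maximum over $m$, is unchanged.

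For part~\ref{easylin:item2}, the bound $c_u(A\oplus B)\le c_u(A)+c_u(B)$ is part~\ref{easylin:item1} applied to the split triangle $A\to A\oplus B\to B$; the real content is the reverse inequality. Since $u_!$ and each $l_{\uple{a}_m}^{*}$ commute with direct sums and hypercohomology is additive, for every $m$ one has the exact identity $\sum_{i}h^i(\Pp^m_{k'},l_{\uple{a}_m}^{*}u_!(A\oplus B))=\sum_{i}h^i(\Pp^m_{k'},l_{\uple{a}_m}^{*}u_!A)+\sum_{i}h^i(\Pp^m_{k'},l_{\uple{a}_m}^{*}u_!B)$. From this, $\max(c_u(A),c_u(B))\le c_u(A\oplus B)$ is immediate; to upgrade it to $c_u(A)+c_u(B)\le c_u(A\oplus B)$ one must produce a single dimension $m$ at which both Betti sums on the right are simultaneously maximal, and I expect this to be the main obstacle: the function $m\mapsto\sum_i h^i(\Pp^m_{k'},l_{\uple{a}_m}^{*}K)$ is not obviously monotone, so this step will require a structural input about complexes of the form $u_!(-)$ — for instance a monotonicity statement comparing the Betti sums on generic linear subspaces of consecutive dimensions — failing which one would keep only the one-sided bound together with $\max(c_u(A),c_u(B))\le c_u(A\oplus B)$.
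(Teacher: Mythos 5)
Your treatment of parts~\eqref{easylin:item1} and~\eqref{easylin:item3} is correct and is essentially the paper's argument: the paper obtains the first inequality of~\eqref{easylin:item1} from additivity of Betti numbers along the long exact sequence exactly as you do, and deduces the other two by rotating the triangle and invoking~\eqref{easylin:item3}, which amounts to the same computation you carry out directly by reading off exactness at the other two spots.

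For part~\eqref{easylin:item2}, your hesitation is well founded, and the obstacle you isolate cannot be overcome: the stated equality is false in general, and only the two-sided bound
\[
\max\bigl(c_u(A),c_u(B)\bigr)\ \leq\ c_u(A\oplus B)\ \leq\ c_u(A)+c_u(B),
\]
which you do prove, holds. As you observe, $c_u(A\oplus B)=\max_m\bigl(\sigma_m(A)+\sigma_m(B)\bigr)$ with $\sigma_m(A)=\sum_i h^i(\Pp^m_{k'},l_{\uple{a}_m}^*u_!A)$, and $m\mapsto\sigma_m$ need not be monotone, so the two maxima need not be attained at a common~$m$. A concrete counterexample on $X=\Pp^1$ with $u=\mathrm{Id}$: take $A$ a skyscraper sheaf at a point and $B=j_!\mcL_\chi$ the extension by zero of a nontrivial rank-one tame Kummer sheaf on $\Gg_m$. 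Then $\sigma_0(A)=0$ and $\sigma_1(A)=1$, so $c(A)=1$ with the maximum at $m=1$; whereas $\sigma_0(B)=1$ (the generic stalk has rank one) and $\sigma_1(B)=\sum_i h^i_c(\Gg_m,\mcL_\chi)=0$ (all cohomology of a nontrivial Kummer sheaf on $\Gg_m$ vanishes, since $\chi_c=0$ and $\rmH^0_c=\rmH^2_c=0$), so $c(B)=1$ with the maximum at $m=0$. Hence $c(A\oplus B)=\max(0+1,1+0)=1<2=c(A)+c(B)$. The paper's own proof of~\eqref{easylin:item2} (``follows immediately from the definition'') overlooks precisely the point you raise. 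Fortunately, every later use of part~\eqref{easylin:item2} in the paper (e.g.\ in Propositions~\ref{pr-tannaka} and~\ref{pr-conjugation} and Corollary~\ref{bwr}) only needs the two displayed inequalities, so you should simply record those and not attempt to prove the equality.
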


\begin{proof}
  The first bound in \eqref{easylin:item1}, as well as the equalities in \eqref{easylin:item2} and \eqref{easylin:item3}, follow immediately
  from the definition~$c_u(A)=c(u_!A)$ and the expression of the
  complexity in terms of Betti numbers. The second and third
  inequalities are then deduced from the first (and from~\eqref{easylin:item3}) using the
  distinguished triangles $A_2\to A_3\to A_1[1]$ and $A_3[-1]\to A_1\to A_2.$
\end{proof}

In general, it is not obvious how sharp the first inequality
$c(A_2) \leq c(A_1) + c(A_3)$ is. However, in the important special
case of the decomposition of a perverse sheaf into its 
irreducible constituents, there is a more satisfactory estimate.

\begin{theo}\label{hardlin}
  Let $(X,u)$ be a quasi-projective variety over~$k$.  Let $A$ be an
  object of $\Der(X)$. For any $i\in\Zz$, let $n_i$ be the length of
  $\PH^i(A)$ in the abelian category of perverse sheaves and let
  $(A_{i,j})_{1\leq j\leq n_i}$ be the family of its Jordan-H\"{o}lder
  factors, repeated with multiplicity. Then the estimates
  \[
    c_u(A)\leq \sum_{i \in \Zz} \sum_{j=1}^{n_i} c_u(A_{i,j}) \ll c(u)
    c_u(A)
  \]
  hold, with an implied constant that only depends on the embedding dimension
  of $(X,u)$.
\end{theo}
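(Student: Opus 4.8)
The plan is to prove the two inequalities by different means: the left one is a formal consequence of subadditivity of complexity, while the right one is the real content and rests on a positivity property of characteristic classes. For the left inequality $c_u(A)\le\sum_{i,j}c_u(A_{i,j})$, I would apply Proposition~\ref{easylin}\,\eqref{easylin:item1} and~\eqref{easylin:item3} to the perverse filtration of $A$ (whose successive quotients are the shifted $\PH^i(A)$, giving $c_u(A)\le\sum_i c_u(\PH^i(A))$) and then to a Jordan--H\"older filtration of each $\PH^i(A)$ (giving $c_u(\PH^i(A))\le\sum_j c_u(A_{i,j})$). The same two reductions show that for the right inequality it suffices to treat the case where $A$ is a single perverse sheaf on $X$ with Jordan--H\"older factors $A_1,\dots,A_N$, i.e.\ to prove $\sum_j c_u(A_j)\ll c(u)\,c_u(A)$.

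The key new ingredient is the positivity statement: if $P$ is a perverse sheaf on $\Pp^n_k$ and $A_m$ is one of the standard test sheaves of Definition~\ref{def-test-am} (which are test sheaves by Corollary~\ref{testsheafconstruction}), then $\cc(P)\cdot\cc(A_m)\ge 0$. Indeed, Corollary~\ref{projectiveeulerformula} identifies this intersection number with $(-1)^n\chi(\Pp^n_{k'},P\otimes l_{\mtrx{g}}^*A_m)$ for a geometric generic point $\mtrx{g}$ of $\GL_{n+1}$; since $l_{\mtrx{g}}$ is an automorphism of $\Pp^n$ and $\mtrx{g}^{-1}$ is again generic, the defining property of test sheaves forces $\rmH^i(\Pp^n_{k'},P\otimes l_{\mtrx{g}}^*A_m)=0$ for $i\ne-n$, so the intersection number equals $h^{-n}(\Pp^n_{k'},P\otimes l_{\mtrx{g}}^*A_m)\ge 0$. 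Together with the exactness of $Q\mapsto\rmH^{-n}(\Pp^n_{k'},A_m\otimes l_{\mtrx{g}}^*Q)$ on perverse sheaves, this shows that $P\mapsto\|\cc(P)\|=\sum_m\cc(P)\cdot\cc(A_m)$ is additive on short exact sequences of perverse sheaves and monotone under perverse subquotients; combined with Corollary~\ref{genericbettibound}\,\eqref{genericbettibound:item2} it yields, for any $K$ in $\Der(\Pp^n_k)$, the estimate $c(K)\asymp\sum_m\sum_p\cc(\PH^p(K))\cdot\cc(A_m)=\sum_m\sum_i h^i(\Pp^n_{k'},A_m\otimes l_{\mtrx{g}}^*K)$, the last equality because the spectral sequence attached to the perverse filtration of $K$ degenerates against a test sheaf.

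Now apply this with $K=u_!A_j$. Writing $u=\bar u\circ j$ with $j$ the open immersion of $X$ into its closure $\bar X$ and $\bar u\colon\bar X\to\Pp^n$ the complementary closed immersion, the functor $u_!$ is right $t$-exact, so $\PH^p(u_!A_j)=0$ for $p>0$ and $h^i(\Pp^n_{k'},A_m\otimes l_{\mtrx{g}}^*u_!A_j)=0$ for $i>-n$; hence the sum over $i$ of these Betti numbers equals $(-1)^n\chi(\Pp^n_{k'},A_m\otimes l_{\mtrx{g}}^*u_!A_j)$ plus twice the contribution of the negative perverse degrees. Summing over $j$ and $m$ and using that $\sum_j[u_!A_j]=[u_!A]$ in the Grothendieck group (so that Euler characteristics add up), the ``main term'' is $\sum_m(-1)^n\chi(\Pp^n_{k'},A_m\otimes l_{\mtrx{g}}^*u_!A)$, whose absolute value is at most $\|\cc(u_!A)\|\le\sum_p\|\cc(\PH^p(u_!A))\|\asymp c(u_!A)=c_u(A)$. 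We are thus reduced to proving $\sum_j\sum_{p<0}\|\cc(\PH^p(u_!A_j))\|\ll c(u)\,c_u(A)$, where each $\PH^p(u_!A_j)$ with $p<0$ is a perverse sheaf supported on the boundary $\bar X\setminus X$, since its pullback along $j$ is $\PH^p(A_j)=0$.

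This last bound is the main obstacle, and is where the factor $c(u)$ genuinely enters — it is absent in the one-dimensional case, where $u_!$ preserves perversity. Here I would use the perverse-truncation triangle ${}^{\mathfrak p}\tau_{<0}(u_!A_j)\to u_!A_j\to\PH^0(u_!A_j)$ together with the excision triangle for $X\subset\bar X$ and its complement to identify, up to controlled perverse subquotients supported on $\bar X\setminus X$, the negative perverse cohomologies of $u_!A_j$ with perverse cohomologies of the boundary restriction $i^*j_*A_j$ of the $*$-extension, where $i$ is the inclusion of $\bar X\setminus X$. Since $\dim(\bar X\setminus X)<\dim X$, one can then conclude by an induction on $\dim X$, using the continuity of the six operations already established in Theorem~\ref{pushpull} on $\bar X$ and on $\bar X\setminus X$, together with the fact that $\sum_j[i^*j_*A_j]=[i^*j_*A]$ so that the boundary contributions for the various $j$ are governed additively by those for $A$. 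The subtle point is to run this induction through the additivity of characteristic classes rather than the mere subadditivity of complexity, lest the boundary error terms be re-expressed in terms of the very quantity $\sum_j c_u(A_j)$ that one is bounding; the complexity $c(u)$, which already controls the negative perverse cohomologies of $u_!\Ql$, is precisely what pays for passing to the boundary.
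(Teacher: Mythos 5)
Your first inequality and your positivity observation ($\cc(P)\cdot\cc(A_m)\geq 0$ for $P$ perverse on $\Pp^n$ and $A_m$ a standard test sheaf) are both correct; the latter is exactly the ingredient the paper uses when it says the norm $\|\cdot\|$ is additive on the cone of effective cycles. The problem is in how you deploy it. You push forward the Jordan--H\"older factors $A_j$ of $A$ along $u_!$, which destroys perversity, and then try to recover $\sum_j c(u_!A_j)$ from the identity $\sum_j [u_!A_j]=[u_!A]$ in the Grothendieck group. But $c(u_!A_j)\asymp\sum_p\|\cc(\PH^p(u_!A_j))\|$ is an \emph{unsigned} sum over perverse degrees, whereas the Grothendieck group only sees the alternating sum $\cc(u_!A_j)=\sum_p(-1)^p\cc(\PH^p(u_!A_j))$; the discrepancy is precisely the boundary contribution in nonzero perverse degrees that you isolate. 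Your proposed resolution --- identifying those terms with perverse cohomologies of $i^*j_*A_j$ and inducting on the dimension of $\overline{X}\setminus X$ --- is only a sketch, and it does not escape the circularity you yourself flag: the objects $i^*j_*A_j$ are not Jordan--H\"older factors of anything attached to $A$ on the boundary, so one again only knows their classes add up in the Grothendieck group, which controls Euler characteristics and not the unsigned sums of norms one needs. As written, the second inequality is not proved.

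The paper sidesteps all of this by running the decomposition in the opposite direction: it takes the Jordan--H\"older factors $B_{i,j}$ of $\PH^i(u_!A)$ \emph{on projective space}, where everything is perverse and the norm is genuinely additive (so $\sum_{i,j}\|\cc(B_{i,j})\|=\sum_i\|\cc(\PH^i(u_!A))\|\ll c_u(A)$ by Proposition~\ref{characteristicclassbound}), and then observes that the perverse filtration of $u_!A$ is stable under $u^*$, so that the nonzero $u^*B_{i,j}$ are exactly the $A_{i,j}$ by uniqueness of Jordan--H\"older factors. The single application of the pullback bound $c_u(u^*B_{i,j})\ll c(u)\,c(B_{i,j})$ from~\eqref{eq-6pullback} is where the factor $c(u)$ enters, cleanly and without any boundary analysis. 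If you want to salvage your route, you would need to replace the Grothendieck-group bookkeeping on the boundary by this kind of pullback argument anyway, at which point you have reproduced the paper's proof.
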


\begin{proof}
  The first inequality follows immediately from the previous
  proposition. We prove the second one.

  For $i\in\Zz$, let $m_i$ be the length of $\PH^i(u_!A)$ in the
  category of perverse sheaves on the projective space target of~$u$,
  and let $(B_{i,j})_{1\leq j\leq m_i}$ be the Jordan-H\"{o}lder
  factors (repeated with multiplicity) of these perverse sheaves.
  \par
  Let~$\overline{X}$ be the closure of the image of~$u$. The $B_{i,j}$ are
  irreducible perverse sheaves with support contained in~$\overline{X}$,
  and hence each $u^* B_{i,j}$ is either zero or is an irreducible
  perverse sheaf on~$X$.

  Because $u_! A$ is supported on~$\overline{X}$, its perverse filtration
  is stable under pullback to~$\overline{X}$. The perverse filtration is
  always stable under open immersions; since $u$ is a locally closed
  immersion, it follows that it is stable under $u^*$.

  Therefore, we have $\PH^i(A) = \PH^i (u^*u_! A)=u^* \PH^i(u_!A)
  $. This perverse sheaf is an iterated extension of $u^* B_{i,j}, \dots, u^* B_{i,m_i}$; by the uniqueness of the Jordan-H\"{o}lder factors, the perverse sheaves $u^* B_{i,1}, \dots, u^* B_{i,m_i}$
  that do not vanish coincide with the $A_{i,1}, \dots, A_{i,n_i}$, including multiplicity. We may therefore assume that the equality $u^* B_{i,j}= A_{i,j}$ holds for~$1\leq j\leq n_i$ and $u^* B_{i,j}=0$ for
  $j>n_i$.

  Then we have
  \begin{align*}
    \sum_{i\in\Zz}\sum_{j=1}^{n_i}c_u(A_{i,j})=\sum_{i\in\Zz}\sum_{j=1}^{n_i}c_u(u^*B_{i,j})
    &\ll c(u)      \sum_{i\in\Zz}\sum_{j=1}^{m_i}c(B_{i,j})  \\
    &\ll 
      c(u)      \sum_{i\in\Zz}\sum_{j=1}^{m_i}\|\cc(B_{i,j})\|,
  \end{align*}
  by using~(\ref{eq-6pullback}) and Corollary~\ref{genericbettibound}\,(2).

  On the other hand, we have
  $$
  \cc(\PH^i(u_!A)) = \sum_{j=1}^{m_j} \cc(B_{i,j}).
  $$
  Since the $B_{i,j}$ are perverse sheaves, their characteristic
  cycles are effective. The norm on~$\Rr^{n+1}$ is additive on the
  cone of effective cycles, hence the equality
  $$
  \|\cc(\PH^i(u_!A))\| = \sum_{j=1}^{m_i}\| \cc(B_{i,j})\|. 
  $$
 The estimate 
  $$
  \sum_{i \in \Zz} \sum_{j=1}^{n_i} c_u(A_{i,j})
  \ll c(u)
  \sum_{i \in \Zz} \|\cc(\PH^i(u_!A))\|
  \ll c(u) c(u_! A) =c(u) c_u(A)
  $$
  then follows by Proposition~\ref{characteristicclassbound}.
\end{proof}

\begin{cor}\label{cor-middle}
  Let $(X,u)$ be a quasi-projective variety over~$k$ and
  $w\colon W\to X$ the embedding of a locally closed subvariety. For
  each perverse sheaf~$A$ on~$W$, the middle extension perverse
  sheaf~$w_{!*}A$ on~$X$ satisfies
  \begin{align*}
  c_{u\circ w}(A)&\ll c(u\circ w)c_u(w_{!*}A), \\
  c_u(w_{!*}A)&\ll (c(u)c(u\circ w))^2c_{u\circ w}(A),
  \end{align*}
  where the implied constants only depend on the embedding
  dimension of~$(X, u)$.  
\end{cor}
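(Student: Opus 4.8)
The plan is to prove the two inequalities separately, using only the continuity of the six operations (Theorem~\ref{pushpull}) and the Jordan--Hölder estimate (Theorem~\ref{hardlin}); nothing deeper is needed, since the real content is already packaged in those results.

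\emph{First inequality.} The intermediate extension restricts to the original sheaf: $w^*w_{!*}A\cong A$. Hence
$c_{u\circ w}(A)=c_{u\circ w}(w^*w_{!*}A)$, and applying the pullback estimate~\eqref{eq-6pullback} to the morphism $w\colon (W,u\circ w)\to (X,u)$ and the object $w_{!*}A$ gives $c_{u\circ w}(A)\ll c_{u\circ w,u}(w)\,c_u(w_{!*}A)$. By the second part of Remark~\ref{rem-complex-immersion}, $c_{u\circ w,u}(w)=c(u\circ w)$ because $w$ is an immersion and the embedding of $W$ is $u\circ w$. This yields $c_{u\circ w}(A)\ll c(u\circ w)\,c_u(w_{!*}A)$.

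\emph{Second inequality.} The structural input is that $w_{!*}A$ is, by definition of the intermediate extension, the image of the canonical map $\PH^{0}(w_{!}A)\to\PH^{0}(w_{*}A)$; this is valid for an arbitrary locally closed immersion (writing $w=i\circ j$ with $i$ a closed and $j$ an open immersion, $\PH^0$ commutes with $i_*$, and $w_!$ is right perverse $t$-exact, so the claim reduces to the standard case of an open immersion). Consequently $w_{!*}A$ is a quotient of the perverse sheaf $P:=\PH^{0}(w_{!}A)$ on the projective space target of~$u$, so its Jordan--Hölder factors form a sub-multiset of those of $P$. Iterating the first inequality of Proposition~\ref{easylin} along a composition series gives $c_u(w_{!*}A)\le\sum_{C}c_u(C)$, the sum over the factors of $w_{!*}A$, which is at most the same sum over the factors of $P$; by Theorem~\ref{hardlin} applied to the perverse sheaf $P$ this is $\ll c(u)\,c_u(P)$. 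It remains to bound $c_u(P)=c_u(\PH^{0}(w_!A))$. On one hand $c_u(w_!A)=c(u_!w_!A)=c((u\circ w)_!A)=c_{u\circ w}(A)$. On the other hand, applying Theorem~\ref{hardlin} to $M:=w_!A$ shows that every perverse cohomology sheaf satisfies $c_u(\PH^{i}(M))\le\sum_{j}c_u(M_{i,j})\le\sum_{i,j}c_u(M_{i,j})\ll c(u)\,c_u(M)$, in particular $c_u(\PH^{0}(w_!A))\ll c(u)\,c_{u\circ w}(A)$. Combining, $c_u(w_{!*}A)\ll c(u)^{2}\,c_{u\circ w}(A)$, which a fortiori gives the stated bound since $c(u),c(u\circ w)\ge 1$ by Proposition~\ref{rm-zero}. (When $w$ is an affine morphism, $w_!A$ is already perverse and one obtains the sharper $c_u(w_{!*}A)\ll c(u)\,c_{u\circ w}(A)$.)

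The main point requiring care is the identification of $w_{!*}A$ as a quotient of a \emph{single} perverse cohomology sheaf of $w_!A$ even when $w$ is not affine — this is where right $t$-exactness of $w_!$ enters — together with the elementary but essential observation that passing to $\PH^i$ costs only a factor $c(u)$ by Theorem~\ref{hardlin}. Once these are in hand the argument is a short bookkeeping exercise; the only genuine obstacle would be if one insisted on an optimal implied constant, which is not the case here.
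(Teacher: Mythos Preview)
Your first inequality is identical to the paper's. For the second, the paper instead uses the cruder bound $c_u(w_{!*}A)\leq c_u(\PH^0(w_!A))+c_u(\PH^0(w_*A))$ and then controls \emph{both} terms, the second via~\eqref{eq-6pushforward}; the pushforward side is what introduces the factors of $c(u\circ w)$. Your argument is different: you exploit only that $w_{!*}A$ is a \emph{quotient} of $\PH^0(w_!A)$, pass through Jordan--Hölder factors, and apply Theorem~\ref{hardlin} twice (once to $\PH^0(w_!A)$, once to $w_!A$), thereby avoiding $w_*$ entirely. This is correct and in fact yields the sharper bound $c_u(w_{!*}A)\ll c(u)^2\,c_{u\circ w}(A)$, since $c_u(w_!A)=c((u\circ w)_!A)=c_{u\circ w}(A)$ is an exact equality. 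The only slightly delicate point is the identification $w_{!*}A=\operatorname{Im}\bigl(\PH^0(w_!A)\to\PH^0(w_*A)\bigr)$ for a locally closed immersion, which you handle correctly via the factorisation $w=i\circ j$ and $t$-exactness of $i_*$; the right $t$-exactness of $w_!$ that you mention is true but not actually needed for the argument.
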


\begin{proof}
  We recall that $w_{!*}A$ is the image of the canonical morphism
  $$
  \PH^0(w_!A)\to \PH^0(w_*A)
  $$
  of perverse sheaves. It satisfies $A=w^*w_{!*}A$, and hence the estimates
  $$
  c_{u\circ w}(A)=c_{u\circ w}(w^*w_{!*}A)\ll c_{u\circ
    w,u}(w)c_u(w_{!*}A)=c(u\circ w)c_{u}(w_{!*}A)
  $$
  follow from~(\ref{eq-6pullback}) and Remark~\ref{rem-complex-immersion}. Conversely, Proposition~\ref{easylin} gives the bound
  $$
  c_u(w_{!*}A)\leq c_u(\PH^0(w_!A))+c_u(\PH^0(w_*A)).
  $$
  Then Theorem~\ref{hardlin}, combined with~(\ref{eq-6extzero})
  and~(\ref{eq-6pushforward}) and Remark~\ref{rem-complex-immersion} again,
 gives the estimate
  $$
  c_u(w_{!*}A)\ll c(u)(c_u(w_!A)+c_u(w_*A))\ll c(u)(c(u\circ w)+c(u\circ
  w)^2c(u)) c_{u\circ w}(A), 
  $$ as we wanted to show. 
\end{proof}
  
\subsection{Nearby and vanishing cycles}

In this section, we prove the continuity of the functors of nearby and
vanishing cycles. Let $S$ be the spectrum of a strictly Henselian discrete valuation ring,
with special point~$\sigma$ and generic point~$\eta$. Let $\bar \eta$ be
a geometric generic point above $\eta$. Let $f \colon X\to S$ be a quasi-projective morphism and $u \colon X\to \Pp^n_S$ a locally closed embedding. We denote by $X_\sigma$ and $X_\eta$ the fibers of $f$ over $\sigma$ and $\eta$ respectively. We also consider the induced embeddings $u_\sigma \colon X_\sigma\to \Pp^n$ and $u_\eta \colon X_\eta\to \Pp^n$. For an object~$A$ of~$\Der(X)$, we denote by $A_\sigma$ and
$A_\eta$ the restrictions of $A$ to~$X_\sigma$ and $X_\eta$ respectively. We denote by $\Psi_f$ (\resp $\Phi_f$) the functor of nearby cycles
(\resp of vanishing cycles) from $\Der(X)$ to
$\Der(X_\sigma)$.

\begin{theo}
\label{theo-vancycles}
Let~$X$, $S$ and~$f$ be as above. For each object $A\in \Der(X)$, the following estimates hold: 
\begin{align*}
  c_{u_\sigma}(\Psi_f(A))&\ll c(u_\sigma)c_{u_\eta}(A_\eta),\\
  c_{u_\sigma}(\Phi_f(A))&\ll c(u_\sigma)c_{u_\eta}(A_\eta)+c_{u_\sigma}(A_\sigma).
\end{align*}
\end{theo}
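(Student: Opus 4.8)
The plan is a bootstrapping argument: establish the nearby cycles estimate first on $\Pp^n_S$ itself, then for closed subvarieties, then for arbitrary quasi-projective $X$, and finally deduce the vanishing cycles estimate from the triangle $A_\sigma\to\Psi_f(A)\to\Phi_f(A)$. The first and hardest step is to show that for any object $B$ of $\Der(\Pp^n_S)$ one has $c(\Psi_g(B))\ll c(B_\eta)$, where $g\colon\Pp^n_S\to S$ is the structure morphism (the implied constant depending only on $n$). To this end, fix $0\le m\le n$ and a geometric generic point $\mtrx{a}$ of $M^{n+1,m+1}$, chosen with matrix entries algebraically independent over the fraction field of the valuation ring $R$ of $S$; localising $R$ adjoined these entries at the generic point of its special fibre and passing to a strict henselisation produces a strictly henselian discrete valuation ring $S'$ dominating $S$, over which $\mtrx{a}$ spreads out to a linear subspace $\iota\colon\mathcal L\cong\Pp^m_{S'}\hookrightarrow\Pp^n_{S'}$ whose special and geometric generic fibres are generic linear subspaces, and along whose base change $S'\to S$ the nearby cycles functor is compatible. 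The crucial point is that, because $\mathcal L$ is a generic linear subspace, the closed immersion $\iota$ is non-characteristic with respect to $B$ relative to $S$, so that the base change morphism $\iota_{\sigma'}^*\Psi_{g_{S'}}(B_{S'})\to\Psi_{g'_{S'}}(\iota^*B_{S'})$ is an isomorphism, where $g_{S'}\colon\Pp^n_{S'}\to S'$ and $g'_{S'}\colon\Pp^m_{S'}\to S'$; I would prove this by a Bertini-type generic transversality argument, in the spirit of the dimension counts of Corollary~\ref{projectiveeulerformula} and Lemma~\ref{lem-indepl} (which uses \cite[Cor.\,8.10]{saito1}), together with the standard fact that non-characteristic immersions commute with nearby cycles. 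Granting this, and since $\Psi$ commutes with proper pushforward along $\Pp^m_{S'}\to S'$,
\[
  \sum_i h^i\big(\Pp^m_{k'},l_{\mtrx{a}}^*\Psi_g(B)\big)=\sum_i\dim\rmH^i\big(\Pp^m_{\bar\eta'},\iota_{\bar\eta'}^*B_{\bar\eta'}\big)\le c(B_\eta)
\]
by the definition of complexity on the generic fibre, and taking the maximum over $m$ gives the claim.

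If $u\colon X\to\Pp^n_S$ is a closed immersion, then $(u_\sigma)_!\Psi_f(A)=(u_\sigma)_*\Psi_f(A)=\Psi_g(u_!A)$, since $\Psi$ commutes with proper (here, closed) pushforward, so the previous step yields $c_{u_\sigma}(\Psi_f(A))\ll c((u_!A)_\eta)=c_{u_\eta}(A_\eta)$. In general, factor $u=\widetilde u\circ j_X$, with $j_X\colon X\to\widetilde X$ the open immersion into the closure $\widetilde X$ of the image of $u$ and $\widetilde u\colon\widetilde X\to\Pp^n_S$ the (closed) inclusion, and set $K=\Psi_{\widetilde f}\big((j_X)_!A\big)\in\Der(\widetilde X_\sigma)$. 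Since $\Psi$ commutes with restriction to the open $X_\sigma$, we have $j_{X_\sigma}^*K=\Psi_f(A)$; and applying the closed-immersion case to $\widetilde X$ and the complex $(j_X)_!A$ gives
\[
  c_{\widetilde u_\sigma}(K)\ll c_{\widetilde u_\eta}\big(((j_X)_!A)_\eta\big)=c\big((u_\eta)_!A_\eta\big)=c_{u_\eta}(A_\eta).
\]
Therefore, by the continuity of pullback \eqref{eq-6pullback} applied to the open immersion $j_{X_\sigma}$, together with Remark~\ref{rem-complex-immersion} (which gives $c_{u_\sigma,\widetilde u_\sigma}(j_{X_\sigma})=c(u_\sigma)$),
\[
  c_{u_\sigma}(\Psi_f(A))=c_{u_\sigma}(j_{X_\sigma}^*K)\ll c_{u_\sigma,\widetilde u_\sigma}(j_{X_\sigma})\,c_{\widetilde u_\sigma}(K)=c(u_\sigma)\,c_{\widetilde u_\sigma}(K)\ll c(u_\sigma)\,c_{u_\eta}(A_\eta),
\]
which is the nearby cycles estimate.

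Finally, applying $(u_\sigma)_!$ to the canonical distinguished triangle $A_\sigma\to\Psi_f(A)\to\Phi_f(A)$ in $\Der(X_\sigma)$ and invoking Proposition~\ref{easylin}\,\eqref{easylin:item1} yields $c_{u_\sigma}(\Phi_f(A))\le c_{u_\sigma}(A_\sigma)+c_{u_\sigma}(\Psi_f(A))\ll c_{u_\sigma}(A_\sigma)+c(u_\sigma)c_{u_\eta}(A_\eta)$, as required. The only genuinely delicate ingredient is the generic transversality statement of the first step, to the effect that a generic linear section commutes with the nearby cycles functor; everything else is formal, relying on the commutation of $\Psi$ with open restriction and with proper pushforward, on the continuity results of Theorem~\ref{pushpull} and Proposition~\ref{easylin}, and on spreading the generic linear subspace out over a discrete valuation ring dominating $S$.
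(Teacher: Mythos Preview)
Your overall architecture matches the paper's: first prove the bound for nearby cycles on $\Pp^n_S$, then for closed $u$, then for general $u$ by pulling back along the open immersion $j_{X_\sigma}$ (using \eqref{eq-6pullback} and Remark~\ref{rem-complex-immersion}), and finally deduce the $\Phi$ estimate from the triangle $A_\sigma\to\Psi_f(A)\to\Phi_f(A)$ via Proposition~\ref{easylin}. That part is fine and essentially identical to the paper.

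The genuine gap is in your first step. You make two claims: (i) that nearby cycles are ``compatible'' with the base change $S'\to S$, and (ii) that the closed immersion $\iota$ of the spread-out generic linear subspace is ``non-characteristic relative to $S$'' and hence commutes with $\Psi$. Neither is justified, and taken together they are actually \emph{false} in general. The paper's own computation (equation \eqref{eqn-loc}) shows that
\[
l_{\mtrx{a}_m}^*\Psi_f(A)\;=\;\Psi_{f'}\bigl(l_{\mtrx{a}_m}^*A\bigr)^{P},
\]
where $P=\Gal(\bar\eta'/\tilde k)$ is a nontrivial pro-$p$ group; so one only gets the pullback of $\Psi$ as the $P$-invariants of $\Psi$ of the pullback, not an isomorphism. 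Your Bertini gesture toward Lemma~\ref{lem-indepl} and \cite[Cor.\,8.10]{saito1} does not help here: those results concern preservation of perversity under transversal pullback, not commutation of $\Psi$ with restriction to a closed subspace, and there is no standard ``non-characteristic relative to $S$ $\Rightarrow$ $\iota^*\Psi=\Psi\iota^*$'' statement to invoke in positive characteristic.

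The paper avoids this entirely by a different factorisation. It writes $l_{\mtrx{a}_m}$ as the composite of the \emph{smooth} matrix-multiplication map $p\colon \Pp^m_S\times_S M^{m+1,n+1}_S\to \Pp^n_S$ with the inclusion of the fibre over $\mtrx{a}_m$. Smooth pullback genuinely commutes with $\Psi$ by \eqref{eqn-vancyles-smooth}, and the passage to the fibre is handled by Deligne's lemma \cite[Th.~finitude, Lem.~3.3]{sga4h}, giving \eqref{eqn-Pinv}. Because $P$ is pro-$p$ and we work with $\Ql$-coefficients, taking $P$-invariants is exact, so one obtains the \emph{inequality}
\[
h^i\bigl(\Pp^n_{k'},l_{\mtrx{a}_m}^*\Psi_f(A)\bigr)\;\le\;h^i\bigl(\Pp^n_{k'},\Psi_{f'}(l_{\mtrx{a}_m}^*A)\bigr)\;=\;h^i\bigl(\Pp^n_{\bar\eta'},l_{\mtrx{a}_m}^*A_{\bar\eta'}\bigr),
\]
the last equality by proper base change \eqref{eqn-vancyles} applied to $f'$. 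This inequality is exactly what is needed, and it is all one can expect. If you replace your claimed isomorphisms by this smooth-factorisation-plus-$P$-invariants argument, your proof becomes correct and coincides with the paper's.
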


Before starting the proof, recall the following compatibilities of the nearby cycle functor with pushforwards and pullbacks. Let $h : X'\to X$ be an $S$-morphism, and $f'= f\circ h$. Recall from
\cite[Exp.\,XIII (2.1.7.1)]{SGA7} that, if $h$ is proper, there is a canonical isomorphism
\begin{equation}
\label{eqn-vancyles-prop}
\Psi_{f} h_* \to h_* \Psi_{f'}
\end{equation}
by proper base change.  Moreover, if~$h$ is smooth then
by~\cite[Exp.\,XIII (2.1.7.2)]{SGA7}, there is a canonical
isomorphism
\begin{equation}
\label{eqn-vancyles-smooth}
h^* \Psi_{f}\to \Psi_{f'} h^*.
\end{equation}

\begin{proof}
  By definition of the vanishing cycles, there is a distinguished
  triangle
\[
A_\sigma \to \Psi_f(A)\to \Phi_f(A),
\]
and hence the inequality $c_{u_\sigma}(\Phi_f(A))\leq c_{u_\sigma}(A_\sigma)+c_{u_\sigma}(\Psi_f(A))$ holds by Proposition~\ref{easylin}. It is thus enough to prove the first inequality.

We first assume that $f$ is proper, so that $u$ is a closed immersion. Using (\ref{eqn-vancyles-prop}) with $h=u$, we can replace $X$ by $\Pp^m_S$  and $A$ by $u_!(A)$. 

From (\ref{eqn-vancyles-prop}), for every~$i\in\Zz$ we get the equality
\begin{equation}
\label{eqn-vancyles}
h^i(X_{\bar \eta}, A_{\bar \eta})=h^i(X_\sigma, \Psi_f(A)).
\end{equation}

By definition, we have
\[ c_{u_\sigma}(\Psi_f(A))=\max_{0\leq m\leq n} \sum_{i \in \Zz} h^i(\Pp^n_{k'},
  l_{\mtrx{a}_m}^*(A_\eta)),
\]
where $\mtrx{a}_m$ is a geometric generic point of $M^{n+1,m+1}_{k(\sigma)}$
defined over an algebraically closed field $k'$, and similarly for
$c_{u_\eta}(A_\eta)$.  As in the proof of Lemma \ref{generic-dual}, we
view $l_{\mtrx{a}_m}$ as the composition of the map
$s\colon x \mapsto (x,\mtrx{a}_m)$ from $\Pp^m$ to $\Pp^m \times M^{m+1,n+1}$
with the matrix multiplication map
$p\colon \Pp^m \times M^{m+1,n+1} \to \Pp^n$. We still denote by $p$ the smooth map \hbox{$\Pp^m_S \times_S M^{m+1,n+1}_S \to \Pp^n_S$}. By (\ref{eqn-vancyles-smooth}), there is an isomorphism 
\begin{equation}
\label{eqn-ppsi}
p^*\Psi_{f}\to \Psi_{f} p^*, 
\end{equation}
where we still denote by $f$ the natural morphism $\Pp^m_S \times_S M^{m+1,n+1}_S\to S$.

Let $S'$ denote the strict localization of $M^{m+1,n+1}_S$ at the point
$\mtrx{a}_m$ and let \hbox{$s\colon S'\to M^{m+1,n+1}_S$} be the
localization map. The scheme~$S'$ is the spectrum of a strictly
Henselian discrete valuation ring, with special point~$\sigma'$ and
generic point~$\eta'$, which are mapped to $\sigma$ and $\eta$
respectively by the canonical morphism $s'\colon S'\to S$. The situation
is similar to the discussion preceding \cite[Lem.\,3.3]{sga4h}. Set
$\tilde k=k(\bar \eta)\times_{k(\eta)}k(\eta')$, which is the fraction
field of the strict localization of $M^{m+1,n+1}_{\bar S}$, where
$\bar S$ is the normalization of $S$ in $\bar \eta$. Then
$\Gal(\bar \eta/\eta)=\Gal(\tilde k/\eta')$ holds. Let $\bar \eta'$ the
spectrum of an algebraic closure of $\tilde k$ and
$P=\Gal(\bar\eta'/\tilde k)$, which is a pro-$p$ group for $p$ the
characteristic exponent of $k$.

Let $f'\colon \Pp^n_{S'}\to S'$ be the strict localization at $\mtrx{a}_m$ of the morphism 
\[
\Pp^m_S \times_S M^{m+1,n+1}_S\longrightarrow M^{m+1,n+1}_S. 
\] We have the following cartesian diagram:
 \[
  \begin{tikzcd}
    \Pp^n_{S'} \arrow[r,"f'"] \arrow[d,"s"]
    & S'\arrow[d,"s"]\arrow[dr,"s'"]&\\
    \Pp^n_{S}\times_S M^{m+1,n+1}_S \arrow[r]\arrow[rr,bend right,"f" description] &M^{m+1,n+1}_S\arrow[r]&S.
  \end{tikzcd}
 \]
As recalled in (\ref{eqn-vancyles-smooth}), there is an isomorphism
\begin{equation}
\label{eqn-locam}
s^*\Psi_{f}\to \Psi_{s'\circ f'} s^*.
\end{equation}  

By \cite[Th.\,Finitude, Lem.\,3.3]{sga4h}, for every object $B$ of $\Der(\Pp^n_{S'})$ and $i\in \Zz$, the equality 
\begin{equation}
\label{eqn-Pinv}
\Psi_{s'\circ f'} (B)=\Psi_{f'} (B)^P
\end{equation} holds. Since $P$ is a pro-$p$ group, taking the $P$-fixed part is an exact functor, hence the inequality 
\begin{equation}
\label{eqn-ineqP}
h^i(\Pp^n_{k'},\Psi_{f'} (B)^P)\leq h^i(\Pp^n_{k'},\Psi_{f'} (B)).
\end{equation}
Combining equations (\ref{eqn-ppsi}), (\ref{eqn-locam}) and (\ref{eqn-Pinv}), we find
\begin{equation}
\label{eqn-loc}
l_{\mtrx{a}_m}^*\Psi_f(A)=s^*p^*\Psi_f(A)=s^*\Psi_f(p^*A)=\Psi_{s'\circ f'}(s^*p^*A)=\Psi_{f'}(l_{\mtrx{a}_m}^*A)^P.
\end{equation}
Hence, by (\ref{eqn-ineqP}) applied to $B=s^*p^*A$, we get 
\begin{equation}
\label{eqn-ineqPA}
h^i(\Pp^n_{k'},l_{\mtrx{a}_m}^*\Psi_f(A))\leq h^i(\Pp^n_{k'},\Psi_{f'}(l_{\mtrx{a}_m}^*A)).
\end{equation}
By (\ref{eqn-vancyles}) applied to $f'$, we have
\begin{equation}
\label{eqn-vancyclesfp}
h^i(\Pp^n_{k'},\Psi_{f'}(l_{\mtrx{a}_m}^*A))=h^i(\Pp^n_{\bar\eta'},(l_{\mtrx{a}_m}^*A)_\eta)=h^i(\Pp^n_{\bar\eta'},l_{\mtrx{a}_m}^*(A_{\bar \eta'})).
\end{equation}
Combining (\ref{eqn-ineqPA}) and  (\ref{eqn-vancyclesfp}), we find the sought-after inequality 
\[
c_{u_\sigma}(\Psi_f(A))\leq c_{u_\eta}(A_\eta). 
\]

We now come back to the general case, i.e. we do not assume that $f$ is proper. We factor the immersion
$u \colon X\to \Pp^n_S$ as $i\circ j$, where $i$ is a closed immersion
and $j$ is an open immersion.

By the first step, applied to $j_!A$ and the morphism $\bar f$ given
by the composition of $i$ and the projection $\Pp^n_S\to S$, we get
$c_{i_\eta}(j_!(A)_\eta)\leq c_{i_\sigma}(\Psi_{\bar f}j_!A)$. By
(\ref{eqn-vancyles-smooth}), the object $j_\sigma^*\Psi_{\bar f}j_!A$ is
isomorphic to $\Psi_f(A)$, and hence we get 
\begin{align*}
  c_{u_\sigma}(\Psi_f(A))& \ll c_{{u_\sigma},{i_\sigma}}(j_\sigma)c_{i_\sigma}(\Psi_{\bar
    f}j_!A)\\
  &\leq c_{{u_\sigma},{i_\sigma}}(j_\sigma)c_{i_\eta}((j_!A)_\eta)
  \\
  &=c_{{u_\sigma},{i_\sigma}}(j_\sigma)c_{u_\eta}(A_\eta)=c(u_\sigma)c_{u_\eta}(A_\eta)
\end{align*} by~(\ref{eq-6pullback}) and Remark
\ref{rem-complex-immersion}. This finishes the proof. 
\end{proof}

\begin{cor}
\label{cor-vancycles}
Let $k$ be an algebraically closed field, $f\colon (X,u)\to (S,v)$ a flat
morphism of quasi-projective varieties over $k$, with $S$ smooth and
irreducible of dimension $1$. Let $\sigma$ be a closed point of
$S$. Let $S_{(\sigma)}$ be the strict localization of $S$ at $\sigma$.
Denote by $\Psi_f$ and~$\Phi_f$ the nearby and vanishing cycles
functors of the base change of $f$ to $S_{(\sigma)}$, which we view as
functors from $\Der(X)$ to $\Der(X_\sigma)$. For any $A\in \Der(X)$, the following estimates hold:
\begin{align*}
  c_{u_\sigma}(\Psi_f(A))&\ll c_{u,v}(f)^2c_u(A),
  \\
  c_{u_\sigma}(\Phi_f(A)) &\ll c_{u,v}(f)^2c_u(A).
\end{align*}
\end{cor}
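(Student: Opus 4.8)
The plan is to deduce Corollary~\ref{cor-vancycles} from Theorem~\ref{theo-vancycles}; the only real work is to bound the two quantities $c(u_\sigma)$ and $c_{u_\eta}(A_\eta)$ occurring on the right-hand side of that theorem in terms of $c_{u,v}(f)$ and $c_u(A)$.

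\emph{Setting up the trait.} Since $S$ is a smooth curve, $S_{(\sigma)}$ is the spectrum of a strictly Henselian discrete valuation ring, with closed point $\sigma$ and generic point $\eta$. Set $X_{(\sigma)}=X\times_S S_{(\sigma)}$, let $f_{(\sigma)}\colon X_{(\sigma)}\to S_{(\sigma)}$ be the projection, and let $A_{(\sigma)}$ be the pullback of $A$. The morphism $X_{(\sigma)}\to\Pp^n_{S_{(\sigma)}}$, $(x,t)\mapsto(u(x),t)$, is a locally closed immersion, since it factors as the closed immersion $X\times_S S_{(\sigma)}\hookrightarrow X\times_k S_{(\sigma)}$ (the preimage of the diagonal of $S$, which is closed because $S$ is separated) followed by $u\times\mathrm{id}$. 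We are therefore in the situation of Theorem~\ref{theo-vancycles}; applying it to $f_{(\sigma)}$, this embedding, and $A_{(\sigma)}$ (the nearby and vanishing cycles of $A_{(\sigma)}$ for $f_{(\sigma)}$ being the $\Psi_f(A)$ and $\Phi_f(A)$ of the Corollary) gives
\[
 c_{u_\sigma}(\Psi_f(A))\ll c(u_\sigma)\,c_{u_\eta}(A_\eta),\qquad
 c_{u_\sigma}(\Phi_f(A))\ll c(u_\sigma)\,c_{u_\eta}(A_\eta)+c_{u_\sigma}(A_\sigma),
\]
where $u_\sigma$ and $u_\eta$ are the embeddings induced on the special and on the geometric generic fibre.

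\emph{Bounding the fibrewise embedding complexities.} I claim that $c(u_s)\ll c_{u,v}(f)$ for \emph{every} closed point $s$ of $S$, where $u_s\colon X_s\to\Pp^n_k$ denotes the restriction of $u$ to $X_s=f^{-1}(s)$. Writing $i_s\colon X_s\hookrightarrow X$ for the closed immersion of the fibre, base change for $(-)_!$ along the cartesian square over $\{s\}\hookrightarrow S$ gives $(i_s)_!\Ql_{X_s}\cong f^*\delta_s$, where $\delta_s$ is the skyscraper sheaf at $s$. Hence, by~(\ref{eq-6pullback}),
\[
 c(u_s)=c_{u_s}(\Ql)=c_u\bigl((i_s)_!\Ql_{X_s}\bigr)=c_u(f^*\delta_s)\ll c_{u,v}(f)\,c_v(\delta_s)=c_{u,v}(f),
\]
the final equality because $v_!\delta_s$ is a skyscraper sheaf on projective space and such a sheaf has complexity~$1$. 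In particular $c(u_\sigma)\ll c_{u,v}(f)$, and further $c_{u_\sigma}(A_\sigma)=c_{u_\sigma}(i_\sigma^*A)\ll c_{u_\sigma,u}(i_\sigma)\,c_u(A)=c(u_\sigma)\,c_u(A)\ll c_{u,v}(f)\,c_u(A)$ by~(\ref{eq-6pullback}) and Remark~\ref{rem-complex-immersion}; likewise $c_{u_s}(A_s)\ll c_{u,v}(f)\,c_u(A)$ for every closed $s$.

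\emph{From the generic to a general closed fibre.} It remains to control $c_{u_\eta}(A_\eta)$, and here I would invoke generic base change. Consider $B=(u,f)_!A$ on $\Pp^n_S$, formed using the morphism $(u,f)\colon X\to\Pp^n_S$: although $(u,f)$ need not be an immersion, base change for $(-)_!$ gives $B_s\cong(u_s)_!A_s$ for every point $s$, so that $c(B_s)=c_{u_s}(A_s)$ and $c(B_{\bar\eta})=c_{u_\eta}(A_\eta)$. Running the relative analogue over $S$ of the construction in Lemma~\ref{lem-complexity-open-set}—for each $0\leq p\leq n$, the complex on $M^{n+1,p+1}_S$ whose fibrewise cohomology sheaves become lisse over a dense open, then shrinking over $S$—produces a dense open $V\subseteq S$ such that $c(B_s)=c(B_{\bar\eta})$ for all closed points $s\in V$. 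Choosing $s\in V(k)$ and combining with the previous paragraph gives $c_{u_\eta}(A_\eta)=c_{u_s}(A_s)\ll c_{u,v}(f)\,c_u(A)$. Substituting the bounds $c(u_\sigma)\ll c_{u,v}(f)$ and $c_{u_\eta}(A_\eta)\ll c_{u,v}(f)\,c_u(A)$ into the displayed inequalities, and using $c_{u,v}(f)\geq1$ to absorb the term $c_{u_\sigma}(A_\sigma)$, yields $c_{u_\sigma}(\Psi_f(A))\ll c_{u,v}(f)^2c_u(A)$ and $c_{u_\sigma}(\Phi_f(A))\ll c_{u,v}(f)^2c_u(A)$, with implied constants depending only on the embedding dimensions of $X$ and of $S$. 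The main obstacle is this last step—the quantitative generic base change alluded to in the introduction—whose proof relies on spreading out and on constructibility of $f_!$ and $f_*$, with a little care needed to obtain an open subset of $S$ working uniformly in the auxiliary matrices rather than one depending on the fibre; everything else is a direct application of the continuity estimates of Theorem~\ref{pushpull} together with the fact that a skyscraper sheaf has complexity~$1$.
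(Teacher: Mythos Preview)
Your proof is correct and follows the same skeleton as the paper's: reduce to Theorem~\ref{theo-vancycles} via the graph embedding over the trait, then bound $c(u_\sigma)$, $c_{u_\sigma}(A_\sigma)$, and $c_{u_\eta}(A_\eta)$ separately. Your treatment of the first two quantities via the skyscraper sheaf $\delta_\sigma$ is exactly the paper's argument. (One small correction: the map $(u,f)\colon X\to\Pp^n_k\times_kS$ \emph{is} always a locally closed immersion, since it factors as the graph of $f$ followed by $u\times\mathrm{id}_S$; your claim that it ``need not be an immersion'' is wrong but harmless.)

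The genuine difference is in the bound for $c_{u_\eta}(A_\eta)$. You reduce the geometric generic fibre to a general closed fibre by a spreading-out argument: the constructible function $(\mtrx{a},s)\mapsto\sum_ih^i(\Pp^p,l_{\mtrx{a}}^*(u_s)_!A_s)$ on $M^{n+1,p+1}_k\times_kS$ is generically constant, hence $c_{u_\eta}(A_\eta)=c_{u_s}(A_s)$ for $s$ in a dense open of $S$, and you already bounded the latter. This is valid (and, contrary to your final remark, it needs only qualitative constructibility, not the quantitative generic base change of Section~6.7). The paper instead bounds $c_{u_\eta}(A_\eta)$ directly: a geometric generic point $\mtrx{b}$ of $M^{n_S+1,n_S}_k$ cuts $v(S)$ in finitely many geometric generic points of $S$, so $f^*v^*l_{\mtrx{b}*}\Ql$ is the constant sheaf on a disjoint union of geometric generic fibres, whence
\[
 c_{u_\eta}(A_\eta)\leq c_u\bigl(A\otimes f^*v^*l_{\mtrx{b}*}\Ql\bigr)\ll c_u(A)\,c_u\bigl(f^*v^*l_{\mtrx{b}*}\Ql\bigr)\leq c_u(A)\,c_{u,v}(f),
\]
the last inequality being immediate from Definition~\ref{def-cuv}. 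The paper's route is shorter and exploits the definition of $c_{u,v}(f)$ more directly; your route is slightly more roundabout but avoids introducing the auxiliary linear slice of $S$ and makes the reduction to closed fibres completely uniform.
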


\begin{proof}
  Let $\Gamma \subset X\times_k S$ be the graph of $f$. We denote by
  $h \colon X\to \Gamma$ the canonical $S$\nobreakdash-morphism, and by
  $f'\colon \Gamma\to S$ the projection. Abusively, we will also write $u \colon \Gamma \to \Pp^n_S$ for the canonical embedding induced by
  $u\colon X\to \Pp^n_k$.

 Since $h$ is an isomorphism, the base change morphism
  \hbox{$\Psi_{f'} h_{\eta*}\to h_{\sigma*} \Psi_f$} is an
  isomorphism as well. Moreover, $h_\eta$ and $h_\sigma$ are isomorphisms and the equality $c_{u_\sigma}(B)=c_{u_\sigma}(h_{\sigma*}B)$ holds for any object $B$ of $\Der(X_\sigma)$.  By
  Theorem \ref{theo-vancycles} applied to the strict localization at~$\sigma$ of~$f'\colon \Gamma \to S$ and $h_*A$, the following estimates hold: 
\begin{align*}
  c_{u_\sigma}(\Psi_{f'}(h_*A))&\ll c(u_\sigma)c_{u_\eta}(h_*A_\eta),
  \\
  c_{u_\sigma}(\Phi_{f'}(h_*A))&\ll
  c(u_\sigma)c_{u_\eta}(h_*A_\eta)+c_{u_\sigma}(h_*A_\sigma).
\end{align*}

Using the base change isomorphism previously quoted, this implies that
\begin{align*}
  c_{u_\sigma}(\Psi_{f}(A))&\ll c(u_\sigma)c_{u_\eta}(A_\eta),
  \\
  c_{u_\sigma}(\Phi_{f}(A))&\ll
  c(u_\sigma)c_{u_\eta}(A_\eta)+c_{u_\sigma}(A_\sigma).
\end{align*}

We will now prove the estimates
\begin{align*}
c_{u_\sigma}(A_\sigma)&\ll c_{u,v}(f)c_u(A),\\
c(u_\sigma)&\ll c_{u,v}(f),\\
c_{u_\eta}(A_\eta)&\ll c_{u,v}(f)c_u(A),
\end{align*}
which will conclude the proof.

First, let $\delta_\sigma\in \Der(S)$ be the rank-one skyscraper sheaf
supported at $\sigma\in S$. We have $c_v(\delta_\sigma)=1$ and
$f^*\delta_\sigma$ is the constant sheaf supported on $X_\sigma$, so
that by Theorem \ref{pushpull}, we obtain
$c_u(f^*\delta_\sigma)\ll c_{u,v}(f)c_v(\delta_\sigma)$. Hence, the estimate
$$
c(u_\sigma)=c_u(f^*\delta_\sigma)\ll c_{u,v}(f)
$$ holds. This proves the
second inequality.

Moreover, we have $A_\sigma=A\otimes \Ql_{\vert X_\sigma}$, and hence the estimate
$$
c_{u_\sigma}(A_\sigma)\ll c_u(A)c(u_\sigma)
$$ holds by Theorem
\ref{pushpull}. Combined with the second inequality, this yields the
first one.

For the third inequality, let $\mtrx{b}$ be a geometric generic point of
$M^{n_S,n_S+1}$ such that the intersection of $v(S)$ and the image of
$l_\mtrx{b}$ in $\Pp^{n_S}_{k'}$ is finite. This intersection consists of finitely many geometric generic points $\bar \eta_1,\dots,\bar \eta_n$ of $S$. The complex $ f^*v^*l_{\mtrx{b}*}\Ql$ is then the constant sheaf supported on $f^{-1}(\{\bar \eta_1,\dots,\bar \eta_n\})$, hence the estimate 
\[
c_{u_\eta}(A_\eta)\leq c_{u}(A\otimes f^*v^*l_{\mtrx{b}*}\Ql).
\]
From Theorem \ref{pushpull} and the definition of $c_{u,v}(f)$, we deduce
\[
  c_{u}(A\otimes f^*v^*l_{\mtrx{b}*}\Ql) \ll
  c_u(A)c_{u}(f^*v^*l_{\mtrx{b}*}\Ql)\leq c_u(A)c_{u,v}(f),
\]
which ends the proof.
\end{proof}

\subsection{Uniformity in families}

In Theorem~\ref{pushpull}, the factors $c(u)$ or $c_{u,v}(f)$
appear. The next proposition ensures that these are bounded in
algebraic families; for all practical
purposes, they will thus behave as constants in applications.

\begin{prop}\label{uniformity}
  Let $S$ be a noetherian scheme in which $\ell$ is invertible. Let
  $X$ be a scheme of finite type over $S$, and let $u\colon X \to \Pp^n_S$
  be a locally closed immersion of schemes over $S$, so that for each geometric points $s$ of $S$, the pair $(X_s,u_s)$ is a quasi-projective variety over the algebraically closed field~$k(s)$. Then: 
  \begin{enumerate}
  \item\label{uniformity:item1} There exists a constant $M_1$, depending only on $(X,S,u)$, such that the inequality~$c(u_s)\leq M_1$ holds for all~$s$. 
  \item\label{uniformity:item2} For any object $A$ of $\Der(X)$, there exists a constant~$M_2$, depending only on $(X,S,u,A)$, such that the inequality $c_{u_s}(A_s)\leq M_2$ holds for
    all~$s$.
  \item\label{uniformity:item3} Let $Y$ be a scheme over $S$ and let $v\colon Y \to \Pp^{n'}_S$ be a
    locally closed immersion of schemes over $S$. Let $f\colon X \to Y$ be
    a morphism of schemes over $S$. Then there exists a constant
    $M_3$, depending only on $(X,S,u,Y,v,f)$ such that the inequality~$c_{u_s,v_s}(f_s)\leq M_3$ holds for all~$s$.
  \end{enumerate}
\end{prop}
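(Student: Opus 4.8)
The plan is to reduce all three statements to the fact that cohomology (ordinary and compactly supported) of constructible complexes on schemes of finite type over a noetherian base is constructible on the base, so that the relevant Betti numbers are upper semicontinuous in $s$ and take only finitely many values. Concretely, recall from Lemma~\ref{lem-complexity-open-set} and the remark following Definition~\ref{def-cuv} that each of the quantities $c(u_s)$, $c_{u_s}(A_s)$, $c_{u_s,v_s}(f_s)$ is a maximum over finitely many indices $0\le p\le n$ (and $0\le q\le n'$) of sums of Betti numbers of a fixed complex pulled back to a \emph{generic} linear section; and that generic here can be replaced by \emph{any} point in a suitable dense open subset of the space of matrices $M^{n+1,p+1}$ (resp.\ a product of two such spaces). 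So the entire problem is to show that these generically-defined Betti numbers are bounded uniformly in $s\in S$.

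First I would set up the relative version of the matrix space. Let $\mathcal{M}_p=M^{n+1,p+1}_S$ and form $\Pp^p_S\times_S\mathcal{M}_p$ with the multiplication morphism $\mathrm{mult}\colon \Pp^p_S\times_S\mathcal{M}_p\to\Pp^n_S$, exactly as in the proof of Lemma~\ref{lem-complexity-open-set}, but now over $S$ rather than over a field. For part~\eqref{uniformity:item1} the complex of interest is $\mathrm{mult}^*u_{!}\Ql$; for part~\eqref{uniformity:item2} it is $\mathrm{mult}^*u_{!}A$; and for part~\eqref{uniformity:item3} one works on $\Pp^p_S\times_S\mathcal{M}_p\times_S\mathcal{M}_q$ with the complex computing $u^*l_{\mtrx{a}_p!}\Ql\otimes f^*v^*l_{\mtrx{b}_q!}\Ql$ restricted over the relevant generic point, i.e.\ one forms the relative graph $\Gamma\subset\Pp^n_S\times_S\Pp^{n'}_S$ of $f$ and intersects it with the universal pair of linear subspaces, getting a scheme of finite type over $\mathcal{M}_p\times_S\mathcal{M}_q$ whose constant sheaf one pushes forward with compact support. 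In each case, let $\pi$ denote the projection down to the relevant matrix space (fibered over $S$). By the constructibility of $R\pi_*$ and $R\pi_!$ on noetherian schemes (see e.g.\ SGA4$\tfrac12$, or \cite[Th.\,Finitude]{sga4h}), the complexes $\pi_*(\cdots)$ and $\pi_!(\cdots)$ are constructible on $\mathcal{M}_p$ (resp.\ $\mathcal{M}_p\times_S\mathcal{M}_q$), so they have lisse cohomology sheaves of finite, locally constant rank on a finite stratification; in particular the sum of the dimensions of their stalks is bounded by a single constant $M$ depending only on the data $(X,S,u)$ (resp.\ $(X,S,u,A)$, resp.\ $(X,S,u,Y,v,f)$).

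Next I would extract the value at the generic linear section. For a fixed geometric point $s$ of $S$, a geometric generic point $\mtrx{a}_p$ of $M^{n+1,p+1}_{k(s)}$ maps into the generic point of the fiber $\mathcal{M}_{p,s}$, which lies over some (not necessarily generic) point of $\mathcal{M}_p$; but by proper base change (for $\pi_!$) and the smooth/generic base change statement used in Lemma~\ref{lem-complexity-open-set} (for $\pi_*$), the stalk of $\pi_*(\cdots)$ or $\pi_!(\cdots)$ at the image of $\mtrx{a}_p$ computes precisely $\rmH^{\bullet}$ or $\rmH^{\bullet}_c$ of the generic linear section of the fiber over $s$ — which is what enters the definition of complexity. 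Since that stalk has total dimension $\le M$, we conclude $c(u_s)\le (n+1)M_1$, $c_{u_s}(A_s)\le (n+1)M_2$, and $c_{u_s,v_s}(f_s)\le (n+1)(n'+1)M_3$ for suitable constants; absorbing the combinatorial factor into the constant gives the statement. The only genuine subtlety — and the step I would be most careful with — is the direct-image case $\pi_*$: constructibility of $R\pi_*$ for a non-proper $\pi$ requires the target to be excellent (or one invokes SGA4$\tfrac12$ Th.\,Finitude, which assumes the base is of finite type over a field or a regular ring of dimension $\le 1$), so if $S$ is merely noetherian one should either impose such a hypothesis or, more cleanly, avoid $\pi_*$ altogether by noting that the complexity in all three parts is defined via $u_!$ and $\mathrm{mult}^!$-type operations and compactly supported cohomology, for which $R\pi_!$ preserves constructibility over any noetherian base; the ordinary-cohomology expression $h^i(\Pp^p,l_{\mtrx{a}_p}^*u_!A)$ then equals $h^i_c$ of the same by properness of $\Pp^p$, so one is always in the $\pi_!$ situation. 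With that observation the argument goes through over an arbitrary noetherian $S$, and all three constants $M_1,M_2,M_3$ depend only on the listed data, as claimed.
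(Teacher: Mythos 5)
Your proposal is correct and follows essentially the same route as the paper: form the universal family over the relative matrix space $M^{n+1,p+1}_S$ (resp.\ a fiber product of two such spaces for part~(3)), push forward to it, invoke constructibility of the resulting complex on a noetherian base to bound stalk dimensions uniformly, and identify the stalk at a geometric generic point of the fiber over $s$ with the generic-linear-section cohomology entering the definition of complexity via proper base change. Your extra care about $\pi_*$ versus $\pi_!$ is resolved exactly as the paper implicitly does (the projection from $\Pp^p_S\times_S M^{n+1,p+1}_S$ is proper, and the remaining cases are genuinely $\pi_!$, which preserves constructibility over any noetherian base).
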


\begin{proof}
  By definition of $c(u_s)$, part~\eqref{uniformity:item1} will follow from~\eqref{uniformity:item2} applied to~$A=\Ql$. 
 
  We now prove~\eqref{uniformity:item2}.  Let $0\leq m\leq n$ be an integer. Let
  $r\colon M_{S}^{n+1,m+1}\times_S\Pp^m_S \to \Pp^n_S$ be the morphism defined by
  $r(\mtrx{a},x) = l_{\mtrx{a}}(x)$.  Consider the diagram
  \[
  \xymatrix{
  X \ar[r]^{u} &  \Pp^n_S & M^{n+1,m+1}_S \times_S  \Pp^m_S \ar[l]_-{r} \ar[d]^{\mathrm{pr}_1} \\
  & & M^{n+1,m+1}_S. 
 }
  \]
 
 Let~$s$ be a geometric point of~$S$. Let $\mtrx{a}_m$ be a geometric
  generic point of the fiber of~$M^{n+1,m+1}_S$ over~$s$. Let~$i\in\Zz$.
  By the proper base change theorem, the equality 
  $$
  \sum_{i \in \Zz} h^i(\Pp^m_{k'}, l_{\mtrx{a}_m}^* u_{s!} A_s)=
  \sum_{i\in\Zz} \dim \mathcal H^i( pr_{1*} r^* u_! A)_{\mtrx{a}_m}
  $$
holds. Since $u$ is a locally closed immersion and~$r$ and~$\mathrm{pr}_1$ are morphisms of
  finite type, the complex $\mathrm{pr}_{1*}r^*u_! A$ is an object of $\Der(M^{n+1,m+1}_S)$, so by constructibility, the sum above is
  bounded as~$s$ varies.
  
    For \eqref{uniformity:item3}, we argue in a similar way with
  $M^{n+1,m+1}_S \times_S M^{n'+1,m'+1}_S$.
\end{proof}

\begin{remark}\label{rem:uniformpolynomials}
For instance, if $f\colon \Aa^n\to \Aa^m$ is a morphism given by~$m$
polynomials in $n$ variables of degree at most~$d$, then using the
universal family of such polynomials it follows that there exists a constant
$b(n,m,d)$ such that the inequality~$c_{u,v}(f)\leq b(n,m,d)$ holds for all~$f$ (with the
standard embeddings $u\colon \Aa^n\to \Pp^n$ and
$v\colon \Aa^m\to \Pp^m$). We can in fact make some cases fully explicit by appealing to results
of Katz~\cite{katzbetti}. Since this can be useful in some
applications, we spell out one such case in the next proposition. 
\end{remark}

For non-negative integers $N$, $r$ and $d$, set
$$
B(N,r,d)= 6 \cdot 2^r \cdot (3 + rd)^{N+1}.
$$

\begin{prop}\label{pr-explicit-uniform}
  Let~$r$, $d$, $n_1$ and~$n_2$ be non-negative integers. Let $X$ be a
  closed subvariety of $\Aa^{n_1}$ over~$k$ defined by $r$ equations
  of degree at most $d$. Let $Y$ be any subvariety of
  $\Aa^{n_2}_k$. Let $u$ and $v$ be the natural immersions
  of~$\Aa^{n_1}$ and~$\Aa^{n_2}$ in $\Pp^{n_1}$ and~$\Pp^{n_2}$
  respectively.  For any morphism $f\colon X \to Y$ defined by $n_2$
  polynomials of degree at most~$d$, the following inequality holds:
  $$
  c_{u,v}(f) \leq B(n_1,n_2+r,d).
  $$
\end{prop}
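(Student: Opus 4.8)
The plan is to unwind the definition of $c_{u,v}(f)$ into the sum of compactly supported Betti numbers of an explicit affine variety cut out by few equations of controlled degree, and then to invoke the estimates of Katz~\cite{katzbetti}.

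First I would reduce to $d\geq 1$: if $d=0$, then $f$ is a constant morphism and $X$ is either empty or all of $\Aa^{n_1}$, so $c_{u,v}(f)\leq 1\leq B(n_1,n_2+r,d)$. Assume then $d\geq 1$. Fix $0\leq p\leq n_1$ and $0\leq q\leq n_2$, and let $\mtrx{a}_p$, $\mtrx{b}_q$ be geometric generic points of $M^{n_1+1,p+1}_k$ and $M^{n_2+1,q+1}_k$ over an algebraically closed field $k'$. Since $l_{\mtrx{a}_p}$ and $l_{\mtrx{b}_q}$ are closed immersions, $l_{\mtrx{a}_p*}\Ql$ is the constant sheaf on the linear subspace $L_p:=l_{\mtrx{a}_p}(\Pp^p_{k'})\subseteq\Pp^{n_1}_{k'}$ extended by zero, and likewise $l_{\mtrx{b}_q*}\Ql$ for $L_q\subseteq\Pp^{n_2}_{k'}$. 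Writing $\tilde f\colon X\to\Aa^{n_2}$ for the morphism whose coordinates are the $n_2$ polynomials defining $f$ (it factors through $Y$), the complex $u^*l_{\mtrx{a}_p*}\Ql\otimes f^*v^*l_{\mtrx{b}_q*}\Ql$ is the constant sheaf, extended by zero, on the subvariety $V_{p,q}:=(X\cap L_p)\cap\tilde f^{-1}(L_q)$ of $X$; hence $\sum_i h^i_c(X_{k'},u^*l_{\mtrx{a}_p*}\Ql\otimes f^*v^*l_{\mtrx{b}_q*}\Ql)=\sum_i h^i_c(V_{p,q},\Ql)$. (This is exactly the reformulation mentioned after Definition~\ref{def-cuv}.)

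The next step is to present $V_{p,q}$ as a closed subvariety of an affine space of dimension $\leq n_1$ cut out by $\leq n_2+r$ equations of degree $\leq d$. For the geometric generic $L_p$, the intersection $L_p\cap\Aa^{n_1}_{k'}$ is a dense open subset of $L_p$ and is the image of an affine-linear embedding $\iota\colon\Aa^p_{k'}\to\Aa^{n_1}_{k'}$ (being contained in the hyperplane at infinity is a closed condition that fails at the generic point). Since $X$ is closed in $\Aa^{n_1}$ and $L_q\cap\Aa^{n_2}$ is closed in $\Aa^{n_2}$, the variety $V_{p,q}$ is closed in $\Aa^{n_1}_{k'}$ and contained in $L_p\cap\Aa^{n_1}_{k'}$, so through $\iota$ it becomes a closed subvariety of $\Aa^p_{k'}$ (crucially, the cut-down to $L_p$ is absorbed into the choice of affine coordinates on $\Aa^p$ rather than producing extra equations). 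Its ideal there is generated by the $r$ polynomials $g_i\circ\iota$, where $g_1,\dots,g_r$ are the degree-$\leq d$ equations of $X$, together with the $n_2-q$ polynomials $\ell_j\circ\tilde f\circ\iota$, where $\ell_1,\dots,\ell_{n_2-q}$ are affine-linear equations cutting out $L_q\cap\Aa^{n_2}$ in $\Aa^{n_2}$; each of these has degree $\leq d$, being a polynomial of degree $\leq d$ composed with affine-linear maps. Thus $V_{p,q}$ is a closed subvariety of $\Aa^p_{k'}$ defined by $r+(n_2-q)$ equations of degree $\leq d$.

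Finally, applying the bound of Katz~\cite{katzbetti} gives $\sum_i h^i_c(V_{p,q},\Ql)\leq B(p,r+n_2-q,d)$; since $p\leq n_1$ and $r+n_2-q\leq n_2+r$, and $B(N,r,d)=6\cdot 2^r(3+rd)^{N+1}$ is non-decreasing in $N$ and in $r$, this is at most $B(n_1,n_2+r,d)$. Taking the maximum over $0\leq p\leq n_1$ and $0\leq q\leq n_2$ yields $c_{u,v}(f)\leq B(n_1,n_2+r,d)$. The one point demanding care is the precise shape of the estimate cited from \cite{katzbetti}: one must check that $B(N,r,d)$ dominates the bound supplied there for a closed subvariety of $\Aa^N$ defined by $r$ equations of degree $\leq d$; this amounts to matching constants with the literature, and is where I expect the (mild) friction.
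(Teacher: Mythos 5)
Your proposal is correct and follows essentially the same route as the paper: unwind $c_{u,v}(f)$ into the compactly supported Betti numbers of the intersection of $X$ with a generic $p$-dimensional linear subspace and with $f^{-1}$ of a generic $q$-dimensional linear subspace, realize this as a closed subvariety of $\Aa^p$ cut out by $r+(n_2-q)$ equations of degree at most $d$ (the cut-down to $L_p$ being absorbed into the affine coordinates), and apply~\cite[Cor.\,of Th.\,1, p.\,34]{katzbetti}, whose bound is exactly $B(N,r,d)$ as defined here, so the ``friction'' you anticipate does not materialize. Your extra care with the $d=0$ case and with the monotonicity of $B$ in both arguments is harmless and slightly more thorough than the paper's one-line version.
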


\begin{proof}
  By definition, the complexity $c_{u,v}(f)$ is the maximum over all 
  integers \hbox{$m_1\leq n_1$} and~$m_2\leq n_2$ of the sum of the compactly
  supported Betti numbers (with $\Ql$\nobreakdash-coefficients) of the
  intersection of $X$ with a generic linear subspace of dimension~$m_1$ in~$\Pp^{n_1}$ and with the pullback by~$f$ of a generic
  linear subspace of dimension $m_2$ in~$\Pp^{n_2}$. The intersection of the first linear subspace with $\Aa^{n_1}$ is an
  affine space of dimension $n_1$. Its intersection with~$X$ is
  defined by $r$ equations of degree at most~$d$, and the pullback of
  a linear subspace is defined by $n_2-m_2$ linear combinations of the
  polynomials defining~$f$, which are hence also of degree at most~$d$.
 Thanks to~\cite[Cor.\,of Th.\,1, p.\,34]{katzbetti}, we deduce the inequalities
  $$
  \sum_{i\in\Zz}h^i_c(X, u^* l_{\mtrx{a}_{m_1} }^*\Ql \otimes f^* v^*
  l_{\mtrx{b}_{m_2}}^* \Ql )\leq B(m_1,n_2-m_2+r,d)\leq B(n_1,n_2+r,d)
  $$
  (with notation as in Definition~\ref{def-cuv}) for all $m_1$ and $m_2$,
  and the result follows.
\end{proof}

\begin{remark}
  In some cases, Katz's remark in~\cite[p.\,43]{katzbetti} 
  leads to better estimates.
\end{remark}

\subsection{The open set of lissity of a complex}

Another invariant governed by the complexity is the degree of the
complement of a dense open set where an object of the derived category is lisse
(i.e., where all its cohomology sheaves are lisse). Below, by \emph{degree} of a subvariety $Z$ of projective space, we mean the sum of the degrees of its irreducible components (which may have different dimensions).

\begin{theo}\label{singloc}
  Let $(X,u)$ be an irreducible quasi-projective variety over~$k$. Let $A$ be an object of $\Der(X)$. Let~$Z$ be the complement of the maximal
  open subset where~$X$ is smooth and~$A$ is lisse.
  Then the estimate
  \[
    \deg (u(Z)) \ll (3 + s)c(u) c_u(A)
  \]
 holds, where the degrees are computed in the projective space target of~$u$, and~$s$
  is the degree of the codimension $1$ part of the singular locus of $X$.
\end{theo}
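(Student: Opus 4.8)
The plan is to reduce to the case where $X$ is a curve by taking generic linear sections, and then to bound the number of ``bad'' points of a complex on a curve by means of the Grothendieck--Ogg--Shafarevich formula recalled in the introduction, which controls that number by an Euler--Poincar\'e characteristic and hence by a sum of Betti numbers, hence by the complexity.

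First some reductions. We may assume $\dim X\geq 1$. Identifying $X$ with $u(X)$ inside $\Pp^N$ (where $N$ is the embedding dimension) and writing $Z=X^{\mathrm{sing}}\cup N(A)$, with $N(A)$ the closed locus where $A$ fails to be lisse, we record the elementary estimate $\deg(u(X))\leq c(u)$, obtained by intersecting $u_!\Ql$ with a generic linear subspace of codimension $\dim X$ and reading off $h^0$. Since the degree of a subvariety is the sum of the degrees of its irreducible components, we decompose $\deg(u(Z))=\sum_{c=1}^{\dim X}\deg(u(Z_c))$, where $Z_c$ is the union of the components of $Z$ of codimension $c$ in $X$, and bound each term separately. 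The geometric input making this work is that $Z$ behaves well under a generic linear section $L$: by Bertini the intersection $X\cap L$ is smooth wherever $X$ is, and by the singular support formalism of Beilinson and Saito (Section~\ref{sec-saito}) the immersion $X\cap L\hookrightarrow X$ is properly $\SS(A)$-transversal with $\SS(A|_{X\cap L})$ equal to the pullback of $\SS(A)$; consequently the bad locus of $(X\cap L,A|_{X\cap L})$ is exactly $Z\cap L$, the codimension-$1$ part of its singular locus has degree $s$ (the degree of the codimension-$1$ part of $X^{\mathrm{sing}}$ being preserved under a generic section down to the appropriate dimension), and by the remark following Lemma~\ref{lem-complexity-open-set} neither $c_u(A)$ nor $c(u)$ increases under the corresponding generic pullback. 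A dimension count on the relevant incidence varieties shows these genericity conditions can be arranged simultaneously for the finitely many relevant points.

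For the codimension-$1$ term, slicing by a generic $L$ of codimension $\dim X-1$ produces a curve $C=X\cap L$ with $Z\cap L=Z_1\cap L$ equal to its bad locus, of cardinality $\deg(u(Z_1))$; its singular points number $s$, and, working on the smooth locus, the non-lisse locus of $A|_C$ is controlled by applying the Euler--Poincar\'e formula to the perverse cohomology sheaves $\PH^j(A|_C)$ --- each a middle-extension sheaf in degree $-1$ together with skyscrapers --- whose number of non-lisse points is at most $\loc(\PH^j(A|_C))$, hence a sum of $\drp_x$, $\jump_x$ and $\swan_x$ terms, hence a sum of Betti numbers, hence $\ll c_v(A|_C)\leq c_u(A)$. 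Thus $\deg(u(Z_1))\ll s+c_u(A)$. For $c\geq 2$, slicing by a generic $L$ of codimension $\dim X-c$ gives $Y_c=X\cap L$ of dimension $c$, and $\deg(u(Z_c))$ equals the number of zero-dimensional components of the bad locus of $(Y_c,A|_{Y_c})$, i.e. its isolated bad points. Those among them at which $A|_{Y_c}$ is non-lisse are bounded by $c_v(A|_{Y_c})\leq c_u(A)$, because each contributes the conormal component of that point with nonzero multiplicity to $\CC$ of some perverse constituent of the extension by zero of $A|_{Y_c}$, and the total norm $\sum_j\|\cc(\PH^j(\,\cdot\,))\|$ is $\ll c_v(A|_{Y_c})$ by Corollary~\ref{genericbettibound}\,(2). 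The isolated singular points of $Y_c$ are bounded similarly in terms of $c(v)\leq c(u)$, using the characteristic cycle of the constant sheaf together with the estimate $\deg(v(Y_c))\leq c(v)$ and a B\'ezout-type bound for the zero-dimensional part of the singular locus. Summing the at most $\dim X$ contributions and using $c(u),c_u(A)\geq 1$ (and $A\neq 0$) gives $\deg(u(Z))\ll\bigl(s+c_u(A)\bigr)+(\dim X-1)\bigl(c(u)+c_u(A)\bigr)\ll(3+s)\,c(u)\,c_u(A)$, with implied constant depending only on $N$.

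The main obstacle is the treatment of the isolated singular points of $X$ and of its sections: a rationally smooth singularity, such as a surface node, need not be visible in the characteristic cycle of the constant sheaf, so a purely cohomological bound can fail and one is forced to argue with degrees, establishing $\deg(u(X^{\mathrm{sing}}))\ll(3+s)c(u)$ through a B\'ezout estimate whose inputs --- the degrees of the polynomials appearing in the Jacobian criterion, governed by the regularity of $\overline{u(X)}$ --- must themselves be shown to be controlled by $c(u)$. A secondary technical point is making the several genericity choices in the slicing argument uniform over the finitely many bad points simultaneously.
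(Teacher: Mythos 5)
Your codimension-one argument (generic linear slice down to a curve, then the Grothendieck--Ogg--Shafarevich formula on the normalization) is exactly the engine of the paper's proof, and that part is sound. But the overall architecture has a genuine gap, and it is the one you flag yourself at the end. The paper never needs to bound higher-codimension components of the non-lisse locus, because it first reduces to the case of an \emph{irreducible perverse sheaf} via the Jordan--H\"older decomposition and Theorem~\ref{hardlin} (this reduction is where the factor $c(u)$ in the statement comes from), and then invokes Zariski--Nagata purity of the branch locus \cite[Exp.\,X, Cor.\,3.3]{SGA1}: for an irreducible perverse sheaf whose support is all of $X$, the non-lisse locus inside the smooth locus of $X$ is automatically either empty or pure of codimension one; and if the support is a proper subvariety $S$, the entire bad locus coming from $A$ is $S$ itself, with $\deg(u(S))\leq c_u(A)$ by a point count on a generic section of complementary dimension. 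So the strata $Z_c$ with $c\geq 2$ that you work hard to control simply never arise once the reduction is made.

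Your substitute for purity --- counting isolated non-lisse points via conormal components of the characteristic cycle --- could plausibly be pushed through, but as written it is incomplete: you must first pass to perverse constituents to have effectivity of $\CC$, you need that an isolated non-lisse point of a perverse sheaf forces the full conormal fibre into $\SS$ and hence into the support of $\CC$ (which uses Saito's identification of that support with $\SS$ for perverse sheaves), and you need to convert the count of such components into the norm controlled by Corollary~\ref{genericbettibound}\,\eqref{genericbettibound:item2}; none of this is carried out. More seriously, your treatment of the zero-dimensional and higher-codimension parts of the singular locus of $X$ is, by your own admission, not a proof: you observe that a rationally smooth singularity is invisible to the characteristic cycle of the constant sheaf, and the fallback B\'ezout estimate is left with inputs that ``must themselves be shown to be controlled by $c(u)$''. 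Since the hypothesis only hands you the degree $s$ of the codimension-one part of $X^{\mathrm{sing}}$, this is exactly where an argument is required, and the proposal does not supply one.
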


\begin{proof}
  We first assume that~$A$ is an irreducible perverse object. Let~$n$ be the
  embedding dimension of~$(X,u)$. Let~$S$ be the support of~$A$.

  If $S$ has codimension $m \geq 1$, then $A$ is lisse (being zero)
  outside~$S$; we will show that the inequality $c_u(A) \geq \deg(u(S))$ holds. Indeed, let $\mtrx{a}_m$ be a
  geometric generic point of~$M^{m+1,n+1}_k$, so that
  $l_{\mtrx{a}_m}\colon \Pp^m \to \Pp^n$ is a generic linear embedding that
  intersects~$S$ in $\deg(u(S))$ general points. The complex
  $l_{\mtrx{a}_m}^* u_!  A$ is then supported on $\deg(u(S))$ points and 
  non-zero on each of them, so the sum of its Betti numbers
  is~$\geq \deg(u(S))$, which gives the stated bound.

  Now assume that~$S$ is equal to $X$. In case $A$ is lisse on the non-singular locus of~$X$, the subset $Z$ is the singular locus of~$X$, and the statement trivially holds since the left-hand side of the estimate is independent of $A$. Otherwise, $Z$ has codimension $1$ in $X$. Indeed, were its codimension bigger, $A$ would be lisse on the non-singular locus of $X$ by purity of the branching locus \cite[Exp.\,X, Cor.\,3.3]{SGA1}. Let~$d$ be the degree of~$u(Z)$. Set \hbox{$m=1+\mathrm{codim}(u(X))$,} and let $\mtrx{a}_m$ be a geometric generic point of~$M^{m+1,n+1}_k$, so that \hbox{$l_{\uple{a}_m}\colon \Pp^m \to \Pp^n$} is a generic linear embedding that
  intersects the support of $A$ in a curve $C$, of which $d$ points lie in the
  singular locus~$Z$. Let~$j$ be the embedding of~$C$ in~$X$.  Moreover, let
  $\tilde{C}$ be the normalization of~$C$ and
  let~\hbox{$\pi\colon \tilde{C} \to C$} be the canonical morphism.

  We can view the pullback~$j^*A$ of~$A$ to~$C$ as obtained first by pullback
  along a smooth morphism $M^{m+1,n+1} \times \Pp^m \to \Pp^n$, followed by
  pullback to the geometric generic fiber.  Thus any property of~$A$ that is
  preserved by smooth pullback and by restriction to the geometric generic fiber
  will be preserved by pullback to $C$.  In particular, by Lemma
  \ref{generic-dual} a shift of $j^*A$ is perverse on~$C$, and because an
  irreducible perverse sheaf on a curve is a shift of a middle-extension sheaf,
  the pullback~$j^*A$ is a shift of a middle extension sheaf~$\mathcal F$. In
  addition, $\mathcal F$ is not lisse on at least $d$ points, since neither
  smooth pullback nor restriction to the geometric generic fiber can make a
  singular point disappear.

  Let $\tilde{\mathcal{F}}$ be the middle extension
  of~$\pi^*\mathcal{F}$ from the maximal open set where it is lisse;
  then $\mathcal F $ is canonically isomorphic to
  $\pi_* \tilde{\mathcal F}$, since both are middle-extension sheaves
  on $C$ that are isomorphic on a dense open set. Let $d'$ be the number of points where $\mathcal F$ is not lisse and that are not contained in the singular support of $C$. We have $d\leq d'+s$ and $\tilde{\mathcal{F}}$ is not lisse in at least $d'$ points. 

 From the equalities
  $$
  \sum_{i\in \Zz} h^i(\Pp^n,l_{\uple{a}_m}^*u_!A)=
  \sum_{i\in\Zz}h^i_c(C,j^*A)=\sum_{i\in\Zz} h^i_c(C,\mathcal{F})
  =\sum_{i\in\Zz}h^i_c(\tilde{C},\tilde{\mathcal{F}}),
  $$
we get
  $$
  -\chi_c(\tilde{C},\tilde{\mathcal{F}})\leq \sum_{i\in\Zz}
  h^i_c(\Pp^n,l_{\uple{a}_m}^*u_!A)\leq c_u(A).
  $$

Let $r$ be the ``generic rank of $A$" in the sense of Proposition~\ref{rm-zero}. By Proposition~\ref{rm-zero}, the inequality $r\leq c_u(A)$ holds. Since $l_{\mtrx{a}_m}$ is a generic linear embedding, $r$ is also the ``generic rank of $j^*A$" in the sense of Proposition~\ref{rm-zero}. Since $j^*A$ is a shifted middle-extension sheaf $\mathcal{F}$, $r$ is also the generic rank of $\mathcal{F}$, \emph{i.e.} the rank of a lisse sheaf of which $\mathcal{F}$ is the middle extension. Since $\tilde{\mathcal{F}}$ be the middle extension
  of~$\pi^*\mathcal{F}$ from the maximal open set where it is lisse, $r$ is also the generic rank of~$\tilde{\mathcal{F}}$. Using the Grothen\-dieck-Ogg-Shafarevich formula,
  we get $ \chi_c(\tilde{C},\tilde{\mathcal{F}})\leq d'-2r$ . From this we
  derive the inequalities
  $$
  d'\leq 2r+c_u(A)\leq 3c_u(A).
  $$
  We finally get
  $$
  d\leq d'+s\leq (3+s)c_u(A),
  $$
since $c_u(A)\geq 1$. 

  Now consider the general case.  For~$i\in\Zz$, let $n_i$ be the
  length of $\PH^u(A)$ and let~$(A_{i,j})_{1\leq j\leq n_i}$ be the
  Jordan--Hölder factors of $\PH^i(A)$, repeated with multiplicity.
  The object~$A$ is lisse on the intersection over $i$ and $j$ of the
  maximal open sets~$U_{i,j}$ where $A_{i,j}$ is lisse.  (Note that
  all the irreducible perverse factors of a lisse sheaf are lisse, so
  that if~$A_{i,j}$ is not lisse at a point, then neither is $A$).

  By the case of perverse sheaves, the complement of~$U_{i,j}$ is the
  union of subvarieties with total degree $\leq
  (3+s)c_u(A_{i,j})$. Thus, we obtain subvarieties $Z_i$ with
  $$
  \sum_{i=1}^m \deg (Z_i) \leq (3 + s)\sum_{i\in\Zz}\sum_{j=1}^{n_i}
  c_u(A_{i,j})\ll (3+s)c(u) c_u(A)
  $$
  by Theorem \ref{hardlin}.
\end{proof}

\subsection{Complexity of the cohomology sheaves of a complex}

In this section, we consider an analogue of Theorem~\ref{hardlin}
where the Jordan--Hölder components of the perverse cohomology sheaves
are replaced by the ordinary cohomology sheaves. This is a case where
we will only be able to prove ``continuity'' abstractly, without
explicit estimates. Precisely, we have:

\begin{prop}\label{reductiontosheaf}
  The exist a function $N\colon \Rr^+\times \Rr^+\to \Rr^+$ with the
  following property: for any quasi-projective algebraic variety
  $(X,u)$ over~$k$ with embedding dimension~$n$, and for any
  object~$A$ of $\Der(X)$, the following inequality holds: 
  \[
    \sum_{i\in\Zz} c_u( \mathcal H^i (A) ) \leq N(n,c_u(A)).
  \]
\end{prop}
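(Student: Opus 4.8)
The plan is to reduce everything to the perverse case treated in Theorem~\ref{hardlin}, using induction on the dimension of the support of $A$ and an induction on the length of the perverse filtration to peel off one perverse cohomology sheaf at a time. Since $c_u(A[h])=c_u(A)$, I may shift so that the relevant range of cohomological degrees is controlled. First I would observe that the perverse cohomology sheaves $\PH^i(A)$ are finitely many (their number is bounded in terms of $n$, since $A$ has amplitude bounded by its embedding dimension once restricted to generic linear slices — more precisely, the relevant combinatorial bound comes from the perverse $t$-structure on a variety of dimension $\leq n$), and that by the spectral sequence associated to the perverse filtration together with Proposition~\ref{easylin}, it suffices to prove the statement when $A$ is perverse: indeed $\sum_i c_u(\mathcal H^i(A)) \leq \sum_i \sum_j c_u(\mathcal H^i(\PH^j(A)))$ and each $c_u(\PH^j(A)) \ll c(u)\,c_u(A)$ by Theorem~\ref{hardlin}, the number of nonzero $j$ being bounded in terms of $n$.

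So assume $A$ is perverse on $(X,u)$ with embedding dimension $n$. I would argue by induction on $d = \dim \mathrm{supp}(A)$. If $d=0$, then $A$ is a skyscraper and $\mathcal H^0(A)=A$, so the claim is trivial. In general, let $V\subset X$ be the maximal open subset of the smooth locus of $X$ over which $A$ is lisse; by Theorem~\ref{singloc} the complement $Z$ of $V$ has degree (after applying $u$) bounded by something of the shape $(3+s)\,c(u)\,c_u(A)$, where $s$, the degree of the codimension-$1$ part of the singular locus, is itself a fixed constant attached to $(X,u)$ — but here is a subtlety: $s$ depends on $X$, not just on $n$. To get a bound depending only on $n$ and $c_u(A)$, I would instead bound the degree of $Z$ intrinsically: $Z$ is contained in the union of the singular locus of $X$ and the non-lisse locus of $A$; since $A \neq 0$ has $c_u(A) \geq 1$, and since passing to $Z$ as a closed subscheme with its reduced structure and bounding its embedding data can be absorbed by enlarging the embedding dimension of the ambient projective space by a universally bounded amount, the restriction $w\colon Z\to X$ has complexity $c(u\circ w)$ bounded in terms of $n$ and $c_u(A)$ — this requires knowing that a subvariety of $\Pp^n$ of bounded degree has bounded complexity as a quasi-projective variety, which follows from Proposition~\ref{rm-zero} and the definition applied to its constant sheaf. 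On $V$, the sheaf $\mathcal H^0(A)$ is the single lisse sheaf underlying $A$ (all other $\mathcal H^i(A)|_V$ vanish), and its complexity restricted to $V$ is $\leq c_u(A)$ up to a bounded factor (by Lemma~\ref{generic-dual} and the perversity of $A$, restricting to a generic linear slice of dimension equal to the codimension of the support picks out exactly the generic rank of $A$, which is $\leq c_u(A)$ by Proposition~\ref{rm-zero}).

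To finish, I would use the excision triangle for $w\colon Z\to X$ and the open complement $V$: writing $j\colon V\to X$, the triangle $j_!j^*A \to A \to w_* w^* A$ (in the suitable derived sense) gives, via the associated spectral sequences and Proposition~\ref{easylin}, a bound
\[
\sum_i c_u(\mathcal H^i(A)) \leq \sum_i c_u(\mathcal H^i(j_!j^*A)) + \sum_i c_u(\mathcal H^i(w_*w^*A)).
\]
The first term is controlled because $j^*A$ is lisse on the smooth variety $V$, so only $\mathcal H^0$ survives (after the normalizing shift), and $c_u(\mathcal H^0(j_!j^*A)) = c_u(j_!j^*A) \ll c_u(A) + c_{u\circ w}(w^*A)$ by excision again and the already-known continuity estimates of Theorem~\ref{pushpull}; the second term is handled by the inductive hypothesis applied to $w^*A$ on $Z$, whose support has dimension $\leq d-1$, combined with $c_{u\circ w}(w^*A) \ll c(u\circ w)\,c_u(A)$ from~\eqref{eq-6pullback} and the push-forward estimate $c_u(w_*\,\cdot) \ll \dots c_{u\circ w}(\cdot)$ from~\eqref{eq-6pushforward} applied cohomology-sheaf by cohomology-sheaf (here one uses that $c(u\circ w)$ has now been bounded purely in terms of $n$ and $c_u(A)$). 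Assembling these, one obtains a bound on $\sum_i c_u(\mathcal H^i(A))$ of the form $N(n, c_u(A))$, with $N$ built recursively from the functions appearing in Theorems~\ref{pushpull}, \ref{hardlin} and \ref{singloc}.

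\textbf{Main obstacle.} The delicate point is the dependence of $s$ in Theorem~\ref{singloc} on $X$ rather than on $n$ alone: to keep everything uniform in terms of $(n, c_u(A))$ I must bound the degree of the singular locus of $X$ — and more generally the complexity of the closed stratum $Z$ — using only $c_u(A)$ and $n$. This is plausible because $Z$ is cut out by a complex whose complexity is controlled (e.g., $Z$ lies in the non-lisse locus of $A$ together with the singular locus of $X$, and the latter can be detected by the failure of local acyclicity, which is itself a complexity-bounded condition), but making this rigorous — turning "degree of $Z$ is bounded" into "complexity of $Z$ as a quasi-projective variety is bounded" — is where the real work lies, and it is why the statement is phrased with an unspecified function $N$ rather than an explicit polynomial bound.
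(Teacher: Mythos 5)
Your overall strategy is essentially the paper's: stratify the support of $A$ by successive non-lisse loci whose degrees are controlled by Theorem~\ref{singloc}, use excision on the (ordinary) cohomology sheaves along this stratification, and on each smooth stratum where everything is lisse invoke the coincidence of ordinary and perverse cohomology sheaves so that Theorem~\ref{hardlin} applies. You have also correctly isolated the main technical obstacle, namely that the quantity $s$ in Theorem~\ref{singloc} and, more generally, the complexities $c(u_j)$ of the strata must be bounded purely in terms of $(n, c_u(A))$; the paper resolves exactly this point with Lemma~\ref{chow} (Chow varieties plus the constructibility argument of Proposition~\ref{uniformity} bound, uniformly in terms of the total degree $d$ and $n$, both the complexity of a closed subvariety of $\Pp^n$ and the degree of its singular locus), feeding into the inductive construction of the stratification in Lemma~\ref{monster}.

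There is, however, one genuine gap: the opening reduction to the case of a perverse sheaf. You claim $\sum_i c_u(\mathcal H^i(A)) \leq \sum_i\sum_j c_u(\mathcal H^i(\PH^j(A)))$ ``by the spectral sequence associated to the perverse filtration together with Proposition~\ref{easylin}''. The spectral sequence only exhibits $\mathcal H^i(A)$ as an iterated extension of \emph{subquotients} ($E_\infty$-terms) of the sheaves $\mathcal H^i(\PH^j(A))$, and the complexity of a constructible sheaf does \emph{not} control the complexity of its subquotients: a quotient of the constant sheaf on $\Pp^1$ can be a sum of $N$ skyscrapers, of complexity $N$. Proposition~\ref{easylin} gives sub-additivity in distinguished triangles, which bounds the middle term by the outer ones but never a kernel or image by its source alone; the only place the paper controls subquotients is Theorem~\ref{hardlin}, and that works only for Jordan--H\"older factors of \emph{perverse} sheaves. (Your parenthetical claim that the number of nonzero $\PH^j(A)$ is bounded in terms of $n$ alone is also false --- it is bounded in terms of $c_u(A)$ via Corollary~\ref{genericbettibound}, which is harmless here but not what you wrote.) The fix is simply to drop this reduction: run your stratification and excision argument directly on the individual sheaves $\mathcal H^i(A)$ for the original complex $A$ --- the excision sequences $0\to j_!j^*\mathcal H^i(A)\to \mathcal H^i(A)\to w_*w^*\mathcal H^i(A)\to 0$ are honest short exact sequences of the fixed sheaves $\mathcal H^i(A)$, so Proposition~\ref{easylin} applies --- and only after restricting to a smooth stratum on which $A$ is lisse identify $\sum_i c(\mathcal H^i(A)|_{\mathrm{stratum}})$ with $\sum_i c(\PH^i(A|_{\mathrm{stratum}}))$ and apply Theorem~\ref{hardlin}. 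This is exactly how the paper's proof proceeds, and with that modification your argument closes.
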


We begin with two lemmas.

\begin{lem}\label{chow}
  For any non-negative integers $d$ and $n$, there exists
  $C(d,n)\geq 0$ such that, for any algebraically closed field $k$,
  for any prime $\ell$ invertible in $k$, and for any closed immersion
  $u\colon X \to \Pp^n$ whose image is a union of irreducible
  subvarieties of total degree~$d$, the inequalities
  \begin{gather*}
    c(u)\leq C(d,n)\\
    \deg(X_s)\leq C(d,n)
  \end{gather*}
  hold, where $X_s$ is the singular locus of $X$.
\end{lem}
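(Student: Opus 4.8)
The plan is to prove the two inequalities separately, in each case reducing to a bound that is uniform \emph{both} in the characteristic of $k$ and in the auxiliary prime $\ell$. The decisive point — and the main obstacle — is precisely this double uniformity: an appeal to finiteness/constructibility of $\ell$-adic complexes over a base of finite type over $\Zz$ would only produce a bound depending on $\ell$, so instead one must extract everything $\ell$-adic from Katz's \emph{effective} Betti number bound $B(N,r,d)$ (as used in Proposition~\ref{pr-explicit-uniform}), which is manifestly independent of the field and of $\ell$. To feed into it, I would first record a purely classical input: there exist integers $R=R(n,d)$ and $D=D(n,d)$, depending only on $(n,d)$, such that any reduced closed subscheme of $\Pp^n$ over any algebraically closed field which is a union of irreducible subvarieties of total degree $d$ is cut out (set-theoretically, indeed scheme-theoretically) by at most $R$ homogeneous forms of degree at most $D$. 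This follows from the fact that such subvarieties form a bounded family — parametrized by the finitely many Hilbert schemes over $\Spec\Zz$ realizing the (necessarily finitely many, characteristic-independent) Hilbert polynomials that occur — so that their Castelnuovo--Mumford regularities, hence the degrees and numbers of generators of their ideals, are bounded in terms of $(n,d)$ alone.

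\textbf{Bound on $c(u)=c(u_!\Ql)$.} I would argue by induction on $n$, the case $n=0$ being immediate. Write $\Pp^n=\Aa^n\sqcup H$ with $H\cong\Pp^{n-1}$ the hyperplane at infinity. Applying the excision triangle to the constant sheaf $u_!\Ql$ and using Proposition~\ref{easylin}, one gets $c(u)\leq c(A')+c(A'')$, where $A'$ is the extension by zero to $\Pp^n$ of the constant sheaf on $X\cap\Aa^n$ and $A''$ is the direct image to $\Pp^n$ of the constant sheaf on $X\cap H$. For $A'$, the affine variety $X\cap\Aa^n$ is defined in $\Aa^n$ by at most $R$ equations of degree at most $D$ by the previous step, so Proposition~\ref{pr-explicit-uniform} applied to the inclusion $X\cap\Aa^n\hookrightarrow\Aa^n$ (given by the coordinate functions, a morphism of degree $1$), together with the compatibility in Remark~\ref{rem-complex-immersion}, yields $c(A')\leq B(n,\,n+R,\,\max(D,1))$. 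For $A''$, note that étale cohomology does not see nilpotents, so $c(A'')$ equals the complexity of the analogous sheaf attached to $(X\cap H)_{\mathrm{red}}$, which is a union of irreducible subvarieties of $\Pp^{n-1}$ of total degree $\leq d$ by B\'ezout; since a generic linear subspace of $\Pp^n$ meets $H$ in a generic linear subspace of $H$, a short computation from the definition of complexity shows that $c(A'')$ is bounded by the complexity of the closed immersion $(X\cap H)_{\mathrm{red}}\hookrightarrow\Pp^{n-1}$, which is $\leq C(d,n-1)$ by the inductive hypothesis. Hence $c(u)\leq B(n,n+R,\max(D,1))+C(d,n-1)$, a function of $(n,d)$ alone.

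\textbf{Bound on $\deg(X_s)$.} Here the idea is that the singular locus varies constructibly in families. Let $T$ be the disjoint union of the Hilbert schemes over $\Spec\Zz$ attached to the relevant finitely many Hilbert polynomials — a scheme of finite type over $\Spec\Zz$ — and let $\mathcal X\subseteq\Pp^n_T$ be the universal flat family. The non-smooth locus of $\mathcal X\to T$ is a closed subscheme $\mathcal Z\subseteq\mathcal X$ whose fibre over a geometric point $t$ is the singular locus of $\mathcal X_t$; the function $t\mapsto\deg((\mathcal Z_t)_{\mathrm{red}})$ is constructible on $T$ by generic flatness and Noetherian induction, hence bounded on the quasi-compact scheme $T$ by a constant depending only on $(n,d)$. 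Since every $X$ in the statement occurs as some $\mathcal X_t$, this bounds $\deg(X_s)$. (More concretely, one may instead observe that the singular locus is cut out by the defining equations of $X$ together with appropriate minors of their Jacobian matrix — forms of degree bounded in terms of $R$ and $D$ — and invoke a refined B\'ezout estimate for the degree of a subvariety so defined.) Taking $C(d,n)$ to be the larger of the two constants obtained finishes the proof; the only genuinely delicate bookkeeping, as noted above, is to keep every constant free of $\ell$ and of the characteristic, which is why all the $\ell$-adic content is channelled through $B(N,r,d)$ and the rest is classical algebraic geometry.
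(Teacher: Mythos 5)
Your proof is correct, but it takes a genuinely different route from the paper for the first inequality. The paper simply invokes the Chow variety $\mathrm{Chow}_{d,n}$ as a parameter space and then cites the abstract constructibility statement of Proposition~\ref{uniformity}\,(1) to bound $c(u)$, and constructibility of $t\mapsto \deg(X_{t,s})$ for the second bound; your treatment of $\deg(X_s)$ is essentially the same argument (Hilbert schemes in place of Chow varieties). For $c(u)$, however, you replace the soft argument by an effective one: excision along $\Pp^n=\Aa^n\sqcup H$, Katz's explicit bound $B(N,r,d)$ via Proposition~\ref{pr-explicit-uniform} for the affine piece, and induction on $n$ for the piece at infinity. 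What this buys is an explicit constant and, more importantly, a bound that is \emph{manifestly} independent of $\ell$ and of the characteristic — a point the paper's one-line appeal to Proposition~\ref{uniformity} leaves less transparent, since the constant there is attached to a family over a base in which $\ell$ is invertible. What it costs is the extra classical input on defining equations; note that for your purposes the set-theoretic statement suffices (each component of degree $d_i$ is cut out set-theoretically by forms of degree $d_i$, so products give $D\leq d$ and $R\leq\binom{n+d}{d}$), so you can avoid the regularity discussion entirely. Two small points to tighten: take $C(\cdot,n-1)$ monotone in the degree, since $\deg\bigl((X\cap H)_{\mathrm{red}}\bigr)$ is only $\leq d$ by the refined B\'ezout inequality; and in the inductive step record that for $m=0$ a generic point of $\Pp^n$ misses $H$, so that term contributes $0$.
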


\begin{proof}
  The theory of Chow varieties (see, e.g.,~\cite{kollar}) provides a
  quasi-projective scheme~$\mathrm{Chow}_{d,n}$ over~$k$ whose points
  ``are'' the closed immersions $i\colon X \to \Pp^n$ whose image is a
  union of irreducible subvarieties of total degree $d$. The first
  inequality then follows from Proposition
  \ref{uniformity}\,\eqref{uniformity:item1}, and the second is a
  consequence of the constructibility of the function that maps a point
  of $\mathrm{Chow}_{d,n}$ to the degree of the singular locus of the
  corresponding quasi-projective variety.
\end{proof}

\begin{lem}\label{monster}
  For any non-negative integers $d$ and $n$, there exists
  $C(d,n)\geq 0$ with the following property: for any algebraically closed field~$k$, for any prime~$\ell$
  invertible in~$k$, for any quasi-projective variety $(X,u)$ over~$k$
  with embedding dimension~$n$, and for any object $A$ of~$\Der(X)$
  with $c_u(A)\leq d$, there exists a stratification
  $$
  X_{n+1} \subseteq X_n \subseteq X_{n-1} \subseteq \dots X_1
  \subseteq X
  $$
  of $X$ such that
  \begin{itemize}
  \item The subvariety $X_1$ is the support of~$A$.
  \item For all~$i$, the subvariety $X_i - X_{i-1}$ is smooth.
  \item For all~$i$, the object $A$ is lisse on
    $X_i - X_{i-1}$.
  \item For all $i$, the subvariety $X_i$ is a union of at most
    $C(d,n)$ subvarieties of degree~\hbox{$\leq C(d,n)$}.
  \item For all~$i$, the inequality $c(u_i) \leq C(d,n)$ holds, where
    $u_i\colon X_i - X_{i+1} \to \Pp^n$ is the natural immersion.
  \end{itemize}
\end{lem}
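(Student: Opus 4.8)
My plan is to build the stratification by repeatedly removing the locus where $X$ fails to be smooth or $A$ fails to be lisse, using Theorem~\ref{singloc} to keep degrees under control and Lemma~\ref{chow} to convert degree bounds into bounds on embeddings and on singular loci. Set $X_1=\operatorname{supp}(A)$; having constructed $X_i$, let $W_i\subseteq X_i$ be the maximal open subset on which $X_i$ is smooth and $A|_{X_i}$ is lisse, and put $X_{i+1}=X_i\setminus W_i$. Every irreducible component $C$ of $X_i$ meets the smooth locus of $X_i$ (its generic point lies on no other component, and $C$ is generically smooth over the perfect field $k$) and the lisse locus of $A|_{X_i}$ (constructibility), so $C\not\subseteq X_{i+1}$; hence $\dim X_{i+1}<\dim X_i$, and since $\dim X_1\le n$ the process stops after at most $n+1$ steps, so we may take $X_{n+1}=\varnothing$. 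By construction each stratum $X_i-X_{i+1}=W_i$ is smooth, $A$ is lisse on it, and $X_1=\operatorname{supp}(A)$; it remains to produce the uniform bounds.

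The first and cleanest bound is on $\deg(\overline{u(\operatorname{supp}A)})$. With $S=\operatorname{supp}(u_!A)\subseteq\Pp^n$ and $\delta_j$ the degree of the union of its $j$-dimensional components, a generic linear embedding $l\colon\Pp^{n-j}\to\Pp^n$ meets those components transversally in $\delta_j$ points, each isolated in $\Pp^{n-j}\cap S$ and a point where $B=l^*u_!A$ has nonzero stalk. For such an isolated point $x$, Mayer--Vietoris for the cover $(\Pp^{n-j}\setminus\{x\},\ \text{a small neighbourhood of }x)$ applied to $B$ gives $\rmH^\bullet(\Pp^{n-j},B)\cong\rmH^\bullet(\Pp^{n-j}\setminus\{x\},B)\oplus B_x$; iterating over all $\delta_j$ points yields $\sum_ih^i(\Pp^{n-j},B)\ge\delta_j$, whence $\delta_j\le c_u(A)$, and summing over $j$ gives $\deg(\overline{u(\operatorname{supp}A)})\le(n+1)c_u(A)\le(n+1)d$. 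The iteration then proceeds as follows: assuming $\deg(\overline{u(X_i)})$ is bounded, Lemma~\ref{chow} bounds the complexity of the \emph{closed} embedding $\overline{u(X_i)}\hookrightarrow\Pp^n$ and the degree $s_i$ of the codimension-$1$ singular locus of $X_i$; an excision triangle relating $A$ to its restriction to $X_i$, together with~\eqref{eq-6pullback} and the bounds already obtained on the lower strata, bounds $c_{u_{X_i}}(A|_{X_i})$; and then Theorem~\ref{singloc}, applied componentwise to $(X_i,u_{X_i})$ and $A|_{X_i}$, bounds $\deg(\overline{u(X_{i+1})})$. The last two items of the conclusion follow: $X_i$ is the union of its at most $\deg(\overline{u(X_i)})$ components, each of degree at most $\deg(\overline{u(X_i)})$; and $c(u_i)$ for the stratum $X_i-X_{i+1}$ is bounded once $c(u_{X_i})$ is, via the excision triangle $(u_{X_i-X_{i+1}})_!\Ql\to(u_{X_i})_!\Ql\to(u_{X_{i+1}})_!\Ql$.

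The real obstacle is that all of this needs a genuine bound on $c(u_{X_i})$ for the induced \emph{locally closed} embedding, and a bound on $\deg(\overline{u(X_i)})$ alone does not provide one: for instance the complexity of the complement of a hypersurface of large degree in $\Pp^n$ is large although its closure has degree one. This is precisely where the hypothesis $X_i=\operatorname{supp}(A|_{X_i})$ enters: every component $C$ of $X_i$ carries $A$ with $\sum_k\dim\mathcal H^k(A)_{\eta_C}\ge1$, and this forces the "boundary" $\overline{u(X_i)}\setminus u(X_i)$ to have small degree, because restricting to a generic linear section reduces to a curve on which the Grothendieck--Ogg--Shafarevich formula makes $|\chi_c|$ — hence $\sum_ih^i(\Pp^m,l^*(u_{X_i})_!(A|_{X_i}))$ — dominate the number of boundary points; this is the same mechanism as in the proof of Theorem~\ref{singloc}, and gives $c(u_{X_i})\ll\operatorname{poly}(c_{u_{X_i}}(A|_{X_i}),n)$. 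Granting this, the degree bounds, the embedding complexities $c(u_{X_i})$ and $c(u_i)$, the singular-locus degrees $s_i$, and the complexities $c_{u_{X_i}}(A|_{X_i})$ are all bounded simultaneously by a single induction on $i$ (equivalently, downward on dimension), and one takes $C(d,n)$ to be the largest of the finitely many constants that appear. I expect the bookkeeping of degrees and complexities through this induction, together with making the boundary-degree estimate of the previous sentence precise uniformly in the dimension of the boundary strata, to be the main technical burden.
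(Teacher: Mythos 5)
Your construction of the stratification and the dimension-drop argument match the paper's, and your degree bound for the support is fine. But the issue you yourself flag as ``the real obstacle'' --- bounding the complexity $c(u_{X_i})$ of the \emph{locally closed} embeddings $X_i\hookrightarrow\Pp^n$ --- is a genuine gap, not just a technical burden. Your proposed resolution is circular: to bound $c(u_{X_i})$ you want to bound $c_{u_{X_i}}(A|_{X_i})$, but the only tool you invoke for the latter, \eqref{eq-6pullback} applied to the closed immersion $X_i\to X$, has constant $c_{u_{X_i},u}(\iota_i)=c(u_{X_i})$ by Remark~\ref{rem-complex-immersion} --- precisely the quantity you are trying to control. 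The auxiliary claim that the support condition forces $\overline{u(X_i)}\setminus u(X_i)$ to have small degree is also left unproved, and note that the hypotheses give you no bound on $c(u)$ itself in terms of $d$ and $n$, so you cannot fall back on the ambient embedding either.

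The paper dissolves the obstacle with a one-line reduction that you skipped: since $c_u(A)=c(u_!A)$ by definition, replace $(X,u,A)$ by $(\Pp^n,\mathrm{Id},u_!A)$ and build the stratification inside the support of $u_!A$. Then every $X_i$ is \emph{closed} in $\Pp^n$ (the support of a constructible complex is closed, and each $X_{i+1}$ is closed in $X_i$), so Lemma~\ref{chow} applies directly to the closed immersions $v_i\colon X_i\to\Pp^n$: a bound on the total degree of $X_i$, obtained from Theorem~\ref{singloc} at each step, immediately yields a bound on $c(v_i)$ via the Chow variety and Proposition~\ref{uniformity}, and \eqref{eq-6pullback} then gives $c_{v_i}(v_i^*A)\leq c(v_i)c(A)$ with no circularity. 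The locally closed embeddings only appear at the very end, for the strata $X_i-X_{i+1}$, where the excision triangle $u_{i!}\Ql\to v_{i!}\Ql\to v_{(i+1)!}\Ql$ gives $c(u_i)\leq c(v_i)+c(v_{i+1})$. I recommend you restructure your argument around this reduction; as written, the induction does not close.
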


\begin{proof}
The equality $c_u(A)=c(u_!A)$ holds by definition, so we may replace $X$ with
  the projective space~$\Pp^n$ (and~$u$ with the identity), and~$A$ with
  $u_!A$, provided we describe the subvarieties~$X_i$ as forming a
  stratification of the support of~$A$.
  
  Indeed, we define $X_1$ to be the support of $A$, and then
  inductively $X_{i+1}$ to be the complement in~$X_i$ of the maximal
  smooth open subset of $X_i$ on which $A$ is lisse. On noting the inequality
  $\dim(X_{i+1})<\dim(X_i)$, this provides a stratification of the
  support of~$A$ with at most $n+1$ non-empty subvarieties. We denote
  by~$v_i$ the immersion $X_i\to\Pp^n$.

  We now prove by induction on~$i$ with $1\leq i\leq n+1$ that~$X_i$
  is a union of varieties whose total degree is bounded only in terms of
  $(d,n)$, and that $c(v_i)$ is bounded only in terms of $(d,n)$.
  
  Since $X_1$ is the support of $A$, either $X_1$ is equal to $\Pp^n$
  or it is the complement of the maximum open subset on which $A$ is
  lisse (in fact, zero). In the second case, Lemma~\ref{singloc} shows
  that that $X_1$ is a union of varieties of degree at most
  $3 c_u(A)$. In the first case, the same inequality holds
  trivially. Then by the first inequality in Lemma~\ref{chow}, the
  complexity $c(v_1)$ has a bound in terms of $c_u(A)$ and $n$, and hence
  in terms of $(d,n)$. This establishes the base case of the
  induction.

  Assume that the induction assumption holds for some~$i$ with
  $1\leq i\leq n$. Then, by~(\ref{eq-6pullback}), we obtain
  $c_{v_i}(v_i^*A)\leq c_{v_i,\mathrm{Id}}(v_i)c(A)=c(v_i)c(A)$. By
  the definition of~$X_{i+1}$, applying Lemma~\ref{singloc} and the
  second inequality of Lemma \ref{chow}, we deduce that~$X_{i+1}$ is a
  union of varieties of total degree bounded only in terms of $(d,n)$. 
  The first inequality of Lemma~\ref{chow} applied once more shows
  that $c(v_{i+1})$ is bounded only in terms of $(d,n)$. This
  completes the induction.

  It only remains to bound the complexity of the immersions
  $u_i$. The excision triangle gives
  $$
  u_{i!}\Ql\longrightarrow v_{i!}\Ql\longrightarrow v_{(i+1)!}\Ql
  $$
  in $\Der(\Pp^n)$, and hence the inequality
  $$
  c(u_i) = c( u_{i!} \Ql ) \leq c(v_{i!} \Ql) + c(v_{(i+1)!}
  \Ql)=c(v_i)+c(v_{i+1})
  $$
  follows by Proposition~\ref{easylin}\,\eqref{easylin:item1}. 
\end{proof}

\begin{proof}[Proof of Proposition~\ref{reductiontosheaf}]
  We apply the previous lemma with $d=c_u(A)$, and we let $C(d,n)\geq 0$ denote the corresponding number and 
  $$
  X_{n+1} \subseteq X_n \subseteq X_{n-1} \subseteq \dots X_1
  \subseteq X
  $$
 a stratification of $X$ with the properties of the lemma.
 
  Applying excision repeatedly and Proposition~\ref{easylin}\,\eqref{easylin:item1}, we get
  \begin{align*}
    \sum_{i\in\Zz} c_u( \mathcal H^j (A) )
    &
      = \sum_{i\in\Zz} c( u_!\mathcal H^j (A))
    \\
    &\leq \sum_{i\in\Zz}
      \sum_{j=1}^{n+1} c( u_{j!} u_j^* u_!\mathcal H^i(A) )
      =\sum_{j=1}^{n+1} \sum_{i\in\Zz}
      c( u_{j!} u_j^* u_!\mathcal H^i(A) ).
  \end{align*}
  Applying~(\ref{eq-6extzero}) to~$f=u_j$ viewed as a morphism from
  $(X_j-X_{j+1},u_j)$ to $(\Pp^n,\mathrm{Id})$, we have
  $$
  c( u_{j!} u_j^* u_!\mathcal H^i(A) ) \ll
  c_{u_j,\mathrm{Id}}(u_j)c_{u_j}(u_j^* u_!\mathcal H^i(A) )=
  c(u_j)c_{u_j}(u_j^* u_!\mathcal H^i(A)) 
  $$
  for all~$i$ and~$j$. By construction, $u_j^*u_{j!}\mathcal{H}^i(A)$ is lisse on the smooth scheme \hbox{$X_j-X_{j+1}$}; since the perverse homology sheaves and
  the usual homology sheaves of a lisse sheaf on a smooth scheme agree, the estimates
  $$
  \sum_{i\in\Zz} c( u_{j!} u_j^* u_!\mathcal H^i(A) ) \ll c(u_j)
  \sum_{i\in\Zz} c_{u_j}( \PH^i(u_j^* u_!A) ) \ll
  c(u_j)^2c_{u_j}(u_j^*u_{!}A)
  $$ hold by Theorem~\ref{hardlin}. Applying~(\ref{eq-6pullback}), we get
  $$
  c_{u_j}(u_j^*u_{!}A)\ll c(u_j)c(u_!A)=c(u_j)c_u(A),
  $$
  and therefore
  $$
  \sum_{i\in\Zz} c_u( \mathcal H^j (A) ) \ll
  c_u(A)\sum_{j=1}^{n+1}c(u_j)^3
  $$
  is bounded only in terms of~$(n,d)$, since $c(u_j)\leq C(d,n)$ for
  all~$j$.
  \end{proof}

\subsection{Generic base change}

We now prove an effective version of Deligne's generic base change
theorem~\cite[Th.\,finitude, 1.9]{sga4h}. This argument is
  due to A. Forey.

\begin{theo}
\label{thm-gen-bc-conductors}
Let $(X,u)$, $(Y,v)$ and $(S,w)$ be quasi-projective algebraic
varieties over~$k$. Let~$f\colon X\to Y$ and $g\colon Y\to S$ be
morphisms.
\par
For any object $A$ of~$\Der(X)$, there exists an integer $N\geq 0$,
depending only on $c_u(A)$ and $(f,g,u,v,w)$, and a dense open set
$U\subset S$ such that
\par
\emph{(i)} The image of the complement of $U$ has degree~$\leq N$.
\par
\emph{(ii)} The object $f_*A$ is of formation compatible with any base
change $S'\to U\subset S$.
\end{theo}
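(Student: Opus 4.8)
The plan is to reduce to the classical generic base change theorem of Deligne \cite[Th.\,finitude, 1.9]{sga4h}, but to extract the \emph{quantitative} statement (i) by combining it with the uniformity-in-families results already available, principally Proposition~\ref{uniformity}. The key observation is that the conclusion of Deligne's theorem is insensitive to the precise complex $A$: it depends only on how the constructible complex $f_*A$ stratifies the base. So the strategy is to realize our given $A$ as a member of a bounded family of complexes and then feed that family into an existing uniform statement. First I would use Lemma~\ref{monster} (applied to $A$ on $(X,u)$ with $d=c_u(A)$) to produce a stratification of $X$ on whose strata $A$ is lisse, with each stratum of controlled degree and each immersion of controlled complexity, all bounded purely in terms of $c_u(A)$ and the embedding dimension. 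By excision (Proposition~\ref{easylin}\,\eqref{easylin:item1}) it then suffices to treat the case where $A=a_!\Ql[\text{shift}]^{\oplus r}$ for the immersion $a$ of a single smooth stratum, with $r\leq c_u(A)$; here $r$ and the degree of the stratum are bounded in terms of $c_u(A)$.

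Next I would run the classical generic base change argument in this bounded setting. The point is that once the stratum data is bounded, the triple $(X',Y',f')$ and the complex $A'=a_!\Ql$ on it range over a scheme of finite type: concretely, the stratum of bounded degree is a point of a Chow variety $\mathrm{Chow}_{d,n}$ (as in the proof of Lemma~\ref{chow}), and $f,g$ are fixed morphisms of the ambient quasi-projective varieties. One then forms the universal situation over the parameter space and applies Deligne's generic base change theorem there: there is a dense open of the parameter space over which the formation of the universal pushforward is compatible with base change after restricting the target $S$ to a dense open, \emph{and} — because the ``bad locus'' in $S$ is a constructible subset of a finite-type scheme over the parameter space — its degree is a constructible function of the parameter, hence bounded. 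Specializing to our particular $A$ recovers both the open set $U\subseteq S$ with complement of bounded degree and the base-change compatibility (ii). The bound $N$ thus produced depends only on $c_u(A)$ and $(f,g,u,v,w)$, as required.

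I would then reassemble: given the original $A$, stratify via Lemma~\ref{monster}, apply the previous paragraph to each of the $\leq n+1$ graded pieces to get open sets $U_1,\dots,U_{n+1}\subseteq S$ with complements of degree bounded in terms of $c_u(A)$, set $U=\bigcap_j U_j$, and note that the complement of $U$ has degree at most the sum of the individual degrees, hence bounded. Over $U$, compatibility of $f_*A$ with base change follows from compatibility for each graded piece together with the excision triangles, which are themselves compatible with base change (and perverse truncation is not needed — one can just use the naive filtration by the strata).

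The main obstacle, I expect, is making the ``universal situation'' precise enough to legitimately invoke Deligne's theorem while keeping the bad locus constructible \emph{over the parameter space} rather than merely constructible in each fiber; this is exactly the kind of spreading-out argument used in the proof of Proposition~\ref{uniformity}, and the cleanest route is probably to phrase everything over a noetherian base $T$ (a Chow-variety-type parameter scheme), verify that $f_*A$ on the total space is an object of the bounded constructible derived category by finiteness, invoke generic base change \emph{relative to $T$}, and then read off that the locus in $\Pp^{n_S}_T$ measuring incompatibility is closed of fibrewise-bounded degree by constructibility of the degree function — precisely the mechanism already used in Lemmas~\ref{chow} and~\ref{monster}.
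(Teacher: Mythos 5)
Your proposal breaks down at the reduction step. After stratifying $X$ via Lemma~\ref{monster} so that $A$ is lisse on each stratum, you claim that excision reduces the problem to complexes of the form $a_!\Ql[\cdot]^{\oplus r}$ for the immersion $a$ of a stratum. But lisse is not constant: the restriction of $A$ to a stratum is a lisse sheaf with arbitrary monodromy, and lisse sheaves of bounded rank and complexity do \emph{not} form an algebraic family over a finite-type parameter scheme (already on $\Gg_m$, or on $\Pp^1$ minus four points, there are ``too many'' rank-one or rank-two lisse sheaves of complexity $\leq 2$, with no universal family carrying them; in the $\ell$-adic setting these are representations of a profinite fundamental group and admit no algebraic moduli). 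Consequently the ``universal situation'' over a Chow-variety-type base $T$ that you want to feed into Deligne's generic base change theorem does not exist, and the constructibility-of-the-degree mechanism of Proposition~\ref{uniformity} — which concerns a \emph{single fixed} complex on a family of varieties over a base, not the totality of complexes of bounded complexity — has nothing to apply to. This is precisely the obstruction that forces the paper's proof to re-run Deligne's d\'evissage directly (Nagata compactification, induction on the relative dimension, fibering by coordinate projections to $\Aa^1_S$, effective generic flatness and smoothness) while quantitatively tracking the degree of each excised or shrunk locus using Theorem~\ref{singloc}, Proposition~\ref{reductiontosheaf} and the six-functor complexity bounds, instead of invoking a universal parametrization.

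Two smaller remarks. First, even for the genuinely parametrizable part of your argument (the strata themselves, via Chow varieties as in Lemma~\ref{chow}), you would still need generic base change \emph{relative to the parameter scheme} together with a fibrewise degree bound for the bad locus in $S$; this is essentially the statement being proved, so without the bounded-family input it cannot be bootstrapped. Second, your reassembly step — intersecting the open sets $U_j$ and propagating base-change compatibility through excision triangles — is sound and mirrors what the paper does in its Steps 4 and 5, but it rests entirely on the broken middle step.
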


\begin{remark}
  The original generic base change theorem is stated for a
  constructible sheaf of $R$-modules, where $R$ is a Noetherian ring
  satisfying $nR=0$ for some integer~$n$ that is invertible in $S$, and $S$ is
  not supposed to be defined over a field $k$. An additional statement
  of the theorem is the constructibility of $f_*A$ on $Y_U$. In the
  setting of the above statement, it is already known that $f_*A$ is
  constructible, precisely by applying \hbox{\cite[Th.\,finitude, 1.9]{sga4h}}
  to $f\colon X\to Y$ over $k$. However, in order to understand the
  complexity of the complement of the open set $U$ over which $f_*A$
  is of formation compatible with base change, we would need to redo
  the whole proof, following closely Deligne's argument (see also
  \cite[Th.\,9.3.1]{fu_etale_co}).
\end{remark}

In this section, we will often write simply $c(X)$ for the complexity $c(u)$, where $(X,u)$ is a quasi-projective variety. Recall also from Remark \ref{rem-complex-immersion} that if $i \colon Z\to X$ is an immersion, then $c_{u\circ i,u}(i)=c(Z)$. 

Before starting the proof, we state a useful lemma.

\begin{lem}
\label{lem-affine-open-cond}
Let $(X,u)$ be a quasi-projective variety over~$k$ of embedding dimension~$n$. There exists a
finite open cover~$(U_i)$ of $X$ into at most $\dim(X)+1$ affine subvarieties with open
immersions~\hbox{$u_i\colon U_i\to X$} such that~$c_{u\circ u_i,u}(u_i)=c(U_i)$ is bounded in
terms of~\hbox{$c(u)=c(X)$} and $n$.
\end{lem}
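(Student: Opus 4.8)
The plan is to produce the affine cover by a standard "one hyperplane at a time" argument on the projective closure, controlling complexity at each step with the tools already developed. Let $n$ be the embedding dimension of $(X,u)$, and write $\overline{X}\subset\Pp^n$ for the closure of the image of $u$. First I would reduce to the case where $u$ is a closed immersion: indeed, $c(u)=c(u_!\Ql)$ and the open cover can be obtained on $\overline X$ and then intersected with $X$, so it suffices to cover $\overline X$ by affine opens of controlled complexity and pull back along the open immersion $X\hookrightarrow\overline X$ (this pullback step costs at most a factor of $c(u)$ via \eqref{eq-6pullback} and Remark \ref{rem-complex-immersion}, and does not increase the number of charts). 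So from now on assume $X=\overline X$ is closed in $\Pp^n$ of dimension $\delta=\dim X$.

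Next, the combinatorial heart of the construction: choose $\delta+1$ hyperplanes $H_0,\dots,H_\delta$ in $\Pp^n$ that are "generic with respect to $X$", in the sense that $X\cap H_{i_0}\cap\dots\cap H_{i_r}$ has dimension $\delta-r-1$ for every subset of indices (possible by prime avoidance / Bertini-type genericity, since $\dim X=\delta$), and in particular $X\cap H_0\cap\dots\cap H_\delta=\varnothing$. Set $U_i=X\setminus H_i$; these are affine (complements of a hyperplane section in a projective variety) and cover $X$ because no point of $X$ lies on all of the $H_i$. This gives a cover by exactly $\delta+1\le\dim(X)+1$ affine opens, as required. To bound $c(U_i)$, note that $U_i$ is itself a quasi-projective variety with embedding $X\setminus H_i\hookrightarrow\Pp^n$, and the excision triangle
\[
u_{i!}\Ql\longrightarrow \Ql_X\longrightarrow (X\cap H_i)_!\Ql
\]
in $\Der(\Pp^n)$ gives, by Proposition \ref{easylin}\,\eqref{easylin:item1},
\[
c(U_i)=c(u_{i!}\Ql)\le c(\Ql_X)+c(\Ql_{X\cap H_i})=c(X)+c(X\cap H_i).
\]
So it remains to bound $c(X\cap H_i)$, the complexity of a hyperplane section, in terms of $c(X)$ and $n$.

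The main obstacle, and the only nontrivial estimate, is precisely this last bound: \emph{the complexity of a generic hyperplane section of $X$ is $\ll c(X)$ with implied constant depending only on $n$}. I would prove it directly from the definition of complexity. Writing $h\colon X\cap H_i\to\Pp^n$ for the inclusion, $c(h)=c(h_!\Ql)$, and by definition $c(h_!\Ql)=\max_{0\le m\le n}\sum_{j}h^j(\Pp^m_{k'},l_{\mtrx a_m}^* h_!\Ql)$ for $\mtrx a_m$ a geometric generic point of $M^{n+1,m+1}_k$. By the projection formula this equals $\sum_j h^j(\Pp^n_{k'},h_!\Ql\otimes l_{\mtrx a_m*}\Ql)$; and since $H_i$ is a fixed hyperplane while $l_{\mtrx a_m}$ is a \emph{generic} linear subspace of dimension $m$, Lemma \ref{generic-intersection} (or rather its evident analogue intersecting a generic linear space with a fixed hyperplane, which one can also get by noting that $H_i$ itself may be taken generic and composing generic points) shows that $H_i\cap(\text{image of }l_{\mtrx a_m})$ is the image of a generic linear embedding $l_{\mtrx b}$ of dimension $m-1$ (when $m\ge 1$) or empty (when $m=0$). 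Hence $h_!\Ql\otimes l_{\mtrx a_m*}\Ql$ is computed from the restriction of $u_!\Ql$ to a generic linear subspace of dimension $m-1$, which contributes at most $c(X)$ by the very definition of $c(X)$. This yields $c(X\cap H_i)\le c(X)$, and feeding back, $c(U_i)\le 2c(X)$ (plus the factor $c(u)$ picked up from the reduction to the closed case, so $c(U_i)\ll c(u)$ with constant depending only on $n$). One subtlety to handle carefully is genericity of the $H_i$: strictly we want a \emph{single} fixed cover of $X$, so I would first choose the $H_i$ generically (over a suitable extension, then descend, or simply over $\bar k$ since complexity is a geometric invariant) so that both the dimension-count covering property and the intersection computation above hold, and observe that the bound on $c(X\cap H_i)$ only used that each $H_i$ meets a generic linear subspace transversally, which holds for $H_i$ in a dense open set. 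This completes the proof.
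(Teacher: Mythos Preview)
Your reduction to the closed case in the first paragraph does not work. If $V_i$ is an affine open of $\overline X$ and $W=\overline X\setminus X$ is the boundary, then $U_i=V_i\cap X=V_i\setminus(V_i\cap W)$ is the complement of a closed subset of an affine variety, which need not be affine (take $\overline X=\Pp^2$, $X=\Pp^2\setminus\{p\}$, $V_i=\Aa^2$: then $U_i=\Aa^2\setminus\{p\}$ is not affine). Generic hyperplanes cannot repair this: a generic $H_i$ will not contain $W$, so $X\setminus H_i=(\overline X\setminus H_i)\setminus W$ has exactly the same defect. Your argument is correct when $X$ is already projective, but the genuinely quasi-projective case is where the content lies.

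The paper's proof addresses precisely this point by using \emph{hypersurfaces that contain the boundary} rather than hyperplanes. Writing $u(X)=Z\setminus W$ with $Z,W$ closed in $\Pp^n$, it first invokes Theorem~\ref{singloc} (as in Lemma~\ref{monster}) to bound $\deg Z$ and $\deg W$ in terms of $c(u)$ and $n$. One then chooses a hypersurface $H\subset\Pp^n$ with $W\subset H$, $Z\not\subset H$, and $\deg H\le\deg W$. Now $U_1=Z\setminus H$ is affine (complement of a hypersurface section in a closed subvariety of $\Pp^n$), is an open subset of $X$, and has complexity controlled by $\deg Z$ and $\deg H$ via Lemma~\ref{chow}. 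The remainder $(Z\cap H)\setminus W$ has strictly smaller dimension, and one finishes by induction on $\dim X$, yielding at most $\dim X+1$ charts. The essential extra ingredient your argument is missing is the degree bound on $W$ coming from Theorem~\ref{singloc}; without it one cannot choose a hypersurface of controlled degree swallowing the boundary.
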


\begin{proof}
  The image of $u$ in~$\Pp^n$ can be written as $Z-W$, where $Z$ and $W$ are
  closed subvarieties of $\Pp^n$. As in the proof of Lemma \ref{monster}, by
  Theorem \ref{singloc} the degrees of $Z$ and~$W$ are bounded in terms of
  $c(u)$ and $n$. Let $H$ be a hypersurface of $\Pp^n$ such that $W\subset H$
  but $Z\notin W$. One can choose $W$ of degree at most the degree of~$W$. Then
  set $U_1=Z-H$, which is an affine open subset of $Z-W$, of complexity bounded
  by the degrees of $Z$ and $H$. Since $\dim(Z\cap H)<\dim(Z)$, one can conclude
  by induction: the affine open subsets of $Z\cap H$ that are obtained are restrictions
  of affine open subsets of $\Pp^n$, and hence their restrictions to $Z$ produce affine
  open subsets of $Z-W$, which, together with $U_1$, cover $Z-W$.
\end{proof}

During the proof of Theorem~\ref{thm-gen-bc-conductors}, we will
repeatedly consider subschemes of $X$, $Y$ and $S$. These will always
implicitly be considered with the locally closed embedding inherited
from $X$, $Y$ or $S$. All complexities will be computed with respect
to those implicit embeddings, and hence we will simplify the notation
by dropping the embeding from the complexity. Moreover, whenever we
say that we can shrink~$S$, we mean that we can replace~$S$ by a dense
open subset whose complement has bounded degree in terms of~$c(A)$.

\begin{proof}[Proof of Theorem \ref{thm-gen-bc-conductors}]
  Working successively with each irreducible component of~$S$, there
  is no loss of generality in assuming, as we do from now on, that $S$
  is irreducible.

  \textbf{Step 1.} We first consider the particular case where~$S=Y$, the variety $X$
  is smooth of pure relative dimension~$d$ over~$S$, and~$A$ is a
  lisse sheaf such that~$R^if_!A'$ is lisse for each~$i\in\Zz$. Here,
  $A'$ denotes the ``naive'' dual $A'=\rhom(A,\Ql)$, i.e.\,non-derived sheaf homomorphisms. Then Deligne~\cite[Th.\,finitude, 2.1]{sga4h} proves
  the result with~$U=S$ (this is essentially a consequence of the proper
  base change theorem and Poincaré duality).
 
  \textbf{Step 2.} We now assume that $S=Y$, $X$ is smooth over $S$ of
  pure relative dimension~$d$, and that $A$ is a lisse sheaf. Let
  again $A'=\rhom(A,\Ql)$; by~(\ref{eq-6extzero}) and
  Theorem~\ref{singloc}, there exists an open set $U\subset S$ whose
  complement has degree bounded in terms of $c(A')$, and hence in terms of
  $c(A)$, such that $f_!A'$ is lisse on~$U$. Over the dense open set
  $U$, we are in the situation of Step~1, and hence the result holds
  in this case too.

  \par
  \textbf{Step 3.} We now perform some reductions for the proof of the
  general case. We first observe that the problem is local on $Y$, so
  that we may assume that $Y$ affine. Using a finite affine cover of
  $X$ and excision, we may also assume that $X$ is affine. Note that
  the complexity of the restriction of $A$ to those affine open
  subsets is bounded in terms of~$c(A)$ by~(\ref{eq-6pullback}).


  Up to shrinking $X$ again, we can now factor $f$ into an open immersion followed by a proper morphism $g$ (this is a form of
  Nagata's compactification theorem).

  By proper base change, the result holds for the morphism $g$ with
  $U=S$, and hence it is enough to prove the result when~$f$ is an
  open immersion with dense image. We will then prove the result by
  induction on the relative dimension~$n\geq 0$ of $X$ over $S$.

  For $n=0$, since $X$ and $Y$ are of dimension $0$ over $S$, up to
  shrinking $S$ to a dense open subset (whose complement is of degree
  bounded in terms of $X$ and $Y$), we have $X=Y$. Hence the result
  holds.

  We now consider $n\geq 1$, and assume that the result holds for
  relative dimension up to~$n-1$.

  We will prove below the following sublemma:

  \begin{lem}\label{lm-sub}
    With notation and assumption as before, up to shrinking $S$, there exists a
    dense open subset $Y'\subset Y$ such that the result holds over $Y'$ and
    such that the complement of $Y'$ in $Y$ is finite of bounded degree over
    $S$.
  \end{lem}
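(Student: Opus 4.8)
The plan is to lower the relative dimension by fibering $Y$ in curves, following Deligne, while carrying every degree and complexity bound through Theorems~\ref{singloc} and~\ref{pushpull}.

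\textbf{Fibering $Y$ in curves.} Fix the locally closed embedding $Y\hookrightarrow\Pp^N_S$ (enlarging $N$ if necessary). By constructibility and Theorem~\ref{singloc} I would first shrink $S$ --- by a dense open whose complement has degree bounded in terms of $c_u(A)$ --- so that $Y\to S$ is flat with geometrically irreducible fibres of pure dimension $n$ and of bounded degree. Then pick an $S$-linear subspace $L\subset\Pp^N_S$ of relative dimension $N-n$ in general position with respect to $Y$; by B\'ezout, $Y\cap L$ is finite over $S$ of bounded degree, so $Y' = Y\smallsetminus(Y\cap L)$ is a dense open of $Y$ whose complement is finite of bounded degree over $S$, which is the second clause of the lemma. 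Projection away from $L$ gives an $S$-morphism $\pi\colon Y'\to\Pp^{n-1}_S$ which, after a further bounded shrinking of $S$, is dominant with one-dimensional generic fibres; put $Y_1=\Pp^{n-1}_S$, of relative dimension $n-1$ over $S$. Let $f'\colon X'\to Y'$ be the restriction of $f$, where $X'=f^{-1}(Y')$; it is again a dense open immersion, and $c(A|_{X'})$ is bounded in terms of $c_u(A)$ by~\eqref{eq-6pullback}.

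\textbf{Reduction over $Y'$.} Because $Y'\to S$ factors through $\pi$, for every $\phi\colon S'\to S$ one has $Y'\times_S S'=Y'\times_{Y_1}(Y_1\times_S S')$; hence $f'_*(A|_{X'})$ is compatible with all base changes pulled back from $S$ if and only if it is compatible with base change along $(Y_1)_{S'}\to Y_1$ for all such $\phi$. Now $X'\to Y'\to Y_1$ has relative dimension $\le 1$ over $Y_1$, and $f'$ is a dense open immersion of $Y_1$-schemes; so the relative-curve case of generic base change --- which for $n\ge 2$ is an instance of the induction hypothesis applied with base $Y_1$, and which for $n=1$ (where $Y_1=S$ and the construction degenerates) must instead be established directly with explicit bounds, adapting Deligne's argument and Theorem~\ref{singloc} (compare~\cite[Th.\,finitude, 2.1]{sga4h}) --- produces a dense open $V\subseteq Y_1$, with complement of degree bounded in terms of $c_u(A)$, over which $f'_*(A|_{X'})$ is compatible with base changes of $V$.

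\textbf{From $V$ to $S$.} The remaining issue, and the main obstacle, is that $Y_1\smallsetminus V$ may dominate $S$, so one cannot simply shrink $S$ to absorb it. Here I would descend on $\dim Y_1=\dim S+(n-1)$: after the usual reduction to base changes $S'\to S$ with $S'$ a trait, compatibility becomes a local-acyclicity statement, and over the closed subset $\pi^{-1}(Y_1\smallsetminus V)$ --- of relative dimension $\le n-1$ over $S$, with $Y_1\smallsetminus V$ a bounded union of locally closed strata of relative dimension $<n-1$ over $S$ by Theorem~\ref{singloc} --- it is controlled by the induction hypothesis after re-fibering each stratum in curves. Iterating at most $n$ times, the relative dimension of the strata strictly decreasing until one reaches strata finite over $S$ (where generic base change is trivial, as in the base case $n=0$ of the main induction), and assembling the pieces via the excision triangles on $Y'$ attached to the open $\pi^{-1}(V)$ --- whose $!$-extension and closed $*$-pushforward terms commute with every base change, the cohomology sheaves involved being constructible with complexity bounded by Theorem~\ref{pushpull} --- one obtains the sought dense open $U\subseteq S$ with complement of degree bounded in terms of $c_u(A)$. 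The hard part is precisely this bookkeeping: establishing the relative-curve case with explicit bounds and then propagating the degree and complexity estimates cleanly through the recursion while keeping the proliferating strata under control, i.e.\ quantifying the bulk of Deligne's dévissage.
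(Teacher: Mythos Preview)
Your strategy fibers in the wrong direction. You project $Y'$ down to $Y_1=\Pp^{n-1}_S$ so that the fibres are curves; this forces you to invoke the relative-curve case over the high-dimensional base $Y_1$, and then you are stuck with a bad locus $Y_1\setminus V$ that can dominate $S$. Your proposed fix --- iterating on $\pi^{-1}(Y_1\setminus V)$ via further curve fibrations and excision --- does not close: you need base-change compatibility of $f'_*A$ itself at the points of that closed set, and knowing something about the restriction of $A$ to the preimage does not give that (the identification $i^* f'_* A \simeq (f'|)_*(A|)$ is precisely the base-change statement you have not yet proved there). The $n=1$ case is also circular in your scheme, as you note.

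The paper instead fibers in the opposite direction. Since $Y$ is affine, say $Y\subset\Aa^r_S$, one uses the $r$ coordinate projections $p_i\colon Y\to\Aa^1_S$. Over each $\Aa^1_S$ the relative dimension of $X$ drops to $\leq n-1$, so the induction hypothesis applies directly (with base $\Aa^1_S$) and yields a dense open $U_i\subset\Aa^1_S$ of bounded complementary degree over which $f_*A$ is compatible with all base changes into $U_i$. Setting $Y'=\bigcup_i p_i^{-1}(U_i)$, the result holds over $Y'$ because a base change $S'\to S$ restricted to $p_i^{-1}(U_i)$ is pulled back from $U_i\times_S S'\to U_i$. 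The complement $Y\setminus Y'$ sits inside the fibre product $(\Aa^1_S\setminus U_1)\times_S\cdots\times_S(\Aa^1_S\setminus U_r)$, a product of closed subschemes each generically finite over $S$; one further bounded shrinking of $S$ makes it finite. No secondary recursion, no gluing over a closed stratum, and the $n=1$ case reduces cleanly to $n=0$.
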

  
  \textbf{Step 4.} Under the conditions of Step~3, we further assume
  that $X$ is smooth over $S$ and that $A$ is a lisse sheaf. We can
  replace $Y$ by its closure in the image of~$v$, and hence assume
  that $Y$ is projective over $S$. Using~ Lemma~\ref{lm-sub}, we can
  assume that there exists $Y'\subset Y$ such that the result holds
  for $Y'$ and that $Y-Y'$ is finite over $S$ of bounded degree.  We
  then have the following commutative diagram:
  $$
  \begin{tikzcd}
    & Y'\arrow{d}{j} &
    \\
    X \arrow{r}{f}\arrow{dr}{a} & Y\arrow{d}{b} & Y-Y' \arrow{l}{i}
    \arrow{dl}{c}
    \\
    & S &
  \end{tikzcd}
  $$

  By Step~2, up to shrinking $S$, the object $a_*A$ is of formation
  compatible with any base change. By the choice of~$Y'$ (after again
  shrinking $S$) the object $j^*f_*A$ is of formation compatible with
  any base change. By the proper base change theorem, the same holds for
  $b_* j_!  j^*f_*A$. Now we have a distinguished triangle
\[
  j_! j^*f_*A\longrightarrow f_*A\longrightarrow
  i_*i^*f_*A
\]
Applying $b_*$ to it, we get the distinguished triangle
\[
  b_*j_! j^*f_*A\longrightarrow a_*A\longrightarrow c_*i^*f_*A
\]
Since the first two complexes in this triangle are of formation
compatible with any base change, the same holds for the third one,
namely $c_*i^*f_*A$. Since $c$ is finite, this implies that $i^*f_*A$
also has the same property. Hence in the first triangle, the first and
third complexes are of formation compatible with any base change, and
hence the middle one, namely $f_*A$, also has this property. This
finishes the proof in this case.

\textbf{Step 5.} We now show how to reduce the situation (after the
reduction in Step~3) to that of Step 4. We will also prove below the
following additional sublemma:

\begin{lem}\label{lm-subsub}
  Up to shrinking $S$, and performing a base change along a finite
  surjective radicial morphism and reducing $X$, in an effective way, we
  can find an open dense subset~$V$ of $X$ that is smooth over $S$ and
 has complement of bounded degree.
\end{lem}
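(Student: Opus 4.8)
Lemma \ref{lm-subsub} asks us to find, after shrinking $S$, after a finite surjective radicial base change, and after reducing $X$, a dense open $V\subset X$ that is smooth over $S$ with complement of bounded degree. Here is the plan.

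The plan is to use generic smoothness together with a perfection/Frobenius trick to handle the inseparability issues in positive characteristic. First I would note that the generic fiber $X_{\eta}$ over the (irreducible) base $S$ is a variety over the function field $k(S)$; generic smoothness does not apply directly since $k(S)$ need not be perfect, but after base change along a sufficiently high power of the relative Frobenius $S'\to S$ (which is finite, surjective and radicial), and replacing $X$ by the reduction $(X\times_S S')_{\mathrm{red}}$, the generic fiber becomes geometrically reduced, hence generically smooth over $k(S')$ by \cite{SGA1}-type arguments. Concretely, one can invoke that a reduced scheme of finite type over a field $K$ is smooth on a dense open after base change to $K^{1/p^N}$ for $N$ large; this is exactly a radicial base change on $S$. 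Then the smooth locus $V\subset X$ of the morphism $X\to S$ is open and, by construction, dense in the generic fiber, hence dense after shrinking $S$ so that $V\to S$ is surjective.

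The control on degrees is the point that requires the quantitative machinery from this paper. The complement $X\setminus V$ is cut out by vanishing of appropriate minors of the Jacobian matrix of the equations defining $X$ inside $\Pp^n_S$; its degree (in the projective space target) is bounded in terms of the complexity $c(X)$ and the embedding dimension via Theorem \ref{singloc} applied fiberwise (or rather, via the uniformity of Proposition \ref{uniformity}, since $X\setminus V$ lives in an algebraic family over $S$). Similarly, the degree of the finite radicial cover $S'\to S$ needed — i.e.\ the exponent $p^N$ — is controlled because geometric reducedness of the generic fiber is governed by the degrees of the equations, which are bounded in terms of $c(X)$ by Theorem \ref{singloc}; and the degree of the locus in $S$ over which $V\to S$ fails to be smooth or surjective is again bounded by Proposition \ref{uniformity} applied to the family $X\setminus V\to S$. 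The phrase "in an effective way" in the statement is precisely the assertion that all these bounds depend only on $c_u(A)$ and the combinatorial data $(f,g,u,v,w)$ (through $c(X)$, $c(Y)$, $c(S)$ and the embedding dimensions), which follows by assembling these fiberwise degree bounds.

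The main obstacle I expect is handling the interaction between the radicial base change and the complexity bounds coherently: one must check that after the base change $S'\to S$ and the reduction step, the complexity of (the new) $X$ over (the new) $S$ is still bounded in terms of the original data, so that the induction in Step 5 can proceed. This is where one uses that a finite radicial morphism induces an equivalence on étale sites (hence does not change Betti numbers, so preserves complexity up to the degree of the morphism), combined with the continuity estimates of Theorem \ref{pushpull} for the closed immersion $X_{\mathrm{red}}\hookrightarrow X\times_S S'$. The bookkeeping — making sure the exponent $N$ and all intermediate degrees stay uniformly bounded, and that "reducing $X$" does not destroy the open immersion structure $f\colon X\to Y$ set up in Step 3 — is the delicate part, but no new geometric input beyond Theorem \ref{singloc}, Proposition \ref{uniformity} and standard facts about radicial morphisms is needed.
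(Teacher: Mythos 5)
Your overall strategy matches the paper's: a base change along a power of the relative Frobenius (a finite surjective radicial morphism, hence harmless for \'etale cohomology) followed by passing to reductions, so that the generic fibre becomes geometrically reduced and the Jacobian criterion can detect a dense smooth locus, with all degrees controlled by the degrees of the defining equations. There is, however, one genuine gap in your degree bound for $X\setminus V$. You take $V$ to be the smooth locus of $X\to S$ and assert that its complement ``is cut out by vanishing of appropriate minors of the Jacobian matrix.'' That is only true once $X$ is flat over $S$: in general the smooth locus is the intersection of the flat locus with the fibrewise-regular locus, and the non-flat locus is a closed subset that is \emph{not} defined by Jacobian minors and whose degree you have not bounded. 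Your argument that flatness is automatic over the generic point of $S$ only shows that $V$ meets the generic fibre densely; it does not show that $X\setminus V$, as a closed subset of $X$, has bounded degree. The paper closes exactly this hole by first invoking an \emph{effective} generic flatness theorem (\cite[Th.\,2.45]{vasco}) to shrink $S$ by a closed subset of bounded degree so that $X$ becomes flat over $S$; only then is the non-smooth locus the Jacobian vanishing locus, of degree bounded by the degrees of the equations. You should insert this step (it is compatible with the ``up to shrinking $S$'' allowance in the statement).

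A secondary, less serious issue: for the quantitative control you lean on Theorem~\ref{singloc} and Proposition~\ref{uniformity}, but neither is really the right tool here. Theorem~\ref{singloc} bounds the non-lissity locus of a complex of sheaves, and Proposition~\ref{uniformity} bounds \emph{complexities} in families, not degrees of subvarieties. The degree bounds you need come more directly from the fact that the Jacobian minors are polynomials of degree bounded by the degrees of the equations defining $X$ in $\Aa^r_S$ (as the paper argues), or, if one insists on a family argument, from a Chow-variety/constructibility statement in the style of Lemma~\ref{chow}. Your final paragraph on the bookkeeping (that the radicial base change preserves \'etale cohomology and that the reduction step must be tracked through the induction of Step~5) is correct and consistent with the paper.
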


By Proposition~\ref{reductiontosheaf}, the complexity of the
cohomology sheaves of $A$ are bounded in terms of $c(A)$; hence, up to
replacing $A$ by each of its cohomology sheaves in turn, we reduce to
the case where~$A$ is a sheaf. If the support of~$A$ is not dense
in~$X$, then the support has relative dimension~$\leq n-1$, and we are
done by induction. If the support of~$A$ is dense in~$X$, then by
Theorem~\ref{singloc}, up to shrinking $V$, we may assume that the
restriction of~$A$ to~$V$ is a lisse sheaf. Applying induction to the
restriction of~$A$ to $X-V$ and excision, we can assume that $A$ is a
lisse sheaf supported on $V$.

Let~$j$ be the open immersion $V\to X$. By Step~4, the result holds
for~$j$, and hence up to shrinking~$S$, we may assume that $j_*j^*A$
is of formation compatible with any base change. Choose a cone~$C$ of
the canonical adjunction morphism $A\to j_*j^*A$. By definition, we
have a distinguished triangle
\[
  A\longrightarrow j_*j^*A\longrightarrow C
\]
and the cohomology of $C$ is supported on $Z=X- V$. Since the first
two complexes are of formation compatible with any base change, the
same is true for $C$. By Theorem~\ref{pushpull} and
Proposition~\ref{easylin}, the complexity of the restriction of~$C$
to~$Z$ is bounded in terms of the complexity of~$A$. Since the
relative dimension of~$Z$ is~$<n$, the induction hypothesis applies to
the closed immersion $Z\to X$ and to the restriction of~$C$
to~$Z$. Hence, up to shrinking~$S$, we can assume that $f_*C$ is of
formation compatible with any base change.

We then apply~$f_*$ to the previous distinguished triangle and
obtain
\[
  f_*A\to (fj)_*j^*A\to f_*C
\]
By Step~4 again, the result holds for the open immersion
$fj \colon V\to Y$, so that after shrinking~$S$, the objet
$(fj)_*j^*A$ is of formation compatible with any base change. Since we
have seen that this also holds for $f_*C$, we deduce that $f_*A$ is of
formation compatible with any base change, as desired.
\end{proof}

We now prove the sublemmas above.

\begin{proof}[Proof of Lemma~\ref{lm-sub}]
  The problem is local on $Y$, so by Lemma~\ref{lem-affine-open-cond},
  we can also assume that $Y$ is affine over $S$. We hence assume that
  $Y\subset \Aa^r_S$, where the choice of coordinates is induced
  by~$v$.

  For $1\leq i\leq r$, consider the $i$-th coordinate projection
  $p_i\colon Y\to \Aa^1_S$. We view $X$ and~$Y$ as $\Aa^1_S$-schemes
  using $p_i$ as structure morphism. The generic relative dimensions
  of~$X$ and~$Y$ are then~$\leq n-1$, and hence we can apply the
  induction hypothesis to this situation. We find a dense open subset
  $U_i\subset \Aa^1_S$ such that the complement of $U_i$ in $\Aa^1_S$ is
  of degree bounded in terms of $c(A)$, and such that $f_*A$ is of
  formation compatible with any base change $S'\to U_i\subset \Aa^1_S$.
Define
  $$
  Y'=\bigcup_{1\leq i\leq r}p_r^{-1}(U_i).
  $$
Since the complexity of the $U_i$ is bounded in terms of $c(A)$, the
  same holds for~$Y'$. From the definition of $U_i$, the result holds
  for the restriction of~$f$ to~$X'=f^{-1}(Y')$. Since the morphism
\[
  Y-Y'= (\Aa^1_S- U_1)\times_S\cdots\times_S (\Aa^1_S-
  U_r)\longrightarrow S
\]
is generically finite, there exists a dense open $S'\subset S$ over
which this morphism is finite. The degree of $S-S'$ is
bounded in terms of the degrees of $\Aa^1_S-U_i$,
for \hbox{$i=1, \ldots, r$}, and hence is itself bounded in terms of $c(A)$,
which ends the proof of the sublemma.
\end{proof}

\begin{proof}[Proof of Lemma~\ref{lm-subsub}]
  Recall that $X$ is assumed to be affine, with a fixed closed embedding
  into $\Aa^r_S$ for some integer $r\geq 0$. By the effective version of
  generic flatness, see, e.g., \cite[Th.\,2.45]{vasco}, we find a
  dense open subset~$S'$ of~$S$ over which~$X$ is flat and such that the complement of $S'$ in $S$ is of bounded degree. We may therefore
  assume that $X$ is flat over $S$. To obtain smoothness, we use the
  Jacobian criterion. We want to define $U$ to be the complement in $X$
  of the vanishing locus of the Jacobian ideal, which is indeed of
  bounded degree. However, for this to define a dense open subset of
  $X$, we need to perform first a base change along a finite surjective
  radicial morphism $S'\to S$ (which does not change étale cohomology),
  then replace $X$ and $Y$ by their reductions. The degree of the finite
  surjective radicial morphism is determined by the degrees (according
  to the affine embedding of $S$) of the coefficients of the polynomials
  defining~$X$ in $\Aa^r_S$, and hence is bounded. Once the result is
  known for $X_{S'}$ over~$S'$, we conclude the proof by considering the
  image~$U$ of the dense open subset of $S'$ by the morphism $S'\to S$,
  since the degree of the complement of~$U$ is still bounded.
\end{proof}

\subsection{Tannakian operations}\label{sec-tannaka}

Let $(X,u)$ be a connected quasi-projective algebraic variety
over~$k$, and let~$\mathcal{F}$ be a lisse $\ell$-adic sheaf
on~$X$. We shall view~$\mathcal{F}$ as a continuous representation~$\pi$ of the étale fundamental group of~$X$ on a finite-dimensional~$\Ql$\nobreakdash-vector
space. The \emph{monodromy group} of~$\mathcal{F}$ is then defined as the Zariski closure of its image, which is an algebraic group $G$ over $\Ql$ equipped with a distinguished faithful representation. Given a further algebraic representation~$\rho\colon G\to \GL_N(\Ql)$,
we denote by~$\rho(\mathcal{F})$ the lisse $\ell$-adic sheaf
corresponding to the representation $\rho\circ \pi$.

The following lemma is very useful in deriving properties of $\rho(\mathcal{F})$ from those of~$\mathcal{F}$. By a reductive group over a field of characteristic zero, we mean a group $G$ all whose finite-dimensional representations are completely reducible (that is, $G$ is not assumed to be connected).

\begin{lem}\label{lm-tannaka}
  Let~$G\subset \GL_N$ be a reductive group over an algebraically
  closed field of characteristic~zero and denote by~$\mathrm{Std}$ its
  tautological $N$-dimensional representation. For any algebraic irreducible
  representation~$\rho\colon G\to \GL(V)$, there exist non-negative
  integers~$(a,b)$ such that $\rho$ is a subrepresentation
  of~$\mathrm{Std}^{\otimes a}\otimes \dual(\mathrm{Std})^{\otimes
    b}$. If the determinant of $\mathrm{Std}$ has finite order, there
  exists such a pair with~$b=0$.
\end{lem}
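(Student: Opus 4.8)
The plan is to use the standard Tannakian/highest-weight argument. First I would recall that since $G$ is reductive over an algebraically closed field of characteristic zero, its category of finite-dimensional representations is semisimple, and the faithful representation $\mathrm{Std}$ is a tensor generator of this category: concretely, every irreducible representation of $G$ occurs as a subquotient — hence, by semisimplicity, as a subrepresentation — of a direct sum of representations of the form $\mathrm{Std}^{\otimes a}\otimes\dual(\mathrm{Std})^{\otimes b}$ for various $a,b\geq 0$. This is a classical fact (see e.g. Deligne--Milne, or Deligne's ``Catégories tannakiennes''): the faithfulness of $\mathrm{Std}$ forces the Tannakian subcategory it generates under $\otimes$, $\oplus$, duals, and subquotients to be all of $\mathrm{Rep}(G)$. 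Since $\rho$ is irreducible, it embeds into one such summand $\mathrm{Std}^{\otimes a}\otimes\dual(\mathrm{Std})^{\otimes b}$ for a single pair $(a,b)$.

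Second, for the refinement when $\det(\mathrm{Std})$ has finite order, say of order $m$, the idea is to absorb the $\dual(\mathrm{Std})^{\otimes b}$ factors into powers of $\mathrm{Std}$. Using the isomorphism $\dual(\mathrm{Std})\simeq \Lambda^{N-1}\mathrm{Std}\otimes\dual(\det\mathrm{Std})$, and the hypothesis that $\det(\mathrm{Std})^{\otimes m}$ is trivial, one can replace each copy of $\dual(\mathrm{Std})$ by $\Lambda^{N-1}\mathrm{Std}\otimes(\det\mathrm{Std})^{\otimes(m-1)}$, which is a subrepresentation of $\mathrm{Std}^{\otimes(N-1+N(m-1))}$. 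Carrying this out $b$ times shows $\rho\hookrightarrow\mathrm{Std}^{\otimes a}\otimes\dual(\mathrm{Std})^{\otimes b}$ is itself a subrepresentation of some $\mathrm{Std}^{\otimes a'}$; passing to an irreducible constituent preserves the subrepresentation property by semisimplicity. So one gets a pair $(a',0)$, as claimed.

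I expect the main subtlety to be purely bookkeeping: making sure that ``subquotient'' can be upgraded to ``subrepresentation'' (immediate from complete reducibility), and that when $\det(\mathrm{Std})$ has infinite order one genuinely cannot avoid the $\dual(\mathrm{Std})$ factors — but that case is not being claimed. There is no serious obstacle here; the statement is essentially the assertion that $\mathrm{Std}$ tensor-generates $\mathrm{Rep}(G)$, together with the elementary determinant manipulation. One mild point worth spelling out is that the trivial representation appears in $\mathrm{Std}\otimes\dual(\mathrm{Std})$ (via the evaluation pairing), so that increasing $a$ and $b$ simultaneously is harmless; this is what lets one take a \emph{single} pair $(a,b)$ rather than needing a direct sum over several pairs, once one invokes irreducibility of $\rho$.

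Concretely, the write-up would proceed: (1) invoke semisimplicity of $\mathrm{Rep}(G)$; (2) state and justify that $\{\mathrm{Std}^{\otimes a}\otimes\dual(\mathrm{Std})^{\otimes b}\}_{a,b\geq 0}$ generates $\mathrm{Rep}(G)$ as a tensor category with subquotients — citing the Tannakian reconstruction, since $\mathrm{Std}$ is faithful the full subcategory it generates has the same fundamental group $G$ and hence equals $\mathrm{Rep}(G)$; (3) deduce that the irreducible $\rho$ embeds in one $\mathrm{Std}^{\otimes a}\otimes\dual(\mathrm{Std})^{\otimes b}$; (4) for the last sentence, use $\dual(\mathrm{Std})\simeq\Lambda^{N-1}\mathrm{Std}\otimes\det(\mathrm{Std})^{\vee}$ together with $\det(\mathrm{Std})^{\otimes m}\simeq\mathbf 1$ to rewrite everything in terms of nonnegative tensor powers of $\mathrm{Std}$, then take an irreducible constituent containing $\rho$.
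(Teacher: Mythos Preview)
Your proposal is correct. The argument is essentially equivalent to the paper's, with cosmetic differences in presentation.

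For the first assertion, the paper simply cites the result for compact Lie groups and transfers it via the equivalence between reductive groups over an algebraically closed field of characteristic zero and compact Lie groups, whereas you invoke the Tannakian formalism directly. These are two standard packagings of the same classical fact that a faithful representation tensor-generates $\mathrm{Rep}(G)$.

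For the second assertion, the paper and you both reduce to showing that $\dual(\mathrm{Std})$ embeds in some $\mathrm{Std}^{\otimes a}$. The paper does this via a multiplicity computation: $\langle\dual(\mathrm{Std}),\mathrm{Std}^{\otimes a}\rangle=\langle\mathbf{1},\mathrm{Std}^{\otimes(a+1)}\rangle$, and then observes $\mathbf{1}=\det^{\otimes m}\subset\mathrm{Std}^{\otimes mN}$ since $\det=\Lambda^N\mathrm{Std}\subset\mathrm{Std}^{\otimes N}$. You instead use the explicit isomorphism $\dual(\mathrm{Std})\simeq\Lambda^{N-1}\mathrm{Std}\otimes\det^{-1}$ and the same observation about $\det$. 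Both arguments produce the same embedding (into $\mathrm{Std}^{\otimes(mN-1)}$), and the underlying idea---that $\det$ sits inside $\mathrm{Std}^{\otimes N}$ and has finite order---is identical.
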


\begin{proof}
  For the first part of the statement, see,
  e.g.,~\cite[Th.\,III.4.4]{compact_representations} in the case of
  compact Lie groups, and use the equivalence of categories between
  reductive groups and compact Lie groups.
  \par
  When the determinant has finite order, it suffices to prove that the
  contragredient of the tautological representation is a
  subrepresentation of a tensor power of the standard representation.
  But, using brackets to denote multiplicity, we have
  $$
  \langle\dual(\mathrm{Std}),\mathrm{Std}^{\otimes a}\rangle=\langle
  \mathbf{1},\mathrm{Std}^{\otimes (a+1)}\rangle
  $$
 for all $a\geq 0$, and if $\det(\mathrm{Std})^{m}=\mathbf{1}$ for some~$m\geq 1$, we have
  $\langle \mathbf{1},\mathrm{Std}^{\otimes mN}\rangle\geq 1$. Indeed, as the determinant is the highest exterior power, there is an inclusion $\det \subset \mathrm{Std}^{\otimes N}$, and this induces $\mathbf{1}=\det^{\otimes m} \subset \mathrm{Std}^{\otimes mN}$.
\end{proof}

\begin{prop}\label{pr-tannaka}
  With notation as above, assume that the group~$G$ is reductive with
  finite center. There exists an integer~$a\geq 0$, depending only
  on~$\rho$, such that
  $$
  c_u(\rho(\mathcal{F}))\ll c_u(\mathcal{F})^a, 
  $$
  where the implied constant depends only on the embedding dimension
  of~$u$.
\end{prop}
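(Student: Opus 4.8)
The plan is to reduce everything to iterating the tensor‑product estimate \eqref{eq-6tensor} of Theorem~\ref{pushpull}, after using Lemma~\ref{lm-tannaka} to realise $\rho(\mathcal F)$ inside a tensor power of $\mathcal F$ with \emph{no dual factors}; the finite‑center hypothesis is exactly what makes the latter possible.

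\textbf{Reduction to $\rho$ irreducible.} Since $G$ is reductive, $\rho$ is a direct sum of irreducible algebraic representations $\rho_1,\dots,\rho_t$, so $\rho(\mathcal F)=\bigoplus_k\rho_k(\mathcal F)$ as lisse sheaves and, by Proposition~\ref{easylin}\,\eqref{easylin:item2}, $c_u(\rho(\mathcal F))=\sum_k c_u(\rho_k(\mathcal F))$. As $\mathcal F\neq 0$ we have $c_u(\mathcal F)\geq 1$ by Proposition~\ref{rm-zero}, so bounds $c_u(\rho_k(\mathcal F))\ll c_u(\mathcal F)^{a_k}$ combine to $c_u(\rho(\mathcal F))\ll c_u(\mathcal F)^{\max_k a_k}$. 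It therefore suffices to treat one irreducible $\rho$, with the final $a$ taken to be $\max_k a_k$, which depends only on $\rho$.

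\textbf{Finite center forces $b=0$.} I claim $\det\circ\mathrm{Std}\colon G\to\Gg_m$ has finite order. It is trivial on the semisimple derived group $[G^\circ,G^\circ]$, and its restriction to the central torus $T=Z(G^\circ)^\circ$ is invariant under the conjugation action of the finite group $G/G^\circ$ (any character of $T$ extending to a homomorphism $G\to\Gg_m$ has this property, since $\Gg_m$ is abelian). Because $Z(G)$ is finite, the locus in $T$ fixed by $G/G^\circ$ is finite (such points lie in $Z(G)$), so $T$ has no nonzero $G/G^\circ$-invariant character; hence $\det\circ\mathrm{Std}$ is trivial on $T$, and as $G^\circ=T\cdot[G^\circ,G^\circ]$ it is trivial on $G^\circ$ and thus factors through the finite group $G/G^\circ$. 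By Lemma~\ref{lm-tannaka} (case $b=0$), the irreducible $\rho$ is a subrepresentation of $\mathrm{Std}^{\otimes a}$ for some integer $a\geq 0$ depending only on $\rho$. If $\det(\mathrm{Std})^m=\mathbf 1$ and $N=\dim\mathrm{Std}$, then $\mathbf 1=\det^{\otimes m}\subseteq\mathrm{Std}^{\otimes mN}$, so tensoring gives $\rho\subseteq\mathrm{Std}^{\otimes (a+mN)}$ as well; replacing $a$ by $a+mN$ we may assume $a\geq 1$.

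\textbf{Conclusion.} Reductivity of $G$ makes the inclusion $\rho\hookrightarrow\mathrm{Std}^{\otimes a}$ split, so composing with the monodromy representation exhibits $\rho(\mathcal F)$ as a direct summand of $\mathcal F^{\otimes a}$, where the $a$-fold tensor power of the lisse sheaf $\mathcal F$ agrees with its derived tensor power. Hence $c_u(\rho(\mathcal F))\leq c_u(\mathcal F^{\otimes a})$ by Proposition~\ref{easylin}\,\eqref{easylin:item2}, and applying \eqref{eq-6tensor} of Theorem~\ref{pushpull} repeatedly ($a-1$ times) yields $c_u(\mathcal F^{\otimes a})\ll c_u(\mathcal F)^{a}$ with an implied constant depending only on $a$ and the embedding dimension of $u$. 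This is the required estimate. The only step that is not pure bookkeeping with the continuity estimates is the second one: eliminating the $\dual(\mathrm{Std})$ factors (and with them the uncontrolled quantity $c(u)=c_u(\Ql)$) is precisely where the hypothesis that $G$ is reductive with finite center is used.
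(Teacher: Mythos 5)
Your proof is correct and follows essentially the same route as the paper: use the finite-center hypothesis to show that the component group fixes no non-trivial (co)character of the central torus, deduce that $\det(\mathrm{Std})$ has finite order so that Lemma~\ref{lm-tannaka} applies with $b=0$, split $\rho(\mathcal F)$ off as a direct summand of $\mathcal F^{\otimes a}$ by reductivity, and iterate \eqref{eq-6tensor}. Your explicit reduction to irreducible $\rho$ and the adjustment ensuring $a\geq 1$ (which handles the trivial representation, where $c_u(\Ql)=c(u)$ is not $O(1)$) are small but worthwhile refinements that the paper leaves implicit.
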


\begin{proof}
Let~$\mathrm{Std}$ denote the standard faithful representation of~$G$ corresponding to the sheaf~$\mathcal{F}$. The identity component $G^0$ of $G$ is reductive, so the restriction of the one-dimensional character $\det(\mathrm{Std})$ of $G$ to $G^0$ is non-trivial if and only if its restriction to the central torus of $G^0$ is non-trivial. The central torus of $G^0$ is a torus with an action of the (finite) component group of $G$ by conjugation which does not fix any non-trivial cocharacter. Hence, it does not fix any non-trivial character, so the restriction of $\det(\mathrm{Std})$ to $G^0$ is trivial, and thus $\det(\mathrm{Std})$ has finite order. By Lemma~\ref{lm-tannaka}, there exists a non-negative integer~$a$
  such that $\rho$ is a direct summand of $\mathrm{Std}^{\otimes a}$. Thus, the estimates
  \[
  c_{u}(\rho(\mathcal{F}))\leq c_{u}( \mathcal{F}^{\otimes a})
  \ll c_{u}(\mathcal{F})^{a}
  \]
  hold by Proposition~\ref{easylin}\,\eqref{easylin:item2} and parts \eqref{eq-6dual} 
  and~\eqref{eq-6tensor} of Theorem \ref{pushpull}. 
\end{proof}

\begin{remark}
  If $X$ is a curve, the estimate can be strengthened considerably
  to
  $$
  c_u(\rho(\mathcal{F}))\ll \dim(\rho)c_u(\mathcal{F})
  $$
  by using the ``local'' formula of Theorem~\ref{curveconstruction}
  below (see~\cite[Ch.\,3]{gksm} for arguments of this kind). It would
  be very interesting to know whether such a strong bound holds in
  higher dimension. 
\end{remark}

In some applications, it is natural to try to improve on this bound by
estimating~$a$ effectively, or in other words, to make
Lemma~\ref{lm-tannaka} effective. We record here one such estimate,
since this might be of interest for other applications.

Let~$N\geq 1$ be an integer and let~$G\subset \GL_{N}$ be a reductive
algebraic group over an algebraically closed field of
characteristic~zero. As above, we denote by~$\mathrm{Std}$ the tautological faithful representation of~$G$ in~$\GL_N$.

\begin{defi}\label{def-invariant}
  For any irreducible algebraic representation $\rho$ of~$G$, we
  define
  \[
    w(\rho) = \min \left\{ a+ b \mid \rho\text{ is a subrepresentation
        of } \mathrm{Std}^{\otimes a} \otimes
      \dual(\mathrm{Std})^{\otimes b}\right\}.
    \]
\end{defi}

Lemma~\ref{lm-tannaka} means that $w(\rho)$ is finite for all
irreducible algebraic representations~$\rho$ of~$G$.

\begin{prop}\label{reptheory}
  Let~$\Lambda$ be the weight lattice of the identity component~$G_0$
  of~$G$. Fix a norm~$\|\cdot\|$ on~$\Lambda$. There
  exist real numbers~$\alpha$, $\beta\geq 0$ such that, for any
  irreducible algebraic representation $\rho$ of~$G$, the inequality
  $$
  w(\rho)\leq \alpha \max_i\|\lambda_i\|+\beta
  $$
  holds, where the $\lambda_i$ are the highest weights of the
  irreducible components of the restriction of~$\rho$ to~$G_0$.
\end{prop}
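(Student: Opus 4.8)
The goal is to bound the quantity $w(\rho)$ linearly in the size of the highest weights appearing in $\rho|_{G_0}$. The plan is to reduce everything to the connected group $G_0$, use standard highest-weight theory there, and then handle the finite component group $G/G_0$ at the end. First I would fix a maximal torus $T \subset G_0$ and a Borel, so that the dominant weights of $G_0$ parametrize irreducible $G_0$-representations. The representation $\mathrm{Std}|_{G_0}$ has some finite set of weights $\mu_1,\dots,\mu_k$ (with multiplicity), and $\dual(\mathrm{Std})|_{G_0}$ has weights $-\mu_1,\dots,-\mu_k$. A tensor product $\mathrm{Std}^{\otimes a}\otimes\dual(\mathrm{Std})^{\otimes b}|_{G_0}$ then has, among its weights, all sums of $a$ of the $\mu_i$'s and $b$ of the $-\mu_j$'s; in particular its set of weights contains the intersection of the lattice $\Lambda$ with the dilated polytope $(a+b)\cdot P$, where $P$ is the convex hull of $\{\mu_i\}\cup\{-\mu_j\}$ (which contains $0$ in its interior, since $\mathrm{Std}$ is faithful so the $\mu_i$ span $\Lambda\otimes\Qq$ — here one may need the observation that if the $\mu_i$ did not span, $G_0$ would have a central torus acting trivially, contradicting faithfulness of $\mathrm{Std}$, or more carefully one passes to the sublattice they generate, which has finite index).

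The key representation-theoretic input is the classical fact that an irreducible $G_0$-representation $V_\lambda$ with highest weight $\lambda$ is a constituent (equivalently, using complete reducibility in characteristic zero, a subrepresentation) of any representation $W$ provided $\lambda$ occurs as a weight of $W$ \emph{and} one has appropriate control — more precisely, the cleanest route is: $V_\lambda$ is a subrepresentation of $V_{\mu_{i_1}}\otimes\cdots\otimes V_{\mu_{i_N}}$ whenever $\lambda = \mu_{i_1}+\cdots+\mu_{i_N}$ is a sum of highest weights of representations that themselves appear in tensor powers of $\mathrm{Std}$ or its dual, by the "top component" of the tensor product (Cartan component). So the plan is: write $\lambda$ as a nonnegative combination of the weights $\mu_i$ and $-\mu_j$ of $\mathrm{Std}$ and $\dual(\mathrm{Std})$ using at most $O(\|\lambda\|)+O(1)$ terms — this is possible precisely because $0$ lies in the interior of the polytope $P$, so $\lambda/(c\|\lambda\|+c')\in P$ for suitable absolute $c,c'$ and one can realize it as an average over lattice points — and then each $\mu_i$ is the highest weight of a summand of $\mathrm{Std}$ and each $-\mu_j$ of a summand of $\dual(\mathrm{Std})$, so $V_\lambda \hookrightarrow \mathrm{Std}^{\otimes a}\otimes\dual(\mathrm{Std})^{\otimes b}$ with $a+b \ll \|\lambda\| + 1$. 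This gives the bound for $G_0$-representations.

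Next I would descend to $G$. An irreducible $G$-representation $\rho$ restricted to $G_0$ is a sum of $G$-conjugates of some $V_\lambda$, all with highest weights of norm $\le \max_i\|\lambda_i\|$; by Frobenius reciprocity (or Clifford theory), $\rho$ embeds in $\mathrm{Ind}_{G_0}^G V_\lambda$, and since $[G:G_0]$ is a fixed finite number, it suffices to realize $V_\lambda$ inside a tensor construction of $\mathrm{Std}$ and then induce, absorbing the bounded factor $[G:G_0]$ into the constants $\alpha,\beta$. Concretely: if $V_\lambda \hookrightarrow \mathrm{Std}^{\otimes a}\otimes \dual(\mathrm{Std})^{\otimes b}$ as $G_0$-representations, then $\rho \hookrightarrow \mathrm{Ind}_{G_0}^G\left(\mathrm{Std}^{\otimes a}\otimes\dual(\mathrm{Std})^{\otimes b}|_{G_0}\right)$, and the latter is a subrepresentation of $\bigl(\mathrm{Std}^{\otimes a}\otimes\dual(\mathrm{Std})^{\otimes b}\bigr)^{\oplus[G:G_0]}$ twisted by the permutation representation of $G/G_0$ — and that permutation representation, being a summand of the regular representation of the finite quotient, embeds in some fixed tensor power of $\mathrm{Std}\oplus\dual(\mathrm{Std})$ by faithfulness applied to $G/G_0$-isotypic pieces. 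Taking $\alpha$ slightly larger and $\beta$ large enough to absorb this fixed overhead completes the argument.

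\textbf{Main obstacle.} The genuinely delicate point is the passage from "$\lambda$ lies in a dilate of the weight polytope of $\mathrm{Std}\oplus\dual(\mathrm{Std})$" to "$\lambda$ is literally a sum of $O(\|\lambda\|+1)$ of the weights $\mu_i$ and $-\mu_j$" — i.e., a Carathéodory-type statement over the lattice with a linear bound on the number of summands, uniform in $\lambda$. One must be careful that $0$ is in the \emph{interior} of the polytope (using faithfulness, after possibly passing to a finite-index sublattice and noting the index is bounded in terms of $G$ alone), and that lattice points near $0$ suffice to write any dominant $\lambda$ as an effective sum; a clean way is to first write a large integer multiple $m\lambda$ exactly as a sum of the generators (possible since they positively span and $m\lambda$ is far in the interior direction), then divide — but keeping the count linear rather than merely polynomial in $\|\lambda\|$ requires choosing the "return to dominant chamber" steps economically. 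Everything else is bookkeeping with fixed constants depending only on $G\subset\GL_N$.
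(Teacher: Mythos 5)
Your reduction to the connected case and the use of Frobenius reciprocity for the component group are in the same spirit as the paper's argument, but the core step for $G_0$ has a genuine gap. You write $\lambda$ as a sum of weights $\mu_{i_1}+\cdots+\mu_{i_N}$ of $\mathrm{Std}|_{G_0}$ and $\dual(\mathrm{Std})|_{G_0}$ and then assert that ``each $\mu_i$ is the highest weight of a summand of $\mathrm{Std}$,'' so that the Cartan component yields $V_\lambda\hookrightarrow\mathrm{Std}^{\otimes a}\otimes\dual(\mathrm{Std})^{\otimes b}$. This is false: the weights of $\mathrm{Std}|_{G_0}$ are in general not highest weights of summands (for $G_0=\SL_2\subset\GL_2$, the weight $-1$ of $\mathrm{Std}$ is not the highest weight of anything). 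Nor can you fall back on ``$\lambda$ is a weight of $W$, hence $V_\lambda$ is a constituent of $W$'': that implication also fails (the zero weight occurs in the adjoint representation of $\SL_2$, yet the trivial representation is not a constituent). So the passage from ``$\lambda$ is a lattice point of a dilate of the weight polytope of $\mathrm{Std}\oplus\dual(\mathrm{Std})$'' to ``$V_\lambda$ embeds in the corresponding tensor power'' is unjustified; this, rather than the Carath\'eodory-type counting issue you flag as the main obstacle, is where the argument breaks.

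The paper avoids this entirely: it selects \emph{finitely many dominant} weights --- generators $\lambda_i$ of the edges of the Weyl chamber together with a finite fundamental set $A$ of small dominant weights --- and for each corresponding irreducible invokes the qualitative Lemma~\ref{lm-tannaka} to fix once and for all a ``spot'' $(a_i,b_i)$, i.e.\ an embedding into $\mathrm{Std}^{\otimes a_i}\otimes\dual(\mathrm{Std})^{\otimes b_i}$. An arbitrary dominant $\lambda$ is then written as $\mu+\sum_i n_i\lambda_i$ with $\mu\in A$ and $n_i$ bounded by linear functions of $\lambda$, and the Cartan component of $\pi_\mu\otimes\bigotimes_i\pi_{\lambda_i}^{\otimes n_i}$ --- a tensor product of irreducibles with genuinely dominant highest weights --- contains $V_\lambda$, giving a spot whose size is linear in $\|\lambda\|$. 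Your argument can be repaired along the same lines (for instance using the fundamental representations plus a finite set of corrections, each assigned a spot by Lemma~\ref{lm-tannaka}), but as written the decisive embedding is not established.
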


\begin{proof}
  In this proof, we will say that a pair $(a,b)$ is a \emph{spot} for
  a representation~$\rho$ of~$G$ if~$\rho$ appears as a
  subrepresentation of $\mathrm{Std}^{\otimes a}\otimes
  \dual(\mathrm{Std})^{\otimes b}$.
  \par
  Let~$n$ be the semi-simple rank of~$G$. For a positive dominant
  weight~$\lambda$ in~$\Lambda$, we denote by~$\pi_{\lambda}$ the
  irreducible representation of~$G^0$ with highest weight~$\lambda$.
  \par
  We first assume that~$G$ is a connected semisimple group. In this
  case, the representation~$\rho$ is associated to a single
  highest weight~$\lambda$. Moreover, we can choose a basis $(e_i)$ of
  the root lattice tensored with~$\Rr$ such that a Weyl chamber can be
  identified with the cone of vectors with coordinates~$\geq 0$.
  \par
  For integers~$i$ with~$1\leq i\leq n$, we denote by $\lambda_i$ the
  weight in the line spanned by~$e_i$ with minimal $i$-th coordinate,
  and we write~$\lambda_i=x_ie_i$ for some integer~$x_i\geq 1$. Further,
  for each~$i$, we fix spots~$(a_i,b_i)$ of~$\pi_{\lambda_i}$ (which
  exist by Lemma~\ref{lm-tannaka}).
  
  We denote by $A$ the finite set of positive dominant weights $\mu$
  such that each coordinate of~$\mu$ is~$\leq x_i$ for all~$i$. For
  each~$\mu\in A$, we fix spots~$(a_{\mu},b_{\mu})$ of~$\pi_{\mu}$.
 
   By subtracting from a weight $\lambda$ of $G$ suitable multiples
  of~$\lambda_1$, \ldots, $\lambda_n$, until all coordinates
  are~$\leq x_i$, we see that we may write
  $$
  \lambda=\mu+\sum_{i=1}^r n_i\lambda_i
  $$
  where~$\mu\in A$ and $0\leq n_i\leq \ell_i(\lambda)$ for some
  linear maps~$\ell_i\colon \Lambda\to\Zz$.
 
  From highest weight theory, the representation~$\rho$ with highest
  weight~$\lambda$ is a summand of
  $$
  \pi_{\mu}\otimes \bigotimes_{1\leq i\leq r}\pi_{\lambda_i}^{\otimes n_i}. 
  $$
 Hence, by definition, the pair~$(a,b)$ given by
  $$
  a=a_{\mu}+\sum_{i=1}^r n_ia_i,\quad b=b_{\mu}+\sum_{i=1}^r n_ib_i,
  $$
  is a spot of~$\rho$.  We have then
  $$
  w(\rho)\leq a+b\leq \alpha+\beta\|\lambda|
  $$
  where
  $$
  \alpha=\max_{\mu\in A}(a_{\mu}+b_{\mu}),\quad \beta=\max_{1\leq
    i\leq r} \|\ell_i\| \times \sum_{i=1}^r (a_i+b_i)
  $$
  (with the usual norm for linear maps~$\Lambda\to\Rr$ with respect to
  the given norm on~$\Lambda$).
  
  Assume now that~$G$ is a connected reductive group.
  We construct the  weights $\lambda_i$ and~$A$  of the derived group
  of~$G$ as before, then lift them (keeping the notation) to the whole
  group. Let further~$(\gamma_j)_{1\leq j\leq k}$ be a basis of the
  subspace of the weights whose associated representations are trivial
  on the derived group.

  For any highest weight $\lambda$ of~$G=G^0$, the
  representation~$\pi_{\lambda}$ can then be expressed as a summand of
  a representation of the form
  $$
  \pi_{\mu}\otimes \bigotimes_{1\leq i\leq r}\pi_{\lambda_i}^{\otimes
    n_i}\otimes \bigotimes_{1\leq j\leq k}\pi_{\gamma_j}^{\otimes
    m_j}
  $$
  where, as before, the~$n_i$ and~$m_j$ are bounded by linear
  functions of~$\lambda$. This leads to a bound of the desired type as
  in the semisimple case.

  Finally, we consider the general case. Let~$\mathrm{Std}_0$ be the
  tautological representation of~$G^0$. Note that for any non-negative
  integers~$(a,b)$, the representation
  \begin{equation}\label{eq-wab}
    \mathrm{Ind}_{G^0}^G(\mathrm{Std}_0^{\otimes
      a}\otimes\dual(\mathrm{Std}_0)^{\otimes b})
  \end{equation}
  contains
  $\mathrm{Std}^{\otimes a}\otimes \dual(\mathrm{Std})^{\otimes b}$ as
  a subrepresentation. Moreover, by Frobenius reciprocity, the
  multiplicity of~$\rho$ in~(\ref{eq-wab}) is the dimension of the
  space of $G^0$-morphisms from the restriction~$\rho_0$ of~$\rho$
  to~$G^0$ to
  $\mathrm{Std}_0^{\otimes a}\otimes\dual(\mathrm{Std}_0)^{\otimes
    b}$. For an irreducible component~$\rho'$ of~$\rho_0$, this space
  is non-zero for some~$(a,b)$ with $a+b=w(\rho')$. The bound then
  follows from the case of connected groups established in the first part. 
\end{proof}

\subsection{Independence of $\ell$}
\label{sec-indep}

In this section and the next one, we work over a finite field~$\Ff$ and~$k$ is an
algebraic closure of~$\Ff$. We use the notation for trace functions
from the introduction. We first prove that the complexities of a compatible system
of $\ell$-adic sheaves are largely independent of $\ell$, following
ideas from Katz~\cite{katzbetti}. 

We fix a quasi-projective variety $(X,u)$ over $\Ff$ and a field~$K$
of characteristic zero. In order to vary the prime~$\ell$, we make the following definitions.
Let $\Lambda$ be a non-empty set and let $\mathcal S$ be a family
$(\ell_{\lambda},\iota_{\lambda})_{\lambda\in\Lambda}$ of pairs where
each $\ell_\lambda$ is a prime number invertible in~$\Ff$ and
$\iota_\lambda$ is a field embedding of the given field $K$ in
$\bQ_{\ell_\lambda}$.

\begin{defi}
  An $\mathcal S$-compatible system of complexes on~$X$ is a family
  $(A_\lambda)_{\lambda\in\Lambda}$ consisting of objects $A_{\lambda}$ of
  $\Der(X,\bQ_{\ell_\lambda})$ such that the following conditions hold:
  \begin{itemize}
  \item for any $\lambda\in\Lambda$, any finite extension $\Ff_n$ of $\Ff$, and any
    $x\in X(\Ff_n)$, the trace $t_{A_{\lambda}}(x;\Ff_n)$ takes values in the
    subfield $\iota_{\lambda}(K)$ of $\bQ_{\ell_\lambda}$;
  \item for any $\lambda$, $\mu$ in~$\Lambda$, any finite
    extension~$\Ff_n$ of~$\Ff$, and any $x\in X(\Ff_n)$, we have
    $$
    \iota_{\lambda}^{-1}t_{A_{\lambda}}(x;\Ff_n)=
    \iota_{\mu}^{-1}t_{A_{\mu}}(x;\Ff_n).
    $$
  \end{itemize}
\end{defi}

\begin{lem}\label{lm-euler-indep}
  Let $(A_\lambda)_{\lambda\in\Lambda}$ be an $\mathcal{S}$-compatible system on~$X$. The Euler--Poincaré characteristic
  $\chi_c(X\times k,A_{\lambda})$ of~$A_{\lambda}$ is independent
  of~$\lambda$.
\end{lem}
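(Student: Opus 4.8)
The plan is to reduce the statement to the Grothendieck–Lefschetz trace formula applied to suitable twists of the $A_\lambda$, and then use the independence of traces together with a Vandermonde-type argument to extract the individual Betti numbers as functions of the trace data. First I would recall that, by the Grothendieck–Lefschetz trace formula, for any $m\geq 1$ and any $\lambda\in\Lambda$ one has
\[
\sum_{x\in X(\Ff_m)} t_{A_\lambda}(x;\Ff_m) = \sum_{i\in\Zz} (-1)^i \Tr\bigl(\Frob_{\Ff_m}\mid \rmH^i_c(X\times k, A_\lambda)\bigr).
\]
Writing the eigenvalues of $\Frob=\Frob_{\Ff}$ on $\bigoplus_i \rmH^i_c(X\times k,A_\lambda)$ with signs (i.e.\ listing $\alpha$ with multiplicity $h^i_c$ and attaching the sign $(-1)^i$), the right-hand side is $\sum_j \epsilon_j \alpha_{j}^{m}$. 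The left-hand side, by the compatibility hypothesis, satisfies $\iota_\lambda^{-1}\bigl(\sum_{x} t_{A_\lambda}(x;\Ff_m)\bigr)$ is independent of $\lambda$ for every $m$. Hence the power sums $\sum_j \epsilon_j \iota_\lambda^{-1}(\alpha_j)^m$ are independent of $\lambda$ for all $m\geq 1$.

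The key step is then the following elementary fact: a finite multiset of nonzero algebraic numbers $\{\beta_j\}$, each weighted with a sign $\epsilon_j\in\{\pm1\}$, is determined by the sequence of weighted power sums $p_m=\sum_j \epsilon_j \beta_j^m$ ($m\geq 1$) — more precisely, after cancelling $\beta_j=\beta_{j'}$ with opposite signs, the resulting signed multiset is uniquely determined. Indeed, the generating function $\sum_{m\geq 1} p_m T^m = \sum_j \epsilon_j \dfrac{\beta_j T}{1-\beta_j T}$ is a rational function of $T$ whose numerator and denominator (in lowest terms) encode the $\beta_j$ via their poles, and the sign-weighted multiplicities via the orders of vanishing/poles and the residues. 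Since $\iota_\lambda$ is a field embedding, $\iota_\lambda^{-1}(\alpha_j)$ is nonzero whenever $\alpha_j$ is, and the rationality is preserved. Applying this with $\beta_j = \iota_\lambda^{-1}(\alpha_j)$, we conclude that the reduced signed multiset $\{(\epsilon_j,\iota_\lambda^{-1}(\alpha_j))\}$ — and in particular the quantity $\sum_j \epsilon_j = \chi_c(X\times k,A_\lambda)$, obtained as the value at $T$ of a suitable specialization, or simply as the signed count after reduction — does not depend on $\lambda$.

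The one subtlety to address carefully is that passing to the \emph{reduced} multiset is exactly what allows the argument to work: there could a priori be cancellation $\alpha_j=\alpha_{j'}$ with $\epsilon_j=-\epsilon_{j'}$, but such a pair contributes $0$ both to every power sum and to the Euler characteristic, so it is harmless. Thus $\chi_c(X\times k,A_\lambda)$ equals the sum of the signed multiplicities in the reduced system, which we have just shown is independent of $\lambda$. I expect the main (minor) obstacle to be bookkeeping: making the reduction-to-lowest-terms argument for the rational generating function precise, and checking that $\iota_\lambda^{-1}$ genuinely commutes with the formation of that rational function (it does, being an embedding of fields of characteristic zero, hence injective and compatible with all arithmetic operations over $\Qq$). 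Everything else is the standard Lefschetz formula together with the hypothesis of $\mathcal S$-compatibility.
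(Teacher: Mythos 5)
Your argument is correct and is essentially the paper's proof in logarithmic-derivative form: the paper notes that the $L$-function of $A_\lambda$ is determined by the trace data, hence independent of $\lambda$, and reads off $\chi_c$ as the degree of that rational function, which is exactly the signed count you extract from the reduced pole data of $\sum_m p_m T^m = T\,\frac{d}{dT}\log L(T)$. (Only cosmetic caveat: $\iota_\lambda^{-1}(\alpha_j)$ is not literally defined for individual eigenvalues, which need not lie in $\iota_\lambda(K)$, but your rational-generating-function formulation already circumvents this since only the power sums need to descend to $K$.)
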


\begin{proof}
  For any~$\lambda$, it follows from the definition of $\mathcal{S}$-compatible system that the
  $L$-function of $A_{\lambda}$ is independent of~$\lambda$.  From the
  expression for this $L$-function given by the
  Grothendieck--Lefschetz formula, we know
  that~$\chi_c(X\times k,A_{\lambda})$ is the degree of this rational
  function (degree of the numerator minus degree of the denominator),
  and hence is independent of~$\lambda$.
\end{proof}

We then have the following result:

\begin{theo}\label{independence}
  Let~$\mathcal S$ be as above.
  \par
  \emph{(1)} Let~$n\geq 0$ be an integer and let $(A_\lambda)_\lambda$
  be an $\mathcal S$-compatible system of perverse sheaves on
  $\Pp^n_\Ff$. Then for  $\lambda$ and~$\mu$ in~$\Lambda$, we have
  \[
    c(A_\lambda)\asymp c(A_{\mu}),
  \]
  where the implied constants only depend on $n$.
  \par
  \emph{(2)} Let $(X,u)$ be a smooth quasi-projective variety over
  $\Ff$ and let $(\mathcal F_\lambda)_\lambda$ be an
  $\mathcal S$\nobreakdash-compatible system of lisse sheaves on~$X$. Then for all
  $\lambda$ and~$\mu$ in~$\Lambda$, we have
  \[
    c_u(\mathcal F_\lambda)\asymp c_u(\mathcal F_{\mu}),
  \]
  where the implied constants only depend on the embedding dimension
  of $(X,u)$.
\end{theo}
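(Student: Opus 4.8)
The plan is to reduce both statements to the fact, already established in Corollary~\ref{genericbettibound}\,\eqref{genericbettibound:item2}, that the complexity of a perverse sheaf is comparable (with constants depending only on $n$) to the $\ell^1$-norm of its characteristic class, and then to show that this characteristic class is essentially independent of $\ell$ for a compatible system. The bridge between ``$\ell$-independence of trace functions'' and ``$\ell$-independence of characteristic classes'' will be Lemma~\ref{lm-euler-indep}, which gives the $\ell$-independence of compactly supported Euler--Poincar\'e characteristics, combined with Corollary~\ref{projectiveeulerformula}, which expresses an intersection number $\overline{\CC(A_\lambda)}\cdot\overline{\CC(B)}$ as $(-1)^n\chi(\Pp^n,A_\lambda\otimes l_{\mtrx g}^*B)$.

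For part~(1): given an $\mathcal S$-compatible system $(A_\lambda)$ of perverse sheaves on $\Pp^n_\Ff$, I want to show $\cc(A_\lambda)$ is independent of $\lambda$ (viewed as an element of $\Zz^{n+1}$ via the basis of Lemma~\ref{constantsheafbasis}, which is canonical and $\ell$-independent). Since the intersection pairing on $\Zz^{n+1}$ is non-degenerate and $\ell$-independent, it suffices to prove that $\cc(A_\lambda)\cdot \cc(K_m)$ is independent of $\lambda$ for each $0\le m\le n$, where $K_m$ is the constant-sheaf basis element of Lemma~\ref{constantsheafbasis}. By Corollary~\ref{projectiveeulerformula} applied to $A=A_\lambda$ and $B=K_m$ (with a geometric generic $\mtrx g\in\GL_{n+1}$), this pairing equals $(-1)^n\chi(\Pp^n_{k'}, A_\lambda\otimes l_{\mtrx g}^*K_m)$. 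Now $l_{\mtrx g}^*K_m$ is (a shift of) the constant sheaf on a generic $m$-plane, so $A_\lambda\otimes l_{\mtrx g}^*K_m$ is, up to shift, the restriction $l_{\mtrx a_m}^*A_\lambda$ to a generic linear subspace, whose compactly supported Euler characteristic is $\ell$-independent: the family $(l_{\mtrx a_m}^*A_\lambda)$ is again $\mathcal S$-compatible (restriction of a linear map preserves compatibility of trace functions, as one checks on $\Ff_n$-points lying on the subspace), so Lemma~\ref{lm-euler-indep} applies. Hence $\cc(A_\lambda)$ is genuinely independent of $\lambda$, and therefore so is $\|\cc(A_\lambda)\|$; by Corollary~\ref{genericbettibound}\,\eqref{genericbettibound:item2}, $c(A_\lambda)\asymp\|\cc(A_\lambda)\|\asymp c(A_\mu)$ with constants depending only on $n$.

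For part~(2): let $(\mathcal F_\lambda)$ be an $\mathcal S$-compatible system of lisse sheaves on a smooth quasi-projective $(X,u)$ of embedding dimension $n$. The complexity $c_u(\mathcal F_\lambda)$ is $c(u_!\mathcal F_\lambda)$, and $u_!\mathcal F_\lambda$ need not be perverse; but the family $(u_!\mathcal F_\lambda)$ is $\mathcal S$-compatible on $\Pp^n_\Ff$ (extension by zero along $u$ preserves the trace-function compatibility, being zero outside $X$). By Proposition~\ref{easylin}\,\eqref{easylin:item3} and the expression $c_u(\mathcal F_\lambda)\asymp\sum_i\|\cc(\PH^i(u_!\mathcal F_\lambda))\|$ from Corollary~\ref{genericbettibound}\,\eqref{genericbettibound:item2}, it suffices to control the perverse cohomology sheaves. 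Since $\mathcal F_\lambda$ is lisse on the smooth variety $X$, the $\PH^i$ of $u_!\mathcal F_\lambda$ are supported on the boundary strata in a way governed by the stratification of $\overline{u(X)}$ by smoothness/lissity, which can be chosen independently of $\lambda$ (by Theorem~\ref{singloc} and Proposition~\ref{uniformity}, the relevant degrees are bounded, and one may pass to a common refinement). One then writes each $\cc(\PH^i(u_!\mathcal F_\lambda))$ as an alternating sum of $\cc$'s of the $u_!$-extensions and truncations computed stratum by stratum; on each smooth stratum the sheaf is lisse and its characteristic class is determined by the generic rank and Swan/local invariants, and the argument of part~(1) — pairing against the $K_m$ and invoking Lemma~\ref{lm-euler-indep} on $\mathcal S$-compatible restrictions to generic linear subspaces — shows each such class is $\ell$-independent. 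Collecting, $\sum_i\|\cc(\PH^i(u_!\mathcal F_\lambda))\|$ is $\ell$-independent up to constants depending only on the embedding dimension, whence $c_u(\mathcal F_\lambda)\asymp c_u(\mathcal F_\mu)$.

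The main obstacle is the bookkeeping in part~(2): $u_!\mathcal F_\lambda$ is not perverse, so one cannot directly pair against test sheaves. The cleanest route is probably to prove the intermediate statement ``for an $\mathcal S$-compatible system of arbitrary complexes $(B_\lambda)$ on $\Pp^n_\Ff$, the total characteristic class $\cc(B_\lambda):=\sum_i(-1)^i\cc(\PH^i(B_\lambda))$ is $\ell$-independent'' — which follows formally from part~(1)'s argument since $\cc(B_\lambda)\cdot\cc(K_m)=(-1)^n\chi(\Pp^n,B_\lambda\otimes l_{\mtrx g}^*K_m)$ is still an Euler characteristic of an $\mathcal S$-compatible complex — and then to separate the contributions of the individual $\PH^i$ by restricting to a $\lambda$-independent stratification on which everything is lisse, so that on each piece only one perverse degree contributes. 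The positivity of characteristic cycles of perverse sheaves (effectiveness, giving additivity of the norm on the effective cone, as used in Theorem~\ref{hardlin}) is what makes the passage from the signed total class to the sum of norms lossless up to constants.
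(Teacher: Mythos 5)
Your part (1) is correct but follows a genuinely different route from the paper. The paper proves (1) by induction on $n$ in the style of Katz: it restricts to a hyperplane section chosen in a common dense open set (Lemma~\ref{lem-indepl}), applies the induction hypothesis, and controls the remaining top-dimensional term by combining Artin vanishing, duality for perverse sheaves, and the $\ell$-independence of the Euler characteristic (Lemma~\ref{lm-euler-indep}). You instead prove the stronger statement that $\cc(A_\lambda)$ is \emph{literally} independent of $\lambda$, by pairing against the basis $\cc(K_m)$, identifying the pairing via Corollary~\ref{projectiveeulerformula} and the projection formula with $(-1)^{n+m}\chi(\Pp^m,l_{\mtrx b}^*A_\lambda)$ for generic $\mtrx b$, and invoking Lemma~\ref{lm-euler-indep}; then Corollary~\ref{genericbettibound}\,\eqref{genericbettibound:item2} converts this into $c(A_\lambda)\asymp c(A_\mu)$. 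This is cleaner and yields more (the invariance of the characteristic class itself). The one point you should make explicit is that Lemma~\ref{lm-euler-indep} applies to varieties over $\Ff$, not to a geometric generic fiber: you must first pass, via Lemma~\ref{lem-complexity-open-set}, to a closed point $\mtrx c$ of $U_\lambda\cap U_\mu$ defined over a finite extension, where the restricted system is honestly $\mathcal S$-compatible; this is routine but necessary.

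Part (2), however, has a genuine gap. Your own argument only gives the $\ell$-independence of the \emph{signed} total class $\sum_i(-1)^i\cc(\PH^i(u_!\mathcal F_\lambda))$, whereas Corollary~\ref{genericbettibound}\,\eqref{genericbettibound:item2} relates $c_u(\mathcal F_\lambda)$ to the \emph{unsigned} sum $\sum_i\|\cc(\PH^i(u_!\mathcal F_\lambda))\|$. Since $u_!\mathcal F_\lambda$ need not be perverse, cancellation between perverse degrees is possible, and nothing you say rules out that the perverse decomposition of $u_!\mathcal F_\lambda$ differs substantially from that of $u_!\mathcal F_\mu$ (e.g., one could a priori be perverse while the other has two large, nearly cancelling perverse constituents). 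The stratification step that is supposed to separate the individual $\PH^i$ is only gestured at: you do not exhibit a $\lambda$-independent stratification on which ``only one perverse degree contributes,'' nor do you show how to recover each $\cc(\PH^i(u_!\mathcal F_\lambda))$ from Euler characteristics of $\mathcal S$-compatible complexes. The paper avoids this either by rerunning the inductive argument of (1) directly on the Betti numbers $\sum_i h^i_c(X\cap L,\mathcal F_\lambda)$, using that the dual of a lisse sheaf on a smooth variety is lisse, or by reducing to (1) through the middle extension $u_{!*}(\mathcal F_\lambda[\dim X])$, which \emph{is} an $\mathcal S$-compatible system of perverse sheaves by Gabber's theorem (\cite[Th.\,3]{fujiwara}), combined with Corollary~\ref{cor-middle}. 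The second route is the natural repair of your argument: apply your part (1) to the middle extensions rather than to $u_!\mathcal F_\lambda$.
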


\begin{proof}
  (1) We argue by induction on $n$, following the strategy of
  Katz~\cite[Th.\,7]{katzbetti}. It is enough to prove the estimate
   $$
  c(A_1)\ll c(A_2)
  $$ 
  for two compatible perverse sheaves $A_1$ in $\Der(\Pp^n,\bQ_{\ell_1})$ and
  $A_2$ in $\Der(\Pp^n,\bQ_{\ell_2})$, with an implied constant that only depends on~$n$.

  If $n=0$, then $A_1$ and $A_2$ are vector spaces of the same
  dimension and their complexity $c(A_1)=c(A_2)$ is this common dimension.

  We now fix $n\geq 1$ and assume that the result holds on $\Pp^m$
  for all $0\leq m\leq n-1$. For each $0\leq m\leq n-1$, we pick a geometric generic point
  \hbox{$\mtrx{a}_m\in M^{n+1,m+1}(k')$} for some algebraically closed
  field~$k'$.

  Let~$0\leq m\leq n-1$. By applying Lemma~\ref{lem-indepl} to $A_1$
  and $A_2$, we find open dense subsets~$U_{m,1}$ and $U_{m,2}$ of
  $M^{n+1,m+1}_k$ satisfying the properties of
  Lemma~\ref{lem-indepl}. Up to replacing $\Ff$ by a finite extension,
  which we may do, we can assume that $(U_{m,1}\cap U_{m,2})(\Ff)$ is
  non-empty for all~$m$, and we then pick
  $\mtrx{b}_m\in (U_{m,1}\cap U_{m,2})(\Ff)$, again for all~$m$.  The
  complexes $l_{\mtrx{b}_m}^* A_1[m-n]$ and
  $l_{\mtrx{b}_m}^* A_2[m-n]$ on $\Pp^{m}$ over~$\Ff$ are perverse and
  form a compatible system; hence, by the induction hypothesis, the equivalence 
  \begin{equation}\label{eq-k1}
    c(l_{\mtrx{b}_m}^* A_1) \asymp c(l_{\mtrx{b}_m}^* A_2)
  \end{equation}
  holds, with implied constants that only depend on $n$.
  We have
  \[
    \max_{0\leq m\leq n-1} \sum_{i \in \Zz} h^i(\Pp^m_{k'},
    l_{\mtrx{a}_m}^* A_1)=\max_{0\leq m\leq n-1} \sum_{i \in \Zz}
    h^i(\Pp^m_{k'}, l_{\mtrx{b}_m}^* A_1),
  \]
  again by Lemma~\ref{lem-indepl}. Hence
  \begin{equation}\label{eq-k2}
    \max_{0\leq m\leq n-1} \sum_{i \in \Zz} h^i(\Pp^m_{k'},
    l_{\mtrx{a}_m}^* A_1)=c(l_{\mtrx{b}_m}^* A_1),
  \end{equation}
  by the definition of the complexity and Lemma~\ref{genlem2}.
  
  Together with~(\ref{eq-k1}), it follows that
  \begin{align*}
    c(A_1)&= \max_{0\leq m\leq n} \sum_{i \in \Zz} h^i(\Pp^m_{k'},
    l_{\mtrx{a}_m}^* A_1) \\ & \leq \max_{0\leq m\leq n-1} \sum_{i \in
      \Zz} h^i(\Pp^m_{k'}, l_{\mtrx{a}_m}^* A_1) +\sum_{i \in \Zz}
    h^i(\Pp^n_{k'}, l_{\mtrx{a}_n}^* A_1)\\
    &\ll c(l_{\uple{b}_m}^*A_2)+ \sum_{i \in \Zz} h^i(\Pp^n_{k'},
    l_{\mtrx{a}_n}^* A_1)\\
    &\ll c(A_2)+ \sum_{i \in \Zz} h^i(\Pp^n_{k'}, l_{\mtrx{a}_n}^*
    A_1).
  \end{align*}
  
  Since the geometric generic point $\mtrx{a}_n\in M^{n+1,n+1}_{k'}$
  induces an isomorphism $l_\mtrx{a_n}$ of $\Pp^n_{k'}$, it is enough
  to prove the estimate
  \[
    \sigma_1=\sum_{i\in \Zz} h^i(\Pp^n_{k'}, A_1)\ll c(A_2)
  \]
  
  Arguing as in the proof of Proposition \ref{bilinearbettibound}, we
  have
  \[
    h^0(\Pp^n_{k'}, A_1)=\chi(\Pp^n_{k'}, A_1)+ \sum_{i<0}(-1)^{i+1}
    h^i(\Pp^n_{k'}, A_1)+\sum_{i>0}(-1)^{i+1} h^i(\Pp^n_{k'}, A_1),
  \]
  and therefore
  \begin{align*}
    \sigma_1&=\sum_{i<0} h^i(\Pp^n_{k'}, A_1)+ h^0(\Pp^n_{k'}, A_1)+\sum_{i>0} h^i(\Pp^n_{k'}, A_1)\\
    &\leq 2\sum_{i<0} h^i(\Pp^n_{k'}, A_1)+\chi(\Pp^n_{k'},
    A_1)+2\sum_{i>0} h^i(\Pp^n_{k'}, A_1).
  \end{align*}
  
  By excision and Artin's vanishing theorem for affine varieties, and
  the fact that~$A_1$ is perverse, the canonical map
  \[
    \rmH^{i}(\Pp^n_{k'},A_1)\longrightarrow \rmH^i(\Pp^{n-1}_{k'},l^*_{\mtrx{a}_{n-1}}A_1)
  \]
  is an isomorphism for $i<0$. Similarly, because the dual of a
  perverse sheaf is perverse and duality exchanges
  $l^*_{\mtrx{a}_{n-1}}$ and $l^!_{\mtrx{a}_{n-1}}$, the canonical map
  \[
    \rmH^i(\Pp^{n-1}_{k'},l^!_{\mtrx{a}_{n-1}}A_1)\longrightarrow
    \rmH^{i}(\Pp^n_{k'},A_1)
  \]
  is an isomorphism for $i>0$. Since $l_{\mtrx{a}_{n-1}}$ is a closed
  immersion, the functors $l^*_{\mtrx{a}_{n-1}}$ and~$l^!_{\mtrx{a}_{n-1}}$
  are equal up to a shift and a Tate twist. Hence, 
  \begin{align*}
    \sigma_1&=2\sum_{i<0} h^i(\Pp^n_{k'}, l^*_{\mtrx{a}_{n-1}}A_1)+\chi(\Pp^n_{k'}, A_1)+2\sum_{i>0} h^i(\Pp^n_{k'}, l^!_{\mtrx{a}_{n-1}}A_1)\\
    &\leq 4 \sum_{i\in \Zz} h^i(\Pp^{n-1}, l^*_{\mtrx{a}_{n-1}}A_1)+
    \chi(\Pp^n_{k'}, A_2)
  \end{align*}
  using Lemma~\ref{lm-euler-indep}. Hence, 
  $$
  \sigma_1\ll c(A_2)+\chi(\Pp^n_{k'},A_2) \ll c(A_2),
  $$
  by~(\ref{eq-k1}) and~(\ref{eq-k2}), and the elementary
  Proposition~\ref{pr-betti} below.
  
  (2) The proof is similar, using the obvious adaptation of
  Lemma~\ref{lem-indepl} to lisse sheaves, and the fact that the dual of a
  lisse sheaf on a smooth scheme is lisse. Alternatively, but at the cost
  of adding a dependency of the implied constants on $c(u)$, we may
  use~\cite[Th.\,3]{fujiwara} to reduce to~(1); indeed, this shows that
  the middle extensions $u_{!*}A_{\lambda}$ of the components of a
  compatible system of lisse sheaves is a compatible system of perverse
  sheaves on the projective space target of~$u$, and one can apply
  Corollary~\ref{cor-middle} to bound the complexity of the
  middle extensions.
\end{proof}

\subsection{Complex conjugation}

We keep working over a finite field~$\Ff$ with algebraic closure~$k$. In applications to analytic number theory, taking the complex conjugate
of the trace function~$t_A$ is a natural operation, but this has no canonical counterpart
at the level of~$\Der(X)$.  Nevertheless, we have the following result:

\begin{prop}\label{pr-conjugation}
  Let~$(X,u)$ be a quasi-projective algebraic variety
  over~$\Ff$. Given any mixed object~$A$ of~$\Der(X)$, there exists a
  mixed object~$A'$ of $\Der(X)$ such that
  \[
    t_{A'}(\cdot;\Ff_n)=\overline{t_A(\cdot;\Ff_n)}\quad \text{for any~$n$}
    \quad \text{and}\quad c_u(A')\ll c(u)c_u(A),
  \]
where the implied constant depends only on the embedding dimension
of~$(X, u)$. 
\end{prop}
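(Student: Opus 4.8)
The plan is to produce $A'$ by taking a Tate twist of the Verdier dual of $A$, using the fact that for a mixed sheaf the trace function of the Verdier dual is essentially the complex conjugate of the original trace function (up to the normalization coming from weights). Concretely, recall that if $A$ is pure of weight $w$, then $\dual(A)$ is pure of weight $-w$, and the eigenvalues of Frobenius are replaced by $|\Ff|^{w}$ divided by the original ones; when $w=0$ this is exactly complex conjugation since the eigenvalues have absolute value $1$. In general, for a mixed object, one decomposes via the weight filtration (which exists for mixed objects) and twists each graded piece appropriately. So the first step is to recall the precise statement: if $A$ is mixed, there is a canonical way to write $A'$ so that $t_{A'}(\cdot;\Ff_n) = \overline{t_A(\cdot;\Ff_n)}$. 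One clean way is to choose an integer $N$ large enough that $A(-N)$ (a Tate twist making all weights, say, sufficiently positive is not quite the point) — actually the standard trick is: $t_{\dual(A)}(x;\Ff_n) = \overline{t_{A}(x;\Ff_n)}$ holds up to a factor $|\Ff_n|^{d}$ where $d$ relates to dimension and the chosen normalization, so we set $A' = \dual(A)(d)$ for the appropriate $d$ (independent of everything except the embedding dimension, or we can absorb it). The point is that a Tate twist does not change complexity: $c_u(A(w)) = c_u(A)$, since twisting tensors with a rank-one geometrically trivial sheaf and does not affect any Betti numbers of pullbacks of $u_!A$.

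The second step, and the one that actually invokes the machinery of this paper, is the complexity bound. Once $A'$ is identified with $\dual(A)$ up to a Tate twist, we have $c_u(A') = c_u(\dual(A))$, and the estimate $c_u(\dual(A)) \ll c(u) c_u(A)$ is exactly estimate~\eqref{eq-6dual} of Theorem~\ref{pushpull}. So the complexity claim is immediate from that theorem, with the implied constant depending only on the embedding dimension of $(X,u)$, as required.

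Thus the proof is short: first I would spell out that for a mixed object $A$ one can form $A'$ as a suitable Tate twist of $\dual(A)$ realizing pointwise complex conjugation of trace functions — invoking the relation between Verdier duality and trace functions (e.g.\ via the Grothendieck--Verdier duality formula for trace functions, $t_{\dual(A)}(x;\Ff_n) = |\Ff_n|^{-\dim X}\overline{t_{A}(x;\Ff_n)}$ for $A$ on a smooth $X$, and in general reducing to strata or using the weight filtration), and noting that $A'$ is again mixed since $\dual$ and Tate twists preserve mixedness. Then I would invoke $c_u(A') = c_u(\dual(A))$ (Tate twist invariance of complexity, which follows directly from Definition~\ref{def-ca} since a Tate twist does not change the underlying complex geometrically) and conclude with \eqref{eq-6dual}.

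The only genuinely delicate point is bookkeeping the normalization in the passage from $\dual(A)$ to a genuine ``complex conjugate'' trace function: one must make sure that the Tate twist needed is by a bounded amount and, more importantly, that it is a legitimate operation on mixed objects with $\Ql$-coefficients under the fixed isomorphism $\iota\colon \Ql \to \Cc$. This is where the hypothesis that $A$ is \emph{mixed} is used — it guarantees the weight filtration exists so that conjugation of eigenvalues of Frobenius is controlled stratum by stratum. I expect this normalization check to be the main (though still routine) obstacle; once it is dispatched, the complexity estimate is a one-line consequence of Theorem~\ref{pushpull}\,\eqref{eq-6dual} together with the Tate-twist invariance of complexity.
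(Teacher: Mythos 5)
There is a genuine gap. Your construction takes $A'$ to be a \emph{single} Tate twist of $\dual(A)$, but this only realizes complex conjugation of the trace function when $A$ is \emph{pure}: an eigenvalue $\alpha$ of Frobenius satisfies $\overline{\alpha}=|\Ff_n|^{w}/\alpha$ only when $|\alpha|=|\Ff_n|^{w/2}$, so the twist needed to turn $\dual(A)$ into a complex conjugate depends on the weight $w$. For a genuinely mixed object, whose weight filtration has graded pieces of several different weights, no single twist $\dual(A)(d)$ has the right trace function. The paper therefore decomposes: first into perverse cohomology sheaves, then each mixed perverse sheaf along its weight filtration, and sets $A'=\bigoplus_j \dual\bigl((F_{j+1}/F_j)(w_j)\bigr)(-w_j)$. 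You gesture at "reducing to strata or using the weight filtration" but do not carry this out, and it is not routine for the complexity bound: once you have split $A$ into the pieces $F_{j+1}/F_j$, you need $\sum_j c(F_{j+1}/F_j)\ll c(A)$, and Proposition~\ref{easylin} only gives the inequality in the wrong direction (subobjects of a complex can a priori have much larger complexity than the complex). The paper's proof of this step is the real content: it uses that characteristic cycles of perverse sheaves are effective and that the norm $\|\cdot\|$ is additive on the effective cone, together with Corollary~\ref{genericbettibound}\,(2), to get $\sum_j c(F_{j+1}/F_j)\ll \|\cc(A)\|\ll c(A)$. The same issue arises for the decomposition into perverse cohomology sheaves of a general complex. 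None of this is covered by citing~\eqref{eq-6dual}.

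A second, smaller gap: even in the pure weight-zero perverse case, the identity $t_{\dual(A)}=\overline{t_A}$ at \emph{all} points (not just where $A$ is lisse) is not simply "the Grothendieck--Verdier duality formula for trace functions." The stalks of the cohomology sheaves of a pure perverse sheaf at non-generic points need not be pointwise pure, so conjugating eigenvalues stalk by stalk is not immediate. The paper invokes geometric semisimplicity of pure perverse sheaves (\cite[5.3.8]{BBDG}) together with Gabber's independence-of-$\ell$ theorem for intermediate extensions (via \cite[Lem.\,1.8.1]{mmp}) to justify this. Your appeal to~\eqref{eq-6dual} for the complexity bound is correct as far as it goes (and the Tate-twist invariance of complexity is fine), but the reduction of the general mixed case to that single application is where the proof actually lives.
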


\begin{proof}
  For each $n \geq 1$, the trace function $t_A(\cdot;\Ff_n)$ is the
  restriction to the set $X(\Ff_n)$ of~$\Ff_n$-rational points of $X$
  of the trace function of the complex~$u_!A$ on~$\Pp^n$, which is
  again mixed if so is $A$. Assume that we have found a mixed
  object~$\widetilde{A}$ of~$\Der(\Pp^n)$ that ``works'' for $u_!A$,
  then $A'=u^*\widetilde{A}$ works for~$A$, and
  $$
  c_u(A')\ll c(u)c(\widetilde{A})\ll c(u)c(u_!A)=c(u)c_u(A),
  $$
  where the first bound is part \eqref{eq-6pullback} of Theorem~\ref{pushpull}.  Thus, we may assume $X=\Pp^n$ (with the identity embedding).
  \par
  We first assume that~$A$ is a perverse sheaf. If~$A$ is pure of
  weight zero, then $A$ is geometrically semisimple by \cite[5.3.8]{BBDG}. As explained in~\cite[Lem.\,1.8.1.\,(1)]{mmp}, a result of Gabber on the independence of $\ell$ of traces of intermediate extensions \hbox{\cite[Th.\,3]{fujiwara}} then implies that we can take~$A'=\dual(A)$. The perverse sheaf $A'$ is also pure of weight zero and has complexity $c(A')=c(A)$ by
  Lemma~\ref{dual1}. If~$A$ is pure of weight~$w$, then we can simply
  take $A'=\dual(A(w))(-w)$, which is pure of weight~$w$.

  Suppose now that~$A$ is a mixed perverse sheaf. Let $(F_j)$ be the
  canonical weight filtration on~$A$, with quotients~$F_{j+1}/F_j$
  pure of weight~$w_j$ (see~\cite[Th.\,5.3.5]{BBDG}). We first claim 
  that the estimate
  \[
  \sum_{j}c(F_{j+1}/F_j)\ll c(A)
  \] holds. Indeed, since the quotients are perverse sheaves, we have
  $$
  \sum_{j}c(F_{j+1}/F_j)\ll \sum_j \|\cc(F_{j+1}/F_j)\|= \Bigl\|
  \sum_j \cc(F_{j+1}/F_j) \Bigr\|=\|\cc(A)\|\ll c(A)
  $$
  by Corollary~\ref{genericbettibound} and the fact that the
  characteristic cycles of perverse sheaves are effective (compare
  with the end of the proof of Theorem~\ref{hardlin}).
  In view of the equality
  $$
  t_{A}(\cdot;\Ff_n)=\sum_{j} t_{F_{j+1}/F_j}(\cdot;\Ff_n)
  $$
  for all~$n\geq 1$, we can take
  $$
  A'=\bigoplus_{j} \dual((F_{j+1}/F_j)(w_i))(-w_i),
  $$
  which is a mixed perverse sheaf with complexity
  $$
  c(A')=\sum_j c(\dual(F_{j+1}/F_j))= \sum_{j} c(F_{j+1}/F_j)\ll c(A),
  $$
  by Lemma~\ref{dual1} and the above.
    
  Finally, in the general case, if~$A$ is a mixed complex of
  weights~$\leq w$, then its perverse cohomology sheaves~$\PH^i(A)$
  are mixed of weights~$\leq w+i$ for all~$\in\Zz$
  by~\cite[Th.\,5.4.1]{BBDG} and we have
  $$
  t_A(\cdot;\Ff_n)=\sum_{i\in\Zz}(-1)^it_{\PH^i(A)}(\cdot;\Ff_n)
  $$
  for all~$n\geq 1$. For all~$i$, let~$A'_i$ be a mixed perverse sheaf satisfying the
  conditions of the proposition for~$\PH^i(A)$. We can define
  $$
  A'=\bigoplus_{i\in\Zz}A'_i,
  $$
  which satisfies
  $$
  c(A')=\sum_{i\in\Zz}c(A'_i)\ll \sum_{i\in\Zz} c(\PH^i(A)) \ll c(A)
  $$
  by the previous case.
\end{proof}

\begin{remark}
  In practice, the conjugate of the trace functions that occur in
  concrete applications are often clearly trace functions with
  essentially the same complexity (e.g., the conjugate of the trace
  function of an Artin--Schreier sheaf $\mathcal{L}_{\psi(f)}$ is
  simply $\mathcal{L}_{\psi(-f)}$). Nevertheless,
  Proposition~\ref{pr-conjugation} might be useful for certain
  theoretical arguments.
\end{remark}

\begin{remark}
  In fact, the assumption that $A$ is a mixed object is always
  satisfied thanks to the deep theorem of L.\,Lafforgue that an
  irreducible lisse sheaf with determinant of finite order on a normal
  variety $X$ is pure of weight zero (see
  \cite[Prop.\,VII.7\,(i)]{Lafforgue}). We briefly sketch how to
  deduce from this that $A$ is mixed. By definition, it suffices to
  treat the case where $A$ is a constructible sheaf and, by induction
  on the dimension of~$X$, we can even suppose that $A$ is lisse. The
  successive quotients of a Jordan--Hölder filtration of $A$ are then
  irreducible lisse sheaves; up to a twist by some $\Ql(w)$, their
  determinant is of finite order by geometric class field theory (see
  \cite[(1.3.6)]{weilII}), and hence they are pure thanks to
  Lafforgue's theorem.
\end{remark}


\section{Examples and applications}\label{creation}

\subsection{Sums of Betti numbers}

One of the most useful properties of the complexity is that it
controls Betti numbers, as the following proposition shows.

\begin{prop}\label{pr-betti}
  Let $(X,u)$ be a quasi-projective variety over~$k$. For any~$A$ in
  $\Der(X)$, the following holds: 
  \begin{align*}
    |\chi_c(X,A)|&\leq \sum_{i \in \Zz} h^i_c(X,A) \leq c_u(A),
    \\
    |\chi(X,A)|&\leq \sum_{i \in \Zz} h^i(X,A) \ll c(u) c_u(A).
  \end{align*}
  Moreover, the implied constant in the second estimate only depends on the
  embedding dimension of~$(X,u)$.
\end{prop}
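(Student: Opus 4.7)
The first estimate in each of the two lines is immediate from the expression of the Euler--Poincaré characteristic as the alternating sum $\chi_c(X,A)=\sum_i (-1)^i h^i_c(X,A)$ (and similarly for $\chi(X,A)$), so the real content is the two inequalities on the right.

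For $\sum_i h^i_c(X,A)\leq c_u(A)$, I would specialize the definition of complexity to the maximal value $m=n$, where $n$ is the embedding dimension. For this $m$, a geometric generic point $\mtrx{a}_n$ of $M^{n+1,n+1}_k=\GL_{n+1,k}$ gives an automorphism $l_{\mtrx{a}_n}$ of $\Pp^n_{k'}$, so pullback along it preserves cohomology and
\[
c_u(A)=c(u_!A)\geq \sum_{i\in \Zz} h^i(\Pp^n_{k'}, l_{\mtrx{a}_n}^*u_!A)=\sum_{i\in\Zz} h^i(\Pp^n_{k'}, u_!A).
\]
Writing $u=i\circ j$ with $j\colon X\to \overline{X}$ an open immersion into the schematic closure of the image of $u$ and $i\colon \overline{X}\to \Pp^n$ the corresponding closed immersion, we have $u_!=i_\ast j_!$; since $\overline{X}$ is proper, the projection to $\Pp^n$ is proper and hence $h^i(\Pp^n_{k'},u_!A)=h^i(\overline{X}_{k'},j_!A)=h^i_c(X_{k'},A)$, which is the same as $h^i_c(X,A)$ by invariance of étale cohomology under extension of algebraically closed fields.

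For the second inequality $\sum_i h^i(X,A)\ll c(u)c_u(A)$, I would pass to the dual via Verdier duality: the identification $\rmH^i(X,A)\simeq \rmH^{-i}_c(X,\dual(A))^\vee$ yields $h^i(X,A)=h^{-i}_c(X,\dual(A))$, and therefore
\[
\sum_{i\in\Zz} h^i(X,A)=\sum_{i\in\Zz} h^i_c(X,\dual(A))\leq c_u(\dual(A))\ll c(u)c_u(A),
\]
where the middle step applies the bound just proved to $\dual(A)$, and the final step is \eqref{eq-6dual} of Theorem~\ref{pushpull}. The dependence of the implied constant only on the embedding dimension is inherited from that estimate. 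There is no real obstacle here: the core idea is simply that the case $m=n$ in the definition of complexity already recovers the full sum of compactly supported Betti numbers, and duality transfers the statement to ordinary cohomology at the cost of a factor $c(u)$.
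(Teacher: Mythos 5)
Your argument is correct and follows the same route as the paper: the case $m=n$ of the definition of complexity recovers $\sum_i h^i_c(X,A)=\sum_i h^i(\Pp^n,u_!A)$ since $l_{\mtrx{a}_n}$ is an automorphism, and the statement for ordinary cohomology follows by Verdier duality combined with the bound $c_u(\dual(A))\ll c(u)c_u(A)$ from \eqref{eq-6dual}. Nothing to add.
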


\begin{proof}
  Let $n$ be the embedding dimension of~$(X,u)$. For each object $A$ of $\Der(X)$, the equality 
  $h^i_c(X, A)=h^i(\Pp^n_{\bar k}, u_{!}A)$ holds. Since étale cohomology is
  invariant under extension of scalars to an algebraically closed
  field $k'$ and~$l_{\mtrx{a}_n} \colon \Pp^n_{k'} \to \Pp^n_{k'}$ is an isomorphism, the first bound is a straightforward consequence of the definition of the complexity $c_u(A)$. On noting that Verdier duality implies the equality of sums of Betti numbers $\sum_{i \in \Zz} h^i(X, A)=\sum_{i \in \Zz} h^i_c(X, \dual(A))$, the second estimate then follows from the first one and the estimate $c_u(\dual(A)) \ll c(u) c_u(A)$ from \eqref{eq-6dual}, with an implied constant that only depends on $n$. 
  \end{proof}

\begin{remark}
  The converse estimate
  \[
  c_u(A)\ll \sum_{i \in \Zz} h^i_c(X,A)
  \]
  does not hold in general, since the right-hand side is independent
  of~$u$, whereas we have seen in Example~\ref{ex-counter} that there is no reasonable intrinsic notion of complexity of an object of $\Der(X)$. This can also be seen concretely, e.g. in the
  case of curves. 
\end{remark}

\subsection{Complexity of sheaves on curves}

In the case of curves, we can write down an explicit
formula for the complexity.

\begin{theo}\label{curveconstruction}
  Let~$(C, u)$ be a smooth connected quasi-projective curve over~$k$. Let $\overline{C}$ be the smooth compactification of $C$, and denote by~$g$ the genus of~$\overline{C}$,
  by~$n$ the number of points of~$\overline{C}-C$, and by $d$ the degree of $u(C)$. 
  \begin{enumerate}
  \item\label{curveconstruction:item1} The complexity of an object~$A$ of~$\Der(C)$ is given by
  \begin{equation}\label{eq-curve}
    c_u(A) = \max \Bigl( d \sum_{i \in \Zz} \dim \mathcal H^i(A)_\eta,
    \sum_{i \in \Zz} h^i_c(C,A)\Bigr),
  \end{equation}
  where~$\eta$ stands for the generic point of~$C$.
  
  \item\label{curveconstruction:item2} If $A$ is a perverse sheaf on $C$, the following inequalities hold: 
  $$
  \max(d, 2g+n-2) \rank(A)+\loc(A)\leq c_u(A) \leq \max(d, 2g+n+2)
  \rank(A) +\loc(A).
  $$ 
    
  \item\label{curveconstruction:item3} If $\mathcal F$ is a middle-extension sheaf on
  $C$, the following inequalities hold:
  $$
  \max(d, 2g+n-2) \rank(\mathcal F) +\loc(\mathcal{F})\leq
  c_u(\mathcal F) \leq \max(d, 2g+n+2) \rank(\mathcal F)
  +\loc(\mathcal{F}).
  $$
   \end{enumerate}
\end{theo}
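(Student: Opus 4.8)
The plan is to prove \eqref{curveconstruction:item1} first, then deduce \eqref{curveconstruction:item2} and \eqref{curveconstruction:item3} from it by computing the two quantities on the right-hand side of \eqref{eq-curve} in terms of $\rank$ and $\loc$ via the Grothendieck--Ogg--Shafarevich formula. For \eqref{curveconstruction:item1}, recall that $c_u(A) = c(u_!A)$ is the maximum over $0 \leq m \leq 1$ of the sum of Betti numbers of $l_{\mtrx{a}_m}^* u_! A$ on $\Pp^m$. The case $m = 1$ gives $l_{\mtrx{a}_1}$ an automorphism of $\Pp^1$, so this contributes exactly $\sum_i h^i(\Pp^1_{k'}, u_!A) = \sum_i h^i_c(C,A)$, which is the second term in the maximum. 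The case $m = 0$ amounts to pulling $u_!A$ back to a generic $k'$-point of $\Pp^n$: the only contribution comes from the $d = \deg(u(C))$ points of $C$ lying on a generic hyperplane section, each a geometric generic point of $C$, at which the stalk of $A$ has total dimension $\sum_i \dim \mathcal H^i(A)_\eta$ (generic base change / the fact that a generic point of $C$ specializes from the generic point of $C$). This gives the first term $d \sum_i \dim \mathcal H^i(A)_\eta$. I would need to be slightly careful that the generic hyperplane does not meet $\overline{C} - C$ and meets $u(C)$ transversally in exactly $d$ reduced points; this is standard Bertini-type genericity, and the stalk computation uses that pullback along $\Spec k' \to C$ hitting a generic point computes $\mathcal H^i(A)_\eta \otimes k'$.

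For \eqref{curveconstruction:item2}, assume $A$ is perverse on $C$; write $r = \rank(A)$. Since $A$ is perverse on a smooth curve, $\mathcal H^{-1}(A)$ is a middle-extension-like sheaf of generic rank $r$ and $\mathcal H^0(A)$ is punctual, so $\sum_i \dim \mathcal H^i(A)_\eta = r$, giving the first term $dr$. For the second term I would compute $\sum_i h^i_c(C,A)$ by bounding it above and below in terms of $-\chi_c(C,A)$. By the excerpt's Euler--Poincar\'e formula, $\chi_c(C,A) = r\,\chi_c(C,\Ql[1]) - \loc(A) = -r(2g - 2 + n) - \loc(A)$ (using $\chi_c(C,\Ql[1]) = -\chi_c(C,\Ql) = -(2-2g-n)$; the sign bookkeeping needs care, and the perverse shift means $\chi_c(C, \Ql[1]) = -(2-2g-n) = 2g - 2 + n$). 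Thus $-\chi_c(C,A) = r(2g+n-2) + \loc(A)$. Now $\sum_i h^i_c(C,A) \geq |\chi_c(C,A)| = r(2g+n-2)+\loc(A)$ (when this is nonnegative), giving the lower bound $\max(d, 2g+n-2)\,r + \loc(A)$ once combined with the $dr$ term and the observation that $\loc(A) \geq 0$. For the upper bound, $A$ being perverse on an affine curve has $\rmH^i_c(C,A)$ concentrated in degrees $0$ and $1$ (Artin vanishing plus its dual), so $\sum_i h^i_c(C,A) = h^0_c + h^1_c$; here $h^0_c(C,A) = \dim \rmH^0_c$ is controlled because $\PH^{\leq -1}$ has no $\rmH^0_c$ and one estimates $h^0_c \leq$ (number of punctual contributions) while $h^1_c = h^0_c - \chi_c(C,A)$, yielding $\sum_i h^i_c(C,A) \leq -\chi_c(C,A) + 2 h^0_c(C,A) \leq r(2g+n-2) + \loc(A) + 4r$ after bounding $h^0_c$; assembling gives $\max(d, 2g+n+2)\,r + \loc(A)$.

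Statement \eqref{curveconstruction:item3} is the special case of \eqref{curveconstruction:item2} applied to the perverse sheaf $A = \mathcal{F}[1]$, using that then $\mathcal H^{-1}(A) = \mathcal F$, $\jump_x(A) = 0$ for all $x$, $\rank(A) = \rank(\mathcal F)$, and $\loc(A) = \loc(\mathcal F)$ by the conventions fixed in the introduction. I expect the main obstacle to be pinning down the constant $4r$ (equivalently, the sharp bound on $h^0_c(C,A)$ and its dual $h^2(\overline{C}, \cdot)$-type term) cleanly enough to land exactly on $\max(d, 2g+n\pm 2)$ rather than a weaker bound; this requires a careful analysis of the low-degree cohomology of a perverse sheaf on an affine curve, distinguishing the lisse part from the punctual part, and of what happens at the points of $\overline C - C$ when passing between compactly-supported and ordinary cohomology. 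The genericity statements in \eqref{curveconstruction:item1} (transversality of the generic hyperplane, avoidance of the finitely many bad points, computation of generic stalks) are routine but should be stated with care since the whole theorem rests on them.
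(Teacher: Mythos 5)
Your overall route is the one the paper takes: read off part~\eqref{curveconstruction:item1} from the definition of $c_u$, then evaluate the two terms of~\eqref{eq-curve} for a perverse sheaf via the Euler--Poincar\'e formula. (One small slip in your part~\eqref{curveconstruction:item1}: for an embedding into $\Pp^N$ with $N\geq 2$ the term $d\sum_i\dim\mathcal H^i(A)_\eta$ comes from $m=N-1$, where a generic hyperplane meets $u(\overline C)$ in $d$ generic points; generic linear subspaces of dimension $\leq N-2$ miss the curve and contribute $0$, and $m=0$ is only relevant when $N=1$.) Two steps in your part~\eqref{curveconstruction:item2}, however, are genuine gaps rather than bookkeeping.

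For the upper bound, you propose to write $\sum_i h^i_c(C,A)=-\chi_c(C,A)+2h^0_c(C,A)$ and then to bound $h^0_c(C,A)$ by $O(\rank(A))$. This cannot work: $h^0_c(C,A)$ is the \emph{middle} cohomology of the perverse sheaf (for $A=\Ql[1]$ on a projective curve of genus $g$ it equals $2g$), so it is comparable to $\chi_c$ itself, not to the rank. You need the opposite decomposition,
\[
\sum_{i\in\Zz} h^i_c(C,A)=\chi_c(C,A)+2\bigl(h^{-1}_c(C,A)+h^{1}_c(C,A)\bigr),
\]
where the two outer groups $\rmH^{-1}_c(C,A)=\rmH^0_c(C,\mathcal H^{-1}(A))$ and $\rmH^{1}_c(C,A)=\rmH^2_c(C,\mathcal H^{-1}(A))$ are, respectively, compactly supported sections and coinvariants of a sheaf of generic rank $\rank(A)$ with no punctual sections, hence each of dimension at most $\rank(A)$; this gives $\chi_c\leq\sum h^i_c\leq\chi_c+4\rank(A)$. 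Relatedly, your sign bookkeeping for Grothendieck--Ogg--Shafarevich is indeed off: with the definition of $\loc$ in the paper one has $\chi_c(C,A)=\rank(A)(2g+n-2)+\loc(A)$ (test it on a skyscraper sheaf, where $\rank=0$, $\loc=1$, $\chi_c=1$), so the relevant quantity is $+\chi_c$, not $-\chi_c$.

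For the lower bound, what the argument actually yields is $\max\bigl(d\,\rank(A),\,(2g+n-2)\rank(A)+\loc(A)\bigr)\leq c_u(A)$, and your final step of ``combining with the $dr$ term and $\loc(A)\geq 0$'' to reach $\max(d,2g+n-2)\rank(A)+\loc(A)$ is not a valid inequality: when $d>2g+n-2$ and $\loc(A)>0$ the target exceeds both entries of the max. Moreover this gap is not fillable, because the stated lower bound fails in examples: take $\mathcal F=j_*\mathcal L$ on $\Aa^1\subset\Pp^1$ with $\mathcal L$ a rank-one tame local system on the complement of five points, with nontrivial local monodromy at each of them and trivial monodromy at infinity. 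Then $d=1$, $g=0$, $n=1$, $\rank(\mathcal F)=1$, $\loc(\mathcal F)=5$, while $\sum_i h^i_c(\Aa^1,\mathcal F[1])=-\chi_c(\Aa^1,\mathcal F)=4$, so $c_u(\mathcal F)=\max(1,4)=4$, strictly less than $\max(d,2g+n-2)\rank(\mathcal F)+\loc(\mathcal F)=6$. You should therefore prove, and state, the lower bound in the ``max of sums'' form displayed above; that is what the two-sided comparison of $c_u$ with $\rank$ and $\loc$ genuinely gives, and it suffices for the applications (e.g.\ the comparison with the Fouvry--Kowalski--Michel conductor) up to an additive $O(\rank)$.
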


\begin{proof}
  Part \eqref{curveconstruction:item1} is simply a translation of the definition of~$c_u(A)$.
  \par
  To deduce \eqref{curveconstruction:item2}, we first note that a perverse sheaf $A$ satisfies
  $$
  \sum_{i \in \Zz} \dim \mathcal H^i(A)_\eta
  =\dim\mathcal{H}^{-1}(A)=\rank(A). 
  $$
 Writing the sum of Betti numbers as 
  $$
  \sum_{i \in \Zz} h^i_c(C,A)=-\chi_c(C,A)+2h^0(C,A)+2h^{-2}(C,A),$$
we then have
  $$
  -\chi_c(C,A) \leq \sum_{i \in \Zz} h^i_c(C,A) \leq
  -\chi_c(C,A)+4\rank(A).
  $$
  Taking the equality $\chi_c(C,\Ql[1])=2g-2+n$ into account, the statement then follows from the Grothendieck--Ogg--Shafarevitch formula.
  \par
  Finally, \eqref{curveconstruction:item3} is a special case of \eqref{curveconstruction:item2}, for the perverse sheaf~$A=\mathcal{F}[1]$. 
\end{proof}

As a corollary, we can now compare the complexity
defined in this paper with the ``analytic conductor'' of Fouvry,
Kowalski and Michel for the affine line over the algebraic closure
of a finite field.  Precisely, for a middle-extension
sheaf~$\mathcal{F}$ on~$\Aa^1$ over such a field, the latter is
defined in~\cite[Def.\,1.13]{fkm1} as
\begin{align*}
c_{\FKM}(\mathcal{F})&=\rank(\mathcal{F})+ \loc_{\FKM}(\mathcal{F}) \\
\loc_{\FKM}(\mathcal{F})&=(\text{number of singular points of $\mathcal{F}$
  in~$\Pp^1$})+\sum_{x\in\Pp^1}\swan_x(\mathcal{F}).
\end{align*}

\begin{cor}
 For a middle-extension sheaf $\mathcal{F}$ on~$\Aa^1$ over a
  finite field, the inequalities
  $$
  c_{\FKM}(\mathcal{F})\leq c_u(\mathcal F) \leq 3c_{\FKM}^2(\mathcal
  F)
  $$
  hold, with~$u$ the obvious embedding $\Aa^1\to \Pp^1$ of degree~$1$.
\end{cor}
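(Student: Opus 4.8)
The plan is to read the corollary off Theorem~\ref{curveconstruction}\eqref{curveconstruction:item3} and then compare the two ``local conductors'' $\loc(\mathcal{F})$ and $\loc_{\FKM}(\mathcal{F})$. First I would specialize Theorem~\ref{curveconstruction}\eqref{curveconstruction:item3} to $C=\Aa^1$ with the standard immersion $u\colon \Aa^1\to\Pp^1$ of degree $1$: the smooth compactification is $\overline{C}=\Pp^1$, so $g=0$; the set $\overline{C}-C=\{\infty\}$ has $n=1$ point; and $d=1$. Hence $\max(d,2g+n-2)=\max(1,-1)=1$ and $\max(d,2g+n+2)=\max(1,3)=3$, so the theorem reads
\[
\rank(\mathcal{F})+\loc(\mathcal{F})\ \le\ c_u(\mathcal{F})\ \le\ 3\rank(\mathcal{F})+\loc(\mathcal{F}).
\]

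Next I would compare $\loc$ with $\loc_{\FKM}$. Write $\loc(\mathcal{F})=\sum_{x\in\Aa^1}\drp_x(\mathcal{F})+\sum_{x\in\Pp^1}\swan_x(\mathcal{F})$ (using $\jump_x=0$ for the middle extension) and $\loc_{\FKM}(\mathcal{F})=\#\{\text{singular points of }\mathcal{F}\text{ in }\Pp^1\}+\sum_{x\in\Pp^1}\swan_x(\mathcal{F})$. The Swan parts coincide, so everything reduces to comparing the count of singular points with $\sum_{x\in\Aa^1}\drp_x(\mathcal{F})$. The key elementary observation is that, $\mathcal{F}$ being a middle extension, a finite point $x$ is a singular point exactly when the stalk $\mathcal{F}_x$ (the space of inertia invariants at $x$) is a proper subspace of the generic stalk, i.e.\ exactly when $\drp_x(\mathcal{F})\ge 1$; hence the number of finite singular points is at most $\sum_{x\in\Aa^1}\drp_x(\mathcal{F})$, while always $\drp_x(\mathcal{F})\le\rank(\mathcal{F})$.

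From the first half of this observation I would get $\loc_{\FKM}(\mathcal{F})\le\loc(\mathcal{F})$ (after absorbing the contribution of the point $\infty$ — see below), whence the left-hand inequality
\[
c_{\FKM}(\mathcal{F})=\rank(\mathcal{F})+\loc_{\FKM}(\mathcal{F})\le\rank(\mathcal{F})+\loc(\mathcal{F})\le c_u(\mathcal{F}).
\]
From the second half I would get $\loc(\mathcal{F})\le\rank(\mathcal{F})\cdot\#\{\text{finite singular points}\}+\sum_{x\in\Pp^1}\swan_x(\mathcal{F})\le\rank(\mathcal{F})\loc_{\FKM}(\mathcal{F})\le c_{\FKM}(\mathcal{F})^2$, using $\rank(\mathcal{F})\ge 1$ and $\loc_{\FKM}(\mathcal{F})\le c_{\FKM}(\mathcal{F})$, $\rank(\mathcal{F})\le c_{\FKM}(\mathcal{F})$. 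Combined with $\rank(\mathcal{F})\le c_{\FKM}(\mathcal{F})$ this gives
\[
c_u(\mathcal{F})\le 3\rank(\mathcal{F})+\loc(\mathcal{F})\le 3c_{\FKM}(\mathcal{F})+c_{\FKM}(\mathcal{F})^2\le 3c_{\FKM}(\mathcal{F})^2,
\]
the last step because $3c+c^2\le 3c^2$ for every integer $c\ge 2$, the case $c_{\FKM}(\mathcal{F})=1$ being settled directly (then $\mathcal{F}$ is lisse on $\Pp^1$, hence constant, $\loc(\mathcal{F})=0$, and $c_u(\mathcal{F})\le 3$).

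The step I expect to be the main obstacle is the bookkeeping at the point $\infty$: $\loc(\mathcal{F})$ records only $\swan_\infty(\mathcal{F})$ there, whereas $\loc_{\FKM}(\mathcal{F})$ also counts $\infty$ among the singular points, so the naive comparison only yields $\loc_{\FKM}(\mathcal{F})\le\loc(\mathcal{F})+1$, one short of what the clean bound $c_{\FKM}(\mathcal{F})\le c_u(\mathcal{F})$ requires. To close this gap I would fall back on the sharper description $c_u(\mathcal{F})=\max\bigl(\rank(\mathcal{F}),\sum_i h^i_c(\Aa^1,\mathcal{F})\bigr)$ of Theorem~\ref{curveconstruction}\eqref{curveconstruction:item1} together with the Grothendieck--Ogg--Shafarevich formula, which in the offending case (when $\infty$ is singular and the finite drops alone do not account for it) forces $\sum_i h^i_c(\Aa^1,\mathcal{F})$ to be large enough to compensate; the rest of the argument is purely combinatorial.
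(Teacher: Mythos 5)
Your specialization of part~(3) of Theorem~\ref{curveconstruction} (with $g=0$, $n=1$, $d=1$) and your treatment of the upper bound are essentially the paper's: the paper bounds $\loc(\mathcal F)\leq\rank(\mathcal F)\loc_{\FKM}(\mathcal F)$ via $\drp_x\leq\rank$ and concludes $c_u(\mathcal F)\leq 3\rank(\mathcal F)c_{\FKM}(\mathcal F)\leq 3c_{\FKM}(\mathcal F)^2$, while you split off the case $c_{\FKM}(\mathcal F)=1$; both are fine.

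The genuine gap is in the lower bound, and you have in fact put your finger on a real defect of the statement rather than a removable technicality. As you observe, a singular point at $\infty$ contributes $1$ to the singular-point count in $\loc_{\FKM}$ but only $\swan_\infty$ to $\loc$, so the direct comparison gives only $\loc_{\FKM}(\mathcal F)\leq\loc(\mathcal F)+1$; the paper's proof simply asserts $\loc_{\FKM}(\mathcal F)\leq\loc(\mathcal F)$ at this step. Your proposed repair --- that part~(1) of Theorem~\ref{curveconstruction} together with Grothendieck--Ogg--Shafarevich ``forces $\sum_i h^i_c(\Aa^1,\mathcal F)$ to be large enough to compensate'' --- is not carried out, and it cannot be. Take $\mathcal F=\mcL_\psi$ for a non-trivial additive character $\psi$: this is a middle-extension (indeed lisse) sheaf on $\Aa^1$ of rank $1$ with a single singular point, at $\infty$, where $\swan_\infty=1$, so $c_{\FKM}(\mcL_\psi)=1+1+1=3$; on the other hand all compactly supported Betti numbers of $\mcL_\psi$ on $\Aa^1$ vanish, so part~(1) of Theorem~\ref{curveconstruction} gives $c_u(\mcL_\psi)=\max(1,0)=1$. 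Hence $c_{\FKM}(\mcL_\psi)=3>1=c_u(\mcL_\psi)$, and the left-hand inequality of the corollary fails as written (the extension by zero to $\Aa^1$ of a non-trivial Kummer sheaf gives a tame example). The same computation shows that the lower bound $\rank(\mathcal F)+\loc(\mathcal F)\leq c_u(\mathcal F)$ of Theorem~\ref{curveconstruction}, on which both you and the paper rely, already fails: the Euler--Poincar\'e identity on $\Aa^1$ reads $\chi_c(\Aa^1,\mathcal F)=\rank(\mathcal F)-\loc(\mathcal F)$, not $\rank(\mathcal F)+\loc(\mathcal F)$, so one only gets $c_u(\mathcal F)\geq\max\bigl(\rank(\mathcal F),|\rank(\mathcal F)-\loc(\mathcal F)|\bigr)$. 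What does survive is a bound of the shape $c_{\FKM}(\mathcal F)\ll c_u(\mathcal F)$ with an absolute implied constant, which is what applications need; but the clean inequality $c_{\FKM}(\mathcal F)\leq c_u(\mathcal F)$ should not be claimed, and the closing promise that ``the rest of the argument is purely combinatorial'' is exactly where your proof would break down.
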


\begin{proof}
  We apply Theorem~\ref{curveconstruction} with $g=0$, $n=1$,
  and~$d=1$.  On noting the inequality~$\drp_x(\mathcal{F})\leq \rank(\mathcal{F})$, the
  upper-bound in~\eqref{curveconstruction:item3} leads to
  \begin{align*}
    c_u(\mathcal{F})
    \leq 3\rank(\mathcal{F})+ \loc(\mathcal{F})
    &
      \leq
      3\rank(\mathcal{F})+\rank(\mathcal{F}) \loc_{\FKM}(\mathcal{F})
    \\
    &\leq 3\rank(\mathcal{F})c_{\FKM}(\mathcal{F})
      \leq 3c_{\FKM}(\mathcal{F})^2.
  \end{align*}
  In addition, the inequality $\loc_{\FKM}(\mathcal{F})\leq \loc(\mathcal{F})$ holds, hence the inequalities 
  $$
  c_{\FKM}(\mathcal{F})\leq \rank(\mathcal{F})+\loc(\mathcal{F})
  \leq c_u(\mathcal{F})
  $$
  by the lower-bound in \loccit
\end{proof}

\subsection{Artin--Schreier and Kummer sheaves}
\label{sec-ask}

In this section, $k$ is the algebraic closure of a finite field~$\Ff$. Given an $\ell$-adic additive character~$\psi\colon\Ff\to \Ql^\times$ (\resp an $\ell$-adic multiplicative character~$\chi\colon \Ff^{\times} \to \Ql^\times$), we
denote by~$\mcL_{\psi}$ (\resp $\mcL_{\chi}$) the corresponding
\emph{Artin--Schreier sheaf} on~$\Aa^1$ over~$\Ff$ (\resp the \emph{Kummer sheaf}
on~$\Gg_m$ over~$\Ff$), as defined, e.g., in~\cite[Sommes
exp.]{sga4h}. These are lisse sheaves of rank one; the Artin--Schreier sheaf is
wildly ramified at~$\infty$ if~$\psi$ is non-trivial, whereas the Kummer sheaf
is tamely ramified at~$0$ and~$\infty$. We also denote by~$\mcL_{\psi}$ and~$\mcL_{\chi}$ the middle extensions of these sheaves to~$\Pp^1$.

\begin{prop}\label{pr-as} Let $C = \Pp^1$ over~$k$ and
  $u=\mathrm{Id}$.
  \par
  \emph{(1)} For any non-trivial additive character $\psi$ of $\Ff$, we have
  $c_u(\mcL_\psi) =1$.
  \par
  \emph{(2)} For any non-trivial multiplicative character $\chi$ of
  $\Ff^{\times}$, we have $c_u(\mcL_\chi) =1$.
\end{prop}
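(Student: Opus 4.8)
The plan is to compute both complexities directly from the explicit curve formula, Theorem~\ref{curveconstruction}\eqref{curveconstruction:item1}, applied with $C=\Pp^1$ and $u=\mathrm{Id}$. Here the smooth compactification is $\overline{C}=C=\Pp^1$, so the genus is $g=0$, there are no points at infinity, and the degree $d$ of $u(\Pp^1)$ inside $\Pp^1$ equals $1$; hence \eqref{eq-curve} becomes
\[
c_u(A)=\max\Bigl(\sum_{i\in\Zz}\dim\mathcal H^i(A)_\eta,\ \sum_{i\in\Zz}h^i_c(\Pp^1,A)\Bigr).
\]
Since $\mcL_\psi$ (resp. $\mcL_\chi$) is the middle extension of a lisse sheaf of rank one, it is concentrated in a single degree at the generic point and its generic stalk has dimension $1$, so the first entry of the maximum is $1$ in both cases. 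Thus the whole task reduces to showing that the total compactly supported cohomology $\sum_i h^i_c(\Pp^1,\mcL_\psi)$ and $\sum_i h^i_c(\Pp^1,\mcL_\chi)$ vanishes.

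The next step is to identify each middle extension with an extension by zero. Since $\psi$ is non-trivial, $\mcL_\psi$ is wildly ramified at $\infty$, so the $I_\infty$-invariants of its generic stalk vanish; therefore its middle extension to $\Pp^1$ has zero stalk at $\infty$ and coincides with $j_!\mcL_\psi$ for $j\colon\Aa^1\hookrightarrow\Pp^1$. Likewise, since $\chi$ is non-trivial, $\mcL_\chi$ is tamely but non-trivially ramified at $0$ and at $\infty$, so the corresponding inertia invariants vanish and the middle extension to $\Pp^1$ coincides with $j_!\mcL_\chi$ for $j\colon\Gg_m\hookrightarrow\Pp^1$. The excision triangle then gives $h^i_c(\Pp^1,\mcL_\psi)=h^i_c(\Aa^1,\mcL_\psi)$ and $h^i_c(\Pp^1,\mcL_\chi)=h^i_c(\Gg_m,\mcL_\chi)$ for every $i$.

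It then remains to invoke the classical vanishing of the cohomology of non-trivial Artin--Schreier and Kummer sheaves: for $\psi\ne 1$ one has $H^i_c(\Aa^1,\mcL_\psi)=0$ for all $i$, and for $\chi\ne 1$ one has $H^i_c(\Gg_m,\mcL_\chi)=0$ for all $i$. In both cases $H^0_c=0$ since the base is affine and the sheaf is lisse, $H^2_c=0$ since the geometric coinvariants of a non-trivial rank-one character sheaf are zero, and $H^1_c=0$ because the Euler--Poincaré characteristic already vanishes: by Grothendieck--Ogg--Shafarevitch $\chi_c(\Aa^1,\mcL_\psi)=\chi_c(\Aa^1,\Ql)-\swan_\infty(\mcL_\psi)=1-1=0$, while $\mcL_\chi$ is tame so $\chi_c(\Gg_m,\mcL_\chi)=\chi_c(\Gg_m,\Ql)=0$. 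Combining, both complexities equal $\max(1,0)=1$. None of the steps is a genuine obstacle; the only point deserving care is the identification of the middle extension to $\Pp^1$ with an extension by zero, which is exactly the statement that a rank-one sheaf ramified at a point has no non-zero inertia invariants there.
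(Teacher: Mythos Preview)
Your argument is correct and follows exactly the approach of the paper, which simply invokes Theorem~\ref{curveconstruction}\eqref{curveconstruction:item1} together with the standard vanishing of the Betti numbers of non-trivial Artin--Schreier and Kummer sheaves. You have merely made explicit what the paper calls ``standard knowledge of the relevant Betti numbers.''
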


\begin{proof}
  These follow from Theorem \ref{curveconstruction}\,\eqref{curveconstruction:item1} by the
  standard knowledge of the relevant Betti numbers.
\end{proof}



We can now easily estimate the complexity of more general
Artin--Schreier and Kummer sheaves, which are building blocks of many
of the sheaves used in applications to analytic number theory.

\begin{defi}
  Let~$X$ be a scheme over~$\Ff$ and $U$ a dense open subset
  of~$X$. Let~$j$ be the open immersion $U\to X$. For each morphism $f\colon U\to\Aa^1$, we define the \emph{Artin--Schreier} sheaf $\mcL_{\psi(f)}$ on~$X$ as~$\mcL_{\psi(f)}=j_!f^*\mcL_{\psi}$. For each morphism~$g\colon U\to\Gg_m$, we define the \emph{Kummer sheaf} $\mcL_{\chi(g)}$ on~$X$ as \hbox{$\mcL_{\chi(g)}=j_!g^*\mcL_{\chi}$}.
\end{defi}

Let $(X,u)$ be a quasi-projective variety over~$k$, and let
$v\colon \Aa^1\to \Pp^1$ be the obvious embedding (or its restriction to~$\Gg_m$).  Combined with~\eqref{eq-6pullback}
and~\eqref{eq-6extzero}, Proposition \ref{pr-as} gives the estimates 
\begin{gather*}
  c_u(\mcL_{\psi(f)})\ll c_{u\circ j,u}(j)c_{u\circ j,v}(f)
  \\
  c_u(\mcL_{\chi(g)})\ll c_{u\circ j,u}(j)c_{u\circ j,v}(g),
\end{gather*}
(with the same notation as above, and when~$\psi$ and~$\chi$ are
non-trivial), where the implied constants only depend on the embedding
dimension of~$u$. According to Remark~\ref{rem:uniformpolynomials}, these quantities are uniformly bounded
if~$f$ or $g$ varies among morphisms with ``bounded degree''.  One can
sometimes be more explicit, using tools like
Proposition~\ref{pr-explicit-uniform}. Here is a simple example:

\begin{cor}\label{cor-as}
  Let~$f_1$ and~$f_2$ be polynomials in~$n$ variables with coefficients in~$k$, with
  $f_2$ non-zero. Let~$U\subset \Aa^n$ be the open set where~$f_2$ does not vanish. Set $f=f_1/f_2\colon U\to \Aa^1$. Then the following estimate holds: 
  $$
  c_{u}(\mcL_{\psi(f)})\ll (4+\deg(f_1)+\deg(f_2))^{n+2}
  $$
  where $u \colon \Aa^n\to\Pp^n$  is the standard embedding and the implied constant only depends on~$n$.
\end{cor}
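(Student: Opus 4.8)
The plan is to reduce the estimate to the general bounds for Artin--Schreier sheaves stated just above the corollary, combined with the explicit uniformity of Proposition~\ref{pr-explicit-uniform}. First I would set up the factorization: let $j\colon U\to\Aa^n$ be the open immersion of the complement of the zero locus of $f_2$, and let $v\colon\Aa^1\to\Pp^1$ (or its restriction $\Gg_m\to\Pp^1$, but here the target is $\Aa^1$) be the standard embedding. By definition $\mcL_{\psi(f)}=j_!f^*\mcL_\psi$, and the general estimate preceding the corollary gives
\[
c_u(\mcL_{\psi(f)})\ll c_{u\circ j,u}(j)\,c_{u\circ j,v}(f),
\]
with an implied constant depending only on $n$. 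So it remains to bound the two complexities of morphisms on the right-hand side by $(4+\deg(f_1)+\deg(f_2))^{n+2}$, up to a constant depending only on $n$.

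The key point is to realize both $j$ and $f$ as morphisms to which Proposition~\ref{pr-explicit-uniform} applies. For $c_{u\circ j, u}(j)$: the open set $U$ is the complement in $\Aa^n$ of the hypersurface $f_2=0$, which can be written as a closed subvariety of $\Aa^{n+1}$ cut out by the single equation $y f_2(x)=1$ (introducing one auxiliary coordinate $y$), a single equation of degree $\deg(f_2)+1$. The immersion $j$ then corresponds to the projection $(x,y)\mapsto x$, given by $n$ polynomials of degree $1$. Applying Proposition~\ref{pr-explicit-uniform} with $n_1=n+1$, $r=1$, $n_2=n$, and $d=\max(\deg(f_2)+1,1)$ yields $c_{u\circ j,u}(j)\le B(n+1,n+1,\deg(f_2)+1)$, which is $\ll (4+\deg(f_2))^{n+2}$ after absorbing the factor $6\cdot 2^{n+1}$ into the constant depending on $n$ (note $(3+(n+1)(\deg(f_2)+1))\ll_n (4+\deg(f_2))$). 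For $c_{u\circ j,v}(f)$: with the same presentation of $U$ inside $\Aa^{n+1}$, the map $f=f_1/f_2$ is, on $U$ where $y f_2=1$, equal to the polynomial $(x,y)\mapsto y f_1(x)$ of degree $\le \deg(f_1)+1$ in the coordinates $(x,y)$. Again Proposition~\ref{pr-explicit-uniform} (now with $n_1=n+1$, $r=1$, $n_2=1$, $d=\max(\deg(f_2)+1,\deg(f_1)+1)$) gives $c_{u\circ j,v}(f)\le B(n+1,2,\max(\deg(f_2)+1,\deg(f_1)+1))\ll_n (4+\deg(f_1)+\deg(f_2))^{n+2}$.

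Multiplying the two bounds and reabsorbing constants, one gets $c_u(\mcL_{\psi(f)})\ll (4+\deg(f_1)+\deg(f_2))^{2(n+2)}$, which is weaker than claimed; to get the sharper exponent $n+2$ I would instead bound the product of the two Betti-number sums directly, rather than bounding each complexity separately. Concretely, going back to the proof of the general Artin--Schreier estimate, one has $c_u(\mcL_{\psi(f)})$ controlled by sums of compactly supported Betti numbers of intersections of the graph of $f$ (inside $\Pp^n\times\Pp^1$), restricted to $U$, with products of generic linear subspaces; this intersection is, after passing to the chart $\Aa^n\times\Aa^1$ and eliminating the rational function, an affine variety in $\Aa^{n+1}$ cut out by one equation $yf_2=1$ together with the linear conditions, where the ``additive character twist'' contributes nothing extra to the Betti count beyond what Katz's theorem already absorbs. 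Applying \cite[Cor.\,of Th.\,1, p.\,34]{katzbetti} once to this single affine variety of ambient dimension $n+1$, defined by $O_n(1)$ equations of degree $O_n(4+\deg f_1+\deg f_2)$, directly yields the bound $B(n+1,\cdot,\cdot)\ll_n (4+\deg f_1+\deg f_2)^{n+2}$. The main obstacle is precisely this bookkeeping: making sure that clearing the denominator $f_2$ and recording the Artin--Schreier twist only increases the number of defining equations and their degrees by amounts depending on $n$ alone, so that the single application of Katz's estimate gives the exponent $n+2$ rather than a product of two such estimates. Once that reduction is set up cleanly, the rest is the routine arithmetic of the function $B(N,r,d)$.
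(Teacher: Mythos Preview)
Your setup is exactly the paper's: embed $U$ as the closed subscheme $\{yf_2(x)=1\}$ of $\Aa^{n+1}$ and write $f$ as $(x,y)\mapsto f_1(x)y$. You are also right that bounding the two factors $c_{u\circ j,u}(j)$ and $c_{u\circ j,v}(f)$ separately via Proposition~\ref{pr-explicit-uniform} and multiplying only gives exponent $2(n+2)$; the paper's one-line proof glosses over this.

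The clean fix is not to merge two applications of Katz, but to notice that the first factor is spurious. Since $(u\circ j)_!=u_!\circ j_!$, one has the \emph{identity}
\[
c_u(\mcL_{\psi(f)})=c_u\bigl(j_!f^*\mcL_\psi\bigr)=c\bigl((u\circ j)_! f^*\mcL_\psi\bigr)=c_{u\circ j}(f^*\mcL_\psi),
\]
so the appeal to~\eqref{eq-6extzero} (and with it the factor $c_{u\circ j,u}(j)$) is unnecessary. A single use of~\eqref{eq-6pullback} gives $c_u(\mcL_{\psi(f)})\ll c_{u\circ j,v}(f)$, with implied constant depending only on $n$, and only this one quantity remains to be bounded.

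Your discussion of $c_{u\circ j,v}(f)$ via the graph of $f$ is on the right track, but the phrase ``the additive character twist contributes nothing extra'' misidentifies the issue: there is no Artin--Schreier sheaf in $c_{u\circ j,v}(f)$ at all --- by Definition~\ref{def-cuv} it is a maximum of sums of $\Ql$-Betti numbers of the varieties $U\cap L_p$ and $U\cap L_p\cap f^{-1}(c)$ (for $L_p\subset\Aa^n$ a generic affine $p$-plane and $c\in\Aa^1$ generic). Note also that Proposition~\ref{pr-explicit-uniform} cannot be cited verbatim here, since its statement uses the embedding $U\hookrightarrow\Pp^{n+1}$ while $u\circ j$ lands in $\Pp^n$; but its proof goes through unchanged: each of these varieties is closed in $\Aa^{p+1}$ (adjoin $y=1/f_2$), cut out by at most two equations of degree at most $\max(\deg f_1,\deg f_2)+1$, and Katz's bound yields
\[
c_{u\circ j,v}(f)\leq B\bigl(n+1,\,2,\,\max(\deg f_1,\deg f_2)+1\bigr)\ll(4+\deg f_1+\deg f_2)^{n+2}.
\]
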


\begin{proof}
  Apply the bound above combined with
  Proposition~\ref{pr-explicit-uniform}, viewing $U$ as the subscheme
  of $\Aa^{n+1}$ defined by the equation $f_2(x)y=1$, and $f\colon U\to\Aa^1$ as the morphism $(x,y)\mapsto f_1(x)y$.
\end{proof}

\subsection{Further examples}

We collect here a few more examples of ``standard'' sheaves whose complexity is easily estimated. Again,~$k$ is the algebraic
closure of a finite field~$\Ff$. 

\begin{prop}\mbox{}
  \begin{enumerate}
  \item Let~$\mathcal{K}$ be any hypergeometric sheaf on~$\Gg_m$ in the
    sense of Katz\footnote{See~\cite[\S\,8.2]{esde} for the definition and properties of hypergeometric sheaves.}. We have
    $$
    c_u(\mathcal{K})\ll \rank(\mathcal{K})
    $$
    where the implied constant is absolute and~$u\colon \Gg_m\to \Pp^1$ is the standard embedding.
    
  \item Let $(E,u)$ be an elliptic curve over $\Ff$ embedded
    in~$\Pp^2$ as a cubic curve. Let $\theta$ be a non-trivial character
    $E(\Ff) \to \Ql^\times$, and let $\mcL_\theta$ be the associated
    character sheaf defined by the Lang isogeny. Then
    $c_u(\mcL_\theta)=3$. 
    
  \item Let $\{x_1,\ldots,x_4\}$ be four distinct points in
    $\Pp^1(\Ff)$. Let $\mathcal F$ be an irreducible middle\nobreakdash-extension sheaf of rank $2$ on $\Pp^1$ over~$\Ff$ with unipotent local monodromy at those four
    points.\footnote{\ Deligne and Flicker~\cite[Prop.\,7.1]{deligne-flicker} have
      proved that there are $|\Ff|$ such sheaves up to isomorphism and
      $\Ff$-twisting.}  Then $c(\mathcal F) = 2$.
  \end{enumerate}
\end{prop}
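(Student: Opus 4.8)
The plan is to derive all three assertions from the explicit description of the complexity of a complex on a smooth curve furnished by Theorem~\ref{curveconstruction}\,\eqref{curveconstruction:item1}, namely $c_u(A)=\max\bigl(d\sum_{i}\dim\mathcal H^i(A)_\eta,\ \sum_i h^i_c(C,A)\bigr)$, where $d$ is the degree of the image of the embedding $u$ and $\eta$ is the generic point of $C$. In each case the sheaf in question sits in degree $0$, so the first term is simply $d\cdot\rank$; the second term will be computed from the Grothendieck--Ogg--Shafarevich formula together with the vanishing of $h^0_c$ and $h^2_c$, which comes from the (geometric) irreducibility and non-triviality of the sheaves involved — recall that for a complex $K$ on $C$ with geometrically non-trivial irreducible top perverse cohomology one has no trivial geometric sub or quotient, and that a geometric subspace of invariants, being canonical, is Frobenius-stable.

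For (1), I would take $C=\Gg_m$ with the standard embedding into $\Pp^1$, so that $d=1$ (the closure of $\Gg_m$ in $\Pp^1$ is $\Pp^1$ itself). After possibly replacing $\mathcal K$ by its pullback along $x\mapsto x^{-1}$, an automorphism of $\Pp^1$ that changes neither the rank nor the complexity, one may assume $\mathcal K$ has type $(n,m)$ with $n\geq m$. From Katz's theory of hypergeometric sheaves (see \cite[\S8.2]{esde}) I would use that $\mathcal K$ is geometrically irreducible of generic rank $\max(n,m)=n$, is tame at $0$ and at $1$, and has $\swan_\infty(\mathcal K)=1$ when $n>m$ (and is tame everywhere when $n=m$, with a one-dimensional stalk at $1$). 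Geometric irreducibility and non-triviality give $h^0_c(\Gg_m,\mathcal K)=h^2_c(\Gg_m,\mathcal K)=0$, hence $\sum_i h^i_c(\Gg_m,\mathcal K)=-\chi_c(\Gg_m,\mathcal K)$, and a Grothendieck--Ogg--Shafarevich computation (removing the point $1$ first in the case $n=m$) gives $\chi_c(\Gg_m,\mathcal K)=-1$ in all cases. Therefore $c_u(\mathcal K)=\max(n,1)=\rank\mathcal K$ as soon as $\rank\mathcal K\geq 1$, which is stronger than claimed.

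For (2), I would use $C=E$, which is proper of genus $1$, embedded as a plane cubic, so that $d=3$ and $\rank\mcL_\theta=1$, giving $d\cdot\rank=3$ for the first term. The sheaf $\mcL_\theta$ is lisse on $E$, and since $\theta\neq 1$ and the Lang torsor $L\colon E\to E$ with group $E(\Ff)$ is geometrically connected, $\mcL_\theta$ is geometrically non-trivial; hence $H^0(E_k,\mcL_\theta)=H^2(E_k,\mcL_\theta)=0$, while $\chi(E_k,\mcL_\theta)=\rank\cdot\chi(E_k,\Ql)=0$ because $E$ has genus $1$ and $\mcL_\theta$ is everywhere unramified. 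So $\sum_i h^i_c(E,\mcL_\theta)=0$ and $c_u(\mcL_\theta)=\max(3,0)=3$. For (3), I would take $C=\Pp^1$ with the identity embedding, so $d=1$ and the first term equals $\rank\mathcal F=2$; here ``unipotent'' is understood in the non-trivial (principal unipotent) sense that matches the Deligne--Flicker setting. Then $\mathcal F$ is tame everywhere, has a one-dimensional stalk at each $x_i$ so that $\drp_{x_i}(\mathcal F)=1$, and is geometrically non-trivial; being irreducible it has no trivial geometric sub or quotient, so $H^0(\Pp^1_k,\mathcal F)=H^2(\Pp^1_k,\mathcal F)=0$. Excision and Grothendieck--Ogg--Shafarevich give, with $U=\Pp^1\setminus\{x_1,\dots,x_4\}$, the equality $\chi(\Pp^1,\mathcal F)=\chi_c(U,\mathcal F|_U)+\sum_i\dim\mathcal F_{x_i}=2(2-4)+4=0$, so $\sum_i h^i_c(\Pp^1,\mathcal F)=0$ and $c(\mathcal F)=\max(2,0)=2$.

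The only points requiring real care are the structural inputs on hypergeometric sheaves in (1) — geometric irreducibility, the value of the rank, and the Swan conductor at infinity — which I would cite directly from Katz, and the verifications in (2) and (3) that the sheaves are geometrically non-trivial, so that the compactly supported cohomology is concentrated in the middle degree with Euler characteristic $0$ and hence vanishes. Beyond these inputs, each part reduces to a one-line Euler--Poincaré computation, and I do not anticipate a genuine obstacle.
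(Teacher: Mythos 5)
Your proposal is correct and follows essentially the same route as the paper: apply Theorem~\ref{curveconstruction}\,(1) and evaluate the two terms in the maximum via vanishing of $h^0_c$, $h^2_c$ (from geometric irreducibility/non-triviality) together with a Grothendieck--Ogg--Shafarevich computation. You simply spell out in more detail the inputs the paper leaves implicit (notably that $\sum_i h^i_c(\Gg_m,\mathcal K)=1$ for a hypergeometric sheaf, and the drop-one convention for the unipotent local monodromies in part (3)).
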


\begin{proof} All these statements follow straightforwardly from calculating
  Betti numbers and Theorem \ref{curveconstruction}\,(1).

  In the case of a hypergeometric sheaf, the cohomology is one-dimensional. 

  In case (2), we apply~\eqref{eq-curve} with $(n,g,d)=(0,1,3)$. The
  rank of $\mathcal L_\theta$ is $1$, so the first term in the maximum
  is equal to~$3$. The second term is $0$, since $h^i(E, \mcL_\theta)=0$ for all $i$ by the assumption that $\theta$ is non-trivial. 

  In case (3), we apply the formula with $(n,g,d)=(0,0,1)$.  The
  first term in the maximum is~$2$ since the rank of~$\mathcal{F}$
  is~$2$.  In the second term, we have
  $$
  h^0(\Pp^1, \mathcal F)= h^2(\Pp^1, \mathcal F) =0,
  $$
  since~$\mathcal{F}$ is an irreducible middle-extension sheaf of
  rank~$2$.  Thus, the sum of Betti numbers is equal to
  $- \chi(\Pp^1,\mathcal F)$. The Euler--Poincar\'{e} characteristic formula
  for $\mathcal F$ gives
  $$
  \chi(\Pp^1,\mathcal{F})=2\chi(\Pp^1,\Ql)-4\times 1=0
  $$
  Hence, the sum of the Betti numbers vanishes and $c(\mathcal{F})=2$.
\end{proof}

\subsection{Tame ramification}
  
When a sheaf naturally appears in higher dimension without being obtained from simpler sheaves by means of Grothendieck's six functors, one may
still be able to estimate its complexity, using either the direct
definition of complexity, or sometimes the uniformity statement of
Proposition~\ref{uniformity}. We illustrate this here with some
results involving tame ramification.

\begin{prop}\label{tame} Let $X$ be a smooth scheme over $k$, and let 
  $j\colon X \to \overline{X}$ be an embedding into a smooth scheme
  $\overline{X}$ such that the complement $D= \overline{X}\setminus X$
  is a divisor with normal crossings. Let
  $i\colon \overline{X} \to \Pp^n_k$ be a closed immersion, and let
  $\mathcal F$ be a lisse sheaf on $X$ that is tamely ramified
  along~$D$.  Then the following holds:
  $$
  c_{i\circ j}(\mathcal F) \ll \rank(\mathcal{F}) c(i\circ j).
  $$ 
  \end{prop}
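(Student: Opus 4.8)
The plan is to reduce the statement, via a standard Betti number bound, to counting the intersections of $\mathcal{F}$ (or rather $\overline{X}$ and the divisor $D$) with a generic linear section, and then to use the tameness hypothesis together with the Grothendieck--Ogg--Shafarevich formula on a generically-chosen curve. First I would apply the definition of $c_{i\circ j}(\mathcal{F})$: it is $\max_{0\le m\le N}\sum_i h^i\bigl(\Pp^m_{k'}, l_{\mtrx{a}_m}^*(i\circ j)_!\mathcal{F}\bigr)$, where $N$ is the embedding dimension. By the projection formula this equals $\sum_i h^i_c\bigl(L\cap X, \mathcal{F}|_{L\cap X}\bigr)$ where $L$ is a generic linear subspace of dimension $m$ in $\Pp^N$. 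So it suffices to bound, uniformly in $m$ and in the generic choice of $L$, the sum of compactly supported Betti numbers of the restriction of $\mathcal{F}$ to $L\cap X$ by $O\bigl(\rank(\mathcal{F})\,c(i\circ j)\bigr)$.

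The key geometric observation is that, for a \emph{generic} linear section $L$ of dimension $m$, the intersection $L\cap \overline{X}$ is again smooth (Bertini), the intersection $L\cap D$ is again a normal crossings divisor inside $L\cap\overline{X}$ (this is the content of Bertini-type transversality statements, and one should phrase it carefully since $D$ may be reducible and singular as a scheme — what matters is that $L$ is transverse to all strata of $D$), and the restriction $\mathcal{F}|_{L\cap X}$ is lisse of the same rank and still tamely ramified along $L\cap D$. Tameness is preserved because $\mathcal{F}$ tamely ramified along $D$ means the restriction to a generic transverse curve is tame at each puncture, and a generic $L$ cuts out such curves after a further generic linear section; formally one can invoke that tame ramification along a normal crossings divisor is detected on generic transverse curves, or argue by Abhyankar's lemma / the structure of the tame fundamental group. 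Thus the problem is reduced, by induction on $m$ descending to the curve case, to: a smooth affine curve $C = L\cap X$ inside a smooth projective curve $\overline{C} = L\cap\overline{X}$, with a lisse sheaf tamely ramified at the points of $\overline{C}\setminus C$.

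On a curve, I would invoke Theorem~\ref{curveconstruction}: for a middle-extension sheaf $\mathcal{G}$ on a smooth connected curve $C$ of genus $g$ with $n$ points at infinity and degree $d$, one has $c_u(\mathcal{G})\le \max(d,2g+n+2)\rank(\mathcal{G}) + \loc(\mathcal{G})$, and for a lisse tamely ramified sheaf $\loc(\mathcal{G}) = \sum_x \drp_x(\mathcal{G}) = 0$ since there are no Swan contributions and a lisse sheaf has no drops. Hence $c_u(\mathcal{G})\ll (g+n+d)\rank(\mathcal{G})$. The quantities $g$, $n$, $d$ for the generic linear section $L\cap X$ are exactly controlled by $c(i\circ j) = c(u_!\Ql)$: indeed $\sum_i h^i_c(L\cap X,\Ql)$ is bounded by $c(i\circ j)$ by the definition of complexity, and $-\chi_c(L\cap X,\Ql) = 2g-2+n$ up to bounded terms, while $d$ is bounded by the degree of $\overline{X}$ which is likewise controlled by $c(i\circ j)$ (via Theorem~\ref{singloc} or directly, since $c(i\circ j)$ already dominates Betti numbers of hyperplane sections). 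Assembling: $c_{i\circ j}(\mathcal{F})\ll \rank(\mathcal{F})\cdot c(i\circ j)$, with an implied constant depending only on $N$.

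The main obstacle I expect is the careful verification that generic linear sections preserve both the normal-crossings structure of $D$ \emph{and} the tameness of $\mathcal{F}$ along it — this is where one must be most careful, because a naive Bertini argument only handles smoothness of $L\cap\overline{X}$, and one genuinely needs transversality of $L$ with the stratification of $D$ by multiplicity of crossings, plus the stability of "tame along a normal crossings divisor" under such restriction. One clean way around this is to avoid the higher-dimensional induction altogether: pass directly to the case $m=1$ by taking $L$ to be a generic \emph{curve} section (so $L\cap\overline{X}$ is a generic smooth curve transverse to $D$, meeting $D$ in finitely many points at which $\mathcal{F}$ is tamely ramified by the very definition of tameness along a normal crossings divisor), handle that case with Theorem~\ref{curveconstruction} as above, and observe that for $m\ge 2$ the term $\sum_i h^i_c(L\cap X, \mathcal{F}|_{L\cap X})$ is controlled by the same input quantities since $L\cap X$ remains smooth and $\mathcal{F}|_{L\cap X}$ lisse — but then one still needs a Betti bound in dimension $\ge 2$, so ultimately the inductive reduction (or an appeal to Theorem~\ref{main}-style bilinear bounds applied to $\mathcal{F}$ against a constant sheaf, combined with an Euler characteristic estimate) seems unavoidable, and making the tame Euler characteristic estimate precise in the multi-dimensional setting is the crux.
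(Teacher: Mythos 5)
Your overall geometric picture is right --- generic linear sections preserve smoothness of $\overline{X}\cap H$, the normal crossings structure of $D\cap H$, and tameness of $\mathcal{F}$, and the tame Euler--Poincar\'e formula $\chi(X\cap H,\mathcal{F})=\rank(\mathcal{F})\,\chi(X\cap H)$ is indeed the arithmetic input --- but there is a genuine gap at the step you yourself flag as the crux. You set out to bound the \emph{sum of Betti numbers} $\sum_i h^i_c(L\cap X,\mathcal{F}|_{L\cap X})$ for a generic $m$-dimensional section $L$, and propose to get this by induction down to curves. This does not close: in dimension $\geq 2$ the Euler characteristic only controls the alternating sum, and neither the Grothendieck--Ogg--Shafarevich formula on a further curve section nor a ``Theorem~\ref{main}-style bilinear bound of $\mathcal{F}$ against a constant sheaf'' recovers the total Betti number from it (the latter is in fact circular, since $c(A)$ appears on the right-hand side of Theorem~\ref{main} and is exactly what you are trying to bound). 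A naive induction also would not produce a bound linear in $\rank(\mathcal{F})$ with a constant depending only on $n$.

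The missing idea is to exploit perversity to replace Betti numbers by Euler characteristics \emph{before} doing any geometry. Since $j$ is an affine open immersion and $i$ a closed immersion, $A=i_!j_!\mathcal{F}[\dim X]$ is perverse, and Corollary~\ref{genericbettibound}\,(2) gives $c(A)\asymp\|\cc(A)\|$, which by Lemma~\ref{constantsheafbasis} is controlled by $\sup_{0\leq m\leq n}|\cc(A)\cdot\cc(K_m)|$. By Theorem~\ref{geneufor} (equivalently Corollary~\ref{projectiveeulerformula}), each such intersection number is, up to sign, $\chi(X\cap H,\mathcal{F})$ for a generic $m$-dimensional linear $H$ --- an Euler characteristic, not a sum of Betti numbers. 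At that point your tameness argument applies directly in arbitrary dimension: $\chi(X\cap H,\mathcal{F})=\rank(\mathcal{F})\,\chi(X\cap H)$ and $|\chi(X\cap H)|\leq c(i\circ j)$ because $H$ is generic. No induction on $m$ and no descent to curves is needed. As written, your argument cannot be completed without this (or an equivalent) bridge from Euler characteristics back to complexity.
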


\begin{proof} Set $d=\dim(X)$.  The object $\mathcal{F}[d]$ is
  perverse since $\mathcal F$ is a lisse sheaf on a smooth scheme of
  dimension $d$. Since~$j$ is an affine open immersion, the object
  $j_! \mathcal F[d]$ is perverse on~$\overline{X}$, and since $i$ is
  a closed immersion, the object $A=i_!j_! \mathcal F[d]$ is also
  perverse (see~\cite[Cor.\,4.1.3]{BBDG}). Since~$A$ is perverse, Corollary~\ref{genericbettibound}\,(2) and
  Lemma~\ref{constantsheafbasis} yield
  $$
  c_{i \circ j}(\mathcal F) = c(A) \ll \| \CC(A)\| \ll \sup_{0\leq
    m\leq n} |\CC(A) \cdot \CC(K_m)|,
  $$
  where $K_m$ is the constant sheaf on a generic $m$-dimensional
  subspace of~$\Pp^n$.

  By Theorem \ref{geneufor}, $|\CC( A) \cdot \CC(K_m)|$ is the
  absolute value of the Euler--Poincaré characteristic of the pullback
  of $A$ to a generic $m$-dimensional subspace, or equivalently the
  Euler--Poincaré characteristic of the restriction of $\mathcal F$ to
  the intersection of a generic $m$\nobreakdash-dimensional subspace with $X$.
 
  For a generic $m$-dimensional subspace~$H$, the intersection
  $H\cap \overline{X}$ is smooth, the intersection with $\overline{D}$
  has normal crossings, and the restriction of $\mathcal F$ has tame
  ramification around the intersection with $D$. This implies the equality
  $$
  \chi(X\cap H,\mathcal{F})=\rank(\mathcal{F})\chi(X\cap H),
  $$ and
  since the linear subspace~$H$ is generic, the Euler--Poincaré
  characteristic of $X\cap H$ is bounded by $c(i \circ j)$.
\end{proof}

\begin{cor}\label{bwr} Let $(X,u)$ and $(Y,v)$ be quasi-projective algebraic
  varieties over~$k$, and let $\pi\colon Y \to X$ be a finite
  \'{e}tale covering. Assume that~$v$ is the composition of an
  embedding into a smooth normal crossings compactification
  $\overline{Y}$ with a closed immersion. Let $\mathcal F$ be a lisse sheaf on $X$ such that
  $\pi^* \mathcal F$ is tamely ramified along $\overline{Y}-Y$. Then
 the estimate
  \[
    c_u(\mathcal F) \ll \rank(\mathcal{F}) c_{v,u}(\pi) c(v)
  \]
  holds, with an implied constant that only depends on the embedding dimensions of~$(X, u)$ and~$(Y, v)$.
\end{cor}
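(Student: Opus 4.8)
The idea is to reduce the statement for $\mathcal F$ on $X$ to the statement for $\pi^*\mathcal F$ on $Y$, to which Proposition~\ref{tame} applies directly, and then to transport the bound back along $\pi$ using the continuity of the six functors. The bridge between the two is the elementary observation that, since we work with $\Ql$\nobreakdash-coefficients, a lisse sheaf is a direct summand of its pushforward along any finite étale cover.

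\textbf{Step 1: $\mathcal F$ is a direct summand of $\pi_*\pi^*\mathcal F$.} Working component by component on $X$, we may assume $\pi\colon Y\to X$ is finite étale of constant degree $\delta\geq 1$. There is the adjunction unit $\mathcal F\to \pi_*\pi^*\mathcal F$ and, because $\pi$ is finite étale, a trace morphism $\pi_*\pi^*\mathcal F\to\mathcal F$ whose composite with the unit is multiplication by $\delta$. Since $\delta$ is invertible in $\Ql$, the unit is a split monomorphism, so $\mathcal F$ is a direct summand of $\pi_*\pi^*\mathcal F$. By Proposition~\ref{easylin}\,\eqref{easylin:item2}, it follows that $c_u(\mathcal F)\leq c_u(\pi_*\pi^*\mathcal F)$.

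\textbf{Step 2: push back along $\pi$.} As $\pi$ is finite, hence proper, we have $\pi_*=\pi_!$, so $\pi_*\pi^*\mathcal F=\pi_!\pi^*\mathcal F$. Applying \eqref{eq-6extzero} of Theorem~\ref{pushpull} to the morphism $\pi\colon (Y,v)\to (X,u)$ and the object $\pi^*\mathcal F$ of $\Der(Y)$ gives
\[
c_u(\mathcal F)\leq c_u(\pi_!\pi^*\mathcal F)\ll c_{v,u}(\pi)\,c_v(\pi^*\mathcal F),
\]
with an implied constant depending only on the embedding dimensions of $(X,u)$ and $(Y,v)$.

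\textbf{Step 3: bound $c_v(\pi^*\mathcal F)$ by Proposition~\ref{tame}.} Write $v=i\circ j$, where $j\colon Y\to\overline Y$ is the embedding into the smooth normal crossings compactification and $i\colon \overline Y\to\Pp^{n'}$ is the closed immersion. Then $Y$ is smooth (being open in the smooth scheme $\overline Y$), $\pi^*\mathcal F$ is a lisse sheaf on $Y$ tamely ramified along $\overline Y-Y$ by hypothesis, and $\pi$ being finite étale surjective we have $\rank(\pi^*\mathcal F)=\rank(\mathcal F)$. Proposition~\ref{tame} therefore yields
\[
c_v(\pi^*\mathcal F)=c_{i\circ j}(\pi^*\mathcal F)\ll \rank(\pi^*\mathcal F)\,c(i\circ j)=\rank(\mathcal F)\,c(v).
\]
Combining this with Step~2 gives $c_u(\mathcal F)\ll \rank(\mathcal F)\,c_{v,u}(\pi)\,c(v)$, as desired.

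\textbf{Main obstacle.} There is no deep obstacle here: the argument is a short chain of the results already established. The only points requiring care are (i) invoking $\pi_!=\pi_*$ for the finite morphism $\pi$, so as to use \eqref{eq-6extzero} rather than \eqref{eq-6pushforward} and thereby avoid spurious factors of $c(u)$; and (ii) the direct-summand trick of Step~1, which is where the hypothesis of characteristic-zero coefficients is used. Everything else is bookkeeping with embedding dimensions and ranks.
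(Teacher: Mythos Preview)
Your proof is correct and follows essentially the same approach as the paper's own proof: split $\mathcal F$ off from $\pi_!\pi^*\mathcal F$ via the adjunction/trace composite (which is multiplication by the degree), then apply \eqref{eq-6extzero} and Proposition~\ref{tame}. The paper's argument is organized identically, only more tersely.
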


\begin{proof}
  The composition of the adjunction maps
  \[\mathcal F \to \pi_* \pi^* \mathcal F = \pi_! \pi^* \mathcal F \to
    \mathcal F
  \]
is the multiplication by the degree of
  $\pi$. The sheaf $\mathcal F$ is hence a direct factor of
  $\pi_! \pi^* \mathcal F$, and the estimates
  \[
    c_{u} (\mathcal F) \leq c_{u} ( \pi_! \pi^* \mathcal F) \ll
    c_{v,u}(\pi) c_{v}( \pi^* \mathcal F) \ll \rank(\mathcal{F})
    c_{v,u}(\pi) c(v),
  \]
  hold by Proposition~\ref{easylin}, part \eqref{eq-6extzero} of Theorem~\ref{pushpull}, and
 Proposition \ref{tame} applied to~$\pi^*\mathcal{F}$ on~$Y$.
\end{proof}

\subsection{The Riemann Hypothesis}\label{sec-rh}

In this section, we record the general ``quasi-ortho\-gonality'' version of the
Riemann Hypothesis over finite fields arising from Deligne's work and
the theory of complexity developed here. 

We work over a finite field~$\Ff$, and assume that~$k$ is an algebraic
closure of~$\Ff$. We fix an isomorphism $\iota$ from~$\Ql$ to~$\Cc$ to
define weights, as explained in the notation section. The notation for
trace functions is also recalled there.

We first explain the type of bounds for values of trace functions of
perverse sheaves that can be obtained using the complexity.

\begin{prop}\label{pr-bound}
  Let $(X,u)$ be a quasi-projective variety over $\Ff$ of pure dimension
  $d$. Let $M$ be a perverse sheaf on~$X$ that is pure of weight
  zero.
  \par
  \begin{enumerate}
  \item\label{pr-bound:item1} For any $x\in X(\Ff)$, the estimate $t_M(x)\ll c_u(M)$ holds.
  \item If $d\geq 1$ and $M$ is geometrically irreducible with support
    equal to~$X$, then the following estimate holds:
    \[
    t_M(x) \ll c_u(M) |\Ff |^{-1/2}.
    \]
    
  \item There exists a stratification of $X$, defined over~$\Ff$, into
    locally closed strata $S_j$ that are irreducible and smooth of pure
    dimension $d_j$ such that the degree of $u(S_j)$ is bounded in terms
    of $c_u(M)$ and the estimate
        $$
    t_M(x) \ll c_u(M) |\Ff|^{\frac{\max\{-d,-d_j-1\} }{2}}
    $$
    holds for all~$j$ and all~$x\in S_j(\Ff)$.
  \end{enumerate}
 In all these estimates, the implied constants only depend on the
  embedding dimension~of~$(X, u)$.
\end{prop}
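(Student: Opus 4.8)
The plan is to derive all three bounds from punctual purity together with the support conditions for perverse sheaves, using Theorem~\ref{pushpull} to trade stalk dimensions against the complexity. For part~\eqref{pr-bound:item1}, fix $x\in X(\Ff)$ with associated closed immersion $i_x\colon \Spec\Ff\to X$; then $(\Spec\Ff,u\circ i_x)$ is a quasi-projective variety embedded in $\Pp^n$ as a single point, so $c(u\circ i_x)=1$. Since $M$ is perverse, $\mathcal H^j(M)_x=\mathcal H^j(i_x^*M)$ vanishes for $j>0$; since $M$ is pure of weight $0$, the complex $i_x^*M$ is mixed of weight $\le 0$ (the functor $i_x^*$ does not increase weights, see \cite[5.1.14]{BBDG}), so $\Frob$ acts on $\mathcal H^j(M)_x$ with eigenvalues of absolute value $\le |\Ff|^{j/2}\le 1$ for $j\le 0$. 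Hence $|t_M(x)|\le\sum_{j}\dim\mathcal H^j(M)_x$. On the other hand $(u\circ i_x)_!i_x^*M$ is a skyscraper with stalk $i_x^*M$, whose complexity is exactly $\sum_j\dim\mathcal H^j(M)_x$; by~\eqref{eq-6pullback} and Remark~\ref{rem-complex-immersion} this is $\ll c(u\circ i_x)\,c_u(M)=c_u(M)$, the implied constant depending only on $n$. This proves~\eqref{pr-bound:item1} and records the inequality $\sum_j\dim\mathcal H^j(M)_x\ll c_u(M)$, which I reuse below.

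For part~(2), the hypotheses imply that $M_{\bar\Ff}$ is the intermediate extension $j_{!*}(\mathcal G[d])$ of a lisse sheaf $\mathcal G$ pure of weight $-d$ on a smooth dense open $j\colon V\hookrightarrow X$, so the strict support condition for intermediate extensions gives $\mathcal H^k(M)=0$ for all $k\ge 0$. Thus every nonzero $\mathcal H^k(M)_x$ sits in a degree $k\le -1$ (here $d\ge 1$ is used), and punctual purity bounds the $\Frob$-eigenvalues there by $|\Ff|^{k/2}\le|\Ff|^{-1/2}$. Combining with the stalk bound from part~(1) yields $|t_M(x)|\le|\Ff|^{-1/2}\sum_j\dim\mathcal H^j(M)_x\ll c_u(M)|\Ff|^{-1/2}$.

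For part~(3), I would first produce the stratification by running the construction in the proof of Lemma~\ref{monster} (which rests on Theorem~\ref{singloc}) applied to $M$ itself: one obtains a finite stratification of $X$ by smooth locally closed subvarieties of degree bounded in terms of $c_u(M)$, defined over $\Ff$ (each step being cut out by a Galois-stable condition, namely the support of a cohomology sheaf and the locus of lissity), and with $M$ lisse on each stratum. Passing to irreducible components---which, for a smooth $\Ff$-scheme, are its connected components, hence still smooth, of pure dimension, defined over $\Ff$, and of unchanged total degree---one may also assume the strata are irreducible. On a stratum $S_j$ of dimension $d_j$, and for $x\in S_j(\Ff)$, one then analyses $i_x^*M$ using that $M$ is geometrically a direct sum of intermediate extensions (\cite[5.3.8]{BBDG}): apart from the ``generic rank'' sheaf $\mathcal H^{-d}(M)$, every other cohomology sheaf satisfies $\dim\operatorname{supp}\mathcal H^k(M)<-k$ \emph{strictly}, so $\mathcal H^k(M)|_{S_j}\ne 0$ forces $k\le -d_j-1$. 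By punctual purity the $\Frob$-eigenvalues on $\mathcal H^k(M)_x$ are then bounded by $|\Ff|^{k/2}\le |\Ff|^{\max\{-d,-d_j-1\}/2}$, and the stalk bound from part~(1) gives $|t_M(x)|\ll c_u(M)|\Ff|^{\max\{-d,-d_j-1\}/2}$.

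The main obstacle is the bookkeeping in part~(3): one must exhibit a \emph{single} stratification that is simultaneously defined over $\Ff$, smooth and irreducible on each piece, of degree controlled by $c_u(M)$, and fine enough that $M$ is lisse on every stratum and that the strict support inequality for its intermediate-extension constituents can be read off degree by degree. This forces one to thread together the output of Theorem~\ref{singloc}, the inductive stratification procedure of Lemma~\ref{monster}, and effective control of irreducible components over a finite field; once the stratification is in hand, the purity and support-condition estimates on each stratum are comparatively routine.
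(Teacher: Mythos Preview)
Your arguments for parts~(1) and~(2) are correct and essentially identical to the paper's: bound $|t_M(x)|$ by $\sum_j\dim\mathcal H^j(M)_x$ using pointwise purity, then control this sum by~\eqref{eq-6pullback} applied to the inclusion of~$x$; for~(2), use that a geometrically irreducible perverse sheaf with full support is an intermediate extension, whence $\mathcal H^0(M)=0$.

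For part~(3) your strategy again coincides with the paper's---the paper appeals to Theorem~\ref{singloc} for the stratification and to \cite[Prop.~2.2.4]{BBDG} for the strict support inequality, while you go through Lemma~\ref{monster} and \cite[5.3.8]{BBDG}, which is if anything more explicit. Both arguments, however, share a genuine gap. The strict inequality $\dim\operatorname{supp}\mathcal H^k(M)<-k$ for $k>-d$ is a property of intermediate extensions from a $d$-dimensional open, \emph{not} of an arbitrary pure perverse sheaf on~$X$. After writing $M_{\bar\Ff}\simeq\bigoplus_\alpha IC_{Z_\alpha}(\mathcal L_\alpha)$ via \cite[5.3.8]{BBDG}, any summand with $\dim Z_\alpha=e<d$ contributes to $\mathcal H^{-e}(M)$ a sheaf with support of dimension exactly~$e$, so your claim that ``every other cohomology sheaf satisfies $\dim\operatorname{supp}\mathcal H^k(M)<-k$ strictly'' fails at $k=-e$. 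Concretely, take $X=\Aa^2$ (so $d=2$) and $M=i_*\bigl(\Ql_L[1](\tfrac{1}{2})\bigr)$ for a line $i\colon L\hookrightarrow\Aa^2$: in any finite stratification of~$X$, some stratum $S_j$ of dimension $d_j\ge 1$ must meet~$L$, and at a point of $S_j\cap L$ one has $|t_M(x)|=|\Ff|^{-1/2}$, whereas the asserted bound is $\ll c_u(M)\,|\Ff|^{-1}$. Thus part~(3), read literally, needs the extra hypothesis (implicit in the paper's citation of \cite[Prop.~2.2.4]{BBDG}) that every simple constituent of~$M$ has $d$-dimensional support; under that hypothesis your argument goes through.
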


\begin{proof}
  (1) The perversity of $M$ implies the vanishing of the cohomology
  sheaves $\mathcal{H}^i(M)$ in all degrees $i>0$ and $i<-d$. By the
  definition of weights for perverse sheaves, if~$M$ is pure of weight
  zero, then $\mathcal{H}^i(M)$ is pointwise mixed of weight $\leq i$,
  which means that the eigenvalues of Frobenius acting on the stalks
  $\mathcal{H}^i(M)_x$ all have modulus $\leq |\Ff|^{i/2}\leq 1$. Therefore, the inequality 
  \[
  |t_M(x)| \leq \sum_{-d \leq i \leq 0} |t_{\mathcal{H}^i(M)}(x)| \leq \sum_{-d \leq i \leq 0} \dim \mathcal{H}^i(M)_x
  \] holds, and it suffices to estimate the dimensions of the stalks of $\mathcal{H}^i(M)$. From~(\ref{eq-6pullback}) applied to the morphism $x\colon \Spec(\Ff)\to X$, we derive the estimates 
  \begin{equation}\label{eqn:complexityboundsgenericrank}
  \dim \mathcal{H}^i(M)_x\leq c_{u\circ x}(x^*M) \ll c_{u\circ x, u}(x)
  c_u(M)=c(u\circ x)c_u(M)= c_u(M),
  \end{equation}
  and hence the estimate $t_M(x) \ll c_u(M)$ as claimed.

  (2) If we assume, moreover, that $M$ is a geometrically irreducible perverse
  sheaf with support $X$,  by~\cite[Th.\,4.3.1\,(ii)]{BBDG} there exists a dense open subset
  $j\colon U\to X$ and a lisse sheaf $\mathcal{F}$ on $U$ such that $M$
  is the middle extension $j_{!*}\mathcal{F}[d]$. Then $\mathcal{H}^0(M)$ vanishes
  by~\hbox{\cite[Prop.\,2.2.4]{BBDG}}, so that the eigenvalues of Frobenius have modulus \hbox{$\leq |\Ff|^{-1/2}$} and the same argument as in~(1) yields the stronger estimate 
  $$
  |t_M(x) |\ll c_u(M) {|\Ff |}^{-1/2}.
  $$ 
  
  (3) By Theorem~\ref{singloc}, there exists a stratification of $X$,
  defined over~$\Ff$, into locally closed strata $S_j$ such that the
  restriction of~$\mathcal{H}^i(M)$ to each $S_j$ is lisse and the
  degree of $u(S_j)$ is bounded in terms of $c_u(M)$. By
  \cite[Prop.\,2.2.4]{BBDG}, there is an inequality
  $\dim \mathrm{Supp}(\mathcal{H}^i(M))<-i-1$ for all $-i<d$. Since
  $\mathcal{H}^i(M)$ restricts to a lisse sheaf on $S_j$, we
  must have $d_j<-i$ whenever $\mathcal{H}^i(M)$ is non-zero and~$d_j\neq d$.
\end{proof}

\begin{remark}
  The following example shows that the estimates in
  Proposition~\ref{pr-bound} are best possible in general. Assume that the finite field satisfies $|\Ff| \equiv 1\pmod{3}$. Let $\chi$ be a non-trivial
  multiplicative character of order three of $\Ff$ and let
  $\mathcal{L}_\chi$ be the associated Kummer sheaf on~$\Gg_m$. Denote by
  $j\colon U\to \Aa^n$ the inclusion of the complement of the hypersurface $F \subset \Aa^n$
  given by the equation
  $$
  x_1^3+\dots +x_n^3=0.
  $$
  Let $\mathcal{F}$ be the lisse sheaf
  $\mathcal{L}_{\chi(x_1^3+\cdots+x_n^3)}(-n/2)$ on $U$ and let $M=j_{!*}(\mathcal{F}[n])$ be its middle extension to $\Aa^n$, shifted to make it perverse. Then $M$ is a
  geometrically irreducible perverse sheaf of weight zero and the equality $| t_M(x) |=|\Ff |^{-n/2}$ holds for all $x\in U(\Ff)$.  We claim that, at the origin, the trace function of $M$ satisfies 
  $$
  |t_M(0) |=|\Ff |^{-1/2}.
  $$ 
  
  To see this, let $h\colon X'\to \Aa^n$ denote the blow-up of $\Aa^n$
  at the origin, \hbox{$E\simeq \Pp^{n-1}$} the exceptional divisor, $F'$ the
  strict transform of $F$, and $j'\colon U'\to X'$ the inclusion of the
  inverse image $U'=h^{-1}(U)$ of $U$. We define
  $\mathcal{F}'=h^*\mathcal{F}$ and
  $M'=j'_{!*}(\mathcal{F}'[n])$. Since~$\chi$ is a character of order
  three, a local computation shows that $\mathcal{F}'$ extends to a
  lisse sheaf outside~$F'$ (for example, in the affine chart given by
  $y_1=1$ and \hbox{$y_i=x_i/x_1$} for $i=2,\dots, n$, this follows from the
  equality
  $\mathcal{L}_{\chi(x_1^3+\cdots+x_n^3)}=\mathcal{L}_{\chi(x_1^3(1+y_2^3+\cdots+y_n^3))}=\mathcal{L}_{\chi(1+y_2^3+\cdots+y_n^3)}$). Since~$F'$ is a smooth divisor, the perverse sheaf $M'$ is the extension by
  zero of the shift of this lisse sheaf to $X'$. Hence, its trace
  function at a point $y=[y_1\colon\cdots\colon y_n]\in E(\Ff)$ equals
  \[
  t_{M'}(y)=\begin{cases} \chi(y_1^3+\dots + y_n^3) & \text{if }y_1^3+\cdots+ y_n^3\neq 0, \\
  0 & \text{otherwise}. 
  \end{cases} 
  \]

  By the decomposition theorem \cite{BBDG}, the perverse sheaf $M$ is a
  direct factor of $h_!(M')$, and since $h$ is an isomorphism outside
  the origin, the other irreducible components of~$h_!(M')$ are
  skyscraper sheaves supported at zero. By the Grothendieck--Lefschetz
  trace formula, the trace function of $M$ at $0$ is equal to
  $$
  \frac{1}{|\Ff |^{n/2}} \sum_{\underset{y_1^3+\dots + y_n^3\neq
      0}{[y_1:\cdots :y_n]\in \Pp^{n-1}(\Ff)}}\chi(y_1^3+\cdots +
  y_n^3), 
  $$ which up to the normalizing factor $|\Ff |^{-n/2}$ is a sum of Jacobi sums of weight $n-1$ 
by a classical computation (see \cite[Ch.\,8, Th.\,5]{IrelandRosen}). Weyl's equidistribution theorem implies that this sum does not cancel, at least for some extensions of any given finite field.
\end{remark}

\begin{theo}[Quasi-orthogonality]\label{th-rh}
  Let~$\Ff$ be a finite field of characteristic different from~$\ell$. Let~$(X,u)$ be a geometrically irreducible
  quasi-projective algebraic variety of dimension $d$ over~$\Ff$.
  \par
  \emph{(1)} Assume that~$X$ is smooth.  Let~$A$ and~$B$ be
  $\ell$-adic constructible sheaves on~$X$. Suppose that~$A$ and~$B$
  are mixed of weights~$\leq 0$ and that there exists a dense open
  subset~$U$ of~$X$ on which~$A$ and~$B$ are lisse sheaves, pure of
  weight zero, and geometrically irreducible. Then there exists a complex
  number~$\alpha$ of modulus $1$ such that the estimate
  \begin{align*}
  \Bigl|\frac{1}{|\Ff|^{d}}\sum_{x\in
    X(\Ff)}t_A(x)\overline{t_B(x)}- \alpha \delta(A,B) \Bigr| \ll
  c(u)&c_u(A)c_u(B)|\Ff|^{-1/2} \\ 
  &+c(u)(c_u(A)+c_u(B))|\Ff|^{-1}
  \end{align*}
 holds, with~$\delta(A,B)=1$ if $A$ and~$B$ are geometrically isomorphic
  over~$U$ and $\delta(A,B)=0$ otherwise.
  \par
  \emph{(2)} Let~$A$ and~$B$ be geometrically irreducible $\ell$-adic
  perverse sheaves on~$X$. Suppose that~$A$ and~$B$ are pure of
  weight zero. Then there exists a complex number~$\alpha$ of modulus~$1$ such that the estimate 
  $$
  \Bigl|\sum_{x\in
    X(\Ff)}t_A(x)\overline{t_B(x)}- \alpha \delta(A,B) \Bigr| \ll
  c(u)c_u(A)c_u(B)|\Ff|^{-1/2}
  $$
 holds, with~$\delta(A,B)=1$ if $A$ and~$B$ are geometrically isomorphic and $\delta(A,B)=0$.
  \par
  In both estimates, the implied constant only depends on the embedding
  dimension of~$(X, u)$.
\end{theo}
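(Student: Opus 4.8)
The plan is to turn each sum into an alternating trace of Frobenius on compactly supported cohomology via the Grothendieck--Lefschetz formula, isolate the one cohomological degree that carries the main term, and control everything else by Deligne's weight bounds together with the Betti-number estimate $\sum_i h^i_c(X,\cdot)\leq c_u(\cdot)$ of Proposition~\ref{pr-betti}. The complex-conjugation result (Proposition~\ref{pr-conjugation}), the tensor estimate~\eqref{eq-6tensor}, and Theorem~\ref{singloc} provide the remaining glue.

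\textbf{Part~(2).} Since $B$ is a geometrically irreducible perverse sheaf pure of weight zero, it is geometrically semisimple, and the argument in the proof of Proposition~\ref{pr-conjugation} (Gabber's theorem via Fujiwara) gives $\overline{t_B(\cdot;\Ff_n)}=t_{\dual(B)}(\cdot;\Ff_n)$, with $\dual(B)$ again perverse, pure of weight zero, and $c_u(\dual B)\ll c(u)c_u(B)$ by Lemma~\ref{dual1} and~\eqref{eq-6dual}. As $t_{A\otimes\dual B}=t_A\,t_{\dual B}$ pointwise, the trace formula gives
\[
\sum_{x\in X(\Ff)}t_A(x)\overline{t_B(x)}=\sum_{i\in\Zz}(-1)^i\Tr\bigl(\Frob\mid \rmH^i_c(X_{\bar k},A\otimes\dual B)\bigr).
\]
Now $\dual(A\otimes\dual B)=\rhom(A,B)$ by biduality, so Poincaré--Verdier duality identifies $\rmH^i_c(X_{\bar k},A\otimes\dual B)$ with the contragredient Frobenius module of $\rmH^{-i}(X_{\bar k},\rhom(A,B))=\mathrm{Ext}^{-i}_{\Der(X_{\bar k})}(A,B)$. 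For $i>0$ this vanishes, since $A,B$ lie in the heart of the perverse $t$-structure; for $i=0$ it is $\mathrm{Hom}_{\Der(X_{\bar k})}(A,B)^{\vee}$, which by Schur's lemma is $\delta(A,B)$-dimensional. It has weight $\leq 0$ because $A\otimes\dual B$ has weights $\leq 0$; dually $\mathrm{Hom}(A,B)$ and $\mathrm{Hom}(B,A)$ have weight $\geq 0$, and since the Frobenius scalars $\gamma,\gamma'$ on these one-dimensional spaces satisfy $\gamma\gamma'=1$ (compose into $\mathrm{End}(A)$), both have modulus one; hence $\rmH^0_c(X_{\bar k},A\otimes\dual B)$ is pure of weight $0$ and Frobenius acts on it by a scalar $\alpha$ with $|\alpha|=1$. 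The remaining terms $i<0$ have weight $\leq i\leq-1$, hence modulus $\ll|\Ff|^{-1/2}$, and the total of the relevant dimensions is $\leq\sum_i h^i_c(X_{\bar k},A\otimes\dual B)\leq c_u(A\otimes\dual B)\ll c(u)c_u(A)c_u(B)$ by Proposition~\ref{pr-betti} and~\eqref{eq-6tensor}. This gives part~(2).

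\textbf{Part~(1).} Let $j\colon U\to X$ be the inclusion of the maximal dense open on which $A$ and $B$ are lisse; as $X$ is smooth, Theorem~\ref{singloc} bounds $\deg\bigl(u(X\setminus U)\bigr)\ll c(u)(c_u(A)+c_u(B))$. Splitting $\sum_{x\in X(\Ff)}$ accordingly: the $O(|\Ff|^{d-1})$ points of $X\setminus U$ contribute, via the pointwise bound $|t_A(x)|\leq c_u(A)$ (Proposition~\ref{pr-bound}\,(1), or pullback to a point), the second error term after dividing by $|\Ff|^{d}$. On $U$ both $A|_U$ and $B|_U$ are lisse pure of weight zero, so $\overline{t_{B|_U}}=t_{(B|_U)^{\vee}}$ and $\rhom(B|_U,A|_U)=A|_U\otimes(B|_U)^{\vee}$ is lisse pure of weight zero; the trace formula gives $\sum_{x\in U(\Ff)}t_A(x)\overline{t_B(x)}=\sum_i(-1)^i\Tr\bigl(\Frob\mid \rmH^i_c(U_{\bar k},\rhom(B|_U,A|_U))\bigr)$. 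Here $\rmH^{2d}_c$ is the $\pi_1$-coinvariants, Tate-twisted by $(-d)$, hence of dimension $\delta(A,B)$ by Schur's lemma and pure of weight $2d$ (Poincaré dual to the invariants), contributing $\delta(A,B)\,\alpha\,|\Ff|^{d}$ with $|\alpha|=1$; every $\rmH^i_c$ with $i<2d$ has weight $\leq i\leq 2d-1$, and — rewriting $\rmH^\bullet_c(U_{\bar k},\rhom(B|_U,A|_U))=\rmH^\bullet_c(X_{\bar k},j_!j^*(A\otimes B'))$, where $B'$ is the conjugate of $B$ from Proposition~\ref{pr-conjugation} so that $B'|_U=(B|_U)^{\vee}$ — the total relevant dimension is $\leq c_u(A\otimes B')\ll c(u)c_u(A)c_u(B)$, giving $O\bigl(|\Ff|^{d-1/2}c(u)c_u(A)c_u(B)\bigr)$. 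Dividing by $|\Ff|^{d}$ yields part~(1).

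\textbf{Main obstacle.} The deep inputs (Deligne's weight bounds for $\rmH^\bullet_c$, Poincaré--Verdier duality, and the Betti-number control of cohomology by complexity from the previous sections) are all available, so the work is largely organizational. The step requiring the most care, and where I expect the difficulty, is pinning down the ``main-term'' cohomology group \emph{together with its exact weight}: in part~(2) one must verify that $\rmH^0_c$ is exactly $\delta(A,B)$-dimensional and pure of weight $0$ (not merely of weight $\leq 0$), which is what forces the main term to have modulus exactly $1$; and in part~(1) one must track the passage between $X$, the open set $U$, and the chosen projective embeddings so that the weight bounds are applied with the correct perverse-versus-sheaf normalization (cf.\ Remark~\ref{rem:weightsforperverse}) and the complexities of the auxiliary objects on $U$ remain controlled in terms of $c(u)$, $c_u(A)$, $c_u(B)$.
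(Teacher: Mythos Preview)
Your proposal is correct and follows essentially the same route as the paper: Grothendieck--Lefschetz on $A\otimes\dual(B)$, Schur's lemma for the one-dimensional ``main'' cohomology group, Deligne's weight bound for the rest, Proposition~\ref{pr-betti} and~\eqref{eq-6tensor} to control Betti numbers, and Theorem~\ref{singloc} for the complement $X\setminus U$ in part~(1). Your treatment of part~(2) via Verdier duality and $\mathrm{Ext}^{-i}(A,B)$ is a clean way to identify the main term; the paper does the same thing by citing Katz's orthogonality argument \cite[Th.\,1.7.2]{mmp}.

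One small technical slip worth fixing in part~(1): you invoke Proposition~\ref{pr-conjugation} to obtain $B'$ on $X$ and then assert $B'|_U=(B|_U)^{\vee}$ as sheaves. That proposition only guarantees equality of \emph{trace functions}, not an isomorphism of sheaves, so the identification $\rmH^\bullet_c(U,\rhom(B|_U,A|_U))=\rmH^\bullet_c(X,j_!j^*(A\otimes B'))$ is not justified as written. The easy fix (which is what the paper does) is to work directly with the Verdier dual $\dual(B)$: on the smooth open set $U$ one has $\dual(B)|_U\cong (B|_U)^{\vee}[2d](d)$, so up to a harmless shift and twist $A|_U\otimes(B|_U)^{\vee}=(A\otimes\dual(B))|_U$, and then the Betti-number bound follows from $c_u(A\otimes\dual(B))\ll c(u)c_u(A)c_u(B)$ via~\eqref{eq-6tensor} and~\eqref{eq-6dual}.
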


\begin{proof} We start with (1). Since~$B$ restricts (up to a shift and a Tate twist) to a perverse sheaf of weight zero on $U$, the proof of Proposition~\ref{pr-conjugation} shows that $\overline{t_B}$ coincides with the trace function of $\dual(B)$ on $U(\Ff)$. Since $A$ and $B$ are geometrically irreducible, the object $A \otimes \dual(B)$ has non-trivial cohomology with compact support in top degree $2d$ if and only if $A$ and $B$ are geometrically isomorphic, and in that case this cohomology is one-dimensional. By \cite[Lem.\,1.8.1]{mmp}, if $A$ and $B$ are geometrically isomorphic, there is an isomorphism $B \simeq A \otimes \alpha^{\deg}$ for a unique $\alpha \in \Ql$, which has modulus $1$ when viewed as a complex number. Arguing as in the proof of Katz's orthogonality theorem~\hbox{\cite[Th.\,1.7.2]{mmp}}, the combination of the Grothendieck--Lefschetz trace formula and Deligne's Riemann Hypothesis thus gives the estimate 
\[
 \Bigl|\frac{1}{|\Ff|^{d}}\sum_{x\in
    U(\Ff)}t_A(x)\overline{t_B(x)}-\alpha\delta(A,B)\Bigr|\leq \sigma  |\Ff|^{-1/2}, 
\] where $\sigma$ denotes the sum of Betti numbers
  $$
  \sigma=\sum_{i\in\Zz} h^i_c(U_{\overline{\Ff}}, A\otimes \dual(B)).
  $$ By Proposition~\ref{pr-betti} and~\eqref{eq-6tensor}, this quantity is bounded by 
  \[
  \sigma \ll c(A\otimes \dual(B)) \ll c_u(A)c_u(\dual(B)) \ll c(u)c_u(A)c_u(B). 
  \]

On the other hand, on the complement $X-U$, the functions $t_A$ and $\overline{t_B}$ are bounded by the generic rank of $A$ and $B$ respectively, which are in turn bounded by their complexity taking \eqref{eqn:complexityboundsgenericrank} into account. This gives the estimate
\[
 \Bigl|\frac{1}{|\Ff|^{d}} \sum_{x\in
    (X-U)(\Ff)}t_A(x)\overline{t_B(x)}\Bigr| \ll c_u(A)c_u(B)|(X-U)(\Ff)| |\Ff|^{-d}.
\] We then note that~$U$ is contained in the intersection of the maximal open
  set~$U_1$ where~$A$ is lisse and the maximal open set~$U_2$ where~$B$
  is lisse, and hence the estimates 
  $$
  \deg(X-U)\ll \deg(X-U_1)+\deg(X-U_2) \ll c(u)(c_u(A)+c_u(B))
  $$
 hold by Theorem~\ref{singloc} (recall that~$X$ is assumed to be smooth).  We
  conclude using the classical bound
  $|(X-U)(\Ff)|\leq \deg(X-U)|\Ff|^{\dim(X-U)}$.
  
  The proof of (2) is the same, except that the term $|\Ff|^{-d}$ does not appear due to the normalization of weights for perverse sheaves (see Remark \ref{rem:weightsforperverse}) and that we do not need to treat the sum over points of $X-U$. 
\end{proof}

Taking $X=\Aa^n$, the second statement immediately implies Theorem~\ref{th-sample-Riemann}.

\begin{remark}
  In the setting of part~(2) of Theorem~\ref{th-rh}, suppose that one
  knows that there is~$\beta\geq 0$ such that
  $$
  \rmH^i_c(X_{\overline{\Ff}},A\otimes\dual(B))=0
  $$
  for~$i\geq -\beta$ (the case $\beta=0$ corresponds to the assumption
  that~$A$ and~$B$ are not geometrically isomorphic).  Then by the
  same argument and the Riemann Hypothesis, we obtain the stronger
  estimate
  $$
  \sum_{x\in X(\Ff)}t_A(x)\overline{t_B(x)} \ll
 c(u) c_u(A)c_u(B)|\Ff|^{-(\beta+1)/2}.
  $$
The case where~$\beta=\dim(X)-1$ corresponds to full square-root
  cancellation for the sum over~$X(\Ff)$.
\end{remark}

A basic finiteness statement follows from this result:

\begin{cor}\label{cor-finiteness}
  Let~$\Ff$ be a finite field of characteristic different from~$\ell$. Let~$(X,u)$ be a geometrically irreducible quasi-projective
  algebraic variety over~$\Ff$.
  \par
  For any $c\geq 1$, there are, up to geometric isomorphism, only
  finitely many $\ell$-adic perverse sheaves~$A$ on~$X$ of complexity
  $c_u(A)\leq c$ that are geometrically irreducible and pure of
  weight zero.
\end{cor}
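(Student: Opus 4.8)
The plan is to use the quasi-orthogonality of Theorem~\ref{th-rh} to \emph{separate} the geometric isomorphism classes in play by the single datum of their trace function over a fixed, sufficiently large extension of $\Ff$, and then to show that this trace function can take only finitely many values; the latter is where one reduces to lisse sheaves and invokes Deligne's theorem on the field of Frobenius traces. So suppose for contradiction that there are infinitely many geometric isomorphism classes of geometrically irreducible perverse sheaves, pure of weight zero, of complexity $\le c$.

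First I would reduce to the lisse case. Let $A$ be such a sheaf. By Theorem~\ref{singloc} the maximal open set $U\subseteq X$ on which $X$ is smooth and $A$ is lisse has complement of degree bounded in terms of $c$; this complement is stable under $\Gal(\overline{\Ff}/\Ff)$, hence defined over $\Ff$, and over a finite field there are only finitely many subvarieties of a given bounded degree, so it suffices to bound, for each fixed such $U$, the number of classes occurring. If the support of $A$ is not all of $X$ we finish by induction on $\dim X$; otherwise $X$ is irreducible and $A=j_{!*}(\mathcal F[d])$ with $d=\dim X$, $j\colon U\to X$ the inclusion, and $\mathcal F$ a geometrically irreducible lisse sheaf on $U$, pure of weight zero, whose complexity for the embedding $U\to\Pp^n$ (itself of bounded complexity, by Theorem~\ref{singloc} and Lemma~\ref{chow}) is bounded in terms of $c$ by Corollary~\ref{cor-middle}. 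Since distinct irreducible perverse sheaves have distinct middle extensions, it is equivalent to count the classes of $\mathcal F$. Moreover $\rank(\mathcal F)$ is bounded in terms of $c$ by Proposition~\ref{rm-zero}, and so are all its Swan conductors (restrict to generic curves and apply Theorem~\ref{curveconstruction}); and, twisting $\mathcal F$ by a suitable rank-one lisse sheaf (which alters neither the geometric class nor, up to bounded constants, the complexity), geometric class field theory \cite[(1.3.6)]{weilII} and Lafforgue's theorem \cite{Lafforgue} let us assume $\det(\mathcal F)$ has finite order, so that $\mathcal F$ is pure of weight zero for \emph{every} $\iota'$ and its Frobenius eigenvalues are algebraic integers.

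Now the separation step. Complexity, the embedding-complexity $c(u)$, and geometric irreducibility are all unchanged by extension of the base field, so the hypotheses and constants in Theorem~\ref{th-rh}\,(1) persist over every $\Ff_m$. Fix $m_0$, depending only on $c$, $n$ and $\Ff$, so that the error term $\eta$ appearing there (after normalisation by $|\Ff_{m_0}|^{d}$) satisfies $\eta<1/2$. For two of our sheaves $\mathcal F,\mathcal G$ on $U$ that are not geometrically isomorphic, Theorem~\ref{th-rh}\,(1) gives $|\,|\Ff_{m_0}|^{-d}\sum_{x\in U(\Ff_{m_0})}t_{\mathcal F}(x;\Ff_{m_0})\overline{t_{\mathcal G}(x;\Ff_{m_0})}\,|<1/2$, whereas the analogous sum with $\mathcal G=\mathcal F$ is at least $1-\eta>1/2$; hence $\mathcal F$ and $\mathcal G$ have distinct trace functions on the finite set $U(\Ff_{m_0})$. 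It therefore suffices to bound the number of possible trace functions $x\mapsto t_{\mathcal F}(x;\Ff_{m_0})$. Each value is a sum of $\rank(\mathcal F)\le c$ roots of unity-sized algebraic integers, so is an algebraic integer of absolute value $\le c$, and the same bound holds at every complex place since $\mathcal F$ is $\iota'$-pure of weight zero for all $\iota'$. By Deligne's theorem \cite{deligne-trace} applied to $\mathcal F$ on the normal scheme $U$ — with the degree of the resulting number field bounded solely in terms of the rank and the Swan conductors, hence in terms of $c$, by running his argument with these (complexity-bounded) invariants, as the introduction anticipates — these values lie in a number field of degree bounded in terms of $c$. A set of algebraic integers of bounded degree all of whose conjugates are bounded in absolute value is finite, so the trace functions over $\Ff_{m_0}$ lie in a fixed finite set of functions $U(\Ff_{m_0})\to\Cc$; together with the separation step this contradicts the assumption of infinitely many classes.

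The main obstacle is precisely this last input: the bare existence of the trace field as a number field (Deligne) is not enough — one needs its degree bounded \emph{solely} in terms of $c$, uniformly over all sheaves of complexity $\le c$, which requires an effective version of \cite{deligne-trace} or a re-examination of its proof exploiting that complexity controls the rank and, through Theorem~\ref{curveconstruction}, the local conductors after restriction to generic curves. The remaining ingredients are comparatively routine: the separation step is a formal consequence of Theorem~\ref{th-rh}, the pointwise control of trace values rests on Proposition~\ref{rm-zero} and Proposition~\ref{pr-bound}, the passage to determinant of finite order is standard, and the finiteness of bounded-degree subvarieties over a finite field is classical.
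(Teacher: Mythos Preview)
Your separation step via quasi-orthogonality is correct and is essentially what the paper does, but you then discard its full strength and replace the rest of the argument with a much harder problem. The obstacle you flag at the end---an effective bound on the degree of Deligne's trace field, uniform in~$c$---is a genuine gap in your argument as written, but it is a gap you have created for yourself; the paper's proof avoids it entirely.

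The point is that Theorem~\ref{th-rh}\,(2), applied over a sufficiently large extension~$\Ff_m$ (depending only on~$c$), gives not merely that distinct geometric classes produce \emph{distinct} trace functions, but that the normalised vectors $t_A/\|t_A\|$ in the finite-dimensional Hilbert space $C(X(\Ff_m))=\{\varphi\colon X(\Ff_m)\to\Cc\}$ are \emph{uniformly angularly separated}: one has $\|t_A\|^2$ close to~$1$ and $|\langle t_A,t_B\rangle|$ small whenever $A$ and~$B$ are not geometrically isomorphic, so the angle between $t_A/\|t_A\|$ and $t_B/\|t_B\|$ is bounded below by some fixed $\theta>0$. A $\theta$-separated subset of the unit sphere in a finite-dimensional space is finite by elementary packing, and that is the whole proof.

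In particular, none of your reduction to lisse sheaves, the induction on dimension, the determinant twist, the Swan-conductor bounds via restriction to curves, or the appeal to Deligne's trace-field theorem is needed. There is no need to know \emph{which} values the trace functions take or in which number field they lie; the finite-dimensionality of $C(X(\Ff_m))$ together with the angular separation already forces finiteness. Note also that the paper works directly with the perverse sheaves via Theorem~\ref{th-rh}\,(2), so even the passage to lisse sheaves for the separation step is superfluous.
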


\begin{proof}
  By Theorem~\ref{th-rh}, applied to finite extensions of~$\Ff$, we
  first see that there exists a finite extension~$\Ff_n$ of~$\Ff$
  (depending on~$c$) such that the equality of trace functions
  $t_A(\cdot;\Ff_n)=t_B(\cdot;\Ff_n)$ implies that~$A$ and~$B$ are
  geometrically isomorphic, for irreducible perverse sheaves~$A$ and~$B$
  of weight zero on~$X$ with~$c_u(A)\leq c$ and~$c_u(B)\leq c$. Moreover,
  we can also ensure that~$X(\Ff_n)$ is not empty and
  that~$t_A(\cdot;\Ff_n)\not=0$ when $c_u(A)\leq c$.
  \par
  Replacing~$\Ff$ by~$\Ff_n$, it is then enough to prove that there are
  only finitely many functions~$t_A$ for irreducible perverse sheaves of
  weight zero with~$c_u(A)\leq c$, up to geometric isomorphism.
  Let~$C(X(\Ff_n))$ be the finite-dimensional Hilbert space of functions
  $X(\Ff_n)\to\Cc$ with norm
  $$
  \|f\|=\sum_{x\in X(\Ff_n)}|f(x)|^2.
  $$
  Theorem~\ref{th-rh} again implies that for~$A$ not geometrically
  isomorphic to~$B$ with $c_u(A)\leq c$ and~$c_u(B)\leq c$, the
  functions~$t_A/\|t_A\|$ and~$t_B/\|t_B\|$, viewed as elements of the
  unit sphere of~$C(X(\Ff_n))$, make an angle $\geq \theta$ for
  some~$\theta>0$ independent of~$A$ and~$B$. It is well-known that
  there can only be finitely many such vectors.
\end{proof}

\begin{remark}
  This argument can be made quantitative, although it is probably far
  from sharp; see~\cite{fkm-counting} for the case of curves.
\end{remark}

\subsection{Examples}

We collect here for ease of reference some immediate corollaries of
the formalism of Section~\ref{sec-6ops}. These contain and generalize
all the basic ``continuity'' estimates of Fouvry, Kowalski and Michel
in the case of curves.

\begin{example}[Fourier Transform] Let~$\Ff$ be a finite field.
  Let~$n\geq 1$ be an integer and let~$X=\Aa^n$ viewed as a
  commutative algebraic group over~$\Ff$, with the obvious embedding
  in~$\Pp^n$ to define the complexity. Let~$\psi$ be a fixed non-trivial $\ell$-adic additive character
  of~$\Ff$ and consider the Artin\nobreakdash--Schreier sheaf
  $ \mcL=\mcL_{\psi(x\cdot y)}$ of rank~$1$ on~$\Aa^n\times \Aa^n$,
  where $x\cdot y$ is the standard scalar product. Deligne defined the $\ell$-adic Fourier transform~$\ft_{\psi}$ as the
  functor~$\Der(\Aa^n)\to \Der(\Aa^n)$ given by
  $$
  \ft_{\psi}(A)=p_{2,!}(p_1^*A\otimes \mcL)=p_{2,*}(p_1^*A\otimes
  \mcL)
  $$
  (see~\cite[1.2.1.1]{laumon}; the equality of the two expressions is
  a highly non-trivial fact, often referred to as ``the miracle of the Fourier transform'').

  From the results of Section~\ref{sec-6ops} and
  Corollary~\ref{cor-as}, we therefore deduce:

  \begin{prop}\label{pr-fourier}
    There exists an integer~$N\geq 0$, depending only on~$n$, such
    that
     \[
    c(\ft_{\psi}(A))\leq Nc(A)
    \]
    holds for any object~$A$ of~$\Der(\Aa^n)$.
  \end{prop}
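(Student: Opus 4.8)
The plan is to read off Proposition~\ref{pr-fourier} directly from the definition of the Fourier transform and the continuity properties established in Section~\ref{sec-6ops}, together with the explicit complexity bound for the Artin--Schreier sheaf $\mcL_{\psi(x\cdot y)}$. Write $X = Y = \Aa^n$ with the standard embeddings $u\colon \Aa^n \to \Pp^n$, and let $Z = \Aa^n\times\Aa^n$ with a chosen embedding (say the Segre embedding of $\Pp^n\times\Pp^n$, or the standard embedding of $\Aa^{2n}$ in $\Pp^{2n}$; the two are interchangeable up to uniformly bounded factors in the complexity). Let $p_1,p_2\colon Z \to \Aa^n$ be the two projections, and recall $\ft_\psi(A) = p_{2,!}(p_1^*A\otimes\mcL)$ with $\mcL = \mcL_{\psi(x\cdot y)}$.

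First I would estimate the complexity of $\mcL$ itself. Since $x\cdot y = \sum x_i y_i$ is a polynomial of degree $2$ in $2n$ variables, Corollary~\ref{cor-as} (applied with $f_1 = x\cdot y$, $f_2 = 1$, so that $U$ is all of $\Aa^{2n}$) gives $c_w(\mcL) \ll_n 1$, where $w$ is the standard embedding of $\Aa^{2n}$; more generally Remark~\ref{rem:uniformpolynomials} already guarantees that $c_w(\mcL)$ is an absolute constant depending only on $n$. Next, the projections $p_1$ and $p_2$ are morphisms given by coordinate projections, i.e.\ by linear polynomials, so by Remark~\ref{rem:uniformpolynomials} (or Proposition~\ref{pr-explicit-uniform}) their complexities $c_{w,u}(p_1)$ and $c_{w,u}(p_2)$ are bounded by a constant depending only on $n$. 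Similarly, $c(w)$, $c(u)$ are bounded in terms of $n$ (they are the complexities of the standard affine embeddings, hence uniformly bounded).

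Then I would chain the continuity estimates of Theorem~\ref{pushpull}:
\begin{align*}
c_u(\ft_\psi(A)) &= c_u(p_{2,!}(p_1^*A\otimes\mcL)) \\
&\ll c_{w,u}(p_2)\, c_w(p_1^*A\otimes\mcL) \\
&\ll c_{w,u}(p_2)\, c_w(p_1^*A)\, c_w(\mcL) \\
&\ll c_{w,u}(p_2)\, c_{u,w}(p_1)\, c_u(A)\, c_w(\mcL),
\end{align*}
using successively~\eqref{eq-6extzero} for $p_{2,!}$, then~\eqref{eq-6tensor} for the tensor product, then~\eqref{eq-6pullback} for $p_1^*$. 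Since all the factors except $c_u(A)$ are bounded by a constant $N$ depending only on $n$, this gives $c_u(\ft_\psi(A)) \leq N c_u(A)$, which is exactly the statement. (One must be mildly careful that the complexity bound on $\mcL$ is taken relative to the same embedding $w$ of $\Aa^{2n}$ used for $p_1^*A$; if one instead uses the Segre embedding of $\Pp^n\times\Pp^n$, one invokes Proposition~\ref{pr-external} or a direct comparison of the two embeddings, again with uniformly bounded loss.)

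The only genuine subtlety — not really an obstacle — is bookkeeping the embedding of the product $\Aa^n\times\Aa^n$ consistently across the three functor applications, and confirming that the constant in Corollary~\ref{cor-as} (which was stated for $\Aa^n$) transfers to $\Aa^{2n}$ with the bilinear phase; this follows since $x\cdot y$ has degree $2$ and $2n$ variables, so Corollary~\ref{cor-as} applies verbatim with $n$ replaced by $2n$ and $\deg f_1 + \deg f_2 = 2$, giving a bound depending only on $n$. Everything else is a mechanical application of Theorem~\ref{pushpull}. I expect the proof to be two or three lines once these inputs are assembled; no hard step is involved, which is consistent with the paper's claim that this is an ``immediate corollary of the formalism''.
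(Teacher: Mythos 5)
Your proof is correct and follows the same route the paper intends: bound $c(\mcL_{\psi(x\cdot y)})$ via Corollary~\ref{cor-as} applied on $\Aa^{2n}$, bound the complexities of the two projections via Proposition~\ref{pr-explicit-uniform} (or Remark~\ref{rem:uniformpolynomials}), and chain the estimates \eqref{eq-6extzero}, \eqref{eq-6tensor} and \eqref{eq-6pullback}; this is precisely the computation the paper makes explicit in the general ``cohomological transform'' example, where $N_K=c_{u\boxtimes v,v}(p_2)\,c_{u\boxtimes v,u}(p_1)\,c_{u\boxtimes v}(K)$. The only blemish is a harmless notational inconsistency between $c_{w,u}(p_1)$ and $c_{u,w}(p_1)$ (the paper's convention puts the source embedding first).
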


  The results of Section~\ref{sec-effective}, together with
  Propositions~\ref{pr-explicit-uniform} and~\ref{pr-as}, lead to explicit
  estimates for~$N$. These are growing at least as fast as~$n!$, and it
  might be interesting to have a better estimate for this ``norm'' of the
  $\ell$-adic Fourier transform.

  For $n=1$,  Fouvry, Kowalski and
  Michel proved the inequality
  $$
  c_{\FKM}(\ft_{\psi}(A))\leq 10 c_{\FKM}(A)^2
  $$
  (for middle-extension Fourier sheaves) in~\cite[Prop.\,8.2]{fkm1}; this
  estimate plays an essential role in many analytic applications, and one
  can expect a similar use of Proposition~\ref{pr-fourier}.
\end{example}

\begin{example}[Other cohomological transforms] Let $(X,u)$ and
  $(Y,v)$ be quasi-pro\-jective varieties over~$k$ and $K$ an object of~$\Der(X\times Y)$. Let \hbox{$T_K\colon \Der(X)\to \Der(Y)$} denote the
  ``cohomological transform with kernel~$K$'', i.e., the functor such
  that
  $$
  T_K(A)=p_{2,!}(p_1^*A\otimes K),
  $$
  where $p_1\colon X\times Y\to X$ and~$p_2\colon X\times Y\to Y$ are
  the projections.

  Applying again the formalism, there exists a constant~$N_K$,
  depending on~$K$, such that the inequality 
  $$
  c_v(T_K(A))\leq N_K c_u(A)
  $$
  holds for any object~$A$ of $\Der(X)$. Precisely, this holds with
  $$
  N_K=c_{u\boxtimes v,v}(p_2)c_{u\boxtimes v,v}(p_1)c_{u\boxtimes v}(K)
  $$
  (where~$u\boxtimes v$ is used to denote the composition of $u\times v$
  with the Segre embedding of the product of the projective spaces
  target of~$u$ and~$v$; as in the case of the Fourier transform,
  other embeddings of~$X\times Y$ might be possible).

  In the very special case where~$X=Y=\Aa^1$ (over finite fields) and
  $K$ is a rank~$1$ Artin--Schreier or Kummer sheaf on~$\Aa^2$, we can
  apply Corollary~\ref{cor-as} to estimate the complexity of the
  kernel sheaf; a weaker form of the resulting estimate was proved by
  Fouvry, Kowalski and Michel~\cite[Th.\,2.3]{fkm-toulouse}.

  Other special cases that have been already considered (when~$n=1$)
  for the varieties $X=Y=\Aa^1$ or~\hbox{$X=Y=\Gg_m$} are additive convolution and
  multiplicative convolution.

  More generally, let~$G$ be a commutative quasi-projective algebraic
  group over~$k$, with a given locally closed embedding~$u$.
  Let~$\sigma\colon G\times G\to G$ be the addition morphism. We have
  two convolution functors
  $$
  *\colon \Der(G)\times \Der(G)\to\Der(G),\quad\quad *_!\colon
  \Der(G)\times \Der(G)\to \Der(G)
  $$
  defined by
  $$
  A*B=\sigma_*(A\boxtimes B),\quad\quad A*_!B=\sigma_!(A\boxtimes B).
  $$

  Use the composition of the Segre embedding with $u\times u$ to embed
  $G\times G$ in projective space.\footnote{\ For certain groups, such
    as affine groups, one can also use other embeddings.} Then
  by~\eqref{eq-6pushforward} and~\eqref{eq-6extzero} and
  Proposition~\ref{pr-external}, there exists an integer~$N$
  such that the inequalities
  $$
  c_u(A*B)\leq Nc_u(A)c_u(B),\quad\quad c_u(A*_!B)\leq N c_u(A)c_u(B)
  $$
  hold for all objects~$A$ and~$B$ of~$\Der(G)$.

\end{example}

\begin{example}[Gowers uniformity sheaves] Let~$(G, u)$ be a
  commutative quasi-pro\-jective algebraic group over a field~$k_0$,
  with the group law written additively. For each integer $d\geq 1$, we denote by $\pi_d \colon G^{d+1}\to G$ the projection $(x,h_1,\ldots,h_d)\mapsto x$. Given a complex~$A$ of $\Der(G)$, we define the \emph{uniformity object} $U_d(A)$ as
  \[
  U_{d}(A)=\pi_{d,!}\Bigl(\bigotimes_{I} \dual^{\circ |I|}(\sigma_I^*A)\Bigr), 
  \]
  where the tensor product runs over all subsets $I \subset \{1,\ldots,d\}$. For each of them, $\dual^{\circ |I|}$ denotes the Verdier duality functor if $|I|$ is odd and the identity functor if~$|I|$ is even, and the morphism $\sigma_I\colon G^{d+1} \to G$ is given by 
  \[
  \sigma_{I}(x,h_1,\ldots,h_d)=x+\sum_{i\in I}h_i. 
  \] 
  
  The point of this construction is that, in the case where $k_0$ is a finite field $\Ff$ and $A$ is a perverse sheaf pure of weight zero, the trace function of~$U_d(A)$ satisfies  
   \[
 \sum_{x\in\Ff}t_{U_d(A)}(x)=\|t_A\|_{U_d}^{2^d}, 
  \]
  where~$\|\cdot \|_{U_d}$ is the $d$-Gowers norm for a complex-valued
  function on the finite group~$G(\Ff)$, see \cite[Def.\,11.2]{taovu}.   For~$G=\Gg_a$, this construction is implicitly used
  in~\cite{fkm-gowers} to obtain an ``inverse theorem'' for Gowers
  norms of one-variable trace functions, and various results from \loccit can actually be interpreted as bounds for the complexity of $U_d(A)$ in terms of that of~$A$. Thanks to Theorem \ref{pushpull}, we can generalize them to any group: there exists an integer $N_d\geq 0$ such that the inequality 
  \[
  c_u(U_d(A))\leq N_dc_u(A)^{2^d} 
 \] holds for all objects $A$ of $\Der(G)$. 
  
\end{example}

\begin{example}[Sum-product sheaves] In~\cite{KMS} and~\cite{KMS2},
  special cases of the following construction are applied to
  Kloosterman sheaves: given an input object~$A$ in~$\Der(\Aa^1)$ and
  an integer $l\geq 1$, one constructs a ``sum-product'' object 
  $$
  B_l=\bigotimes_{1\leq i\leq l}f_i^*A\otimes f_{i+l}^*\dual(A)
  $$
  on $\Aa^{2+2l}$, with coordinates $(r,s,\uple{b})$, by
means of the functions $f_i(r,s,\uple{b})=r(s+b_i)$. It follows from the general formalism that, performing this
  construction with \emph{any} input object $A$ in~$\Der(\Aa^1)$, we
  have
  $$
  c_v(B_l)\leq N_l c_u(A)^{2l}
  $$
  for some constant $N_l$ depending only on~$l$ (the embeddings are
  the standard embeddings $u\colon \Aa^1\to \Pp^1$ and
  $v\colon \Aa^{2l+1}\to \Pp^{2l+1}$).
\end{example}

\subsection{Equidistribution}

Using the theory of complexity developed in this paper, the
``horizontal'' version of Deligne's~Equidistribution Theorem
formulated by Katz in the case of curves~\cite[Ch.\,3]{gksm} admits a
straightforward extension to higher-dimensional varieties. As before, we fix an isomorphism $\iota \colon \Ql \to \Cc$ through which both fields are
identified. 

\begin{theo}\label{th-katz}
  Let~$N\geq 1$ be an integer and let~$(X,u)$ be a smooth and connected
  quasi\nobreakdash-projective scheme over~$\Spec(\Zz[1/N])$ with geometrically
  irreducible generic fiber. For each prime $p$ not dividing $N$, let
  $(X_{\Ff_p}, u_p)$ denote the reduction of $(X, u)$ modulo~$p$. Let~$\mathcal{P}$ be an infinite set of primes not
  dividing~$N\ell$, and assume that we are given, for
  each~$p\in\mathcal{P}$, a lisse $\ell$-adic sheaf~$\mathcal{F}_p$ on $X_{\Ff_p}$
  that is pure of weight zero and whose complexity satisfies \hbox{$c_{u_p}(\mathcal{F}_p)\ll 1$}.
  \par
  Assume that, for each $p\in\mathcal{P}$, the geometric and
  arithmetic monodromy groups of~$\mathcal{F}_p$ coincide and are
  isomorphic to a fixed semisimple (i.e., the connected
    component of the identity is semisimple) algebraic
  group~$G$. Let~$K$ be a maximal compact subgroup of~$G(\Cc)$.
  \par
  For $p\in\mathcal{P}$ and $x\in X(\Ff_p)$, let~$\theta_{p}(x)$
  be the unique conjugacy class in~$K$ that intersects the conjugacy
  class of the semisimplification of the image in~$G$ of the Frobenius
  at~$x$ relative to~$\Ff_p$, so that the equality 
  $$
  \Tr(\theta_p(x))=t_{\mathcal{F}_p}(x)
  $$
  holds. Then the families $(\theta_p(x))_{x\in X(\Ff_p)}$ become
  equidistributed as $p\to +\infty$ in the space of conjugacy classes
  of~$K$, with respect to the Haar probability measure.
\end{theo}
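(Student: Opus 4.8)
The plan is to follow the classical Deligne--Katz equidistribution argument (as in \cite[Ch.\,3]{gksm}), using Weyl's criterion, and to replace each appeal to ``uniformly bounded Betti numbers'' by an appeal to the uniform bound on the complexity $c_{u_p}(\mathcal{F}_p)$. First I would recall that equidistribution in the space $K^\natural$ of conjugacy classes of $K$ with respect to Haar measure is equivalent, by Weyl's criterion and the Peter--Weyl theorem, to showing that for every nontrivial irreducible representation $\rho$ of $G$ (equivalently of $K$), one has
\[
\frac{1}{|X(\Ff_p)|}\sum_{x\in X(\Ff_p)}\Tr(\rho(\theta_p(x)))\longrightarrow 0
\quad\text{as }p\to+\infty.
\]
Since $\Tr(\rho(\theta_p(x)))$ is exactly the trace function of the lisse sheaf $\rho(\mathcal{F}_p)$ (because $\theta_p(x)$ is conjugate to the semisimplified Frobenius and $\rho$ is a representation of the common arithmetic/geometric monodromy group), this reduces to a Riemann Hypothesis estimate for the twisted sum $\sum_{x}t_{\rho(\mathcal{F}_p)}(x)$, and the denominator $|X(\Ff_p)| = p^{\dim X}(1+O(p^{-1/2}))$ is handled by the Lang--Weil estimate (or directly by Proposition~\ref{pr-betti} applied to the constant sheaf).

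The key steps, in order, would be: (1) apply Proposition~\ref{pr-tannaka} (using that $G$ is semisimple, hence reductive with finite center, and its standard representation has determinant of finite order) to get $c_{u_p}(\rho(\mathcal{F}_p))\ll c_{u_p}(\mathcal{F}_p)^{a}\ll 1$ with $a$ depending only on $\rho$ and the implied constant depending only on the embedding dimension; (2) observe that since $\mathcal{F}_p$ is pure of weight zero and the arithmetic and geometric monodromy groups agree, $\rho(\mathcal{F}_p)$ is again pure of weight zero, and for $\rho$ nontrivial irreducible it is geometrically irreducible and geometrically nontrivial (the representation $\rho\circ\pi$ of the geometric fundamental group has image Zariski-dense in $\rho(G)$, which contains no trivial subrepresentation), so $\mathrm{H}^{2\dim X}_c(X_{\overline{\Ff_p}},\rho(\mathcal{F}_p))=0$ and more generally the top cohomology vanishes; (3) apply the Grothendieck--Lefschetz trace formula together with Deligne's Riemann Hypothesis (\cite{weilII}), exactly as in the proof of Theorem~\ref{th-rh}, to obtain
\[
\Bigl|\,\sum_{x\in X(\Ff_p)}t_{\rho(\mathcal{F}_p)}(x)\,\Bigr|
\ll \Bigl(\sum_{i}h^i_c(X_{\overline{\Ff_p}},\rho(\mathcal{F}_p))\Bigr)\,p^{\dim X - 1/2}
\ll c(u_p)\,c_{u_p}(\rho(\mathcal{F}_p))\,p^{\dim X-1/2},
\]
using Proposition~\ref{pr-betti} for the bound on the sum of Betti numbers; (4) combine with the uniform bounds $c(u_p)\ll 1$ (from Proposition~\ref{uniformity}, since the $u_p$ are the reductions of a single $u$ over $\Zz[1/N]$) and $c_{u_p}(\rho(\mathcal{F}_p))\ll 1$ from step (1), and divide by $|X(\Ff_p)|\gg p^{\dim X}$, to conclude that the normalized sum is $\ll p^{-1/2}\to 0$. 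Since this holds for every nontrivial irreducible $\rho$, Weyl's criterion gives the equidistribution.

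The main obstacle I expect is step (2): carefully justifying that $\rho(\mathcal{F}_p)$ is geometrically irreducible, pure of weight zero, and has vanishing top-degree compactly supported cohomology, uniformly for $\rho$ fixed and $p$ varying. Purity follows because the monodromy representation is defined over a number field and $\mathcal{F}_p$ pure of weight zero with equal arithmetic and geometric monodromy forces Frobenius eigenvalues to be Weil numbers of the right size after applying any algebraic representation (this is Deligne's argument, cf.\ \cite[Ch.\,3]{gksm}); geometric irreducibility of $\rho(\mathcal{F}_p)$ for $\rho$ irreducible follows from the density of the image of the geometric fundamental group in $G$; and the vanishing $\mathrm{H}^{2\dim X}_c=0$ follows because a nontrivial irreducible $\rho$ has no $G$-invariants, hence $\rho(\mathcal{F}_p)$ has no geometric coinvariants, and Poincaré duality then identifies $\mathrm{H}^{2\dim X}_c$ with the dual of the invariants $\mathrm{H}^0$, which vanishes. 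A secondary technical point is that the implied constant in the final estimate must be made independent of $p$: this is exactly where Proposition~\ref{uniformity} and the uniform complexity hypothesis $c_{u_p}(\mathcal{F}_p)\ll 1$ do their work, replacing the ad hoc ``uniform Betti number bounds'' that would otherwise have to be established by hand in each application.
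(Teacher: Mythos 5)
Your proposal is correct and follows essentially the same route as the paper: Weyl's criterion for conjugacy classes of $K$, identification of $\Tr(\rho(\theta_p(\cdot)))$ with the trace function of the lisse sheaf $\rho(\mathcal{F}_p)$, the uniform complexity bound $c_{u_p}(\rho(\mathcal{F}_p))\ll c_{u_p}(\mathcal{F}_p)^a\ll 1$ from Proposition~\ref{pr-tannaka}, and the Riemann Hypothesis estimate (the paper simply invokes Theorem~\ref{th-rh}\,(1) with $B=\Ql$ rather than re-running the Lefschetz argument, and deduces purity of $\rho(\mathcal{F}_p)$ directly from its being a subsheaf of a tensor power $\mathcal{F}_p^{\otimes a}$, which is slightly cleaner than your appeal to Deligne's number-field argument). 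The technical points you flag in step (2) are all handled exactly as you suggest, so there is no gap.
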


\begin{proof}
  Let~$d$ be the relative dimension of~$X$
  over~$\Spec(\Zz[1/N])$. Shrinking $\mathcal{P}$ and~$X$ if necessary,
  we may assume that $X_{\Ff_p}$ is a smooth and geometrically connected
  variety of dimension~$d$ and that $X(\Ff_p)$ is not empty for all
  primes $p$ in $\mathcal{P}$. In fact, the number of $\Ff_p$-points
  behaves asymptotically like $|X(\Ff_p)|\sim p^d$ as $p\to +\infty$ by
  the Lang--Weil estimate.
  \par
  Each lisse $\ell$-adic sheaf $\mathcal{F}_p$ corresponds to a representation of
  the fundamental group~$\pi_{1}(X_{\Ff_p})$ of~$X_{\Ff_p}$. According to the Weyl Criterion, equidistribution follows from the vanishing
  \begin{equation}\label{eq-weyl}
    \lim_{p\to+\infty}
    \frac{1}{|X(\Ff_p)|}\sum_{x\in X(\Ff_p)}
    \Tr(\rho(\theta_p(x)))=0
  \end{equation}
  for any non-trivial irreducible representation~$\rho$ of~$K$. By the
  correspondance between semisimple algebraic groups and compact Lie
  groups, such a representation~$\rho$ corresponds uniquely to an
  algebraic irreducible representation of the algebraic group~$G$,
  which is still denoted by~$\rho$. The lisse sheaf~$\rho(\mathcal{F}_p)$
  on~$X_{\Ff_p}$ (see Section~\ref{sec-tannaka}) satisfies
  $$
  \Tr(\rho(\theta_p(x)))=t_{\rho(\mathcal{F}_p)}(x)
  $$
  for all~$x\in X(\Ff_p)$. Moreover, this sheaf is of weight zero (the group $G$ being semisimple, it is a subsheaf of some tensor power $\mathcal{F}^{
\otimes a}$ by Lemma \ref{lm-tannaka}), geometrically irreducible (since its arithmetic and geometric monodromy group are equal), and non-trivial. By the
  Riemann Hypothesis (Theorem~\ref{th-rh}\,(1)), applied with~$B=\Ql$,
  the estimate 
  $$
  \frac{1}{p^d} \sum_{x\in X(\Ff_p)}\Tr(\rho(\theta_p(x))) \ll
  c_{u_p}(\rho(\mathcal{F}_p))p^{-1/2}
  $$
  holds for all~$p\in\mathcal{P}$. By Proposition~\ref{pr-tannaka}, there
  exists an integer~$a\geq 0$, depending only on~$\rho$, such that
  $$
  c_{u_p}(\rho(\mathcal{F}_p))\ll c_{u_p}(\mathcal{F})^a.
  $$
  Since we assumed that
  $c_{u_p}(\mathcal{F}_p)$ is bounded, this estimate implies~\eqref{eq-weyl}.
\end{proof}

In practice, some multi-variable cases of this theorem could be proved
by applying the Deligne--Katz equidistribution theorems to families of
curves covering~$X$.
\par
Katz--Sarnak~\cite[\S 9.6]{katz-sarnak} and Katz~\cite[Ch.\,12]{mmp}
have proved earlier statements of the same kind, but assuming
that~$\mathcal{F}_p$ is the base change to~$\Ff_p$ of a ``common''
sheaf or perverse sheaf on~$X$ over $\Zz[1/N]$.
\par
In fact, we now show that Theorem~\ref{th-katz} gives a positive
answer to the question of Katz~\cite[p.\,8 and 12.6.6]{mmp} concerning
equidistribution of certain higher-dimensional families of additive
character sums. This illustrates that the theory of complexity can, to
a certain extent, obviate the lack of a theory of exponential sums
over~$\Zz$ (the second part implies Theorem~\ref{th-sample-2} from the
introduction).

Let~$n \geq 1$ and $d \geq 1$ be integers. Let $P(n,d)$ be the space of
polynomials of degree~$d$ in~$n$ variables, and $P(n,d,\text{odd}) \subset P(n,d)$ the
subspace consisting of odd polynomials, by which we mean that only
monomials of odd degree have non-zero coefficients. For a prime number~$p$ and~$f\in P(n,d)(\Ff_p)$, set
\[
  S(f;p)=\frac{1}{p^{w(f)/2}}\sum_{x\in\Ff_p^n}e\Bigl(\frac{f(x)}{p}\Bigr),
\]
where~$w(f)$ is the smallest integer such that the vanishing
\[
\rmH^i_c(\Aa^n_{\overline{\Ff}_p},\mathcal{L}_{\psi(f)})=0
\]
holds for all~$i>w(f)$, and we recall the notation $e(z)=\exp(2i\pi z)$.

For an even integer $N$, we denote by 
\[
\USp_N(\Cc)=\Un_N(\Cc) \cap \Sp_N(\Cc)
\] the group of unitary
symplectic matrices of size~$N$, which is a maximal compact subgroup of the
symplectic group~$\Sp_N(\Cc)$.

\begin{cor}\label{cor-katz}
  Let~$n \geq 1$ and $d \geq 1$ be
  integers. Set~$K_n=\Un_{(d-1)^n}(\Cc)$ and, for odd~$d$,
  \[
  K_{n,odd}=\begin{cases}
  \USp_{(d-1)^n}(\Cc) & \text{if~$n$ is odd}, \\
  \Ort_{(d-1)^n}(\Cc) & \text{if~$n$ is even}.
  \end{cases}
 \]
 \begin{enumerate}
 \item\label{cor-katz:item1} The families
   $(S(f;p))_{f\in P(n,d)(\Ff_p)}$ become equidistributed as
   $p\to+\infty$ with respect to the measure which is the image under
   the trace of the probability Haar measure on $K_n$.

 \item\label{cor-katz:item2} Suppose that~$d$ is odd. The families
   $(S(f;p))_{f\in P(n,d,\text{odd})(\Ff_p)}$ become equidistributed as
   $p\to+\infty$ with respect to the measure which is the image under
   the trace of the probability Haar measure on $K_{n,odd}$.
 \end{enumerate}
\end{cor}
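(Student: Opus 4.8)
The plan is to deduce Corollary~\ref{cor-katz} from the general equidistribution result Theorem~\ref{th-katz} applied to a well-chosen family of sheaves on a parameter space, following the template of Katz~\cite{mmp}. Concretely, let $V=P(n,d)$, viewed as an affine space over $\Zz[1/N]$ for a suitable $N$ (divisible by $2$ and by enough small primes), and let $D(n,d)\subset V$ be the open subvariety of Deligne polynomials. First I would set up the \emph{universal exponential sum}: on $\Aa^n\times D(n,d)$ consider the morphism $(x,f)\mapsto f(x)$ to $\Aa^1$, pull back the Artin--Schreier sheaf $\mcL_\psi$, and push forward with compact support along the projection $p_2\colon \Aa^n\times D(n,d)\to D(n,d)$ to obtain a complex whose fiber at $f\in D(n,d)(\Ff_p)$ computes $\sum_{x\in\Ff_p^n}e(f(x)/p)$. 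By the theory of Deligne polynomials (this is precisely where the hypothesis that the top-degree part is nonsingular in $\Pp^{n-1}$ enters), the relevant cohomology is concentrated in a single degree $w=n$, is pure of weight~$n$ after the normalization, and has rank $(d-1)^n$; thus the object $\mathcal{F}_p=R^np_{2,!}p_1^*\mcL_{\psi}$, suitably Tate-twisted to weight zero, is a lisse sheaf on (an open subset of) $D(n,d)_{\Ff_p}$ of rank $(d-1)^n$, and $t_{\mathcal{F}_p}(f)=S(f;p)$ up to sign.

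The next step is to verify the three hypotheses of Theorem~\ref{th-katz}. For the \emph{complexity bound} $c_{u_p}(\mathcal{F}_p)\ll 1$ uniformly in $p$: since $\mathcal{F}_p$ is (a twist of) a cohomology sheaf of the proper pushforward of a sheaf $p_1^*\mcL_\psi$ whose complexity is uniformly bounded by Corollary~\ref{cor-as} (the universal polynomial has bounded degree), the continuity estimates of Theorem~\ref{pushpull} for $f_!$ together with Proposition~\ref{reductiontosheaf} for extracting a cohomology sheaf, and the uniformity statement Proposition~\ref{uniformity}, give a bound depending only on $(n,d)$ and not on $p$. (One uses here that the parameter space $D(n,d)$, its embedding, and the universal family are all defined over $\Zz[1/N]$, so Proposition~\ref{uniformity} applies with $S=\Spec\Zz[1/N]$.) For the \emph{monodromy group}: this is the one input genuinely external to the present paper. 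Katz in~\cite{mmp}, building on the work on sums of the form $\sum e(f(x)/p)$, has computed the geometric (and arithmetic) monodromy group of precisely this family — it is $\GL_{(d-1)^n}$ in the generic case, and $\Sp_{(d-1)^n}$ respectively $\Ort_{(d-1)^n}$ in the odd case (for $n$ odd resp.\ even), the symmetry coming from the autoduality $f(x)\mapsto f(-x)$ when $d$ is odd and all monomials have odd degree; the geometric and arithmetic groups agree after the weight-zero normalization, and $G_{\mathrm{geom}}=G_{\mathrm{arith}}$ is independent of~$p$ for $p$ large. I would cite this as the key geometric input. The maximal compact subgroups are then $\Un_{(d-1)^n}$, $\USp_{(d-1)^n}$, $\Ort_{(d-1)^n}$ as in the statement, and the conjugacy class $\theta_p(f)$ attached to Frobenius satisfies $\Tr\theta_p(f)=t_{\mathcal{F}_p}(f)=S(f;p)$.

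Finally, Theorem~\ref{th-katz} yields equidistribution of $(\theta_p(f))_{f}$ in the space of conjugacy classes of $K_n$ (resp.\ $K_{n,odd}$) with respect to Haar measure; pushing forward under $\Tr$ gives equidistribution of $(S(f;p))_f$ with respect to the trace image of Haar measure, which is exactly the assertion of parts~\eqref{cor-katz:item1} and~\eqref{cor-katz:item2}. For part~(1) one takes the family over $D(n,d)$; for part~(2) one restricts to odd polynomials (the top-degree part of an odd polynomial of odd degree automatically being considered, and one must intersect with the Deligne condition, shrinking the parameter space accordingly). The symplectic versus orthogonal dichotomy in~\eqref{cor-katz:item2} is governed by the sign of the autoduality pairing on the cohomology, which is $(-1)^n$ by a standard computation with Poincar\'e duality on $\Aa^n$. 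I expect the main obstacle to be entirely bookkeeping rather than conceptual: one must make sure that all the objects ($D(n,d)$, the universal family, the sheaves $\mathcal{F}_p$) descend to a single scheme over $\Zz[1/N]$ so that Proposition~\ref{uniformity} delivers a \emph{$p$-independent} complexity bound, and one must invoke Katz's monodromy computation in the precise form needed; once these are in place, Theorem~\ref{th-katz} does the rest mechanically.
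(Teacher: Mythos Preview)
Your plan works essentially as written for part~\eqref{cor-katz:item2}, and indeed matches the paper's argument there. The genuine gap is in part~\eqref{cor-katz:item1}: the monodromy group of the family over $D(n,d)$ is \emph{not} $\GL_{(d-1)^n}$. Katz shows (see~\cite[Th.~6.8.34]{mmp}) that for $p\geq 7$ not dividing $d(d-1)$ the geometric monodromy group of $\mathcal{D}_p$ is
\[
G_{2p}=\{g\in\GL_N\mid \det(g)^{2p}=1\},\qquad N=(d-1)^n,
\]
which depends on $p$. So the hypothesis of Theorem~\ref{th-katz} that the monodromy group be a \emph{fixed} group $G$ independent of $p$ fails, and you cannot invoke that theorem as a black box for~\eqref{cor-katz:item1}.

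The paper circumvents this by unpacking the Weyl-criterion argument inside Theorem~\ref{th-katz}: for a nontrivial irreducible representation $\rho$ of $\Un_N(\Cc)$ (equivalently of $\GL_N$), one restricts $\rho$ to $G_{2p}$ and checks, via Frobenius reciprocity, that the trivial representation does not occur in this restriction once $p$ is large enough (the only way it could is if $\rho=\det^{2ph}$ for some nonzero $h$, which is excluded for large $p$). Then the Riemann Hypothesis applied to each irreducible constituent of $\rho(\mathcal{D}_p)$ gives the required Weyl sum estimate. Two smaller points you also skip: the cases $d=1,2$ are handled separately by elementary means (and $d\ge3$ is where Katz's monodromy computation lives), and the passage from equidistribution over $D(n,d)(\Ff_p)$ to $P(n,d)(\Ff_p)$ uses the uniform bound $|S(f;p)|\ll 1$ together with the fact that the complement of the Deligne locus has smaller dimension.
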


\begin{proof}
  Note that $P(n, d)$ is a dense open subset of the affine space of
  polynomials of degree $\leq d$ in $n$ variables.
  Let~$D(n,d)\subset P(n,d)$ denote the dense open subset of Deligne
  polynomials (namely, those for which the homogeneous part of highest
  degree defines a smooth hypersurface in~$\Pp^{n-1}$) and let
  $D(n,d,\text{odd}) \subset P(n,d,\text{odd})$ be the subset of odd
  Deligne polynomials. Both~$D(n,d)$ and $D(n,d,\text{odd})$ are smooth
  schemes over~$\Zz$.
  \par
  Because (by the Riemann Hypothesis) the estimate
  $$
  |S(f;p)|\leq \sum_{i\leq
    w(f)}h^i_c(\Aa^n_{\overline{\Ff}_p},\mathcal{L}_{\psi(f)})\ll
  c(\mathcal{L}_{\psi(f)})\ll 1
  $$
  holds for all~$p$ and~$f\in P(n,d)(\Ff_p)$, it is enough to prove the
  equidistribution of the sums~$S(f; p)$ for~$f\in D(n,d)(\Ff_p)$ (\resp
  $f\in D(n,d,\text{odd})(\Ff_p)$).

  We first handle separately the cases~$d=1$ and~$d=2$. If~$d=1$, the
  sum~$S(f;p)$ vanishes for any non-zero linear polynomial~$f$. Since
  $K_0=\Un_0(\Cc)$ is the trivial group, whose only element has trace
  zero, the equidistribution holds in that case.
  
  If~$d=2$, then~$K_2=\Un_1(\Cc)$ is the unit circle. Write a
  polynomial $f\in D(n,2)(\Ff_p)$ as
  \[
  f(x)=Q(x)+\lambda(x)+\mu, 
  \]
  where~$Q$ is a non-degenerate quadratic form, $\lambda$ a linear
  form and~$\mu$ a constant. For each prime~$p\geq 3$, it is an elementary consequence of the fact that normalized
  Gauss sums have modulus one that~$|S(f;p)|=1$. Let~$h\geq 1$ be an
  integer. Then
  \[
  \sum_{f\in D(n,2)(\Ff_p)}S(f;p)^h= \frac{1}{p^{nh/2}} \sum_{x_1,\ldots,
    x_h\in \Ff_p^n} \sum_{Q,\lambda}
  e\Bigl(\frac{1}{p}\Bigl(\sum_i(Q(x_i)+\lambda(x_i)\Bigr)\Bigr)
  \sum_{\mu \in\Ff_p}e\Bigl(\frac{h\mu}{p}\Bigr).
  \]
  This vanishes as soon as~$p>h$. If~$h\leq -1$, we obtain the same
  conclusion after noting that
  $S(f;p)^{-1}=\overline{S(f;p)}=S(-f;p)$. Thus, 
  $$
  \lim_{p\to+\infty} \frac{1}{|D(n,2)(\Ff_p)|}\sum_{f\in
    D(n,2)(\Ff_p)} S(f;p)^h=0
  $$
  for any non-zero integer~$h$, which proves equidistribution
  in~$\Un_1(\Cc)=K_2$.
  \par
  We now assume that~$d\geq 3$. For a prime~$p$,
  let~$\psi$ be an $\ell$-adic character with trace function
  $x\mapsto e(x/p)$, under the identification of~$\Ql$ and~$\Cc$. On
  $\Aa^n\times D(n,d)$ over~$\Ff_p$ with coordinates~$(x,f)$ we have the
  Kummer sheaf~$\mcL_{\psi(f(x))}$.  Define
  \[
  \mathcal{D}_p=p_{2,!}\mcL_{\psi(f(x))}[n](n/2),
  \]
  where $p_2 \colon \Aa^n\times D(n,d) \to D(n,d)$ denotes the
  projection to the second factor, so that the equality
  \[
    S(f;p)=(-1)^{n}t_{\mathcal{D}_p}(f)
  \]
 holds by the trace formula.  This is \emph{a priori} an object of the
  derived category of~$D(n,d)$ over~$\Ff_p$, but Deligne has shown (see \cite[Cor.\,3.5.11, Cor.\,3.5.12]{mmp}) that~$\mathcal{D}_p$ is in fact a lisse sheaf of rank $N=(d-1)^n$ and pure
  of weight zero for each prime~$p$ not dividing $d$. 
  \par
  We first prove~\eqref{cor-katz:item2}, which fits exactly the
  statement of Theorem~\ref{th-katz}. The relevant monodromy groups have
  been computed by Katz~\cite[Th.\,12.6.3]{mmp}: if~$p\geq 7$ and
  $p\nmid d$, the geometric and arithmetic monodromy groups of the
  restriction~$\mathcal{F}_p$ of~$\mathcal{D}_p$ to~$D(n,d,\text{odd})$
  coincide and are equal to $\Sp_{N}$ if~$n$ is odd and to~$\Ort_N$
  if~$n$ is even (precisely, these references show that the geometric
  monodromy group is as stated, but by~\cite[Th.\,3.10.6]{mmp}, these
  sheaves are arithmetically self-dual, so their arithmetic monodromy
  groups cannot be bigger). Hence, $K_{n,odd}$ is a maximal compact
  subgroup of the complex points of the geometric monodromy group for any~$p$.  Moreover,
  denoting by~$i$ the closed immersion $D(n,d,\text{odd})\to D(n,d)$,
  the estimate
  $$
  c(\mathcal{F}_p)=c(i^*p_{2,!}\mcL(\psi(f(x)))) \ll
  c(i)c(p_2)c(\mcL_{\psi(f(x))})
  $$
  holds by~(\ref{eq-6pullback}) and~(\ref{eq-6extzero}) for any
  prime~$p$, where the complexities are computed using the embeddings
  of~$\Aa^n$, $D(n,d)$ and their product and subschemes are induced by
  the natural embeddings of affine spaces in the projective space of the
  same dimension.
  \par
  By the bound in Section~\ref{sec-ask} and by Theorem~\ref{uniformity},
  we deduce that $c(\mathcal{F}_p)\ll 1$ for all~$p$, where the implied
  constant depends only on~$(d,n)$. Thus, the equidistribution
  in~\eqref{cor-katz:item2} follows from Theorem~\ref{th-katz}.
  \par
  We now come back to~\eqref{cor-katz:item1}. Here we note that, as
  above, we have $c(\mathcal{D}_p)\ll 1$ for all~$p$, but the setting is
  not exactly that of Theorem~\ref{th-katz}. Indeed, Katz
  proved in~\cite[Th. 6.8.34]{mmp} that the geometric monodromy group
  of~$\mathcal{D}_p$, for~$p\geq 7$ that does not divide $d(d-1)$, is the
  group
  $$
  G_{2p}=\{g\in \GL_N\,\mid\, \det(g)^{2p}=1\},
  $$
  which depends on~$p$. We argue then by repeating the use of the Weyl
  Criterion in the proof of Theorem~\ref{th-katz}.
  \par
  For $f\in D(n,d)(\Ff_p)$, we denote by $\theta_p(f)$ the conjugacy
  class in~$G_{2p}$ corresponding to the Frobenius at~$f$, whose trace
  is equal to $S(f;p)$. Let~$\rho$ be a non-trivial irreducible
  representation of~$\Un_N(\Cc)$, which can also be viewed as a
  representation of~$\GL_N$. The lisse sheaf $\rho(\mathcal{D}_p)$
  satisfies $c(\rho(\mathcal{D}_p))\ll 1$ for all~$p$ (as in
  \loccit). The restriction of~$\rho$ to~$G_{2p}$ is a direct sum of a
  bounded number of irreducible representations. We claim that if~$p$ is
  large enough, depending on~$\rho$, then this restriction does not
  contain the trivial representation of~$G_{2p}$. Indeed, by Frobenius
  reciprocity, the multiplicity of the trivial representation is equal
  to the sum over~$h\in\Zz$ of the multiplicity of the
  character~$\det(\cdot)^{2ph}$ in~$\rho$, i.e., it is equal to one
  if~$\rho=\det(\cdot)^{2ph}$ for some non-zero~$h$ (because~$\rho$ is
  non-trivial), and zero otherwise. The first case cannot occur if~$p$
  is large enough, hence the claim.
  \par
  Now applying the Riemann Hypothesis as above to each irreducible
  subrepresentation of~$\rho(\mathcal{D}_p)$, we obtain
  $$
  \frac{1}{|D(n,d)(\Ff_p)|} \sum_{f\in D(n,d)(\Ff_p)}
  \Tr(\rho(\theta_p(f)))\ll p^{-1/2}
  $$
  for all~$p$, where the implied constant depends only on
  $(n,d,\rho)$. This implies the equidistribution in~\eqref{cor-katz:item2}.
\end{proof}

In~\cite{ffk}, the three last-named authors use the theory of
complexity, among other tools, to study the equidistribution of families
of exponential sums arising as discrete Fourier--Mellin transforms of
trace functions on commutative algebraic groups, generalizing the
equidistribution theorems of Deligne~\cite{weilII} (for powers
of~$\Gg_a$, through the Fourier transform) and Katz~\cite{katz-mellin}
(for~$\Gg_m$), see \cite{fresan} for a survey. Among other things, this
has applications, also discussed in~\cite{ffk}, to the study of the
variance of arithmetic functions over function fields in arithmetic
progressions, improving results of Hall, Keating and
Roddity-Gershon~\cite{variance}. We state here (a form of) the basic
result, in the ``vertical'' direction (see~\cite[Th.~2]{ffk}).

\begin{theo}
  Let $(G,u)$ be a connected commutative algebraic group over a finite
  field~$\Ff$ with a given quasi-projective embedding. Denote by~$\Ff_n$
  the extension of~$\Ff$ of degree~$n$ in an algebraic closure of~$\Ff$
  and by $\widehat{G}(\Ff_n)$ the group of characters of~$G(\Ff_n)$. 

  Let~$A$ be a geometrically irreducible perverse sheaf on~$G$ which is
  pure of weight zero. There exists a complex reductive algebraic
  group~$G_A$ with a maximal compact subgroup~$K_A$ such that the sums
  $$
  S(M,\chi)=\sum_{x\in G(\Ff_n)}\chi(x)t_A(x;\Ff_n),
  $$
  defined for $\chi\in\widehat{G}(\Ff_n)$, become equidistributed on
  average in~$\Cc$ with respect to the image under the trace map of the Haar probability
  measure~$\mu$ on $K_A$, i.e. the equality
  $$
  \lim_{N\to+\infty} \frac{1}{N} \sum_{1\leq n\leq N}
  \frac{1}{|G(\Ff_n)|} \sum_{\chi\in\widehat{G}(\Ff_n)} f(S(M,\chi))=
  \int_{K_A}f(\Tr(g))d\mu(g)
  $$
  holds for any continuous and bounded function $f\colon \Cc\to \Cc$.
\end{theo}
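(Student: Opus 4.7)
The strategy follows Katz's Tannakian approach to equidistribution of exponential sums on commutative algebraic groups (developed for $\Gg_m$ in \emph{Convolution and Equidistribution} and extended to general commutative $G$), with the theory of complexity serving to control error terms uniformly in $n$. The first step is to interpret each $S(M,\chi)$ sheaf-theoretically: by the Lang isogeny, a character $\chi\in\widehat G(\Ff_n)$ corresponds to a rank-one lisse sheaf $\mathcal L_\chi$ on $G_{\Ff_n}$ pure of weight zero with $c_u(\mathcal L_\chi)\ll 1$ uniformly in $\chi$ and $n$, so that the Grothendieck--Lefschetz trace formula gives
\[
 S(M,\chi) \;=\; \sum_{i\in\Zz}(-1)^i \Tr\bigl(\Frob_{\Ff_n}\mid \rmH^i_c(G_{\bar\Ff_n},A\otimes \mathcal L_\chi)\bigr).
\]
Viewing this as a Fourier--Mellin transform, one forms the Tannakian category generated by $A$ inside the category of perverse sheaves on $G$ under middle convolution (the existence of such a Tannakian structure is due to Gabber--Loeser for tori and Krämer--Weissauer in general). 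The reductive group $G_A$ and its maximal compact $K_A$ in the statement are produced by this Tannakian formalism, equipped with a faithful ``standard'' representation $\mathrm{Std}$ of some dimension $r$.

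Next, I would split the set of characters into a ``good'' locus $U_n\subset \widehat G(\Ff_n)$ and a ``bad'' complement. A character is good when $A\otimes \mathcal L_\chi$ has cohomology concentrated in a single degree of dimension $r$; on the good locus Frobenius determines a semisimple conjugacy class $\theta_\chi\in K_A$ with $\Tr(\mathrm{Std}(\theta_\chi))=\pm S(M,\chi)$. The bad locus lies in the $\Ff_n$-points of a proper closed subscheme of the character variety of $G$, which exists (independently of $n$) as a consequence of generic base change and the Tannakian setup; by Theorem~\ref{singloc} and the uniformity statement of Proposition~\ref{uniformity}, its degree is bounded in terms of $A$ alone, hence $|U_n^c|\ll |\Ff_n|^{\dim G-1}$. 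Combining this with the trivial estimate $|S(M,\chi)|\ll c_u(A\otimes \mathcal L_\chi)\ll c_u(A)$ on the bad locus --- which follows from Proposition~\ref{pr-betti} and the bilinearity of complexity under tensor product (Corollary~\ref{tensor1}) together with $c_u(\mathcal L_\chi)\ll 1$ --- shows that the bad characters contribute $O(|\Ff_n|^{-1})$ to any normalized average.

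For the good characters, equidistribution of the $\theta_\chi$ in $K_A$ reduces by the Peter--Weyl criterion to showing that, for every non-trivial irreducible representation $\rho$ of $G_A$,
\[
 \frac{1}{N}\sum_{n=1}^N \frac{1}{|G(\Ff_n)|}\sum_{\chi\in U_n}\Tr(\rho(\theta_\chi)) \longrightarrow 0.
\]
Each inner sum is again a sum of trace function values over $\widehat G(\Ff_n)$, this time of a specific perverse sheaf $\rho(A)$ on the character variety produced by the Tannakian formalism (essentially a Mellin transform of a convolution power of $A$ followed by applying $\rho$). Because $\rho$ is a non-trivial irreducible representation, $\rho(A)$ has no invariants, and Theorem~\ref{th-rh} gives square-root cancellation of the form $O(c_{(\cdot)}(\rho(A))\,|\Ff_n|^{-1/2})$. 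The complexity of $\rho(A)$ is bounded independently of $n$ by combining Proposition~\ref{pr-tannaka} (which controls $c(\rho(\mathcal F))$ in terms of $c(\mathcal F)^{w(\rho)}$) with the continuity of the six operations (Theorem~\ref{pushpull}) applied to the convolution and Mellin functors defining $\rho(A)$. Summing over $n$ and averaging then yields the stated limit.

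The main obstacle is the second step: constructing the Tannakian group $G_A$ in the generality of an arbitrary connected commutative algebraic group, and verifying that the ``bad'' locus of characters is cut out by a closed subscheme of controlled complexity. Once this geometric input is in place, the role of the present paper is to supply the uniform complexity bounds for the sheaves $A\otimes\mathcal L_\chi$ and for the Tannakian construct $\rho(A)$, which are precisely what the estimates of Theorems~\ref{pushpull}, \ref{th-rh} and Proposition~\ref{pr-tannaka} provide; these are the ingredients without which the averaging argument would not close.
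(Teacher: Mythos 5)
The paper itself does not prove this theorem: it is stated as a quotation of the main ``vertical'' result of the companion work of Forey, Fres\'an and Kowalski, so there is no in-paper argument to compare yours against. Your sketch does identify the strategy of that reference — Katz's Tannakian method for Mellin-type transforms on commutative groups, with the complexity theory of the present paper supplying the uniform bounds — and you correctly isolate which ingredients come from this paper (uniform boundedness of $c_u(A\otimes\mathcal{L}_\chi)$, continuity under convolution via Theorem~\ref{pushpull}). But beyond the gap you acknowledge (the construction of the Tannakian group $G_A$ for an arbitrary connected commutative $G$, due to Gabber--Loeser, Kr\"amer--Weissauer and the companion paper, and supplied nowhere in the present text), there are further genuine problems with the way you propose to close the argument.

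First, your treatment of the bad characters presupposes a ``character variety'' over $\Ff$ whose $\Ff_n$-points are $\widehat{G}(\Ff_n)$, to which Theorem~\ref{singloc} and Proposition~\ref{uniformity} could be applied. Already for $G=\Gg_m$ no such variety exists compatibly with the sheaf-theoretic constructions (characters correspond to Lang torsors, not to points of a fixed scheme), so the bound $|U_n^c|\ll|\Ff_n|^{\dim G-1}$ must instead come from a generic vanishing theorem for perverse sheaves on $G$ — again part of the Tannakian package, not of this paper. Second, the Weyl-sum step is not an application of Theorem~\ref{th-rh}: the sum $\sum_{\chi}\Tr(\rho(\theta_\chi))$ runs over characters, not over the $\Ff_n$-points of a variety carrying a perverse sheaf. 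The correct mechanism is orthogonality of characters, which gives
\[
\frac{1}{|G(\Ff_n)|}\sum_{\chi\in\widehat{G}(\Ff_n)}S(\rho(A),\chi)=t_{\rho(A)}(e;\Ff_n),
\]
after which one invokes purity (as in Proposition~\ref{pr-bound}) and the absence of the unit object among the constituents of $\rho(A)$. Finally, your argument never uses the Ces\`aro average over $n$, which is the whole point of the ``on average'' formulation: it is what suppresses the contributions of punctual and negligible constituents of $\rho(A)$ and the possible mismatch between arithmetic and geometric Tannakian monodromy, exactly as in Deligne's equidistribution theorem. As written, your step 4 would prove equidistribution for each fixed large $n$, which is stronger than the statement and not what the method delivers.
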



\section{Effective bounds}\label{sec-effective}

It is clear that the implied constants in our bounds for all of
Grothendieck's six functors can be made effective as long as that for the Betti numbers of a tensor product of complexes in Theorem \ref{main} can be made effective. We state here such
an effective bound and sketch the proof.

\begin{theo}\label{th-main-effective}
  For all objects $A$ and $B$ of $\Der(\Pp^n_k)$, the following estimate holds: 
  \[ 
    \sum_{i \in \Zz} h^i(\Pp^n_k, A \otimes B) \leq
    \frac{2^{16}}{3^4} e^{4/13}13^n (n+2)!
    \ c(A) c(B). 
  \]
\end{theo}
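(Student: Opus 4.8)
The plan is to redo the proof of Theorem~\ref{main} keeping track of all constants at each step, so I would begin by recording an effective version of every auxiliary result used in that proof. The backbone is the induction on $n$ in Theorem~\ref{main}, which was proved by combining three inputs: the generic-pullback bound of Corollary~\ref{genericbettibound}\,\eqref{genericbettibound:item1}, the degeneration inequality of Lemma~\ref{gmdegenerationbound}, and the complexity-reduction estimates of Lemma~\ref{complexityreductionbound}. So first I would make Corollary~\ref{genericbettibound}\,\eqref{genericbettibound:item1} effective: that requires the constant $M_n$ (the norm of the bilinear form $b_n$ from Proposition~\ref{bilinearbettibound}) and the constants $16(n+1)^2$ coming from Proposition~\ref{characteristicclassbound}, which in turn traces back to the depth-$\leq 4$ bound of Corollary~\ref{testsheafconstruction} and the $(n+1)$-fold sum over standard test sheaves. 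The bilinear form satisfies $b_n(x,y)=x\cdot y+4b_{n-1}(f_n(x),f_n(y))$, and since $\|\cdot\|$ is the $\ell^1$-norm in the dual basis to the $\cc(A_m)$ and the $f_n$ have bounded operator norm, one gets a clean closed form $M_n = O(C^n)$ for an explicit $C$; unwinding this is the first computational chunk.

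Next I would track the induction itself. At the inductive step in the proof of Theorem~\ref{main}, the telescoping-sum argument produces one ``generic'' term bounded by $\ll c(A)c(B)$ via Corollary~\ref{genericbettibound}\,\eqref{genericbettibound:item1} and Lemma~\ref{final generic}\,(1), plus $n+1$ difference terms, each of which — by Lemma~\ref{gmdegenerationbound}, the induction hypothesis, Lemma~\ref{complexityreductionbound}, and Lemma~\ref{final generic}\,(2) — contributes at most $2c(A)c(B)+2c(A)c(B)+9c(A)c(B)=13c(A)c(B)$. So if $T_n$ denotes the constant in dimension $n$, the recursion has the shape $T_n \leq (n+1)\cdot 13\, T_{n-1} + (\text{constant from Corollary~\ref{genericbettibound}})$. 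The factor $13^n(n+2)!$ in the target is exactly what such a recursion yields: the $13^n$ is the product of the ``$13$''s, the $(n+2)!$ absorbs the $(n+1)$-fold telescoping at each level (shifted because $\Gg_m$ contributes and the index runs $1\le m\le n+1$), and the $\frac{2^{16}}{3^4}e^{4/13}$ prefactor comes from summing the geometric-type series $\sum_k 1/((k+1)!\,13^k)$-flavoured corrections against the base case $n=0$, where the constant is $1$. I would make this precise by proving $T_n \le \lambda_n$ by induction, with $\lambda_n = \frac{2^{16}}{3^4}e^{4/13}13^n(n+2)!$, checking the base case $n=0$ (trivially $1 \le \lambda_0 = \frac{2^{16}}{3^4}e^{4/13}\cdot 2$) and verifying that $\lambda_n$ dominates $13(n+1)\lambda_{n-1}$ plus the extra generic term, which reduces to an elementary numerical inequality.

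The main obstacle will be the effective control of the constant in Corollary~\ref{genericbettibound}\,\eqref{genericbettibound:item1}, i.e.\ pinning down $M_n$ and the factor $16(n+1)^2$ with enough care that they are dominated by the ``slack'' in $\lambda_n$ at each inductive level. Concretely, one needs that the non-telescoping contribution at dimension $n$ is bounded by something like $C^n (n+1)^2 \le \varepsilon_n \lambda_n$ with $\sum \varepsilon_n$ small, so that the induction closes with the stated prefactor rather than an $n$-dependent one. This forces an honest estimate of the operator norms of the linear maps $f_n$ of Lemma~\ref{pullbacklinearity} with respect to the test-sheaf dual basis, and of the entries of the intersection matrix $\cc(A_i)\cdot\cc(A_j)$ from Lemma~\ref{testsheafpairing} (which is unipotent-triangular with $1$'s on the anti-diagonal, so its inverse has controlled entries). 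I expect these to be the fiddly part; everything downstream is bookkeeping. Since the paper only asks for a sketch here, I would present the recursion $T_n \le 13(n+1)T_{n-1} + E_n$ with $E_n$ the explicit generic-term bound, state the closed-form solution, and verify the final numerical inequality, referring to the effective forms of Proposition~\ref{bilinearbettibound} and Corollary~\ref{genericbettibound} for the value of $E_n$ without reproducing their proofs in full.
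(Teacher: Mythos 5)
Your overall strategy --- effectivizing the double induction from the proof of Theorem~\ref{main} and solving the recursion $T_n \le 13\cdot(\text{number of telescoping terms})\cdot T_{n-1}+E_n$ --- is exactly the paper's, and your accounting of the telescoping step is essentially right (the paper shaves the term count from $n+1$ to $n$ by scaling so that $\lambda_1=1$; your $13(n+1)T_{n-1}$ variant still closes, since $\tfrac{17}{13}(n+1)!\le (n+2)!$). The genuine gap is that the entire numerical content of the theorem sits in the step you defer: the explicit bound $E_n\le \tfrac{4^{8+n}}{3^4}(n+1)^2$ for the non-telescoping term coming from Corollary~\ref{genericbettibound}\,(1). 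That is where $2^{16}/3^4=4^8/3^4$ and, after summing $\sum_k (4/13)^k(k+1)^2/k!$, the factor $e^{4/13}$ actually originate; your attribution of the prefactor to ``corrections against the base case $n=0$, where the constant is $1$'' is off, since it comes from the $k=0$ term of $E_k$, i.e.\ from the norm of the bilinear form $b_n$, not from the base case of the induction.

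Moreover, the route you propose for pinning down $M_n$ --- iterating $b_n(x,y)=x\cdot y+4b_{n-1}(f_n(x),f_n(y))$ via an operator-norm bound on $f_n$ and inverting the test-sheaf intersection matrix --- multiplies the norm by $4\|f_n\|^2$ at each step, so you would need $\|f_n\|=1$ exactly (not merely $O(1)$) in the relevant norm to land on $4^{O(1)+n}$ rather than something strictly larger; and the anti-triangular intersection matrix of Lemma~\ref{testsheafpairing} only has its entries below the anti-diagonal specified, so ``controlled inverse'' still requires a computation. The paper's device for making all of this explicit is to work in the basis $e_m=\overline{\CC(K_m)}$ of constant sheaves on linear subspaces, where the intersection pairing has the closed form $(-1)^n\min(m_1+m_2+1-n,0)$ and $f_n$ is literally the shift $e_m\mapsto e_{m-1}$, and then to pass to $e_i'=\sum_{j\le i}2^{i-j}e_j$, which is dual to the standard test sheaves (so Proposition~\ref{characteristicclassbound} gives $\|\overline{\CC(A)}\|\le 4(n+1)c(A)$) and on which $f_n$ still acts as a norm-one shift. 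Without this change of basis, or an equivalent closed-form evaluation of $b_n$ against the test-sheaf dual basis, your argument establishes a bound $C_n\,c(A)c(B)$ for some effectively computable $C_n$, but does not verify the specific constant $\tfrac{2^{16}}{3^4}e^{4/13}13^n(n+2)!$ in the statement.
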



\begin{proof}[Sketch of proof] The primary issue is to control the bilinear form of Corollary \ref{genericbettibound}. It is convenient to calculate with this bilinear form in a basis generated by constant sheaves (identical to that of Lemma \ref{constantsheafbasis} up to a sign). Let $e_m$ be the class of $\overline{\CC(K_m)}$ for~$K_m$ the constant sheaf on an $m$-dimensional subspace, so that $e_0, \dots, e_n$ form a basis of the vector space $\CH_n\left( \overline{ T^*\Pp^n}\right) \otimes \Qq$. By Theorem \ref{geneufor}, the intersection pairing 
\[
e_{m_1} \cdot e_{m_2} = \overline{\CC(K_{m_1})} \cdot \overline{\CC(K_{m_2})}
\] is equal to $(-1)^n$ times the Euler--Poincaré characteristic of the intersection of a general $m_1$\nobreakdash-dimensional subspace and a general $m_2$-dimensional subspace. This intersection is a projective space of dimension $m_1+m_2-n$, and hence has Euler--Poincaré characteristic $\min(m_1+m_2+1-n,0)$, so the intersection number is equal to $(-1)^{n} \min(m_1+m_2+1-n,0)$.

By construction, the function of Lemma \ref{pullbacklinearity} is easy to calculate with respect to this basis: it sends $e_m$ to $e_{m-1}$. Therefore, the bilinear form of Lemma \ref{genericbettibound} satisfies $f(e_{m_1},e_{m_2}) = g(m_1+m_2+1-n)$ with 
\[
g(x)= \sum_{k=0}^{x}  4^k  (x-k)= \frac{4^{x+2} -3x -4 }{9}
\] for $x \geq 0$ and $g(x)=0$ for $x \leq 0$. For simplicity, we will upper-bound it by the simpler bilinear form $f$ such that $f(e_{m_1},e_{m_2}) = 4^{m_1+m_2+3-n}/9$.

Set $e_i' = \sum_{j \leq i}  2^{i-j} e_j$. Then $e_0',\dots,e_n' $ also form a basis. Fix a norm $\|\cdot\|$ on the vector space $\CH_n\left( \overline{ T^*\Pp^n}\right) \otimes \Rr$ which is the $\ell^\infty$ norm in this basis. This basis is convenient because the intersection of~$e_i'$ with the characteristic cycle of a test sheaf supported on~$\Pp^m$  is $1$ if~$m+i = n$ and $0$ otherwise. To check this, it suffices to check that the intersection number of $e_i$ with the characteristic cycle of a test sheaf supported on $\Pp^m$ is equal to~$1$ if~$i + m =n$, to~$-2 $ if $i + m - n =1$, and to $0$ otherwise. This follows from the fact that the tensor product of $K_i$ with the test sheaf supported on $\Pp^m$ is the test sheaf supported on $\Pp^{i+m-n}$, which has Euler--Poincaré
characteristic equal to~$1$ if $i+ 2-n=0$, to~$-2$ if~$i + m - n=1$, and to~$0$ otherwise.  

Because the intersection number of $e_i'$ with the characteristic cycle of a test sheaf is equal to~$1$ on $\Pp^m$ if $m+i=n$ and to~$0$ otherwise, Proposition \ref{characteristicclassbound} then shows that for a perverse sheaf $A$, the norm $\|\overline{\CC(A)}\|$ is bounded by $4(n+1) c(A)$. 

Rewriting the bilinear form $f$ in the basis $e_i'$ gives
\begin{align*}
f(e_{i_1}',e_{i_2}') &= \sum_{j_1 \leq i_1} \sum_{j_2 \leq i_2} \frac{ 2^{ i_1-j_1} 2^{i_2-j_2} 4^{j_1+j_2+3-n}}{9} \\
&=\frac{ 2^{i_1+i_2} 4^{3-n}}{9} \Bigl( \sum_{j_1 \leq i_1} 2^{j_1}\Bigr)\Bigl( \sum_{j_2 \leq i_2} 2^{j_2}\Bigr) \\
&\leq \frac{ 2^{i_1+i_2} 4^{4-n}}{9}  2^{ i_1 + 1} 2^{i_2+1} = \frac{ 4^{i_1+i_2+ 4-n}}{9}. 
\end{align*}
Hence, the total norm of the bilinear form is at most 
\[ 
  \sum_{i_1=0}^n \sum_{i_2=0}^n \frac{ 4^{i_1+i_2 4-n}}{9} =
  \frac{4^{4-n}}{9} \Bigl( \sum_{i=0}^n 4^{i}\Bigr)^n =
  \frac{4^{4-n}}{9} \Bigl( \frac{4^{n+1}}{3}\Bigr)^2=
  \frac{4^{6+n}}{3^4}
\]
and the constant of Corollary \ref{genericbettibound} involves an extra
factor of $16(n+1)^2$ coming from the constant of Proposition
\ref{characteristicclassbound}, for a total of $(n+1)^2 4^{8+n}/3^4$.

Let $b_n$ be the constant in Theorem \ref{main} in dimension $n$. Then
we can see from the induction argument that there are $n$ terms (it may
appear to be $n+1$, but we may set~\hbox{$\lambda_1=1$} at the start by
scaling the whole matrix, which does not affect the automorphism of
$\Pp^m$), each of size $13 b_{n-1} c(A) c(B)$, plus one term coming from
Corollary \ref{genericbettibound} of size
$4^{8+n}(n+1)^2 c(A) c(B)/3^4$, so we obtain:
\begin{align*}
  b_n &=13 n b_{n-1} +  \frac{4^{8+n}}{3^4} (n+1)^2
  =\sum_{k =0}^n 13^{n-k} \frac{n!}{k!}  \frac{4^{8+k}(k+1)^2}{3^4}
  \\ &= n! 13^n \sum_{k =0}^n   \frac{4^{8+k}(k+1)^2}{3^4 13^k k!} 
  \leq (n+1)^2 n! 13^n \frac{2^{16}}{3^4} e^{4/13} \leq (n+2)! 13^n
  \frac{2^{16}}{3^4} e^{4/13},
\end{align*}
as we wanted to show.
\end{proof}

\begin{remark}
  Being more careful in the numerical arguments would lead to a
  significantly improved constant (the dominant terms in the sum
  defining $b_n$ are those for small~$k$, and for such values our bounds
  could see significant improvement, \eg we could use the constant~$1$
  in Corollary \ref{genericbettibound} for $k=0$ instead
  of~$2^{16}/3^4$), and some minor adjustments to the algebraic geometry
  can lower the base of the exponent $13^n$, but we do not know how to
  improve on the factorial growth, except in characteristic zero where a
  completely different argument offers exponential growth. \end{remark}

From this statement, it is completely straightforward to make explicit
the inequalities of Section \ref{sec-6ops}, because all the implicit
constants in those inequalities come from repeated applications of
Theorem \ref{main}, which can be replaced with this effective version.


\bibliographystyle{amsplain}

\bibliography{references}

\providecommand{\bysame}{\leavevmode\hbox to3em{\hrulefill}\thinspace}
\providecommand{\MR}{\relax\ifhmode\unskip\space\fi MR }
\providecommand{\MRhref}[2]{%
  \href{http://www.ams.org/mathscinet-getitem?mr=#1}{#2}
}
\providecommand{\href}[2]{#2}
\begin{thebibliography}{10}

\bibitem{ASbetti}
A.~Adolphson and S.~Sperber, \emph{On the degree of the {$L$}-function
  associated with an exponential sum}, Compositio Math. \textbf{68} (1988),
  no.~2, 125--159.

\bibitem{beil_SS}
A.~Beilinson, \emph{Constructible sheaves are holonomic}, Selecta Math. (N.S.)
  \textbf{22} (2016), no.~4, 1797--1819.

\bibitem{BBDG}
A.~A. Be\u{\i}linson, J.~Bernstein, P.~Deligne, and O.~Gabber, \emph{Faisceaux
  pervers}, Analysis and topology on singular spaces, {I} ({L}uminy, 1981),
  Ast\'{e}risque, vol. 100, Soc. Math. France, Paris, 1982, pp.~5--171.

\bibitem{bombieribetti}
E.~Bombieri, \emph{On exponential sums in finite fields. {II}}, Invent. math.
  \textbf{47} (1978), no.~1, 29--39.

\bibitem{compact_representations}
T.~Br\"{o}cker and T.~tom Dieck, \emph{Representations of compact {L}ie
  groups}, Graduate Texts in Mathematics, vol.~98, Springer-Verlag, New York,
  1995.

\bibitem{sga4h}
P.~Deligne, \emph{Cohomologie \'{e}tale}, Lecture Notes in Mathematics, vol.
  569, Springer-Verlag, Berlin, 1977, S\'{e}minaire de g\'{e}om\'{e}trie
  alg\'{e}brique du Bois-Marie SGA $4\frac{1}{2}$.

\bibitem{weilII}
\bysame, \emph{La conjecture de {W}eil. {II}}, Publ. math. IHES (1980), no.~52,
  137--252.

\bibitem{deligne-trace}
\bysame, \emph{Finitude de l'extension de {$\mathbf{Q}$} engendr\'{e}e par des
  traces de {F}robenius, en caract\'{e}ristique finie}, Mosc. Math. J.
  \textbf{12} (2012), no.~3, 497--514, 668.

\bibitem{deligne-flicker}
P.~Deligne and Y.~Z. Flicker, \emph{Counting local systems with principal
  unipotent local monodromy}, Ann. of Math. \textbf{178} (2013), no.~3,
  921--982.

\bibitem{SGA7}
P.~Deligne and N.~M. Katz (eds.), \emph{Groupes de monodromie en
  g\'{e}om\'{e}trie alg\'{e}brique. {II}}, Lecture Notes in Mathematics, vol.
  340, Berlin-New York, Springer-Verlag, 1973, S\'{e}minaire de
  G\'{e}om\'{e}trie Alg\'{e}brique du Bois-Marie 1967--1969 (SGA 7 II).

\bibitem{ffk}
A.~Forey, J.~Fres\'{a}n, and E.~Kowalski, \emph{{Arithmetic Fourier transforms
  over finite fields: generic vanishing, convolution, and equidistribution}},
  \url{https://arxiv.org/abs/2109.11961}.

\bibitem{fkm-counting}
\'{E}. Fouvry, E.~Kowalski, and Ph. Michel, \emph{Counting sheaves using
  spherical codes}, Math. Res. Lett. \textbf{20} (2013), no.~2, 305--323.

\bibitem{fkm-gowers}
\bysame, \emph{An inverse theorem for {G}owers norms of trace functions over
  {$\mathbf{F}_p$}}, Math. Proc. Cambridge Philos. Soc. \textbf{155} (2013),
  no.~2, 277--295.

\bibitem{fkm2}
\bysame, \emph{Algebraic trace functions over the primes}, Duke Math. J.
  \textbf{163} (2014), no.~9, 1683--1736.

\bibitem{fkm1}
\bysame, \emph{Algebraic twists of modular forms and {H}ecke orbits}, Geom.
  Funct. Anal. \textbf{25} (2015), no.~2, 580--657.

\bibitem{fkm-toulouse}
\bysame, \emph{On the conductor of cohomological transforms}, Ann. Fac. Sci.
  Toulouse Math. (6) \textbf{30} (2020), no.~1, 203--254.

\bibitem{fresan}
J.~Fres\'{a}n, \emph{{\'{E}quir\'{e}partition de sommes exponentielles [travaux
  de Katz]}}, S\'{e}minaire Bourbaki, Ast\'{e}risque, vol. 414, Soc. Math.
  France, Paris, 2019, Exp.\,no.\,1141, pp.~205--250.

\bibitem{fu_etale_co}
L.~Fu, \emph{{{\'E}}tale cohomology theory}, revised ed., Nankai Tracts in
  Mathematics, vol.~14, World Scientific Publishing Co. Pte. Ltd., Hackensack,
  NJ, 2015.

\bibitem{fujiwara}
K.~Fujiwara, \emph{Independence of {$\ell$} for intersection cohomology (after
  {G}abber)}, Algebraic geometry 2000, {A}zumino ({H}otaka), Adv. Stud. Pure
  Math., vol.~36, Math. Soc. Japan, Tokyo, 2002, pp.~145--151.

\bibitem{fulton}
W.~Fulton, \emph{Intersection theory}, second ed., Ergebnisse der Mathematik
  und ihrer Grenzgebiete. 3. Folge., vol.~2, Springer-Verlag, Berlin, 1998.

\bibitem{GaitsgoryNearbyCycles}
D.~Gaitsgory, \emph{Construction of central elements in the affine {H}ecke
  algebra via nearby cycles}, Invent. math. \textbf{144} (2001), no.~2,
  253--280.

\bibitem{SGA1}
A.~Grothendieck (ed.), \emph{Rev\^{e}tements \'{e}tales et groupe fondamental
  ({SGA} 1)}, Documents Math\'{e}matiques, vol.~3, Soci\'{e}t\'{e}
  Math\'{e}matique de France, Paris, 2003, S\'{e}minaire de g\'{e}om\'{e}trie
  alg\'{e}brique du Bois Marie 1960--61.

\bibitem{variance}
C.~Hall, J.~P. Keating, and E.~Roditty-Gershon, \emph{Variance of arithmetic
  sums and {$L$}-functions in {$\mathbf{F}_q[t]$}}, Algebra Number Theory
  \textbf{13} (2019), no.~1, 19--92.

\bibitem{IrelandRosen}
K.~Ireland and M.~Rosen, \emph{{A Classical Introduction to Modern Number
  Theory}}, 2nd ed., Graduate Texts in Mathematics, vol.~84, Springer-Verlag,
  1990.

\bibitem{ant}
H.~Iwaniec and E.~Kowalski, \emph{Analytic number theory}, American
  Mathematical Society Colloquium Publications, vol.~53, American Mathematical
  Society, Providence, RI, 2004.

\bibitem{gksm}
N.~M. Katz, \emph{Gauss sums, {K}loosterman sums, and monodromy groups}, Annals
  of Mathematics Studies, vol. 116, Princeton University Press, Princeton, NJ,
  1988.

\bibitem{esde}
\bysame, \emph{Exponential sums and differential equations}, Annals of
  Mathematics Studies, vol. 124, Princeton University Press, Princeton, NJ,
  1990. \MR{1081536}

\bibitem{katzbetti}
\bysame, \emph{Sums of {B}etti numbers in arbitrary characteristic}, Finite
  Fields Appl. \textbf{7} (2001), no.~1, 29--44.

\bibitem{mmp}
\bysame, \emph{Moments, monodromy, and perversity: a {D}iophantine
  perspective}, Annals of Mathematics Studies, vol. 159, Princeton University
  Press, Princeton, NJ, 2005.

\bibitem{katz-mellin}
\bysame, \emph{Convolution and equidistribution. sato-tate theorems for
  finite-field mellin transforms}, Annals of Mathematics Studies, vol. 180,
  Princeton University Press, Princeton, NJ, 2012.

\bibitem{katz-sarnak}
N.~M. Katz and P.~Sarnak, \emph{Random matrices, {F}robenius eigenvalues, and
  monodromy}, American Mathematical Society Colloquium Publications, vol.~45,
  American Mathematical Society, Providence, RI, 1999.

\bibitem{KW_weilconj}
R.~Kiehl and R.~Weissauer, \emph{Weil conjectures, perverse sheaves and
  {$\ell$}-adic {F}ourier transform}, Ergebnisse der Mathematik und ihrer
  Grenzgebiete. 3. Folge., vol.~42, Springer-Verlag, Berlin, 2001.

\bibitem{kollar}
J.~Koll\'{a}r, \emph{Rational curves on algebraic varieties}, Ergebnisse der
  Mathematik und ihrer Grenzgebiete. 3. Folge., vol.~32, Springer-Verlag,
  Berlin, 1996.

\bibitem{KMS}
E.~Kowalski, Ph. Michel, and W.~Sawin, \emph{Bilinear forms with {K}loosterman
  sums and applications}, Ann. of Math. \textbf{186} (2017), no.~2, 413--500.

\bibitem{KMS2}
\bysame, \emph{Stratification and averaging for exponential sums: bilinear
  forms with generalized {K}loosterman sums}, Ann. Sc. Norm. Super. Pisa Cl.
  Sci. (5) \textbf{21} (2020), 1453--1530.

\bibitem{Lafforgue}
L.~Lafforgue, \emph{Chtoucas de {D}rinfeld et correspondance de {L}anglands},
  Invent. math. \textbf{147} (2002), no.~1, 1--241.

\bibitem{LaumonEuler}
G.~Laumon, \emph{Comparaison de caract\'{e}ristiques d'{E}uler-{P}oincar\'{e}
  en cohomologie {$\ell$}-adique}, C. R. Acad. Sci. Paris S\'{e}r. I Math.
  \textbf{292} (1981), no.~3, 209--212.

\bibitem{laumon}
\bysame, \emph{Transformation de {F}ourier, constantes d'\'{e}quations
  fonctionnelles et conjecture de {W}eil}, Publ. math. IHES (1987), no.~65,
  131--210.

\bibitem{saito1}
T.~Saito, \emph{The characteristic cycle and the singular support of a
  constructible sheaf}, Invent. math. \textbf{207} (2017), no.~2, 597--695.

\bibitem{saito2}
\bysame, \emph{Characteristic cycle of the external product of constructible
  sheaves}, manuscripta math. \textbf{154} (2017), no.~1-2, 1--12.

\bibitem{taovu}
T.~Tao and V.~Vu, \emph{Additive combinatorics}, Cambridge Studies in Advanced
  Mathematics, vol. 105, Cambridge University Press, 2006.

\bibitem{CC-epsilon}
N.~Umezaki, E.~Yang, and Y.~Zhao, \emph{Characteristic class and the
  {$\varepsilon$}-factor of an \'{e}tale sheaf}, Trans. Amer. Math. Soc.
  \textbf{373} (2020), no.~10, 6887--6927.

\bibitem{vasco}
W.~V. Vasconcelos, \emph{Computational methods in commutative algebra and
  algebraic geometry}, Algorithms and Computation in Mathematics, vol.~2,
  Springer-Verlag, Berlin, 1998.

\end{thebibliography}

\end{document}